\theoremstyle{plain}
\newtheorem{thmin}{Theorem}
\newtheorem*{corollaryC}{Corollary C$'$}
\newtheorem*{corollaryD}{Corollary D$'$}
\newtheorem{theorem}{Theorem}[section]
\newtheorem{proposition}[theorem]{Proposition}
\newtheorem*{gronwall}{Gronwall's Lemma}
\newtheorem{lemma}[theorem]{Lemma}
\newtheorem{keylemma}[theorem]{Key Lemma}
\theoremstyle{definition}
\newtheorem{definition}[theorem]{Definition}
\newtheorem{affirmation}[theorem]{Claim}
\theoremstyle{remark}
\newtheorem*{remarksn}{Remark}
\newtheorem{remark}[theorem]{Remark}
\def\id{{\mathrm{id}}}
\def\Int{\mathop{\mathrm{Int}}}
\newcommand{\wt}{\widetilde}
\def\from{\colon} 
\newcommand{\ma}{\measuredangle}
\def\including{containing }
\def\includes{contains }
\def\Bb{\overline{B}}
\def\Bh{\widehat{B}}
\def\Op{\mathop{\mathrm{Op}}\nolimits}
\def\N{{\mathbb N}}
\def\R{{\mathbb R}}
\def\Z{{\mathbb Z}}
\renewcommand{\t}{\theta}
\newcommand{\f}{\varphi}
\newcommand{\eps}{\varepsilon}
\def\taut{{\tilde{\tau}}}
\def\P{{\mathbb P}}
\def\D{{\mathbb D}}
\def\T{{\mathbb T}}
\def\Ss{{\mathbb S}}
\def\cercle{{\mathbb S}}
\def\sphere{{\mathbb S}}
\def\CC{{ C}}
\def\Cinf{{\CC^{\infty}}}
\def\DDi{ {\widetilde{\Diff_+(\cercle^1)}} }
\def\Diff{{\mathrm{Diff}}}
\def\ft{{\tilde{f}}}
\def\gt{{\tilde{g}}}
\def\fbar{{\bar{f}}}
\def\gbar{{\bar{g}}}
\def\hti{{\tilde{h}}}
\def\hbar{{\bar{h}}}
\def\nut{{\tilde{\nu}}}
\newcommand{\FT}{ {\mathcal{F}_\pitchfork } }
\newcommand{\FTV}{ {\mathcal{F}_{\pitchfork,V} }}
\newcommand{\FF}{ {\mathcal{F} } }
\newcommand{\PP}{ {\mathcal{P} } }
\newcommand{\PPt}{ {\PP_\pitchfork } }
\newcommand{\NN}{ {\mathcal{N} } }
\newcommand{\Mal}{ {\mathcal{M}} }
\newcommand{\GG}{ {\mathcal{G} } }
\newcommand{\LL}{{\f }}
\newcommand{\Lid}{{\f_{\id}}}
\newcommand{\Lf}{{\f_f}}
\newcommand{\xibar}{ {\bar\xi} }
\newcommand{\xit}{ {\tilde\xi} }
\newcommand{\ITI}{\mathrm{iti}}
\newcommand{\dist}{\mathop{\mathrm{dist}}\nolimits}
\newcommand{\fix}{\mathop{\mathrm{Fix}}\nolimits}
\def\ly{\fontencoding{U}\fontfamily{lasy}\fontseries{m}\fontshape{n}\selectfont}
\def\guil#1{\leavevmode\hbox{{\ly(\kern-0.20em(\kern+0.20em}}\nobreak{}\,#1\,%
  \nobreak\hbox{{\ly\kern+0.20em)\kern-0.20em)}}}
\def\Bars#1{\lVert #1 \rVert}
\def\lrBars#1{\left\lVert #1 \right\rVert} 
\def\norm#1{\left\lVert #1 \right\rVert}
\def\res#1{\mathbin{|}{}_{#1}}
\begin{document}
\title{On the connectedness of the space of codimension one foliations on a closed $3$-manifold}
\author{
\begin{tabular}{c}
{H\'el\`ene \textsc{Eynard-Bontemps}}\\
{\small{IMJ - PRG, UPMC}\footnote{Institut de Math\'ematiques de Jussieu - Paris Rive Gauche, Universit\'e Pierre et Marie Curie, 4 place Jussieu, 75252 Paris cedex 5, France. Tel: 00 33 1 44 27 79 62. Fax: 00 33 1 44 27 63 66.}}\\
{\small{helene.eynard-bontemps@imj-prg.fr}}
\end{tabular}}

\date{}
\maketitle

\begin{abstract}
We study the topology of the space of smooth codimension one foliations on a closed $3$-manifold. We regard this space as the space of integrable plane fields included in the space of all smooth plane fields. It has been known since the late 60's that every plane field can be deformed continuously to an integrable one, so the above inclusion induces a surjective map between connected components. We prove that this map is actually a bijection.
\end{abstract}\bigskip

\noindent\emph{Key words:} foliation, integrable plane field, $3$-dimensional manifold, $h$-principle, deformation, homotopy.\medskip

\noindent\emph{2010 Mathematics Subject Classification:}  57R30.

\newpage

\tableofcontents
\newpage

In this article, we are interested in the topology of the space $\FF(M)$ of ($C^\infty$) smooth codimension one foliations on a closed $3$-manifold $M$. We identify such a foliation with its tangent plane field, and hence regard $\FF(M)$ as the subset of \emph{integrable plane fields} inside the space $\PP(M)$ of all plane fields on $M$, endowed with the usual $C^\infty$ topology.

Most plane fields are not integrable: non integrable plane fields form a dense open subset of $\PP(M)$.  It has been known since the late 60's, however,  that any closed 3-manifold admits a smooth codimension one foliation \cite{Li,No}. Moreover, according to works of J. Wood \cite{Wo} and W.P. Thurston \cite{Th2}, any smooth plane field can be deformed to a smooth foliation. In other words, the map $\pi_0\FF(M)\overset{\iota_*}{\to} \pi_0\PP(M)$ induced by the inclusion $\FF(M)\overset{\iota}{\hookrightarrow}\PP(M)$ is \emph{surjective}. It is then tempting to ask whether this inclusion is actually a weak homotopy equivalence, or in Gromov's language whether \emph{foliations satisfy the parametric $h$-principle}. In fact, such an $h$-principle was established by Y.~Eliashberg \cite{El} for a related class of (locally homogeneous) plane fields, namely overtwisted contact structures. We study here the validity of the \emph{one-parameter} $h$-principle for foliations, and obtain the following:

\begin{thmin}\label{t:main} Let $M$ be a closed oriented $3$-manifold, $\PP(M)$ the space of smooth transversely oriented plane fields on $M$ and $\FF(M)$ the space of smooth codimension one foliations on $M$. The inclusion of $\FF(M)$ in $\PP(M)$ induces a bijection between connected components.
\end{thmin}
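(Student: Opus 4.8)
The plan is to deduce the statement from the \emph{injectivity} of $\iota_*$, since its surjectivity is exactly the content of the theorems of Wood and Thurston recalled above. So one starts with two foliations $\mathcal F_0,\mathcal F_1\in\FF(M)$ whose tangent plane fields are joined by a path $(\xi_t)_{t\in[0,1]}$ in $\PP(M)$, and the aim is to build a path \emph{inside} $\FF(M)$ from $\mathcal F_0$ to $\mathcal F_1$. The strategy has two stages: a flexible, parametric stage that follows Thurston's construction, and a normalization stage that deals with the fact that $\mathcal F_0$ and $\mathcal F_1$ are arbitrary foliations, not ones produced by that construction.

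For the first stage I would revisit Thurston's proof that every plane field deforms to a foliation and make it depend continuously on a parameter. Fixing a handle decomposition (or an open book) of $M$ and running Thurston's recipe on the whole family $(\xi_t)_t$ at once --- making each plane field almost tangent to the $2$-dimensional pieces, turbulizing and spiralling near the $1$- and $2$-handles, inserting Reeb-like pieces near the $0$- and $3$-handles --- should produce a path $(\mathcal F^{\mathrm{Th}}_t)_{t\in[0,1]}$ in $\FF(M)$ whose endpoints $\mathcal F^{\mathrm{Th}}_0,\mathcal F^{\mathrm{Th}}_1$ are foliations of a fixed ``normal form'', each homotopic \emph{as a plane field} to the corresponding $\mathcal F_i$ (which is what Thurston's theorem guarantees). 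Granting this, the whole problem reduces to a non-parametric claim: every foliation $\mathcal F$ on $M$ lies in the same connected component of $\FF(M)$ as the normal-form foliation $\mathcal F^{\mathrm{Th}}$ built from $T\mathcal F$.

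To prove this normalization I would work piece by piece along the chosen decomposition, using the Reeb components of $\mathcal F^{\mathrm{Th}}$ as a reservoir of flexibility. First, deform $\mathcal F$ within $\FF(M)$ so that it contains explicit Reeb components around the relevant closed transversals: a Reeb component can be created out of a product neighborhood of a closed transversal through a continuous family of ``growing'' Reeb components, which keeps us in the same connected component of $\FF(M)$ (and, turbulizing with the correct sign, in the prescribed homotopy class of plane fields). Next one pushes the holonomy of $\mathcal F$ into these Reeb regions, so that on the complement $\mathcal F$ becomes $C^\infty$-close to a suitable position of $\mathcal F^{\mathrm{Th}}$. The two foliations can then be matched one piece of the decomposition at a time, the input being a \emph{local connectedness} statement for spaces of codimension-one foliations on elementary pieces such as $\D^1\times\D^2$ or $\D^2\times\D^1$ with prescribed boundary behaviour --- in the spirit of Larcanché's theorem that two sufficiently $C^\infty$-close foliations are joined by a path.

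The hard part, I expect, is exactly this normalization. A codimension-one foliation on a $3$-manifold may carry very nontrivial holonomy, exceptional minimal sets and dense leaves, so it cannot be brought to normal form by a homotopy that fixes most of it; the real work is to show that all of this complexity can honestly be \emph{absorbed} --- shepherded into controllable Reeb-type regions, or neutralized against the flexibility of Thurston's construction --- by a genuine path of foliations, and to organize the piece-by-piece argument so that the local deformations (which may have to move leaves around rather than fix boundaries) assemble into one global path. Keeping every step in the $C^\infty$ category, where turbulization and the gluings are delicate, and securing the parametric dependence of the first stage, are the remaining technical hurdles; Novikov's theorem and the structure theory of codimension-one foliations on closed $3$-manifolds are the tools I would rely on to keep the holonomy under control throughout.
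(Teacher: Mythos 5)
Your first stage is essentially the paper's own strategy: it corresponds to Theorem~C (the ``malleable case'') proved in Section~\ref{s:malleable}, a relative one-parameter version of Thurston's construction, filling the toric holes with Schweitzer foliations \`a la Larcanch\'e, with the Siphon Lemma patching the points where the choice of transverse arcs jumps. So far so good. The decisive gap is in your second ``normalization'' stage, and it reflects a subtlety that the statement of Theorem~\ref{t:main} is carefully worded around: you set out to prove that two foliations $\mathcal F_0,\mathcal F_1$ with homotopic tangent plane fields are joined by a \emph{path} inside $\FF(M)$, i.e.\ that $\iota_*:\pi_0\FF(M)\to\pi_0\PP(M)$ is injective on \emph{path}-components. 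The paper explicitly declines to claim this and says it is unknown. What Theorem~\ref{t:main} asserts is a bijection on \emph{connected} components, and the reduction of an arbitrary foliation to a nice one is accomplished only by an arbitrarily small $\CC^\infty$ \emph{perturbation} (Theorem~\ref{t:cleaning}), not by a continuous deformation. This is precisely the point of the sentence ``this is where we drop from path-connectedness to connectedness.'' A connected-but-not-obviously-path-connected set can still have the right $\pi_0$-on-connected-components image, and that is all that is proved.

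Concretely, the obstacle your normalization stage glosses over is the holonomy of the foliation near its Novikov tori (torus leaves meeting no closed transversal). A general $\CC^\infty$ foliation may have uncountably many of these, stacked in saturated sets $\T^2\times[a,b]$ where the holonomy is a pair of commuting germs of interval diffeomorphisms that can be infinitely tangent to the identity on a Cantor set of points. There is, to date, no technique for ``flowing'' such a pair continuously to a tame one while keeping commutativity --- and commutativity is exactly what integrability forces. The paper therefore first defines \emph{neat} foliations (those whose Novikov tori sit in a simple local model, Definition~\ref{d:net}), proves that neat foliations can be path-connected to malleable ones inside $\FF(M)$ (Theorem~\ref{t:malleabilization}, by rolling up the torus leaves into Reeb/Schweitzer pieces), and then --- separately and only at the level of density --- proves Theorem~\ref{t:cleaning} using the Bonatti--Haefliger deformation theory plus the approximation result for commuting diffeomorphisms of \cite{B-E}. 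Your appeal to Larcanch\'e-type local path-connectedness also overreaches: her result concerns taut foliations (or foliations transverse to a circle fibration), not arbitrary ones; whether two $\CC^\infty$-close general foliations are joined by a path of foliations is precisely the open question. So the correct conclusion of the argument is weaker than what you aim for and goes through density: $\NN(M)\cap\CC$ is path-connected (Corollary~D') and dense in $\FF(M)\cap\CC$ (Theorem~\ref{t:cleaning}), hence $\FF(M)\cap\CC$ is connected, and that is Theorem~\ref{t:main}.
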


Theorem \ref{t:main} improves the main result of the author's PhD dissertation \cite{Ey}, which states that any two $C^\infty$ foliations homotopic as plane fields can be connected by a continuous path of $C^1$ foliations. What was missing in \cite{Ey} in order to remain in the $C^\infty$ class was the path-connectedness of the space of smooth $\Z^2$-actions on the segment (cf. Section \ref{s:cleaning}). C.~Bonatti and the author have since proved the \emph{connectedness} of this space \cite{B-E}, which, combined to \cite{Ey}, gives Theorem \ref{t:main}. Path-connectedness, however, remains out of reach, for actions as well as foliations: we do not know whether the map $\pi_0\FF(M)\overset{\iota_*}{\to} \pi_0\PP(M)$ is \emph{injective}. In $\PP(M)$, which is locally contractible, connected and path-connected components are the same, but this is not clear in $\FF(M)$, which is a closed subset with empty interior. 

Surjectivity between higher homotopy groups, on the other hand, is easier to obtain. The following result can be derived from our techniques. We give a complete proof for $k=1$ (cf. Theorem \ref{t:malleable}) and a sketch for the general case (cf. end of Section \ref{s:malleable}).

\begin{thmin}\label{t:pik} Let $M$ be a closed oriented $3$-manifold, $\PP(M)$ the space of smooth transversely oriented plane fields on $M$ and $\FF(M)$ the space of smooth codimension one foliations on $M$. For any $k\ge1$, the map $\pi_k\FF(M)\overset{\iota_*}{\to} \pi_k\PP(M)$ induced by the inclusion is \emph{surjective}.
\end{thmin}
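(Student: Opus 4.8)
The plan is to prove a parametric form of the Wood--Thurston theorem \cite{Wo,Th2}: every continuous family $(\xi_s)_{s\in S^k}$ of plane fields on $M$ is homotopic, through continuous families of plane fields, to a family $(\eta_s)_{s\in S^k}$ each member of which is tangent to a smooth foliation. This is precisely the assertion that $\iota_*:\pi_k\FF(M)\to\pi_k\PP(M)$ is surjective. To deal with basepoints, one first applies the \emph{non-parametric} theorem of Wood and Thurston to the plane field $\xi_{s_0}$ at a fixed $s_0\in S^k$, replacing the family by a homotopic one for which $\xi_{s_0}$ is already integrable; the parametric homotopy is then produced relative to $s_0$.

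First I would put the family in normal form with respect to a fixed handle decomposition of $M$. Since $S^k$ is compact and being in general position with respect to such a decomposition is an open condition, after refining the decomposition and applying a small preliminary homotopy one may assume that all the $\xi_s$ are simultaneously in general position, with combinatorial type locally constant, hence constant, in $s$. One then runs the Wood--Thurston construction: over a regular neighbourhood of the $1$-skeleton (a handlebody) the restricted plane fields are simple enough to be approximated by foliations; these are extended across the $2$- and $3$-handles by fillings whose local models are Reeb-type components; and finally the homotopy class is corrected by ``spinning'' the foliation along suitable embedded surfaces, the amount of spinning being dictated by the Euler class and the secondary (Hopf) invariant.

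The point is that every one of these choices must now be made continuously in the parameter $s\in S^k$. Here the paper's technology enters: rather than arbitrary fillings and arbitrary spinnings one works with the deformations furnished by \emph{malleable} foliations (Theorem~\ref{t:malleable}), whose flexible regions --- a Reeb component, or a product chart --- are built precisely to absorb a prescribed \emph{family} of homotopy-class corrections. Since $(\xi_s)$ depends continuously on $s$ and $S^k$ is connected, all the $\xi_s$ lie in a single component of $\PP(M)$, so the Euler class and Hopf invariant of $\xi_s$ are independent of $s$; combined with the preliminary basepoint step, this makes the family of corrections that has to be inserted locally constant, hence constant, so it can be spread uniformly over $S^k$.

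The main obstacle is exactly this parametric consistency: in the non-parametric proof one makes arbitrary choices at each handle and corrects the homotopy class afterwards, but over a whole $k$-sphere of parameters these choices and corrections must assemble into a single continuous family, which amounts to controlling the connectivity of the spaces of local foliation models and of the spinning corrections. For $k=1$ it is enough that these spaces be path-connected, which is what the malleability analysis of Theorem~\ref{t:malleable} provides, so that case is carried out in full; the case $k\ge 2$ needs their higher connectivity, obtained by the same scheme but only recorded here in the form stated above. I expect this surjectivity statement to be genuinely easier than the injectivity half of Theorem~\ref{t:main}: one never has to decide when two \emph{given} foliations can be joined --- the ``cleaning'' of Section~\ref{s:cleaning} plays no role --- but only to manufacture new foliations realising a prescribed family of plane fields.
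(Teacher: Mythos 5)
Your first normal-form step has a genuine gap, and it is precisely the gap that the paper spends its entire appendix (Proposition~\ref{param}, Key Lemma~\ref{l:local}) closing. You claim that, because $S^k$ is compact and ``general position'' is open, a refinement of the decomposition and a small preliminary homotopy let you assume all $\xi_s$ are simultaneously in general position with \emph{constant combinatorial type}. For a single plane field this is Thurston's jiggling; but over a $k$-parameter family it is false. The tangency condition $\xi_s(p)=T_p\sigma$ for $p$ in a $2$-cell $\sigma$ is codimension $2$, so for a single $s$ it is avoidable, but over $S^k\times\sigma$ the tangency locus is generically $k$-dimensional: for every $k\ge 1$ there will be parameters $s$ and points $p$ where $\xi_s$ is tangent to some $2$-cell, and no refinement of the decomposition removes this. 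The paper says this explicitly (``since the direction of $\xi_t$ varies with $t$, one cannot require the edges and faces to be transverse to every $\xi_t$'') and the entire point of importing Eliashberg's machinery --- special $2$-simplices treated as ``big vertices'', the norm estimates of Lemma~\ref{l:defmod}, the induction of Lemma~\ref{l:recurrence} over skeleta of the parameter polyhedron --- is to handle exactly the parameter values where transversality degenerates. Assuming the combinatorial type is constant erases the core difficulty.

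Your last two paragraphs also drift toward Wood's method (correcting the homotopy class afterwards by ``spinning'' along embedded surfaces, guided by the Euler class and Hopf invariant) rather than Thurston's (which never loses control of the homotopy class in the first place, by deforming $\xi$ directly and only near the skeleton). The claim that constancy of the Euler and Hopf invariants over $S^k$ makes the needed family of corrections constant is not justified: the correction at each $s$ depends on the particular foliation your construction produced at $s$ relative to $\xi_s$, not merely on the homotopy class of $\xi_s$, and there is no argument that these corrections assemble continuously. The paper's route avoids this entirely: given the parametric normal-form result, surjectivity on $\pi_1$ falls out of Theorem~\ref{t:malleable} applied to a based loop viewed as a path from a malleable foliation to itself (Remark after Corollary~C'), and the $k\ge 2$ assertion is only claimed as ``derivable from the same techniques'' precisely because the non-appendix steps are written for one parameter. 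You correctly sense that surjectivity is easier than injectivity and that Section~\ref{s:cleaning} plays no role, but the proposal as written relies on a false transversality claim and on an unproved continuity of a family of spinning corrections.
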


To present the strategy of the proof of Theorem \ref{t:main}, we will first explain how to deform a \emph{single} plane field $\xi$ to a foliation. The argument we describe is due to Thurston \cite{Th2}, who later generalized it to higher dimensions and codimensions \cite{Th3,Th4}. 

\subsection*{Thurston's method}\label{ss:thurston}

Thurston's construction proceeds in three steps. \medskip

\emph{Step 1}. First, we make $\xi$ integrable outside a finite collection of balls (thought of as ``holes" in the resulting foliation) on which it is \emph{almost horizontal} (cf. Definition \ref{d:p-i}), meaning, basically, that it is tangent to the boundary sphere at exactly two points, the \emph{poles}, and transverse to a vector field on the whole ball  (tangent to the boundary) connecting the poles. In this article, such a plane field will be called \emph{almost integrable} (cf. Definition \ref{d:p-i}). To do so, the idea is to construct a triangulation ``in good position'' with respect to $\xi$, and to make $\xi$ integrable in a neighbourhood $V$ of its $2$-skeleton. More precisely, we require all faces and edges to be transverse to $\xi$, and the direction of $\xi$ to be almost constant on each $3$-simplex. We then make $\xi$ integrable in a neighbourhood of every vertex, then every edge, and finally every face. The key point is that, in a neighbourhood of every simplex $\sigma$ of the $2$-skeleton, there exists a nonsingular vector field $\nu$ tangent to $\xi$ and transverse to $\sigma$. The deformation consists in making $\xi$ invariant under $\nu$ in a neighbourhood of $\sigma$. Since $\xi$ is already integrable near $\partial \sigma$, it is already invariant under $\nu$ there and thus remains unchanged. This guarantees the global coherence of these local perturbations. The neighbourhood $V$ of the $2$-skeleton can be chosen so that the complement of $V$ is a collection of balls (one in each $3$-simplex) on which $\xi$ is almost horizontal.

\begin{figure}[htbp]
\centering
\begin{tabular}{ccccc}
\put(-7,0){$\scriptstyle \xi$}
\includegraphics[height=3cm]{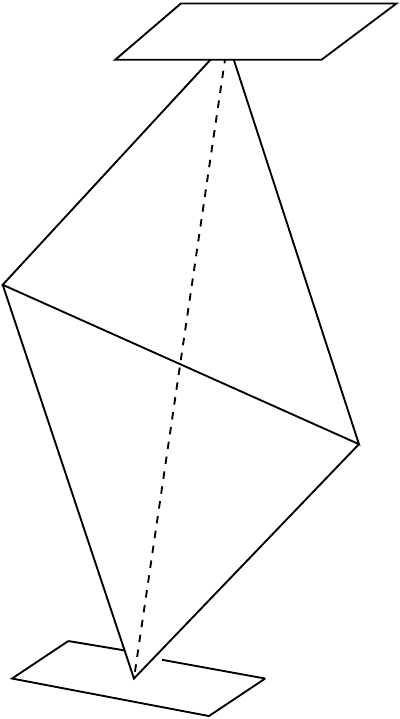} 
& \includegraphics[height=3cm]{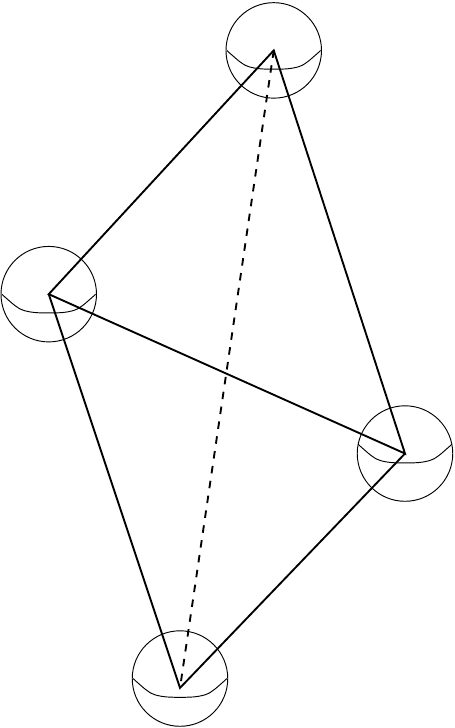} 
& \includegraphics[height=3cm]{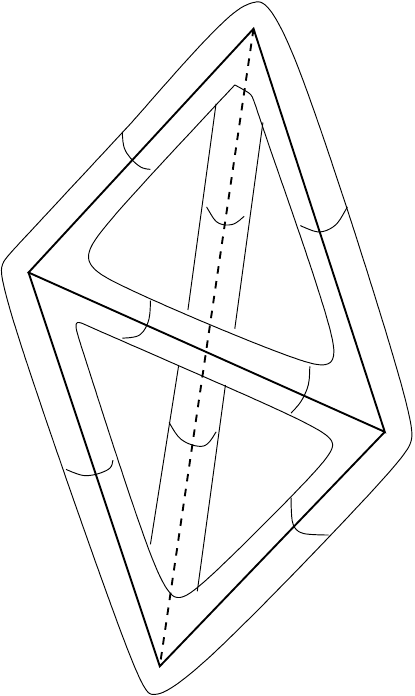}
& \includegraphics[height=3cm]{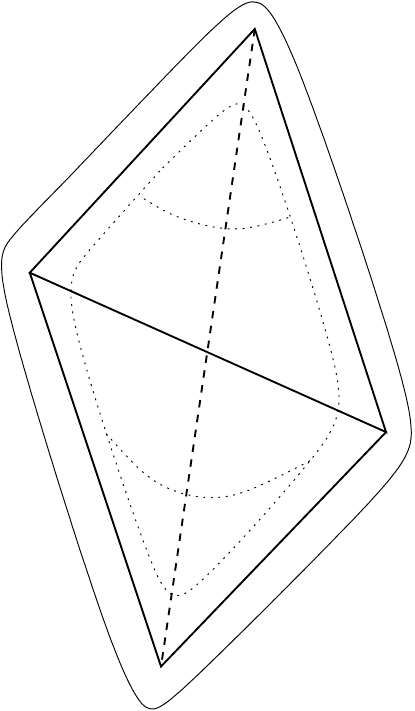}
& \includegraphics[height=3cm]{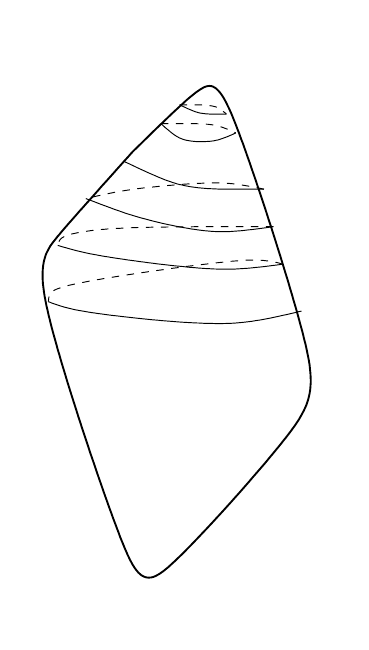}
\end{tabular}
\caption{Making $\xi$ integrable near the $2$-skeleton \label{f:2skeleton}}
\end{figure}

\emph{Step 2}. Due to the Reeb Stability Theorem \cite{Re}, the restriction of $\xi$ to such a ball $B$ cannot be deformed (rel. $\partial B$) to a foliation, unless $\xi\res {\partial B}$ is a foliation by circles outside the poles. To get around this problem, the idea is to replace ball-shaped holes by toric ones by digging tunnels along transverse arcs in $V$ connecting the poles from outside. A sufficient condition for such arcs to exist is that the foliation on $V$ is \emph{taut}, \emph{i.e} that every transverse arc (in particular ``meridians" connecting the poles of a ball) extends to a closed transversal, or equivalently that every leaf is crossed by a closed transversal. In that case, we will say that the almost integrable plane field $\xi$ itself is \emph{taut}. Thurston artfully reduces to this situation by ``ripping" all leaves, making them spiral around new ball-shaped holes where he temporarily sacrifices the integrability (cf. Lemma \ref{l:desintegration} for a parametric version of this trick). 
\begin{figure}[htbp]
\centering
\begin{tabular}{cc}
\includegraphics[height=1.7cm]{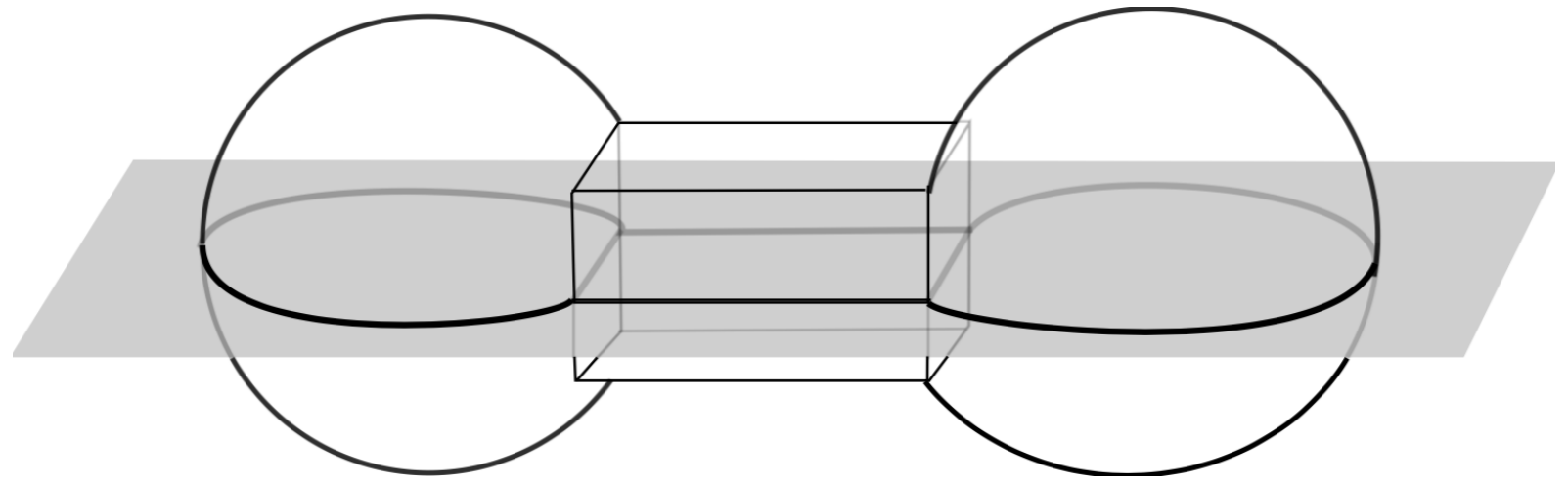} 
& \includegraphics[height=1.7cm]{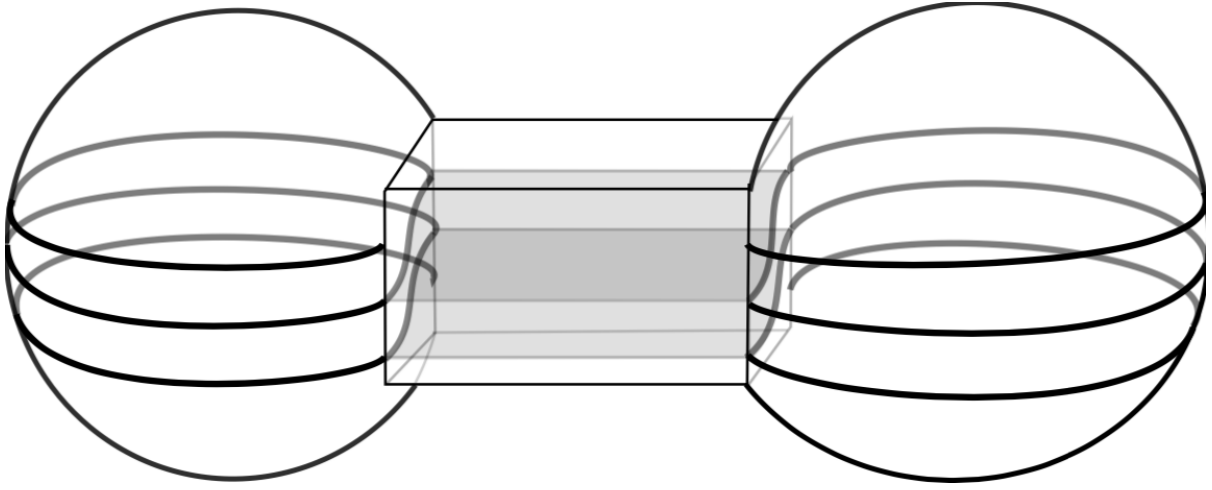} 
\end{tabular}
\caption{Thurston's trick \label{f:whirl}}
\end{figure}

\begin{figure}[htbp]
\centering
 \begin{tabular}{cc}
\includegraphics[height=3cm]{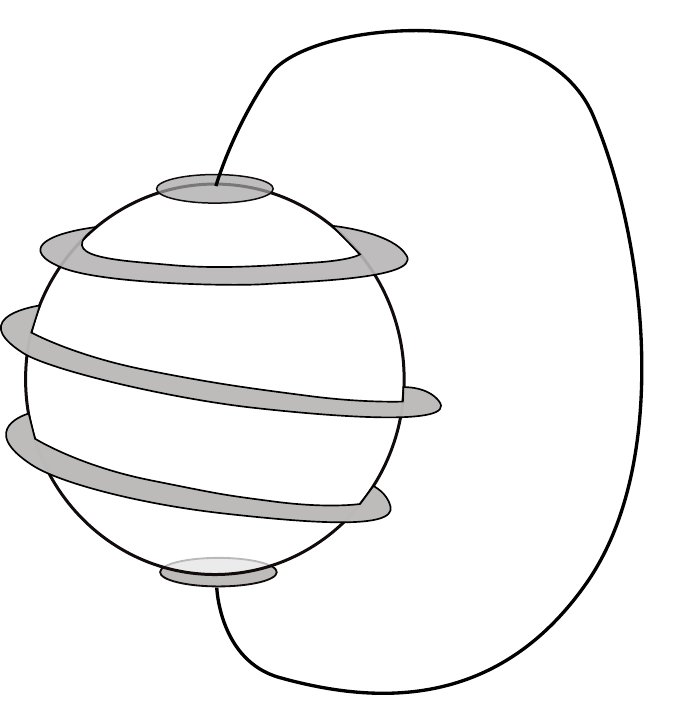} 
  & \includegraphics[height=3cm]{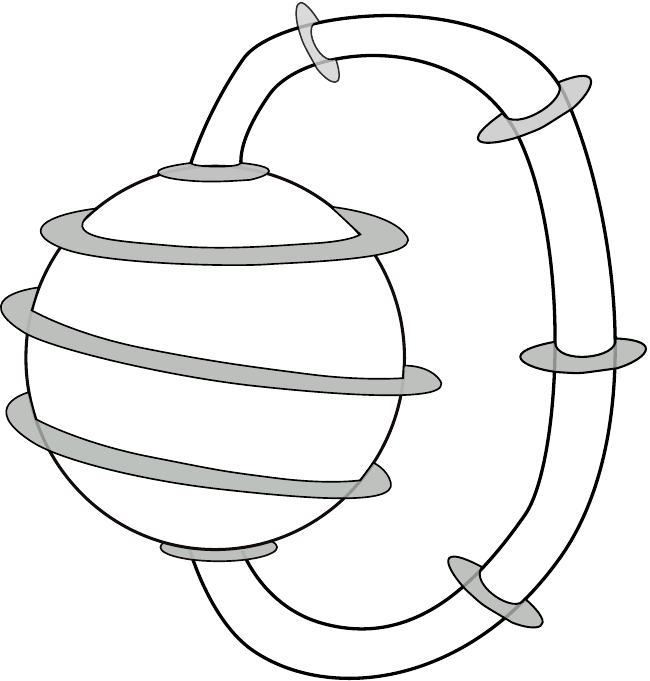} 
\end{tabular}
\caption{Enlarging the holes \label{f:tunnel}}
\end{figure}

\emph{Step 3}. We can now enlarge each hole by digging a tunnel (so far foliated by disks) along a transverse arc connecting its poles. We thus have a collection of solid toric holes outside of which the plane field, still denoted by $\xi$, is integrable and on which $\xi$ is  transverse to the $\sphere^1$ factor of $\D^2\times \Ss^1$. Thurston then shows that such a plane field on a solid torus can always be deformed to a foliation, relative to the boundary. This uses the simplicity of the group of smooth orientation preserving diffeomorphisms of the circle, due to M.~Herman \cite{He1}. P.~Schweitzer later gave a more geometric filling argument in \cite{Sc}, also based on a theorem of Herman \cite{He}, whose advantage, as A.~Larcanch\'e observed in \cite{La}, is that the resulting foliations depend continuously on their trace on the boundary. These foliations of the solid torus, which will be referred to as \emph{Schweitzer foliations}, will be described more precisely in Section \ref{s:prelim}. Let us just say, for now, that they are transverse to the $\sphere^1$ factor of $\D^2\times \sphere^1$  \emph{except} above two circles in $\D^2$ whose products by $\sphere^1$ are torus leaves bounding Reeb components \cite{Re}. In particular, all leaves except these two are crossed by a closed transversal. \medskip

By construction, the foliations obtained by Thurston's process (combined with Schweit\-zer's filling method) are \emph{malleable} in the following sense: a foliation is \emph{malleable} (cf.  Definition \ref{d:malleable}) if it is taut outside a finite collection of solid tori and induces, on each of these, a Schweitzer foliation whose trace on the boundary torus has a  whole one-parameter family of (meridian) circle leaves. We will denote by $\Mal(M)$ the set of malleable foliations on $M$. Thus what Thurston's construction shows is that the map $\pi_0\Mal(M)\to \pi_0\PP(M)$ induced by the inclusion $\Mal(M)\hookrightarrow\PP(M)$ is \emph{onto}.\medskip

\subsection*{Outline of the proof of Theorem \ref{t:main}}

The general idea to prove Theorem \ref{t:main} is to give a relative one-parameter version of Thurston's construction: start with a continuous family $\xi_t$, $t\in [0,1]$, in $\PP(M)$ such that $\xi_0$ and $\xi_1$ are integrable, and deform it (with fixed endpoints $\xi_0$ and $\xi_1$) to a family of integrable plane fields. This raises two major difficulties. First, there are a lot of choices involved in Thurston's process (triangulation, transverse arcs...), and it is not at all clear (and is actually wrong) that such choices can be made continuously with respect to the parameter $t$.  But a perhaps bigger issue is the relative part of the problem: Thurston's process does not leave integrable plane fields unchanged! It deforms them (a great deal) to \emph{malleable} foliations. So let us first restrict to the case when $\xi_0$ and $\xi_1$ are malleable, and then explain how to reduce to this case. 

\paragraph{``Malleable case" (Section \ref{s:malleable}).} As we just saw, the best we can expect from a (naive) parametric version of Thurston's construction  is the following: 

\begin{thmin}[Malleable case]\label{t:malleable} Any continuous path of plane fields connecting two malleable foliations can be deformed (with fixed endpoints) to a path of malleable foliations. In particular, the map $\pi_0\Mal(M)\overset{\iota_*}{\to} \pi_0\PP(M)$ induced by the inclusion $\Mal(M)\overset{\iota}{\hookrightarrow}\PP(M)$ is \emph{injective}. 
\end{thmin}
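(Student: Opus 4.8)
The plan is to run Thurston's three-step construction fiberwise over the parameter $t\in[0,1]$, and the whole difficulty is to organise the choices so that the construction is continuous in $t$ and so that it leaves the malleable endpoints $\xi_0,\xi_1$ essentially unchanged. First I would reduce to a controlled local model: since $\xi_0$ and $\xi_1$ are malleable, they are taut outside finitely many solid tori carrying Schweitzer foliations with a one-parameter family of meridian circle leaves; the presence of this family of circle leaves is exactly what gives room to attach the rest of the path to the endpoints without disturbing them. So I would first isotope and slightly deform the path near $t=0$ and $t=1$ so that on small time-intervals $[0,\eps]$ and $[1-\eps,1]$ the path stays inside $\Mal(M)$ (using that $\Mal(M)$ is locally path-connected in the relevant sense, via the continuous dependence of Schweitzer foliations on their boundary trace, due to Larcanch\'e), and then work on $[\eps,1-\eps]$ where we only need to produce \emph{some} path of malleable foliations with the prescribed malleable endpoints on $\{\eps,1-\eps\}$.

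Next I would carry out Step 1 parametrically: choose a triangulation of $M$ in ``good position'' with respect to the whole family $\xi_t$ simultaneously. This is possible after subdividing finely enough and perturbing, because good position is an open condition and $[0,1]$ is compact, so a single triangulation works for all $t$ (possibly after first cutting $[0,1]$ into finitely many subintervals and interpolating, but one can also argue directly). Then, exactly as in the single–plane-field case, make each $\xi_t$ integrable in a neighbourhood $V$ of the $2$-skeleton by successively flowing along a vector field tangent to $\xi_t$ near each vertex, edge and face; since the vector fields can be chosen continuously in $t$ (they solve an open, convex-fibered selection problem) and the perturbations near $\partial\sigma$ are trivial, the resulting family is continuous in $t$ and leaves the already-integrable $\xi_\eps,\xi_{1-\eps}$ fixed outside a neighbourhood of the skeleton — and one arranges, by choosing the triangulation subordinate to the solid-tori decomposition of the endpoints, that they are fixed altogether. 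The outcome is a continuous family of almost integrable plane fields with ball-shaped holes, agreeing with the endpoints on $\{\eps,1-\eps\}$.

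For Step 2 I would invoke the parametric version of Thurston's ripping trick (Lemma \ref{l:desintegration} in the paper) to make the family taut; again this is done so as not to disturb the malleable endpoints, whose holes are already solid tori with the meridian circle-leaf family, hence already taut. Finally, Step 3: dig tunnels along transverse arcs joining the poles of each ball-hole — these arcs exist by tautness, and can be chosen continuously in $t$ on the contractible parameter interval $[\eps,1-\eps]$ — turning ball holes into solid-torus holes, and fill each solid torus with a Schweitzer foliation depending continuously on its boundary trace (Larcanch\'e). This produces a continuous path of malleable foliations on $[\eps,1-\eps]$ with the right endpoints, which glued to the two short malleable paths near $0$ and $1$ gives the desired deformation; the injectivity statement for $\pi_0$ is then immediate. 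The main obstacle, and the place where real work is needed, is precisely the continuity of the choices in $t$ together with the relative constraint: a single triangulation and single choice of transverse/flow vector fields must be made to work for the whole family \emph{and} be compatible with the pre-existing torus decompositions of $\xi_0$ and $\xi_1$; this forces the triangulation and all auxiliary data near $t=0,1$ to be tailored to the endpoints, which is the delicate matching argument underlying the whole proof.
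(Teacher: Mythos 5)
There are two genuine gaps in your plan, and each of them is precisely where the bulk of the paper's technical work lies.

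\textbf{Step 1 is not as you describe.} You claim that ``good position'' for the triangulation is an open condition, so that a single sufficiently fine triangulation works for the whole family $\xi_t$, and that the auxiliary vector fields ``solve an open, convex-fibered selection problem'' and thus vary continuously. This is false for a one-parameter family: since the direction of $\xi_t$ rotates as $t$ varies, there is no way to choose the faces of a fixed triangulation transverse to \emph{every} $\xi_t$, and where some $\xi_t$ is tangent to a face $\sigma$ the usual flow-along-a-transverse-vector-field argument breaks down --- there is no nonsingular vector field tangent to $\xi_t$ and transverse to $\sigma$ near the tangency. This is not a measure-zero accident one can perturb away, because tangencies between a fixed 2-simplex and a one-parameter family of plane fields are generically unavoidable. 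The paper handles this via Eliashberg's device of treating these ``special'' 2-simplices as big vertices (Proposition \ref{p:p-i} and the entire Appendix, Section \ref{s:eliashberg}). Your plan simply does not contain this idea and would fail at exactly this point.

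\textbf{Step 3 is not as you describe either.} You assert that, $[\eps,1-\eps]$ being contractible, the transverse arcs joining the poles ``can be chosen continuously in $t$.'' Contractibility of the parameter space does not give this: there is no bundle-theoretic structure making the set of admissible transverse arcs a fibration over the parameter interval, and indeed the paper explicitly points out that one only obtains \emph{piecewise} continuous choices of arcs, hence a piecewise continuous path of foliations. Bridging the jumps --- showing that different choices of arcs yield malleable foliations deformable to one another relative to the rest of the construction --- is the content of the Siphon Lemma (Lemma \ref{l:v-c}), the final ingredient of Proposition \ref{p:third}. Without it (or a substitute) your plan produces a discontinuous family of foliations. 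A minor further remark: your reduction near $t=0,1$ via ``local path-connectedness of $\Mal(M)$'' is replaced in the paper by the more concrete Lemma \ref{l:inverse}, which converts the malleable endpoints into taut almost-integrable plane fields with a specified homotopy; your version would itself need justification, since $\Mal(M)$ lives inside the closed nowhere-dense subset $\FF(M)$ and local path-connectedness there is not automatic.

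In short: you correctly identify that the proof is a parametric, relative Thurston construction, and your high-level decomposition into three steps matches the paper. But at the two places you wave off --- parametric Step~1 and parametric Step~3 --- the naive arguments you give do not work, and the paper's Appendix (Eliashberg's special simplices) and Siphon Lemma exist precisely to supply what is missing.
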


\begin{remarksn}\label{r:pi1}
This also shows that the map $\pi_1\Mal(M)\to \pi_1\PP(M)$ (for any choice of base point) is \emph{onto} (see Theorem \ref{t:pik}).
\end{remarksn}

Before sketching the proof of Theorem \ref{t:malleable}, note that taut foliations (when they exist) are in particular malleable (cf. Definition \ref{d:malleable}), so the following statement is a direct corollary of the above:

\begin{corollaryC}\label{c:tendu} 
Two taut foliations homotopic as plane fields
are connected by a path of (malleable) foliations.
\end{corollaryC}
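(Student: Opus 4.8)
The plan is to deduce this statement directly from Theorem \ref{t:malleable}, the only genuine content being the (easy) observation that taut foliations are a special case of malleable ones. So the first step is to record that every taut foliation $F$ lies in $\Mal(M)$: in the terminology of Definition \ref{d:malleable} one simply takes the finite collection of solid tori to be empty, so that the requirement ``taut outside the tori'' becomes exactly the tautness of $F$ on all of $M$, while the Schweitzer-foliation condition on the (absent) tori is vacuously satisfied. Hence if $F_0$ and $F_1$ are taut foliations, then $F_0,F_1\in\Mal(M)\subset\FF(M)\subset\PP(M)$.

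Next I would unwind the hypothesis ``homotopic as plane fields'': under the identification of a foliation with its tangent plane field, this means precisely that there is a continuous path $t\mapsto\xi_t$, $t\in[0,1]$, in $\PP(M)$ with $\xi_0=F_0$ and $\xi_1=F_1$. Thus $(\xi_t)_{t\in[0,1]}$ is a continuous path of plane fields connecting the two malleable foliations $F_0$ and $F_1$, which is exactly the input required by Theorem \ref{t:malleable}.

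Finally I would invoke Theorem \ref{t:malleable}: the path $(\xi_t)$ can be deformed, keeping its endpoints $\xi_0=F_0$ and $\xi_1=F_1$ fixed, to a path $(\xi'_t)_{t\in[0,1]}$ contained entirely in $\Mal(M)$. This family $(\xi'_t)$ is then the desired path of (malleable) foliations from $F_0$ to $F_1$, which proves the corollary.

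I do not expect any real obstacle here: the statement is a formal consequence of Theorem \ref{t:malleable} once one checks the inclusion of taut foliations into $\Mal(M)$, and that inclusion is immediate from Definition \ref{d:malleable}. The one point deserving a word of care is the interpretation of ``homotopic as plane fields'' as the existence of an honest path in $\PP(M)$ (equivalently, since $\PP(M)$ is locally contractible, as lying in the same connected component), which is precisely the situation to which Theorem \ref{t:malleable} applies.
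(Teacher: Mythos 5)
Your proposal is correct and matches the paper's own (very brief) derivation: the paper observes that taut foliations are in particular malleable --- exactly your observation that the bunch of solid tori $W$ in Definition~\ref{d:malleable} may be taken empty --- and then cites Theorem~\ref{t:malleable} directly. Nothing more is needed.
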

 
\begin{remarksn}\label{l:larcanche}\begin{itemize}
\item This extends a result by Larcanch\'e \cite{La}, who proved the above statement in the case of two sufficiently close taut foliations, and of two foliations transverse to the fibers of a circle bundle over a closed surface (and thus taut). Schweitzer's construction plays a key role in her proof as well as ours.  
\item The foliations of the paths we construct, including Larcanch\'e, are malleable, but not taut in general. As a matter of fact, J. Bowden \cite{Bo} and T. Vogel \cite{Vo} recently gave examples of taut foliations homotopic as foliations but which cannot be connected by a path of \emph{taut} foliations. 
\end{itemize}
\end{remarksn}

Now the proof of Theorem \ref{t:malleable} goes as follows. Consider two malleable foliations, $\tau_0$ and $\tau_1$, homotopic as plane fields. Here, we only explain how to connect them by a path of malleable foliations. Think of $\tau_0$ and $\tau_1$ as obtained from two taut almost integrable plane fields $\xi_0$ and $\xi_1$ by Thurston's construction, using Schweitzer's filling method (see ``step 3'' above, and Lemmas \ref{l:correspondance} and \ref{l:inverse} for further detail) and take a path $\xi_t$, $t\in[0,1]$, of plane fields connecting $\xi_0$ and $\xi_1$. This is the path to which we are going to apply a parametric version of Thurston's process. \medskip

\emph{Step 1}. We first deform the whole family $\xi_t$, $t\in[0,1]$, to a family of almost integrable plane fields (all having the same ``holes", the poles varying continuously with respect to the parameter), keeping $\xi_0$ and $\xi_1$ unchanged  (cf. Proposition \ref{p:p-i}). To do this, we pick a triangulation of $M$ such that every $\xi_t$ is almost constant on each $3$-simplex. Unfortunately, since the direction of $\xi_t$ varies with $t$, one cannot require the edges and faces to be transverse to \emph{every} $\xi_t$. And if some $\xi_t$ is tangent to some face $\sigma$ at some point, one cannot find the desired nonsingular vector field $\nu_t$ near $\sigma$ both tangent to $\xi_t$ and transverse to $\sigma$. Fortunately, this problem has already been considered and solved by Eliashberg in the closely related field of contact structures \cite{El}. The main idea is to consider these special $2$-simplices $\sigma$ as ``big vertices" and treat them before any other simplex of the $2$-skeleton. The adaptation of Eliashberg's techniques to foliations is carried out in the appendix. \medskip

\emph{Step 2}. A parametric version of Thurston's second step (cf. Lemma \ref{l:desintegration}) allows us to perturb the new family to a family of \emph{taut} almost integrable plane fields (again keeping $\xi_0$ and $\xi_1$ unchanged).\medskip

\emph{Step 3}. Each of these new plane fields can be made integrable following Thurston's third step, using transverse arcs connecting the poles of the balls for each value of the parameter (whose existence is guaranteed by step 2). This, in particular, turns $\xi_0$ and $\xi_1$ back into $\tau_0$ and $\tau_1$. But we want the resulting foliations to depend continuously on the parameter. This can be achieved if we find transverse arcs which vary continuously with respect to the parameter: then the toric holes to be filled also vary continuously, as well as the holonomy of the foliations induced on their boundaries, and Schweitzer's construction can be performed continuously too (cf. Theorem \ref{t:larcanche}). In the general case however, the transverse arcs vary only \emph{piecewise} continuously, and we only get a piecewise continuous path of foliations. To fill the gaps, we basically need to check that the deformation class (among foliations) of a malleable foliation obtained from a taut almost integrable plane field does not depend on the choice of transverse arcs. This is the content of the Siphon Lemma \ref{l:v-c}, which is the final ingredient of Proposition \ref{p:third} (the relative one-parameter version of Thurston's third step) and thus concludes the proof of Theorem \ref{t:malleable}.


\paragraph{Reduction to the malleable case (Sections \ref{s:malleabilization} and \ref{s:cleaning}).} The injectivity of the map $\pi_0\FF(M)\overset{\iota_*}{\to} \pi_0\PP(M)$ would follow from Theorem \ref{t:malleable} if we could show that any foliation can be deformed to a malleable one through foliations. Recall that a malleable foliation is basically one whose leaves all intersect closed transversals, except possibly finitely many torus leaves bounding Reeb components. What is true for \emph{any} foliation now, according to classical results by S.~P.~Novikov \cite{No} and S.~Goodman \cite{Go}, is that only torus leaves can fail to intersect closed transversals. But such ``problematic" torus leaves, which will henceforth be referred to as \emph{Novikov leaves},  do not necessarily bound Reeb components. We want to get rid of them by deforming the foliation, or rather to replace them by nice ones lying in Schweitzer foliations. In Section \ref{s:malleabilization}, we first restrict to foliations which are described by a simple local model near their Novikov tori (cf. Definition \ref{d:net}). We call such foliations ``\emph{neat}". Local perturbations using the tools of Section \ref{s:prelim} allow us to prove: 

\begin{thmin}[Malleabilization]\label{t:malleabilization} Every neat foliation can be deformed to a malleable one among neat foliations.
\end{thmin}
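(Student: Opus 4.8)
The plan is to start from a neat foliation $\FF$ and work one Novikov torus at a time, using the explicit local model guaranteed by neatness to replace a neighbourhood of each Novikov torus by a piece of Schweitzer foliation, without creating new Novikov leaves outside the chosen tori. First I would fix a neat foliation $\FF$ and enumerate its (finitely many, by the Novikov--Goodman finiteness) Novikov tori $T_1,\dots,T_n$. Around each $T_i$, neatness provides a standard collar $T_i\times[-1,1]$ on which $\FF$ is given by a prescribed model; the key is that in this model the dynamics transverse to $T_i$ is governed by a map of the circle (or an interval), and one can isolate a small sub-collar $T_i\times[-\eps,\eps]$ whose complement in $M$ is already taut — because, by Novikov--Goodman, every leaf of $\FF$ other than the $T_i$'s is crossed by a closed transversal, and these closed transversals can be arranged to avoid the sub-collars. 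So the problem reduces to a purely local one: deform $\FF$ on a single standard collar $T^2\times[-1,1]$, relative to the boundary, from the neat model to something whose restriction contains a Schweitzer-type piece (a small solid-torus hole carrying a Schweitzer foliation whose boundary trace has a one-parameter family of meridian circle leaves), while keeping everything outside taut.

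The central step is this local replacement, and it should be built from the tools of Section~\ref{s:prelim}. Concretely: inside the collar, first dig a small solid-torus hole $N_i\cong\D^2\times\Ss^1$ whose core is isotopic to a curve on $T_i$ that is transverse (or can be made transverse after a preliminary perturbation) to $\FF$, arranging that on $N_i$ the plane field is transverse to the $\Ss^1$ factor; this is exactly the situation of Step~3 of Thurston's method. Then fill $N_i$ by a Schweitzer foliation matching the boundary holonomy — invoking Theorem~\ref{t:larcanche} so that the filling is canonical and continuous in the boundary data — and choose it so that its boundary trace on $\partial N_i$ carries a whole one-parameter family of meridian circle leaves, which is part of the definition of malleable. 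Outside the holes $N_1,\dots,N_n$, the foliation should now be taut: the closed transversals through the old non-torus leaves survive (they were chosen to avoid the collars, or can be re-routed using the new local model), the old Novikov tori have been absorbed into or removed by the digging, and the two torus leaves of each Schweitzer piece that bound Reeb components are exactly the exceptions allowed in the definition of malleability. One must check that the deformation from $\FF$ to this new foliation can be realized by a continuous path of foliations — here the neat local model is essential, since it lets the digging-and-filling be interpolated by an explicit family — and that the path stays within neat foliations throughout, which follows if the local models are preserved away from the surgery regions and the surgery itself is performed through models of the prescribed type.

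The main obstacle I expect is the \emph{compatibility of tautness after digging}: removing a solid-torus neighbourhood of a curve on a Novikov torus $T_i$ can destroy closed transversals that used to pass near $T_i$, and one must be sure that the remaining manifold-with-holes still has enough closed transversals to be taut — in particular that no \emph{new} Novikov-type obstruction is created by the surgery. Handling this requires a careful choice of the core curves of the holes (they should be chosen so that the complement deformation-retracts onto a region where tautness is visibly inherited from the original Novikov--Goodman transversals) and a careful bookkeeping of which leaves are newly ``bad'': the only ones allowed to be bad in a malleable foliation are the Reeb-bounding torus leaves internal to the Schweitzer pieces, so every other leaf of the final foliation must be shown to meet a closed transversal. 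A secondary difficulty is making the whole construction continuous in the parameter (to get a genuine path of foliations rather than a discrete jump), but this is precisely where neatness and the continuity statement of Theorem~\ref{t:larcanche} are designed to help, so it should be a matter of assembling the local interpolations rather than a new idea.
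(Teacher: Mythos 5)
Your plan does not actually remove the Novikov tori, which is the crux of Theorem~\ref{t:malleabilization}. You propose to dig a solid-torus hole $N_i$ near each Novikov torus $T_i$ and fill it with a Schweitzer foliation, and you then assert that ``the old Novikov tori have been absorbed into or removed by the digging.'' But they are not. If the core of $N_i$ is a closed transversal sitting at some positive $z$-height in the model collar $T_i\times[-1,1]$ (and it must sit off $T_i$: a curve \emph{on} the leaf $T_i$ is tangent to $\FF$, never transverse, so the parenthetical in your proposal is incoherent), then the dig-and-fill leaves the foliation unchanged near $T_i = \T^2\times\{0\}$. The model equation $dz - u(z)(a\,dx_1 + b\,dx_2) = 0$ with $u(0)=0$ still forbids any closed transversal through $T_i$: along a positive transversal, $z$ is strictly monotone near $z=0$, so a closed loop cannot pass through. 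Hence $T_i$ is still a Novikov torus, and it does not lie inside any Schweitzer piece, so the resulting foliation is not malleable. In a malleable foliation, every Novikov torus must be a torus \emph{inside} some solid torus $W_i$ (parallel to $\partial W_i$), and a closed leaf $T_i$ in the middle of a collar $T_i\times[-1,1]$ can never be of that type without first being destroyed.

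What the paper does, and what your plan is missing, is a deformation that \emph{kills} $T_i$ as a leaf. In the ``easy case'' where the transverse slope vectors $(a^+,b^+)$ and $(a^-,b^-)$ on the two sides of $T_i$ agree, one simply perturbs the profile $u$ to have no zeros, and $T_i$ disappears with no new Novikov leaves created. In general, one must first change the slope on one side to match the other, \emph{relative to the boundary of the collar}; this cannot be done in a suspension alone, so the paper first inserts a Reeb component along a nearby transverse circle to serve as a ``siphon'' that absorbs the excess holonomy, then interpolates the slopes using the flexibility of suspension foliations over a pair of pants (Lemma~\ref{l:pantalon}). This is Lemma~\ref{l:rolling}, and it is the genuine idea you are not reproducing. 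After rolling up, one still has Reeb fillings (not simple Schweitzer foliations), whose boundary holonomy is a nontrivial translation and hence has no interval of fixed points; your plan tacitly assumes you can ``choose'' a simple boundary trace, but the boundary holonomy is dictated by the foliation outside the hole. The paper's second step, Lemma~\ref{l:frag-fol} based on the fragmentation Lemma~\ref{l:fragmentation}, replaces each Reeb filling by a bunch of parallel \emph{simple} Schweitzer pieces by decomposing the holonomy into factors with intervals of fixed points. Both of these steps are essential and neither appears in your outline.
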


This, together with Theorem \ref{t:malleable}, implies the following, where $\NN(M)$ denotes the space of neat foliations on a manifold $M$: 

\begin{corollaryD}[Neat case]\label{c:clean} Any continuous path of plane fields connecting two neat foliations can be deformed (with fixed endpoints) to a path of neat foliations. In particular, the map $\pi_0\NN(M)\overset{\iota_*}{\to} \pi_0\PP(M)$ induced by the inclusion $\NN(M)\overset{\iota}{\hookrightarrow}\PP(M)$ is \emph{injective}. 
\end{corollaryD}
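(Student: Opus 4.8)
The plan is to deduce this statement by chaining together the two main results already in hand: Theorem~\ref{t:malleabilization} (Malleabilization) and Theorem~\ref{t:malleable} (Malleable case). Suppose we are given a continuous path $(\xi_t)_{t\in[0,1]}$ of plane fields with $\xi_0$ and $\xi_1$ neat foliations; we must deform it, rel endpoints, to a path lying entirely in $\NN(M)$. The first move is purely local at the two endpoints: by Malleabilization, there is a path of neat foliations $(\tau^0_s)_{s\in[0,1]}$ with $\tau^0_0=\xi_0$ and $\tau^0_1$ malleable, and likewise $(\tau^1_s)$ from $\xi_1$ to a malleable foliation. Concatenating these (the first one reversed) at the ends of $(\xi_t)$ produces a new path of plane fields from a malleable foliation to a malleable foliation, which agrees with the original path except on two arbitrarily short initial/terminal subintervals where it runs through $\NN(M)$.

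Now apply Theorem~\ref{t:malleable} to this middle portion: it can be deformed, with its (malleable, hence neat) endpoints fixed, to a path of \emph{malleable} foliations, and malleable foliations are in particular neat (a Schweitzer foliation near each toric hole is exactly the local model required by neatness — one should check this against Definition~\ref{d:net}, but it is built into the constructions of Section~\ref{s:prelim}). Reattaching the two short neat arcs from the Malleabilization step, we obtain a path from $\xi_0$ to $\xi_1$ lying entirely in $\NN(M)$. Finally, this whole construction is itself a homotopy rel endpoints from the original path $(\xi_t)$ to the new one: the Malleabilization arcs and their reverses cancel up to homotopy (they are inserted and removed by sliding a reparametrization), and Theorem~\ref{t:malleable} already provides a deformation rel endpoints in $\PP(M)$. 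The ``in particular'' clause, injectivity of $\pi_0\NN(M)\to\pi_0\PP(M)$, is the special case where the original path is constant-at-two-points, i.e. $\xi_0$ and $\xi_1$ are neat foliations that happen to be homotopic as plane fields.

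The only genuinely delicate point is \emph{bookkeeping the reparametrizations so that the concatenation is again a continuous homotopy rel endpoints}, rather than any new geometry: one must be careful that inserting the Malleabilization arcs at the two ends, applying Theorem~\ref{t:malleable} to the middle, and then viewing the result back inside $\PP(M)$ really assembles into a single continuous two-parameter family $(\xi_{t,u})$ with $\xi_{t,0}=\xi_t$, $\xi_{t,1}\in\NN(M)$ for all $t$, and $\xi_{0,u}=\xi_0$, $\xi_{1,u}=\xi_1$ for all $u$. This is the standard ``do a homotopy on a subinterval and extend by a constant (in the $u$-direction) homotopy elsewhere, then glue'' argument, and it works because all the intermediate paths share the fixed endpoints $\xi_0,\xi_1$; I expect it to be routine once the Malleabilization output is known to stay in $\NN(M)$ throughout. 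Thus the substantive content is entirely in Theorems~\ref{t:malleabilization} and~\ref{t:malleable}, and Corollary~D$'$ is a formal consequence.
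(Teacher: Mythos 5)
Your derivation is correct and is exactly the one the paper intends: the paper states Corollary~D$'$ as an immediate consequence of Theorems~\ref{t:malleabilization} and~\ref{t:malleable} without further argument, and your assembly of the Malleabilization arcs with the deformation from the malleable case is the standard one. The point you flag, that malleable foliations are neat, does hold and is implicit in the constructions: the Novikov tori of a malleable foliation are the torus leaves of the Reeb fillings inside the Schweitzer foliations, and near $\{r=\tfrac12\}$ the Reeb form $\bar\omega$ of Lemma~\ref{l:reeb-fill} defines a model foliation in the sense of Definition~\ref{d:model}, with $u(s)=-\bigl(1-\rho_{1/2}(s+\tfrac12)\bigr)/\rho_{1/2}(s+\tfrac12)$ and slope vectors given by $\omega$ on one side and $dz$ on the other.
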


Finally, in Section \ref{s:cleaning}, we prove:

\begin{thmin}\label{t:cleaning} Neat foliations are dense among foliations.
\end{thmin}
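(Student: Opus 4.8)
The plan is to start with an arbitrary foliation $\FF$ and perturb it near its Novikov tori so that each such torus acquires the simple local model required by Definition \ref{d:net}. First I would recall the structure theory near a Novikov leaf: by the results of Novikov and Goodman quoted above, the leaves failing to meet a closed transversal are exactly the (finitely many, up to the relevant equivalence) torus leaves $T$, and each such $T$ has a neighbourhood foliated either as a suspension over $T$ or as the boundary of a generalized Reeb component, so that the holonomy of $\FF$ along $T$ is (up to conjugacy and taking one-sided germs) determined by a pair of commuting germs of circle diffeomorphisms, each either the identity or a contraction/expansion. The obstruction to neatness is that these germs can be "wild" — infinitely tangent to the identity on one side, or with complicated fixed-point sets — whereas the neat local model asks for something like a linear (or at least tame, finitely-tangent) germ on each side.

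The key steps, in order: (i) isolate a saturated tubular neighbourhood $N \cong T^2 \times [-1,1]$ of each Novikov torus $T = T^2 \times \{0\}$ on which the foliation is transverse to the $[-1,1]$-factor away from $T$ itself, so that $\FF|_N$ is encoded by its holonomy pseudogroup, generated by two commuting germs $f^\pm$ at the two ends; (ii) observe that $f^+$ and $f^-$ are individually germs of orientation-preserving circle (or interval) diffeomorphisms fixing a point, hence can each be connected, within such germs, to a model germ (the identity, or a hyperbolic fixed point) — this is a one-variable statement and uses only elementary conjugation/isotopy of germs, the point being that a germ of a diffeomorphism of $([0,\infty),0)$ is isotopic rel $0$ to either $\id$ or $x\mapsto \lambda x$ according to whether it has further fixed points accumulating at $0$ or is eventually a contraction/expansion; (iii) realize this isotopy of holonomy germs as an actual deformation of $\FF$ supported in $N$, keeping it a foliation throughout — here one uses the standard fact (the same "suspension/turbulization" toolkit invoked in Section \ref{s:prelim}) that a continuous path of commuting pairs of germs integrates to a continuous path of foliations of $T^2\times[-1,1]$ with prescribed behaviour on the boundary; (iv) check that the resulting foliation is neat, i.e. that no new bad tori are created (the deformation is local and one-parameter, and the model endpoint has exactly the prescribed behaviour at $T$), and that finitely many disjoint neighbourhoods suffice since Novikov tori are, up to isotopy, finite in number.

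The main obstacle I expect is step (ii)–(iii): controlling the holonomy germs simultaneously on both sides while keeping the two generators commuting along the deformation, and simultaneously controlling what happens to leaves \emph{other} than $T$ inside $N$ — one must ensure the perturbation does not, for instance, create a new Novikov torus inside $N$ or destroy tautness of nearby leaves. Commutativity is the delicate point: a naive isotopy of $f^+$ and $f^-$ separately need not stay commuting, so one should instead work with the abelian subgroup (or its Lie-theoretic model) they generate and deform within centralizers, or exploit that the relevant germs lie in a flow. Making the deformation genuinely $C^\infty$ and compactly supported in the transverse direction, rather than merely on germs, is the place where the care taken in Section \ref{s:prelim} pays off, and I would lean on those lemmas rather than redo the constructions. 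Away from the Novikov tori the foliation already has the taut-type behaviour, so no perturbation is needed there, and density then follows because every neighbourhood of $\FF$ in $\FF(M)$ contains such a neat perturbation.
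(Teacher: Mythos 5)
Your plan points in roughly the right direction (pass to holonomy, perturb the holonomy, realize the perturbation as a perturbation of the foliation), but there are two substantive gaps. First, you repeatedly assume that Novikov tori are isolated and finite in number ("finitely many, up to the relevant equivalence," "$N\cong\T^2\times[-1,1]$ ... on which the foliation is transverse away from $T$ itself"). This is false for a general $C^\infty$ foliation: there can be a Cantor set's worth of Novikov tori. What \emph{is} true (cf.\ \cite{Th1}, \cite{B-F}) is that they are gathered in finitely many saturated \emph{Novikov stacks} $\T^2\times[a,b]$ with the foliation transverse to the second factor; this forces one to work with pairs of commuting germs in $\Diff_+(\R,J)$ near a whole \emph{segment} $J$, not a point. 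Relatedly, your description of the holonomy data as "two commuting germs $f^\pm$ at the two ends" misrepresents the structure: the two generators are the holonomies of the two generators of $\pi_1(\T^2)\simeq\Z^2$, and there is one such \emph{pair} of commuting germs (with both one-sided behaviours encoded), not one germ per side.

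Second, you correctly flag commutativity as "the delicate point," but the remedies you sketch do not work. If the two germs already lay in a common smooth flow the pair would essentially be neat to begin with, so "exploit that the germs lie in a flow" is circular, and the centralizer of a wild germ (say one infinitely tangent to the identity with a Cantor set of fixed points) has no usable Lie-theoretic structure to deform within. Your step (ii) also overclaims: a germ of $\Diff_+([0,\infty),0)$ is \emph{not} isotopic rel.\ $0$ to $\id$ or to $x\mapsto\lambda x$; what is true is a $C^\infty$ \emph{approximation} by tamer germs, and this is precisely why the theorem gives density rather than a deformation. The crux of the paper's proof is a genuine theorem about commuting pairs of germs, the approximation of any commuting pair in $\Diff_+(\R,J)^2$ by a "piecewise clean" pair (cf.\ \cite{B-E}), combined with the Bonatti--Haefliger machinery \cite{B-H}, via \cite{B-F}, to convert a small perturbation of the holonomy pair near $J$ into a small perturbation of the foliation near the Novikov stack. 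Without a substitute for those two ingredients your argument has a hole exactly where you predicted one.
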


This is precisely where we loose \emph{path}-connectedness and achieve only connectedness: instead of a \emph{continuous deformation} of any foliation to a neat one, what we achieve is a(n arbitrarily) small \emph{perturbation}. \medskip

Theorem \ref{t:main} follows readily: given a connected component $\CC$ in $\PP(M)$, $\FF(M)\cap \CC$ is connected since $\NN(M)\cap \CC$ is both path-connected by Corollary D' and dense in $\FF(M)\cap \CC$ by Theorem \ref{t:cleaning}.
\bigskip

\noindent\textbf{Acknowledgements}. I am grateful \emph{beyond words} to Emmanuel Giroux for his invaluable advice and support through all the stages of this work.

This final text has benefitted a lot from every opportunity I had to present and discuss my work, and I am deeply grateful to all those who gave me this opportunity, and more generally showed their interest and shared their thoughts. I wish to thank Jonathan Bowden, in particular, for his precious help. I would also like to thank the referees for their many helpful comments and suggestions, especially regarding the Appendix.

\section{From plane fields to foliations on the solid torus}\label{s:prelim}

A key step in Thurston's construction of foliations (cf. ``Step 3" in ``Thurston's method'' above) consists in deforming any given plane field $\xi$ transverse to the $\sphere^1$ factor on the solid torus $\D^2\times \cercle^1$ to a foliation, relative to the boundary. Larcanch\'e proved in \cite{La} that this can actually be done continuously with respect to $\xi$, using a construction introduced by Schweitzer in \cite{Sc} as an alternative to Thurston's method. In this section, we give a brief account of these works.

Given a manifold $M$ (possibly with boundary), $\PP(M)$ (resp. $\FF(M)$) denotes the space of hyperplane fields (resp. codimension $1$ foliations) on $M$. Now given a manifold $B$, we denote by $\PPt(B\times\cercle^1)$ (resp. $\FT (B \times \sphere^1)$) the subspace of $\PP(B\times\sphere^1)$ (resp. $\FF(B\times\sphere^1)$) made up of plane fields transverse to the $\sphere^1$ factor.

\begin{proposition}\label{p:third-param} There is a continuous path of maps $\psi_t:\PPt(\D^2\times\cercle^1)\to \PP(\D^2\times\cercle^1)$, $t\in[0,1]$, such that:
\begin{itemize}
\item $\psi_0$ is the inclusion,
\item $\psi_1$ has value in $\FF(\D^2\times\cercle^1)$,
\item for every $\xi$ in $\PPt(\D^2\times\cercle^1)$, all plane fields $\psi_t(\xi)$, $t\in[0,1]$, coincide along $\partial\D^2\times\sphere^1$.
\end{itemize}
 \end{proposition}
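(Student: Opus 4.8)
The plan is to revisit the Thurston--Schweitzer filling construction and verify that, in Larcanch�'s version, every choice made in it can be performed continuously in the plane field $\xi$. Concretely, I would proceed as follows. First I would recall the structure of a Schweitzer foliation on $\D^2\times\sphere^1$: one picks two disjoint embedded circles $c_0,c_1$ in $\Int\D^2$, whose products with $\sphere^1$ will be the two torus leaves bounding Reeb components; outside $c_0\times\sphere^1$ and $c_1\times\sphere^1$ the foliation is to be transverse to the $\sphere^1$ factor, hence given by a suspension, i.e. by the data of a loop in $\Diff_+(\sphere^1)$ along each loop in $\D^2$ minus the two circles, which amounts (via Herman's theorem on the simplicity/perfectness of $\Diff_+(\sphere^1)$, as exploited by Schweitzer) to prescribing the holonomy. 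The key observation of Larcanch� is that the holonomy representation produced by Schweitzer's filling depends only on the trace of $\xi$ on $\partial\D^2\times\sphere^1$, and can be chosen to depend continuously on that trace; moreover the interpolation between $\xi$ near the boundary and the suspension in the interior is itself canonical once one fixes, say, a collar of $\partial\D^2$ and a bump function on $\D^2$.

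The main steps, in order, would be: (1) Fix once and for all the auxiliary data independent of $\xi$: the circles $c_0,c_1\subset\Int\D^2$, a collar neighbourhood $\partial\D^2\times[0,\eps]$ of the boundary of the disk, and a smooth function on $\D^2$ equal to $1$ near $\partial\D^2$ and to $0$ near $c_0,c_1$ and on the region where the Reeb components live. (2) Given $\xi\in\PPt(\D^2\times\sphere^1)$, extract its boundary trace, which is a plane field on $T^2\times\sphere^1$-type boundary transverse to the $\sphere^1$ factor, hence a suspension; push this germ inward through the collar by a linear homotopy in $t$, keeping it transverse to $\sphere^1$ and unchanged on $\partial\D^2\times\sphere^1$ -- this is straightforward since the space of such germs is convex in the relevant parametrization. (3) On the rest of $\D^2\times\sphere^1$, run Schweitzer's construction: the holonomy data it outputs is obtained from the boundary holonomy by an explicit averaging/interpolation procedure using Herman's theorem, and one checks that each ingredient (the section of the $\Diff_+(\sphere^1)$-bundle, the Reeb component insertion near $c_0,c_1$) varies continuously with the input. (4) Assemble: define $\psi_t(\xi)$ to be $\xi$ for $t=0$, the collar-pushed-in plane field glued to the partially-filled interior for intermediate $t$, and the full Schweitzer foliation for $t=1$; continuity in both $t$ and $\xi$ follows from continuity of each piece, and by construction all $\psi_t(\xi)$ agree with $\xi$ on $\partial\D^2\times\sphere^1$.

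The hard part will be Step (3): making Schweitzer's filling genuinely \emph{continuous} (indeed, this is exactly the content of Larcanch�'s refinement \cite{La}, and of Theorem \ref{t:larcanche} referred to later in the paper). The subtlety is that Herman's theorem produces a diffeomorphism with prescribed properties but not canonically; one must choose a continuous section of this construction over the (infinite-dimensional) space of boundary holonomies, and one must ensure that the Reeb components glued in near $c_0,c_1$ interpolate smoothly with the ambient suspension. Since the paper explicitly cites Larcanch�'s work for precisely this continuity statement, I would in practice simply invoke \cite{La} (and Theorem \ref{t:larcanche}) for Step (3) and devote the bulk of the write-up to verifying that the remaining bookkeeping -- the collar interpolation of Step (2) and the gluing in Step (4) -- is carried out with fixed boundary trace and depends continuously on $\xi$, which is elementary. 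One should also double-check transverse orientations are respected throughout, so that $\psi_t$ indeed lands in $\PP(\D^2\times\sphere^1)$ as defined (transversely oriented plane fields).
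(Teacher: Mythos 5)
Your proposal takes essentially the same route as the paper: reduce the data of $\xi$ to its boundary trace, then to the holonomy of that trace (a single element of $\DDi$), and then invoke Schweitzer's filling in Larcanch\'e's parametrized form. The paper packages the reduction as two ``standard fact'' lemmas --- Lemma~\ref{l:holonomie1}, that restriction to the boundary torus is a trivial fibration of $\PPt(\D^2\times\cercle^1)$ over $\FT(\cercle^1\times\cercle^1)$ with contractible fibres, and Lemma~\ref{l:holonomie}, that $\mathrm{hol}\colon\FT(\cercle^1\times\cercle^1)\to\DDi$ is likewise --- and this is cleaner than your collar-and-gluing description in steps~(2) and~(4), since it makes precise exactly why the homotopy can be chosen to fix the boundary trace of $\xi$ (and not merely its holonomy class). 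On the other hand, you stop at invoking Theorem~\ref{t:larcanche}, whereas the paper regards Proposition~\ref{p:third-param} and Theorem~\ref{t:larcanche} as essentially equivalent and devotes the rest of the section to \emph{proving} Theorem~\ref{t:larcanche}: Herman's continuous decomposition $f = T_{\lambda_f}\circ(g_f^{-1}\circ T_\mu\circ g_f)$ (Theorem~\ref{t:herman}), the Reeb Filling Lemma~\ref{l:reeb-fill} for the two translation-conjugate pieces, and the flexibility of suspension foliations over the pair of pants (Lemma~\ref{l:pantalon}) to merge them. So a complete write-up along your lines would still have to reproduce that argument.

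One factual slip worth correcting: in your opening paragraph you attribute Schweitzer's construction to ``Herman's theorem on the simplicity/perfectness of $\Diff_+(\cercle^1)$.'' That is the theorem \cite{He1} used in Thurston's original filling argument. Schweitzer's filling, and hence Larcanch\'e's parametrized version, rests on a different theorem of Herman \cite{He} --- the continuous decomposition of an arbitrary $f\in\DDi$ into a translation composed with a conjugate of the golden rotation --- which is Theorem~\ref{t:herman} in the paper. (Also, the boundary of $\D^2\times\cercle^1$ is $\cercle^1\times\cercle^1=T^2$, not a ``$T^2\times\cercle^1$-type boundary.'')
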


\begin{remark}\label{r:simplecon} 
Since the disk is simply connected, the only foliation of $\D^2 \times \cercle^1$ transverse to $\cercle^1$, up to fibered isotopy, is the foliation by meridian disks $\D^2 \times \{\cdot\}$. In other words, only the foliation of $\partial \D^2 \times \cercle^1$ by meridian circles extends to a foliation of $\D^2 \times \cercle^1$ transverse to $\cercle^1$. Thus, most foliations in $\psi_1(\PPt(\D^2\times\cercle^1))$ will not be everywhere transverse to $\cercle^1$.
\end{remark}

The whole section is devoted to the proof of Proposition \ref{p:third-param}. The main issue is to construct the map $\psi_1$ which, to any plane field transverse to the $\sphere^1$ factor on $\D^2\times\sphere^1$, associates a foliation having the same trace on the boundary.  Let us start with the simple yet key example of a plane field $\xi$ defined by an equation of the form $dz-\rho(r)\lambda d\t=0$, where $(r,\t)$ denote the polar coordinates on $\D^2$, $z$ the coordinate on $\sphere^1$, $\lambda$ some real number and $\rho$ some smooth step function vanishing on $[0,1/2]$ and equal to $1$ near $1$. This plane field induces a linear foliation on $\partial \D^2 \times \cercle^1$, which can be extended to a foliation of the solid torus by putting a Reeb component along the core curve and wrapping the external leaves around it as shown on Fig. \ref{filling}.

\begin{figure}[htbp]
\centering
\begin{tabular}{ccccc}
\includegraphics[height=2.5cm]{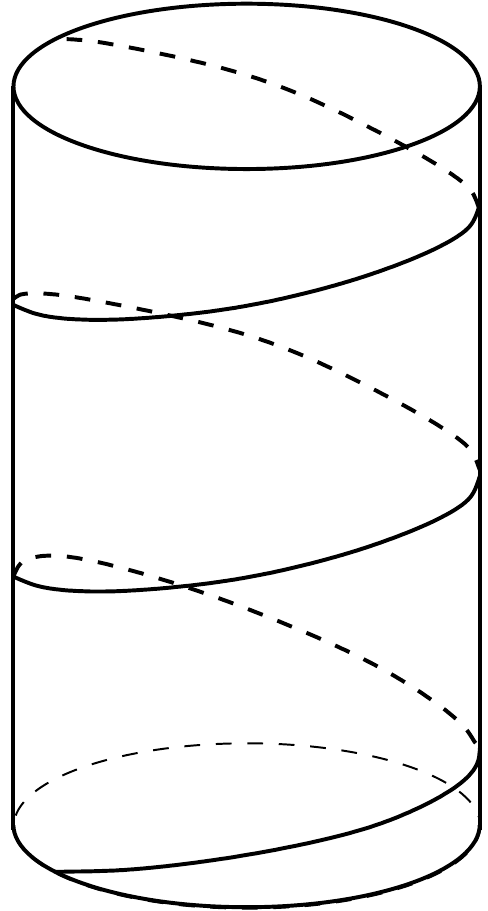} 
  && \includegraphics[height=2.5cm]{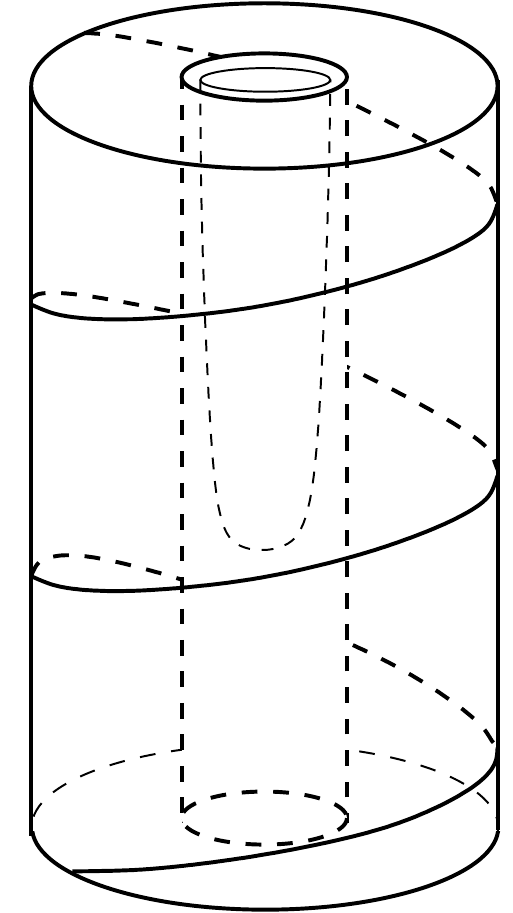} 
&& \includegraphics[height=2.5cm]{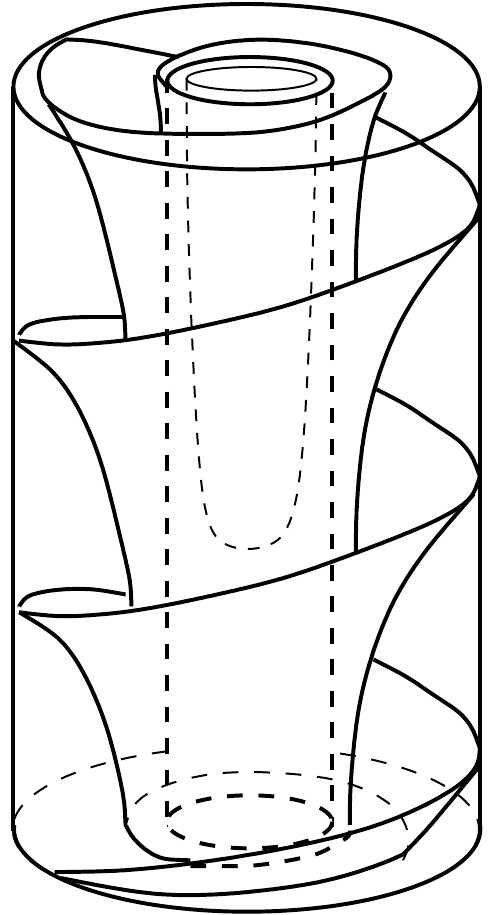}
\end{tabular}
\caption{Reeb filling of a linear foliation\label{filling}}
\end{figure}

Moreover, this foliation is homotopic to the initial plane field relative to the boundary. Lemma \ref{l:reeb-fill} below (applied to  $\omega=dz-\lambda d\t$), gives an analytic description of these objects. Let $\{\rho_0,\rho_{1/2},\rho_1\}$ denote a partition of unity on $[0,1]$ meeting the following conditions:
\begin{itemize}
\item $\rho_1$ equals $1$ near $1$ and $0$ precisely on $[0,1/2]$;
\item $\rho_0$ equals $1$ near $0$ and $0$ precisely on $[1/2,1]$;
\item as a consequence, $\rho_{1/2}$ equals $0$ near $\{0,1\}$ and $1$ precisely at $1/2$.
\end{itemize}

\begin{lemma}[Reeb Filling Lemma]\label{l:reeb-fill}
For every non singular closed $1$-form $\omega$ on $\partial \D^2\times\cercle^1$, the $1$-form
$$ \bar\omega = \rho_1(r) \, \omega + \rho_{1/2}(r) \, dr
 + \rho_0(r) \, dz $$ 
(where $\rho_1(r)\omega$ is viewed as a form on the solid torus) is nonsingular, integrable on $\D^2\times\cercle^1$ and induces $\omega$ on the boundary. Moreover, if $\omega(\partial_z)>0$, the $1$-forms
$$ \bar\omega_t = \rho_1(r) \, \omega
 + \rho_{1/2}(r) \, (t \, dz + (1-t) \, dr) + \rho_0(r) \, dz, \quad 
   t \in [0,1], $$
are all nonsingular, integrable if $\omega=dz$ (but not in general) and define a homotopy of plane fields relative to the boundary between the plane field tangent to the foliation and a plane field transverse to the $\cercle^1$ factor.
\end{lemma}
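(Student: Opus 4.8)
The plan is a direct verification, organized by the three pieces of the support of $\{\rho_0,\rho_{1/2},\rho_1\}$ and using only the partition-of-unity relation together with the facts that $\omega$ is closed and has no $dr$-component (we extend it to be independent of $r$ near the boundary). The first thing to record is that $\bar\omega$, and likewise every $\bar\omega_t$, is an honest smooth $1$-form on the whole solid torus: near the core $r=0$ the terms $\rho_1(r)\,\omega$ and $\rho_{1/2}(r)\,dr$ vanish identically (their $r$-supports lie in $\{r\ge1/2\}$, resp.\ in a set bounded away from $0$), so the singularity of the polar coframe at $r=0$ is never felt, and there $\bar\omega=\rho_0(r)\,dz$. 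I would then split the range $0\le r\le1$ as $[0,1/2]\cup[1/2,1]$. On $[0,1/2]$, $\rho_1\equiv0$ and $\rho_0+\rho_{1/2}\equiv1$, so $\bar\omega=\rho_{1/2}(r)\,dr+\rho_0(r)\,dz$: the two nonnegative coefficients never vanish together, so $\bar\omega$ is nonsingular, and being a combination of $dr$ and $dz$ with coefficients depending on $r$ alone it satisfies $\bar\omega\wedge d\bar\omega=0$ trivially ($d\bar\omega$ is a multiple of $dr\wedge dz$). On $[1/2,1]$, $\rho_0\equiv0$, so $\bar\omega=\rho_1(r)\,\omega+\rho_{1/2}(r)\,dr$; its $dr$-component is exactly $\rho_{1/2}$, hence $\bar\omega$ can vanish only where $\rho_{1/2}=0$, i.e.\ where $\rho_1=1$, and there $\bar\omega=\omega\ne0$; moreover $d\omega=0$ gives $d\bar\omega=\rho_1'(r)\,dr\wedge\omega$, so $\bar\omega\wedge d\bar\omega=0$ because $\omega\wedge\omega=0$ and $dr\wedge dr=0$. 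At $r=1$, $\rho_1=1$ and $\rho_{1/2}=\rho_0=0$, so $\bar\omega$ restricts to $\omega$ on the boundary torus. This proves the first assertion, the foliation $\{\bar\omega=0\}$ being the Reeb filling depicted in Figure~\ref{filling}.

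For the homotopy, note $\bar\omega_0=\bar\omega$ and, using $\rho_1+\rho_{1/2}+\rho_0\equiv1$, that $\bar\omega_1=\rho_1(r)\,\omega+(1-\rho_1(r))\,dz$. Along $r=1$ every $\bar\omega_t$ equals $\omega$, so $(\bar\omega_t)_{t\in[0,1]}$ is a homotopy relative to the boundary, and at $t=0$ it is the plane field tangent to $\{\bar\omega=0\}$. Nonsingularity of $\bar\omega_t$ for all $t$ comes from the same two-region bookkeeping: on $[0,1/2]$ one has $\bar\omega_t=(1-t)\rho_{1/2}(r)\,dr+\bigl(t\rho_{1/2}(r)+\rho_0(r)\bigr)\,dz$, whose coefficients cannot both vanish (if the $dz$-one does, then $\rho_0=0$ and $t\rho_{1/2}=0$, forcing either $\rho_{1/2}=0$, impossible together with $\rho_0=\rho_1=0$, or $t=0$, in which case the $dr$-coefficient is $\rho_{1/2}=1$); on $[1/2,1]$ one has $\bar\omega_t(\partial_r)=(1-t)\rho_{1/2}(r)$ and $\bar\omega_t(\partial_z)=\rho_1(r)\,\omega(\partial_z)+t\,\rho_{1/2}(r)$, and since $\omega(\partial_z)>0$ these never vanish together (if they did, the second would force $\rho_1=0$ and $t\rho_{1/2}=0$, hence $\rho_{1/2}=1$ and $t=0$, contradicting the first), so $\bar\omega_t\ne0$. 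At $t=1$, $\bar\omega_1(\partial_z)=\rho_1(r)\,\omega(\partial_z)+(1-\rho_1(r))$ is pointwise a convex combination of the positive number $\omega(\partial_z)$ and $1$, hence strictly positive, so $\ker\bar\omega_1$ is transverse to the $\cercle^1$ factor. When $\omega=dz$, $\bar\omega_t=\bigl(1-(1-t)\rho_{1/2}(r)\bigr)\,dz+(1-t)\rho_{1/2}(r)\,dr$ again involves only $dr$ and $dz$ with coefficients in $r$ and is thus integrable for every $t$; for a general closed $\omega$ it is not — for instance for the flat form $\omega=d\theta+dz$ one computes $\bar\omega_t\wedge d\bar\omega_t=t\,\rho_1'(r)\,dr\wedge d\theta\wedge dz$, which is not identically zero for $t\in(0,1]$ — which accounts for the parenthetical remark in the statement.

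I do not expect any genuine obstacle here: the lemma provides an explicit construction and the proof is essentially bookkeeping. The two points calling for a little care are, first, the smoothness of $\bar\omega$ and of the $\bar\omega_t$ at the core $r=0$ — which is precisely why $\rho_1$ and $\rho_{1/2}$ are required to have their specific supports, not just to form a partition of unity — and, second, keeping straight which $\rho_i$ is active on which subinterval, so that the non-vanishing arguments genuinely use $\rho_0+\rho_{1/2}=1$ or $\rho_1+\rho_{1/2}=1$. Everything else (integrability on each piece, the boundary value, transversality at $t=1$) follows at once from the explicit shape of the forms.
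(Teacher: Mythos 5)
The paper states the Reeb Filling Lemma without supplying a proof, treating it as a routine verification, so there is no proof in the paper to compare against; your job was to supply that verification, and you have done so correctly. The organization is sound: you extend $\omega$ radially (no $dr$-component, coefficients independent of $r$), observe that the supports of $\rho_1$ and $\rho_{1/2}$ stay away from the core so that $\bar\omega$ and each $\bar\omega_t$ are genuinely smooth at $r=0$ even though $r$ and $d\theta$ are not; you then split $[0,1]$ into $[0,1/2]$ (where $\rho_1\equiv0$ and $\rho_0+\rho_{1/2}\equiv1$) and $[1/2,1]$ (where $\rho_0\equiv0$ and $\rho_1+\rho_{1/2}\equiv1$) and use the partition-of-unity identity on each piece to get nonsingularity; integrability of $\bar\omega$ on $[0,1/2]$ is clear from the $dr$--$dz$ form, and on $[1/2,1]$ from $d\omega=0$ and $\omega\wedge\omega=0$; the boundary restriction is read off at $r=1$; and the homotopy properties follow. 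Your nonsingularity case analysis for $\bar\omega_t$ on each region is correct, as is the transversality of $\bar\omega_1$ ($\bar\omega_1(\partial_z)$ is a convex combination of $\omega(\partial_z)>0$ and $1$, hence positive). For the counterexample $\omega=d\theta+dz$, your closed-form expression $\bar\omega_t\wedge d\bar\omega_t=t\,\rho_1'(r)\,dr\wedge d\theta\wedge dz$ is indeed correct: a raw computation gives coefficient $t(\rho_{1/2}\rho_1'-\rho_1\rho_{1/2}')$, and on $[1/2,1]$ one has $\rho_{1/2}=1-\rho_1$ so the Wronskian collapses to $\rho_1'$, while on $[0,1/2]$ both quantities vanish; and $\rho_1'$ is certainly not identically zero. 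So the non-integrability for $t\in(0,1]$ is established. No gaps; this is a complete proof of a lemma the paper delegates to the reader.
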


A foliation of the form $\bar\omega$ will be called a \emph{Reeb filling of $\omega$}, a \emph{Reeb filling of slope $\lambda$} if $\omega=dz-\lambda d\t$, or simply a \emph{Reeb foliation}. 
\medskip

\begin{remark}\label{r:reeb} 
If $\omega=dz$, $t\in[0,1]\mapsto\bar\omega_{1-t}$ defines a deformation \emph{of foliations} between the product foliation by meridian disks and a Reeb filling of slope $0$ (cf. Figure \ref{f:creation-reeb}: to visualize the deformation, rotate each picture of the central sequence around a vertical axis. This sequence represents a continuous path of dimension one foliations invariant under vertical translations, the continuous deformation of their tangent line fields being sketched above).
\end{remark}

\begin{figure}[htbp]
 \centering
\begin{tabular}{cccccc}
\includegraphics[height=3cm]{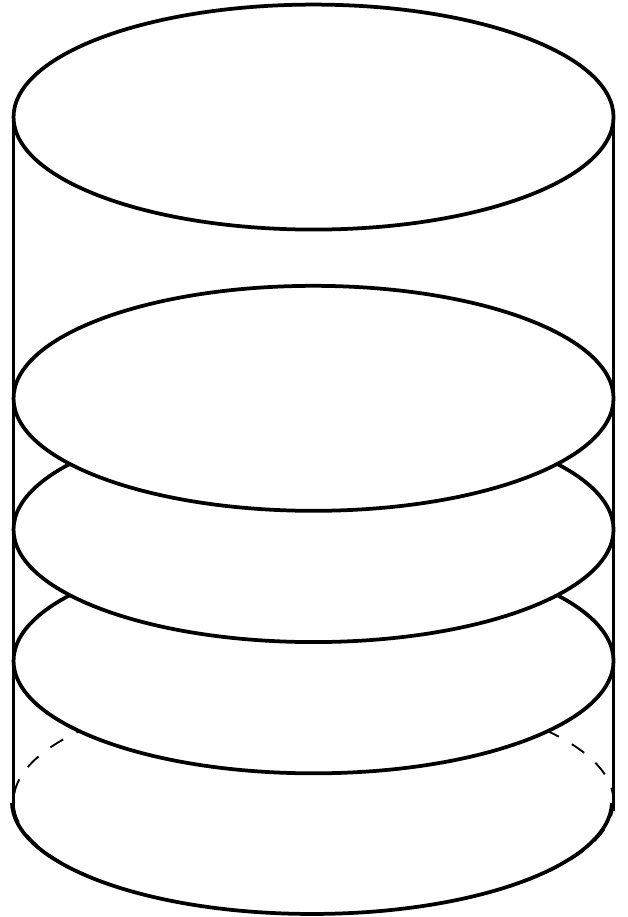} 
  & \includegraphics[height=3.5cm]{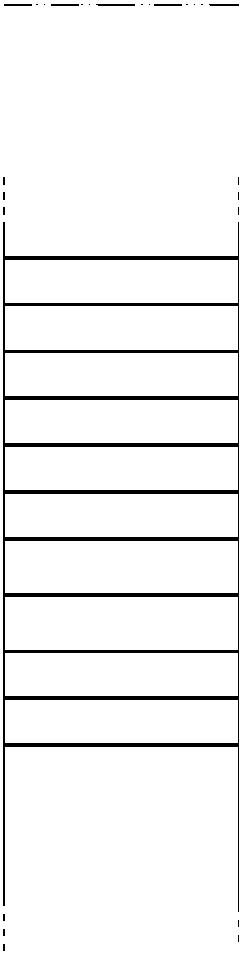} 
& \includegraphics[height=3.5cm]{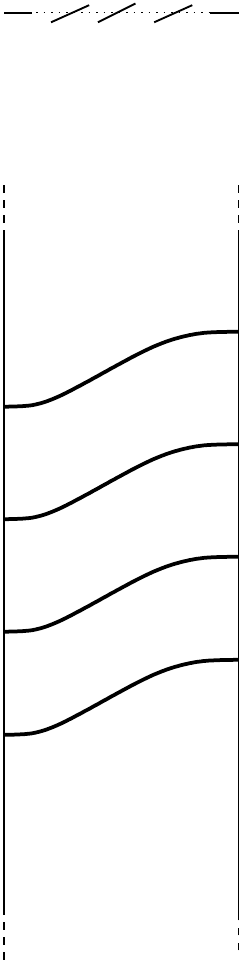}
 & \includegraphics[height=3.5cm]{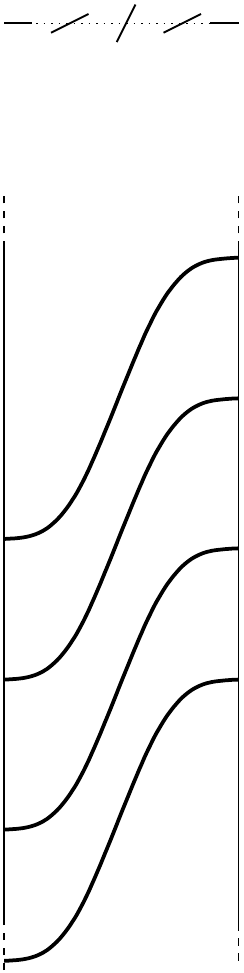}
& \includegraphics[height=3.5cm]{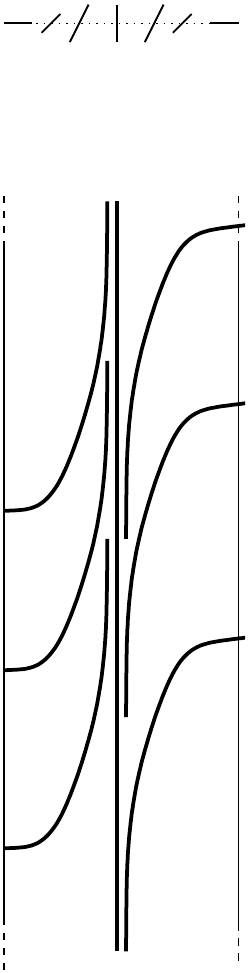}
&\includegraphics[height=3cm]{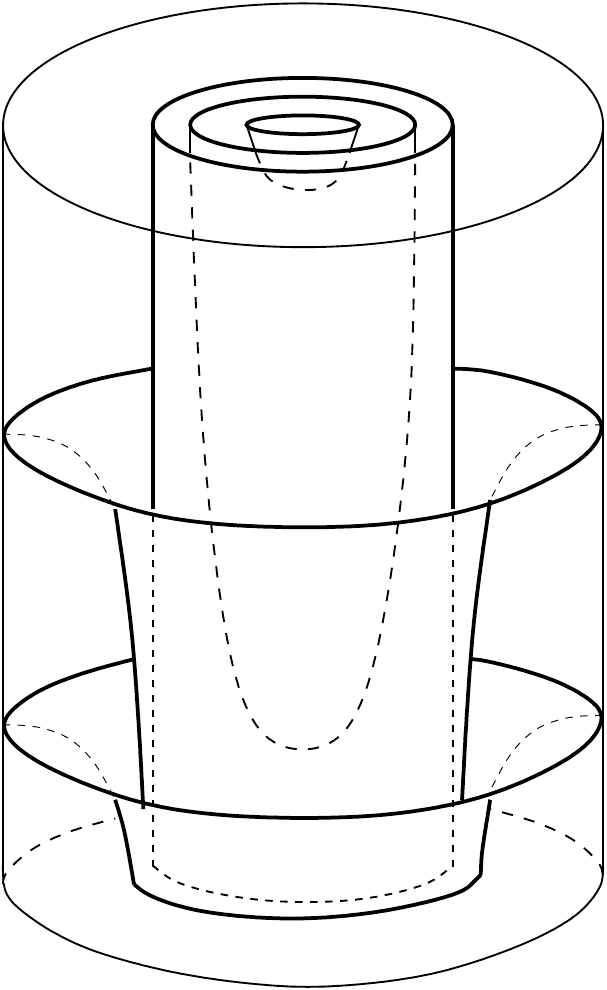} 
\end{tabular}
\caption{Addition of a Reeb component\label{f:creation-reeb}}
\end{figure}

Note that if $\omega$ is not closed, \emph{i.e.} if the foliation on the boundary torus is not \emph{linearizable}, $\bar\omega$ is not integrable. In fact, one can prove (cf. for example \cite[Lemma 2.1]{C-C2}) that Reeb's construction does not generalize to nonlinearizable foliations, due to some rigidity phenomenon concerning the \emph{holonomy} of a $C^2$ foliation near a torus leaf (in relation with Kopell's Lemma \cite{Ko} for commuting $\CC^2$ diffeomorphisms of the interval). The idea is nevertheless to reduce to the linearizable case. To do so, one first needs to translate Proposition \ref{p:third-param} in terms of \emph{holonomy}.

Let $\tau$ be an element of $\FT (\cercle^1 \times \cercle^1)$. The transversality condition implies that, for every $x$ in $\cercle^1$, the leaf through $(1,x)\in\cercle^1\times\cercle^1$ goes all the way around the torus, alternately intersecting every fiber $\{e^{2\pi i t}\} \times \cercle^1$, $t\in[0,1]$, at a point $(e^{2\pi i t}, f_t(x))$. This defines a one-parameter
family $(f_t)_{t\in[0,1]}$ of smooth orientation-preserving diffeomorphisms of the circle, which has a unique lift $(\wt{f_t})_{t\in[0,1]}$ to $\DDi$ --~the group of orientation-preserving diffeomorphisms of $\R$ commuting with the unit
translation~-- satisfying $\wt{f_0}=\id_\R$. We call  the diffeomorphism $\wt{f_1}$ the \emph{holonomy} of the foliation $\tau$, and denote it by $hol(\tau)$. For example, the holonomy of the foliation defined by $dz-\lambda d\t=0$ is the translation $T_{2\pi\lambda}: x\mapsto x+2\pi\lambda$ (hence, we will sometimes call the corresponding Reeb filling the \emph{Reeb filling of $T_{2\pi\lambda}$}). Now the following standard facts (which we will not prove) allow us to reduce Proposition \ref{p:third-param} to Theorem \ref{t:larcanche} below:

\begin{lemma}\label{l:holonomie1} 
The map 
$$\begin{matrix}
 \PPt(\D^2\times\cercle^1) &\to&\FT (\cercle^1 \times
\cercle^1)\\
\xi&\mapsto &\xi\res{ \partial\D^2\times\cercle^1}
\end{matrix}$$
is a trivial fibration with contractible fibres.
\end{lemma}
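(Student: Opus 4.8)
The plan is to make everything completely explicit by attaching to each transverse plane field its canonical defining $1$-form. This identifies $\PPt(\D^2\times\cercle^1)$ and $\FT(\cercle^1\times\cercle^1)$ with spaces of smooth functions, and turns the restriction map into a continuous linear surjection of topological vector spaces, which I will then split by an explicit formula.

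First I would fix coordinates: let $\partial_z$ be the (globally defined) unit vector field along the $\cercle^1$ factor, let $x,y$ be Cartesian coordinates on $\D^2$ and $r,\theta$ the associated polar coordinates. A plane field $\xi$ on $\D^2\times\cercle^1$ is transverse to $\partial_z$ if and only if it is the kernel of a \emph{unique} $1$-form $\alpha_{(a,b)}=dz+a\,dx+b\,dy$, and $\xi$ is $C^\infty$ exactly when $(a,b)$ is; conversely every pair $(a,b)\in\Cinf(\D^2\times\cercle^1;\R^2)$ arises this way. Hence $\xi\mapsto(a,b)$ is a homeomorphism of $\PPt(\D^2\times\cercle^1)$ onto the Fr\'echet space $\Cinf(\D^2\times\cercle^1;\R^2)$ for the $C^\infty$ topologies, the transverse subbundle of $\mathrm{Gr}_2(T(\D^2\times\cercle^1))$ being fibrewise an affine plane in these ``covector coordinates''. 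On the $2$-torus $\cercle^1_\theta\times\cercle^1_z$ there is no integrability condition ($\FT$ consists of line fields transverse to $\partial_z$), and the same recipe gives a homeomorphism $\FT(\cercle^1\times\cercle^1)\cong\Cinf(\cercle^1\times\cercle^1;\R)$ via $\ker(dz+c\,d\theta)\leftrightarrow c$. Pulling $\alpha_{(a,b)}$ back to $\{r=1\}$, where $dx=-\sin\theta\,d\theta$ and $dy=\cos\theta\,d\theta$, one reads off that under these identifications the restriction map becomes $\Phi(a,b)=\bigl[(\theta,z)\mapsto b(1,\theta,z)\cos\theta-a(1,\theta,z)\sin\theta\bigr]$, which is manifestly continuous and $\R$-linear.

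The crux is then to exhibit a continuous linear section $S$ of $\Phi$. I would fix once and for all a smooth cutoff $\chi\from[0,1]\to[0,1]$ equal to $1$ near $1$ and \emph{identically} $0$ near $0$, and set $S(c)=\bigl(-\chi(r)\,c(\theta,z)\sin\theta,\ \chi(r)\,c(\theta,z)\cos\theta\bigr)$. Both components extend smoothly across the centre $r=0$ of the disk (they vanish identically there because $\chi$ does), so $S(c)\in\Cinf(\D^2\times\cercle^1;\R^2)$; clearly $S$ is linear and continuous, and $\Phi\circ S=\id$ since $\cos^2\theta+\sin^2\theta=1$. In particular $\Phi$ is onto, so the restriction map does take values in $\FT(\cercle^1\times\cercle^1)$ surjectively. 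It follows that $w\mapsto\bigl(\Phi(w),\,w-S(\Phi(w))\bigr)$ is a homeomorphism of $\Cinf(\D^2\times\cercle^1;\R^2)$ onto $\FT(\cercle^1\times\cercle^1)\times\ker\Phi$, with inverse $(c,v)\mapsto v+S(c)$, intertwining $\Phi$ with the first projection. Thus the restriction map is, up to homeomorphism, the trivial bundle $\FT(\cercle^1\times\cercle^1)\times\ker\Phi\to\FT(\cercle^1\times\cercle^1)$, and its fibre over $c$ is the affine subspace $S(c)+\ker\Phi$, which is convex, hence contractible. That is exactly the statement of the lemma.

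I do not anticipate a genuine obstacle: the proof is formal once the canonical-defining-form identifications are in place. The only points needing a little care are (i) verifying that those identifications are honest homeomorphisms for the $C^\infty$ topology (routine, from the smooth bundle chart on the transverse Grassmannian), and (ii) the smoothness of $S(c)$ at the centre of the disk, which is why the cutoff $\chi$ must vanish identically — not merely to high order — on a neighbourhood of $r=0$.
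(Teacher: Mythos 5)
Your proof is correct. The paper states this lemma without proof (marking it as a standard fact with a \qedsymbol), and your argument — identifying $\PPt(\D^2\times\cercle^1)$ and $\FT(\cercle^1\times\cercle^1)$ with Fr\'echet spaces of coefficient functions via the canonical normalized defining $1$-forms, and splitting the resulting continuous linear surjection with a cutoff-function section that vanishes identically near the core to ensure smoothness — is precisely the standard explicit argument that fills in that omitted proof.
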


\begin{lemma}\label{l:holonomie} 
The holonomy map 
$$ hol : \FT (\cercle^1 \times
\cercle^1) \to \DDi $$
is a trivial fibration with contractible fibers.
\end{lemma}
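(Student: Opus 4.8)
Lemma \ref{l:holonomie} asserts that the holonomy map $hol : \FT(\cercle^1\times\cercle^1)\to\DDi$ is a trivial fibration with contractible fibres. I will treat the ``contractible fibres'' part first, then the triviality.

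\textbf{Identifying a fibre.} The plan is to fix $\wt f_1\in\DDi$ and describe $hol^{-1}(\wt f_1)$ explicitly. A foliation $\tau\in\FT(\cercle^1\times\cercle^1)$ transverse to the $\sphere^1$ factor is determined, via the holonomy suspension picture already set up in the excerpt, by its ``unrolled'' graph: writing the leaf through $(1,x)$ as $t\mapsto(e^{2\pi i t},\wt f_t(x))$, the datum of $\tau$ is equivalent to a path $t\in[0,1]\mapsto \wt f_t\in\DDi$ with $\wt f_0=\id_\R$ and $\wt f_1$ the prescribed holonomy, together with the requirement that $(t,x)\mapsto \wt f_t(x)$ be smooth and that each $\wt f_t$ be a diffeomorphism (this is automatic for a path in $\DDi$); the $\sphere^1$-transversality is exactly the statement that, for each fixed $x$, $t\mapsto\wt f_t(x)$ is a genuine reparametrisation, which one can moreover normalise. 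So $hol^{-1}(\wt f_1)$ is homeomorphic to the space of smooth paths in $\DDi$ from $\id_\R$ to $\wt f_1$ (with the appropriate smooth-in-$(t,x)$ topology). Since $\DDi$ is contractible --- it is convex-like: it deformation retracts to $\{\id_\R\}$ along the straight-line homotopy $s\mapsto (1-s)\,\id_\R + s\,\wt g$, which stays inside $\DDi$ because a convex combination of the identity and an element commuting with the unit translation and with positive derivative again has positive derivative and commutes with the unit translation --- the based path space of $\DDi$ is contractible as well (straight-line homotopy again, applied pointwise in $t$, preserves the endpoints $\id_\R$ and $\wt f_1$ only after correcting at the endpoints; more cleanly, the path space of a contractible space based at two points is contractible). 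Hence the fibres are contractible.

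\textbf{Triviality.} For the product structure, the plan is to exhibit a continuous section and a continuous ``difference'' map. Given $\wt g\in\DDi$, build a canonical foliation $\sigma(\wt g)$ with holonomy $\wt g$ by suspending the linear-in-$t$ path $\wt f_t := (1-t)\,\id_\R + t\,\wt g$, i.e. interpolating linearly (in the universal cover) between $\id$ and $\wt g$; this depends continuously --- indeed smoothly --- on $\wt g$, giving a section $\sigma:\DDi\to\FT(\cercle^1\times\cercle^1)$ of $hol$. Then define $\Phi:\FT(\cercle^1\times\cercle^1)\to\DDi\times hol^{-1}(\id_\R)$ by $\tau\mapsto\bigl(hol(\tau),\ \tau\star\sigma(hol(\tau))^{-1}\bigr)$, where $\tau\star(\cdot)$ means: follow the foliation $\tau$ for the first half of the suspension parameter and run the inverse interpolation for the second half, so as to land in the fibre over $\id_\R$; concretely, at the level of paths $t\mapsto\wt f^\tau_t$ in $\DDi$, send it to the path that is $t\mapsto \wt f^\tau_{2t}$ on $[0,1/2]$ and $t\mapsto \wt f^\sigma_{2-2t}\circ \wt f^\tau_1$ reparametrised on $[1/2,1]$, which closes up to $\id_\R$. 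This $\Phi$ is continuous, is a homeomorphism onto its image with continuous inverse given by concatenating with $\sigma$, and fits into the obvious commutative triangle over $\DDi$; hence $hol$ is a trivial fibration. (One must smooth the concatenations near $t=1/2$ using a fixed bump function, as is standard, and check this smoothing is done continuously in the foliation; this is the kind of routine-but-fiddly point that Larcanch\'e's paper \cite{La} already handles.)

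\textbf{Main obstacle.} The genuinely delicate point is not the homotopy-theoretic skeleton but keeping everything \emph{smooth and continuous in the $C^\infty$ topology} throughout: the identification ``$\FT(\cercle^1\times\cercle^1) \simeq$ smooth paths in $\DDi$ from $\id$ to the holonomy'' requires care about the function spaces involved (jointness of smoothness in $(t,x)$, the reparametrisation freedom coming from how fast leaves cross the fibres, which should be quotiented out or fixed by a normalisation), and the concatenation/smoothing operations used to prove both contractibility of fibres and triviality must be shown to be continuous as maps of these infinite-dimensional spaces. I expect this bookkeeping --- rather than any conceptual difficulty --- to be where the real work lies, which is presumably why the statement is quoted here as ``standard'' with the proof deferred to or borrowed from \cite{La}.
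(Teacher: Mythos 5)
Your approach is the right one, and it is surely the reason the paper states this lemma with a bare $\square$: once one identifies a foliation $\tau\in\FT(\cercle^1\times\cercle^1)$ with the smooth family $(\wt f_t)_{t\in[0,1]}$ in $\DDi$ starting at $\wt f_0=\id$, the holonomy map becomes evaluation at $t=1$, and everything follows from the convexity of $\DDi$ (which you verify correctly). The contractibility of the fibers is then the contractibility of the space of smooth paths from $\id$ to $\wt f_1$; rather than appealing abstractly to ``path space of a contractible space is contractible'' (after admitting the naive straight-line homotopy does not fix the right endpoint), it is cleaner to write the retraction explicitly onto the affine path: $\wt f_t^{\,s}=(1-s)\wt f_t+s\bigl((1-t)\,\id+t\,\wt f_1\bigr)$, which visibly fixes both endpoints and stays in $\DDi$ by convexity.

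There is, however, an error in your trivialization formula, and the construction can be simplified so as to remove the delicate point you flag at the end. As written, your second-half path $t\mapsto \wt f^\sigma_{2-2t}\circ\wt f^\tau_1$ gives $\wt f_1^\sigma\circ\wt f_1^\tau=hol(\tau)^2$ at $t=1/2$ and $hol(\tau)$ at $t=1$, so the concatenation neither matches at the seam nor closes up at $\id$; what you want on $[1/2,1]$ is simply $t\mapsto \wt f^\sigma_{2-2t}$. But you need not concatenate at all: since $\DDi$ is a group, set
$$
\Phi(\tau)=\Bigl(hol(\tau),\; t\mapsto \wt f_t^\tau\circ\bigl(\wt f_t^{\sigma(hol(\tau))}\bigr)^{-1}\Bigr),
\qquad
\Psi(g,\wt h_\cdot)=\Bigl(t\mapsto \wt h_t\circ\wt f_t^{\sigma(g)}\Bigr),
$$
where $\wt f_t^{\sigma(g)}=(1-t)\,\id+t\,g$. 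These are mutually inverse, smooth in $(t,x)$ (no seam), and continuous in the $C^\infty$ topology because they are built from composition and inversion in $\DDi$ together with the affine section $\sigma$; they commute with projection to $\DDi$. This disposes of the ``smooth the concatenation near $t=1/2$'' bookkeeping entirely, which is precisely the ``genuinely delicate point'' you worried about.
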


\begin{theorem}[Schweitzer {\cite[``Step 4'', p. 182]{Sc}}, Larcanch\'e {\cite[Theorem 4, p. 396]{La}}] \label{t:larcanche}
There is a continuous path of maps 
$$\f_t:\DDi\to \PP(\D^2\times\sphere^1),\quad t\in[0,1],$$
 such that:
\begin{itemize}
\item $\f_0$ has value in $\PPt(\D^2\times\sphere^1)$,
\item $\f_1$ has value in $\FF(\D^2\times\sphere^1)$,
\item for every $f\in\DDi$, all plane fields $\f_t(f)$, $t\in[0,1]$, coincide along $\partial\D^2\times\sphere^1$ and $hol(\f_t(f)\res{\partial\D^2\times\sphere^1})=f$.
\end{itemize}
The foliations $\LL_f:=\f_1(f)$, $f\in\DDi$, will be referred to as \emph{Schweitzer foliations}. 
\end{theorem}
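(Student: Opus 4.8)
The strategy is to realize Schweitzer's filling construction \cite{Sc} in the continuous‑in‑$f$ form due to Larcanch\'e \cite{La}, using the Reeb Filling Lemma \ref{l:reeb-fill} as the basic building block. First, the case of a translation $f=T_{2\pi\lambda}$: applying Lemma \ref{l:reeb-fill} to $\omega=dz-\lambda\,d\t$, which satisfies $\omega(\partial_z)=1>0$, one gets the Reeb filling $\bar\omega\in\FF(\D^2\times\cercle^1)$, inducing the linear foliation of slope $\lambda$ (hence holonomy $T_{2\pi\lambda}$) on the boundary, together with a homotopy $\bar\omega_t$ relative to the boundary from $\bar\omega$ to a plane field transverse to $\cercle^1$. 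Putting $\f_1(T_{2\pi\lambda})=\bar\omega$ and $\f_t(T_{2\pi\lambda})=\bar\omega_{1-t}$ settles the three conclusions over the subgroup of translations, visibly smoothly in $\lambda$. The task is then to propagate this to all of $\DDi$.

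To do so, fragment a general $f\in\DDi$ as follows. The map $f\mapsto f(0)$ is continuous, and $g_f:=T_{-f(0)}\circ f$ fixes $0$, hence descends to a diffeomorphism of $\cercle^1$ supported in a proper arc; thus $f=T_{c(f)}\circ g_f$ with $c(f)=f(0)$. One further writes $g_f=g_{f,N}\circ\cdots\circ g_{f,1}$ with each $g_{f,i}$ supported in a proper arc, depending continuously on $f$, with a \emph{locally bounded} number $N$ of factors, and with $g_{f,i}\to\id$ as $f$ degenerates; the existence of such a bounded fragmentation rests on the structure theory of $\Diff^\infty_+(\cercle^1)$, ultimately on Herman's simplicity theorem \cite{He1}. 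Now decompose $\D^2\times\cercle^1$ into a central solid torus $C=\{r\le r_0\}\times\cercle^1$ and concentric shells $S_i=\{r_{i-1}\le r\le r_i\}\times\cercle^1\cong\T^2\times[0,1]$, $i=1,\dots,N$, with $r_N=1$. On $C$ put the Reeb filling of $dz-\lambda(f)\,d\t$ with $2\pi\lambda(f)=c(f)$; on each $S_i$ put Schweitzer's explicit model, a foliation transverse to $\cercle^1$ agreeing with a product collar near $\partial S_i$ and whose holonomy, from inner to outer boundary torus, is $g_{f,i}$ — this model is written by formulas, hence varies smoothly with $g_{f,i}$ and reduces to the product foliation when $g_{f,i}=\id$. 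These pieces glue to a foliation $\LL_f:=\f_1(f)$ of $\D^2\times\cercle^1$, transverse to $\cercle^1$ away from the torus leaves carried by the central Reeb component, whose holonomy on $\partial\D^2\times\cercle^1$ is the composite of all the $g_{f,i}$ with $T_{c(f)}$, namely $f$ (with the natural identifications of the boundary trivializations); taking $N$ locally constant and the surplus shells trivial makes $\LL_f$ depend continuously on $f$. Since every leaf of a plane field transverse to $\cercle^1$ meets the closed transversal given by an $\cercle^1$‑fiber, the shells are automatically taut, so $\LL_f$ is a Schweitzer foliation of the kind described in the introduction.

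To build the homotopy $\f_t(f)$, keep all shells fixed and deform only the central piece, through the relative homotopy $\bar\omega_t$ of Lemma \ref{l:reeb-fill} (legitimate since $\omega(\partial_z)=1>0$), to a plane field transverse to $\cercle^1$; being relative to $\partial C$, this is compatible with the fixed shells. This yields $\f_t(f)$, $t\in[0,1]$, with $\f_1(f)=\LL_f\in\FF$, $\f_0(f)$ transverse to $\cercle^1$ on all of $\D^2\times\cercle^1$, and all $\f_t(f)$ agreeing along $\partial\D^2\times\cercle^1$, where the holonomy is $f$; continuity in $(t,f)$ follows from the explicit nature of every ingredient.

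The main obstacle is entirely parametric: Schweitzer's construction produces such a foliation for each individual $f$, but its choices are not manifestly continuous in $f$. The two delicate points — both settled by Larcanch\'e \cite{La} on the basis of \cite{Sc} — are (a) a \emph{continuous, uniformly bounded} fragmentation $f=T_{c(f)}\circ g_{f,N}\circ\cdots\circ g_{f,1}$, which requires a quantitative form of the perfectness of $\Diff^\infty_+(\cercle^1)$ (Herman) controlling the decomposition uniformly near each $f_0$, together with the possibility of letting the factors tend to the identity so that the (locally constant) number of shells can be normalized; and (b) writing the shell models and the gluing by formulas so that $\LL_f$ and its homotopy to a transverse plane field depend continuously on $f$. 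One must also track the winding/integer data, which in $\DDi$ is carried entirely by the translation factor $T_{c(f)}$ and absorbed by the central Reeb component, the arc‑supported factors $g_{f,i}$ being unwound inside the transverse shells.
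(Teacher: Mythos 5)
Your construction has a fatal topological flaw in the shell models, which is precisely the obstruction Herman's theorem is there to bypass. You assert that each shell $S_i\cong\T^2\times[0,1]$ carries a foliation \emph{transverse to $\cercle^1$} whose boundary holonomy is ``incremented'' by the arc--supported factor $g_{f,i}$ as one passes from the inner to the outer boundary torus. But a foliation of $\T^2\times[0,1]$ transverse to the $\cercle^1$ factor is a suspension over the annulus $\cercle^1\times[0,1]$, whose fundamental group is $\Z$; hence the holonomies of its traces on the two boundary tori are necessarily \emph{conjugate} in $\DDi$ (by the radial holonomy), not composed with an arc--supported factor. Assembling your pieces, the foliation on the complement of the single central Reeb component is a suspension of $\T^2\times[r_0,1]$, so the boundary holonomy on $\partial\D^2\times\cercle^1$ must be conjugate to the holonomy just outside the Reeb torus leaf, i.e.\ conjugate to the translation $T_{c(f)}$. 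This can equal $f$ only when $f$ is itself smoothly conjugate to a translation in $\DDi$, which is false for generic $f$ (indeed, this is exactly what Herman's theorem is about). A related symptom is that you produce a single Reeb component, whereas Schweitzer foliations always have two (see the introduction and Remark \ref{r:lid}).

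The idea you are missing is the one the paper centres the proof on: Herman's decomposition theorem (Theorem \ref{t:herman}, from \cite{He}, not the simplicity theorem \cite{He1} you invoke), which writes $f=T_{\lambda_f}\circ(g_f^{-1}\circ T_\mu\circ g_f)$ with $(\lambda_f,g_f)$ depending continuously on $f$. Each factor is a translation or a conjugate of one, hence arises as the holonomy of a \emph{closed} $1$--form ($dz-\lambda_f\,d\t$ and $g_f'(z)\,dz-\mu\,d\t$ respectively) and so admits a Reeb filling by Lemma \ref{l:reeb-fill}. These two Reeb fillings are then joined to the boundary by a suspension over a \emph{pair of pants} (Lemma \ref{l:pantalon}), whose free-on-two-generators $\pi_1$ is what allows the outer boundary holonomy to be the product $h_-\circ h_+$ rather than a conjugate of a single inner holonomy. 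Your arc--supported fragmentation cannot play this role: the factors $g_{f,i}$ are not conjugate to translations and therefore cannot be absorbed into Reeb fillings. (The fragmentation Lemma \ref{l:fragmentation} does appear in the paper, but in Lemma \ref{l:frag-fol}, where it is used \emph{in conjunction with} Theorem \ref{t:larcanche}, not as a route to proving it.)
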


Hence, in Proposition \ref{p:third-param},  the second point can be replaced by: ``for every $\xi$ in $\PPt(\D^2\times\sphere^1)$, $\psi_1(\xi)$ is a Schweitzer foliation". 

Let us now present the proof of Theorem \ref{t:larcanche}. Again, the main issue is to construct the continuous map $\f_1$ which, to any holonomy $f\in\DDi$, associates a foliation of $\D^2\times\sphere^1$ transverse to the boundary and whose restriction to the boundary has holonomy $f$. As a matter of fact, only $\f_1$ appears explicitly in \cite{Sc,La}, but the existence of $\f_t$ is a straightforward consequence of the construction of $\f_1$, as we will see. 

The idea to construct $\f_1$ is to reduce to the translation case (solved by Reeb's construction) by translating the following decomposition result of Herman for diffeomorphisms in terms of ``foliation merging". 

\begin{theorem} [Herman \cite{He}, Corollaire (5.2) p.127]\label{t:herman}
Let $\mu = (1+\sqrt5)/2$ denote the golden ratio. There is a continuous map 
$$  \begin{aligned} 
   \DDi & \rightarrow \R \times \DDi \\
   f & \mapsto (\lambda_f, g_f)
\end{aligned}  $$
such that $f = T_{\lambda_f} \circ (g_f^{-1} \circ T_\mu \circ g_f)$ for all
$f \in \DDi$, and $(\lambda_{\id}, g_{\id}) = (-\mu, \id)$.
\end{theorem}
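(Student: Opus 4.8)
This is Herman's theorem, and its proof combines two facts: the translation number $\mathrm{rot}\colon\DDi\to\R$ is continuous and invariant under conjugation (even by homeomorphisms commuting with $T_1$), and Herman's linearization theorem applies at the rotation number $1/\mu=(\sqrt5-1)/2$, which is of constant type and in particular Diophantine. The plan is to peel off the translation factor $T_{\lambda_f}$ from $f$ using the rotation number, then linearize what remains, and finally worry about continuous dependence.

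To extract $\lambda_f$, set $\Phi_f(s)=\mathrm{rot}(T_s\circ f)$ for $f\in\DDi$ and $s\in\R$. This is jointly continuous in $(s,f)$, non-decreasing in $s$, satisfies $\Phi_f(s+1)=\Phi_f(s)+1$, and has $\Phi_f(s)-s$ confined between $\min(f-\id)$ and $\max(f-\id)$; hence $\Phi_f$ is onto and $\Phi_f^{-1}(\mu)$ is a nonempty compact interval. I claim it is a single point. Indeed, if $\Phi_f(s_0)=\mu$ then $T_{s_0}\circ f$ is a $C^\infty$ lift with irrational rotation number, so by Denjoy's theorem there is a homeomorphism $\phi$ of $\R$ commuting with $T_1$ and conjugating it to $T_\mu$; then for $\eps>0$ one has $T_{s_0+\eps}\circ f=\phi^{-1}\circ(\phi T_\eps\phi^{-1})\circ T_\mu\circ\phi$, and since $\delta:=\min_x\bigl(\phi T_\eps\phi^{-1}(x)-x\bigr)>0$ we get $(\phi T_\eps\phi^{-1})\circ T_\mu\ge T_{\mu+\delta}$ pointwise, whence $\mathrm{rot}(T_{s_0+\eps}\circ f)\ge\mu+\delta>\mu$ (symmetrically $<\mu$ for $\eps<0$). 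So there is a unique $s=:-\lambda_f$ with $\Phi_f(s)=\mu$; joint continuity of $\Phi$, this uniqueness, and the bound $\lambda_f\in[\min(f-\id)-\mu,\max(f-\id)-\mu]$ make $f\mapsto\lambda_f$ continuous. For $f=\id$, $\Phi_{\id}(s)=s$, so $\lambda_{\id}=-\mu$.

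Now put $h_f:=T_{-\lambda_f}\circ f\in\DDi$; by construction $\mathrm{rot}(h_f)=\mu$, so $h_f$ descends to a $C^\infty$ circle diffeomorphism of rotation number $1/\mu$. Herman's linearization theorem supplies $g\in\DDi$ with $g\circ h_f\circ g^{-1}$ equal to a lift of the rotation by $1/\mu$, and comparing translation numbers ($\mathrm{rot}(gh_fg^{-1})=\mathrm{rot}(h_f)=\mu$ and $1/\mu+1=\mu$) pins this lift down to $T_\mu$. Such a conjugator becomes unique once one imposes $g(0)=0$: two of them differ by an element of the centralizer of $T_\mu$ in $\DDi$, which is exactly the translation group $\{T_s:s\in\R\}$, and a translation fixing $0$ is the identity. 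Write $g_f$ for this normalized conjugator. Then $f=T_{\lambda_f}\circ h_f=T_{\lambda_f}\circ(g_f^{-1}\circ T_\mu\circ g_f)$ for every $f$, and since $h_{\id}=T_\mu$ we get $g_{\id}=\id$, i.e. $(\lambda_{\id},g_{\id})=(-\mu,\id)$.

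It remains to check that $f\mapsto g_f$ is continuous, and this --- rather than mere existence of the decomposition --- is the heart of the matter. Qualitative linearization does not suffice; one needs the quantitative version of Herman's theorem, namely that at a fixed Diophantine (here golden) rotation number the $C^k$ norms of the normalized linearizing conjugacy of a $C^\infty$ circle diffeomorphism are controlled by finitely many derivatives of the diffeomorphism. Granting these a priori estimates, the family $\{g_f\}$ is locally uniformly bounded in every $C^k$, hence precompact in $C^\infty$, and since its only possible accumulation point is the unique normalized conjugator, $h\mapsto g$ is continuous on $\{h\in\DDi:\mathrm{rot}(h)=\mu\}$; composing with the continuous maps $f\mapsto\lambda_f$ and $f\mapsto h_f$ gives the asserted continuity of $f\mapsto(\lambda_f,g_f)$. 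Thus the whole difficulty is concentrated in Herman's estimates for the linearizing conjugacy; everything around the translation factor is elementary one-dimensional dynamics.
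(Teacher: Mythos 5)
The paper does not prove this statement; it cites it verbatim from Herman's monograph~\cite{He}, so there is no internal proof to compare against. Your sketch is nevertheless a faithful and correct outline of how Herman's decomposition is obtained from his linearization theorem, and it is worth assessing on its own terms.

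The elementary half is right. The key fact you need about $\Phi_f(s)=\mathrm{rot}(T_s\circ f)$ is that, although in general it is only non-decreasing in $s$ (it has plateaux at rational values, the Arnold tongues), it is \emph{strictly} increasing at any $s$ where its value is irrational; your Denjoy-plus-conjugation argument proves exactly that, so $\lambda_f$ is well-defined, and joint continuity of $(s,f)\mapsto\mathrm{rot}(T_s f)$ together with the uniform bound $\lambda_f\in[\min(f-\id)-\mu,\max(f-\id)-\mu]$ gives continuity of $f\mapsto\lambda_f$. The normalization $g_f(0)=0$ and the computation of the centralizer of $T_\mu$ in $\DDi$ (translations, because $\Z+\mu\Z$ is dense once $\mu$ is irrational) are also correct, and pin down $g_f$ uniquely once it exists. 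The point that one must choose the lift $T_\mu$ rather than $T_{1/\mu}$ is correctly settled by matching translation numbers, using $\mu=1/\mu+1$.

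The hard half --- existence and, above all, $\Cinf$-continuity of the linearizing conjugacy --- you are right to flag as the entire content of Herman's theorem, and you are honest that you invoke Herman's a priori estimates rather than reproving them. That is exactly the reliance the paper itself makes by citing~\cite{He}. The only thing worth adding is that Herman's original estimates are established for rotation numbers in his arithmetic class $\mathscr{A}$ (not just ``a full-measure set'' in the abstract), and $1/\mu=(\sqrt5-1)/2$, having constant-type continued fraction $[0;1,1,1,\dots]$, does belong to $\mathscr{A}$; so one does not even need to pass through Yoccoz's extension to all Diophantine numbers to justify the statement for the Golden Number. With Herman's estimates granted as a black box, your argument is complete and agrees with the standard proof.
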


\begin{remark}\label{r:herman}\begin{itemize}
\item Schweitzer did not use or mention the continuous character of the map in \cite{Sc}, it is Larcanch\'e who saw the potential of Herman's theorem for the parametric case.
\item 
Actually, Herman proves this result for any number $\mu$ in a full-measure set, and works by Yoccoz show that it is true for any diophantine number $\mu$. However, in what follows, we only need it to be true for one number $\mu$.
\end{itemize}
\end{remark}

We can now describe Schweitzer's foliation $\f_f$ for a given holonomy $f$. Roughly speaking, $\LL_f$ is obtained by taking the
Reeb fillings of $T_{\lambda_f}$ and $g_f^{-1} \circ T_\mu \circ g_f$, gluing them together as Fig. \ref{glue} suggests, and inflating the resulting picture a little to remove the angles. 
\begin{figure}[htbp]
\centering
\begin{tabular}{cc}
\includegraphics[height=2.5cm]{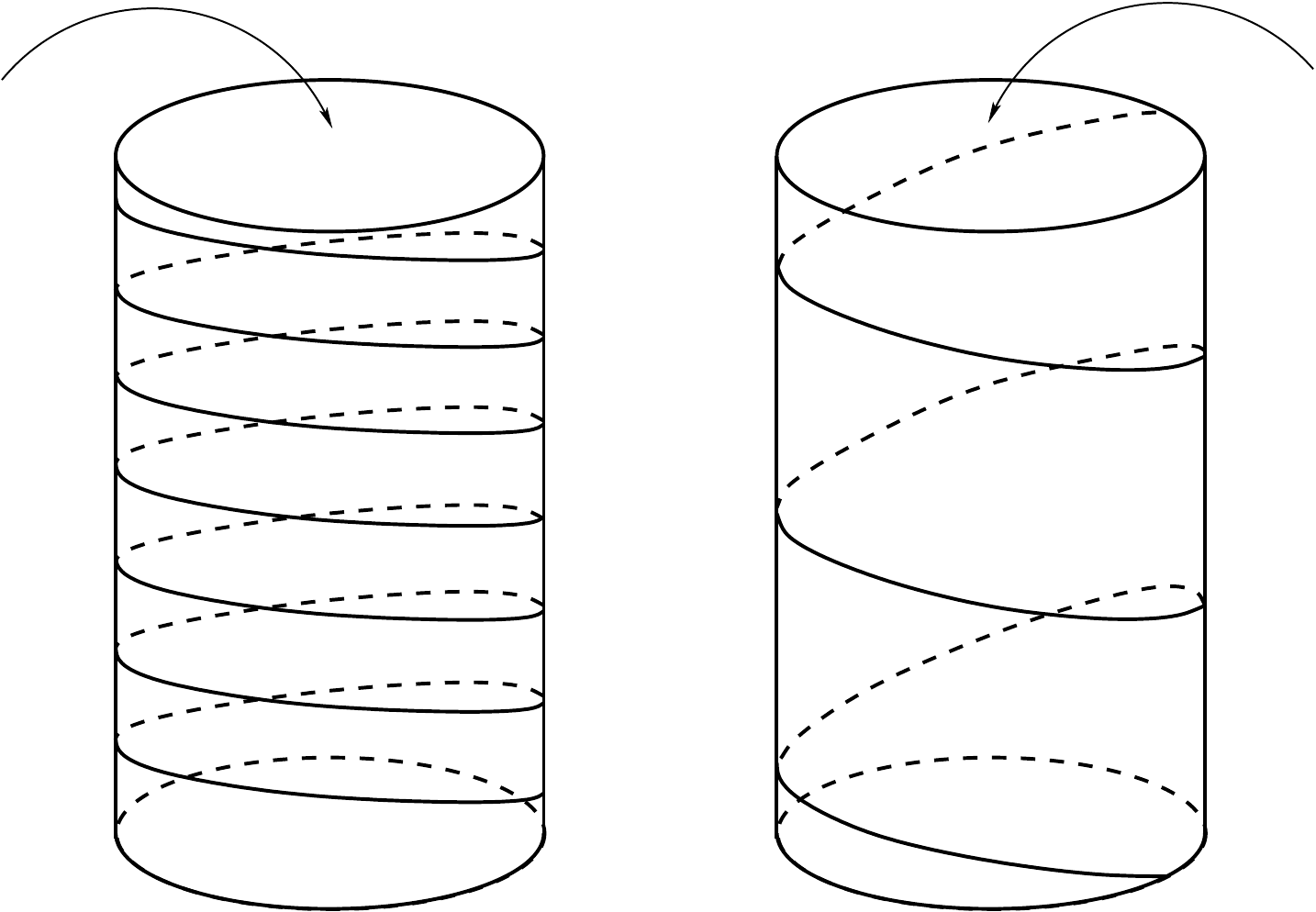} 
\put(0,55){\small{Reeb filling of}}
\put(10,45){$\scriptstyle g_f^{-1} \circ T_\mu \circ g_f$}
\put(-153,55){\small{Reeb filling}}
\put(-143,45){\small{of }$\scriptstyle T_{\lambda_f}$}
\hspace{2.5cm}
  & 
  \includegraphics[height=2.6cm]{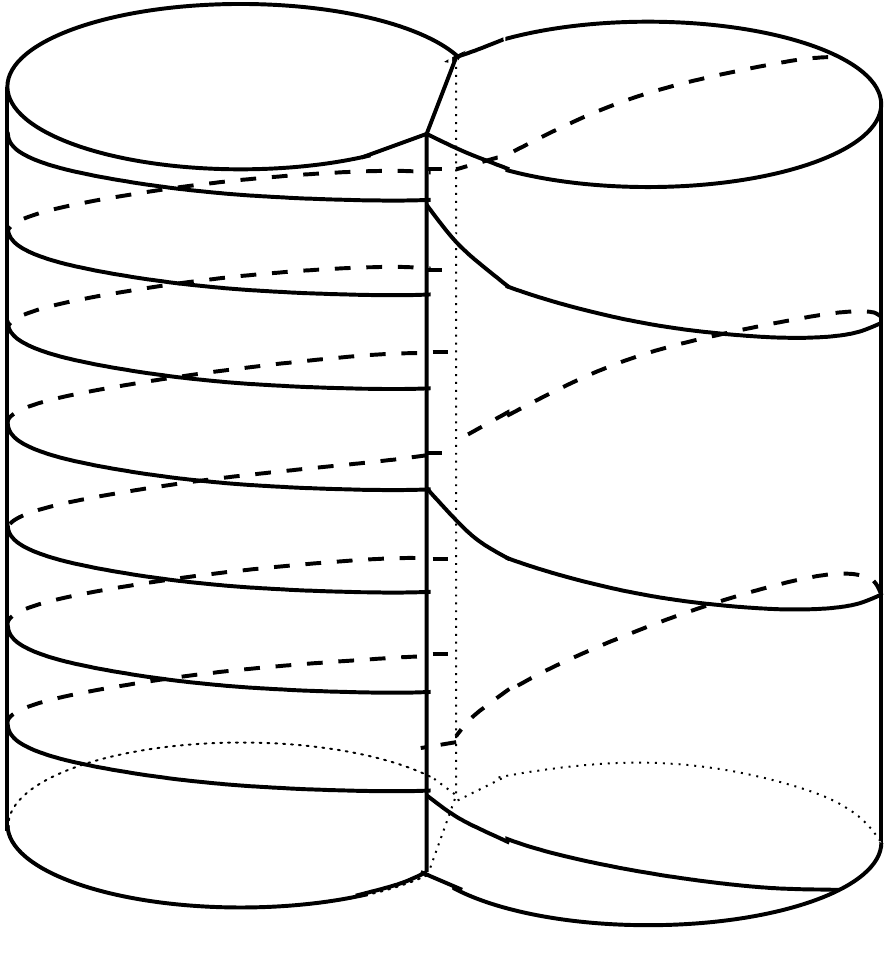} 
\end{tabular}
\caption{Foliation merging\label{glue}}
\end{figure}
The holonomy on the boundary of the resulting bigger solid torus is exactly the composition of the holonomies on the smaller
tori, \emph{i.e} precisely $f$. A more rigorous argument is given below, using suspension foliations over a pair of pants (cf. Lemma \ref{l:pantalon}).

\begin{remark} \label{r:lid}
Note that when $f$ is the identity (i.e. when $\partial \D^2\times\sphere^1$ is foliated by meridian circles), Schweitzer's foliation $\LL_\id$ is not the foliation by meridian disks; it consists of two Reeb fillings (of $T_\mu$ and $T_{-\mu}$ respectively) ``glued together". The ``inflated'' picture is depicted on Fig.~\ref{f:lid}. It will be important for us, however, to observe that $\Lid$ and the foliation by meridian disks can be deformed to one another \emph{through foliations}, relative to the
boundary. Indeed, given the product foliation on $\D^2 \times \cercle^1$, dig two parallel Reeb components in $D_{\pm} \times \cercle^1$ (cf. Remark \ref{r:reeb}), for some small disks $D_{\pm}\subset \D^2$. Then make the slope of the foliations induced on $\partial D_\pm \times\cercle^1$ vary from $0$ to $\pm \mu/2\pi$ respectively. This deformation easily extends to $(\D^2\setminus(D_+\cup D_-))\times\cercle^1$ rel. $\partial \D^2\times\cercle^1$ (cf. Lemma \ref{l:pantalon} below), and to
$D_{\pm}\times \cercle^1$ using the Reeb Filling Lemma \ref{l:reeb-fill}. 

Similarly, the Reeb and Schweitzer foliations associated to a translation $T_\lambda$ can be deformed to one another \emph{through foliations}, relative to the boundary.
\end{remark}
\begin{figure}[htbp]
\centering
\includegraphics[height=4cm]{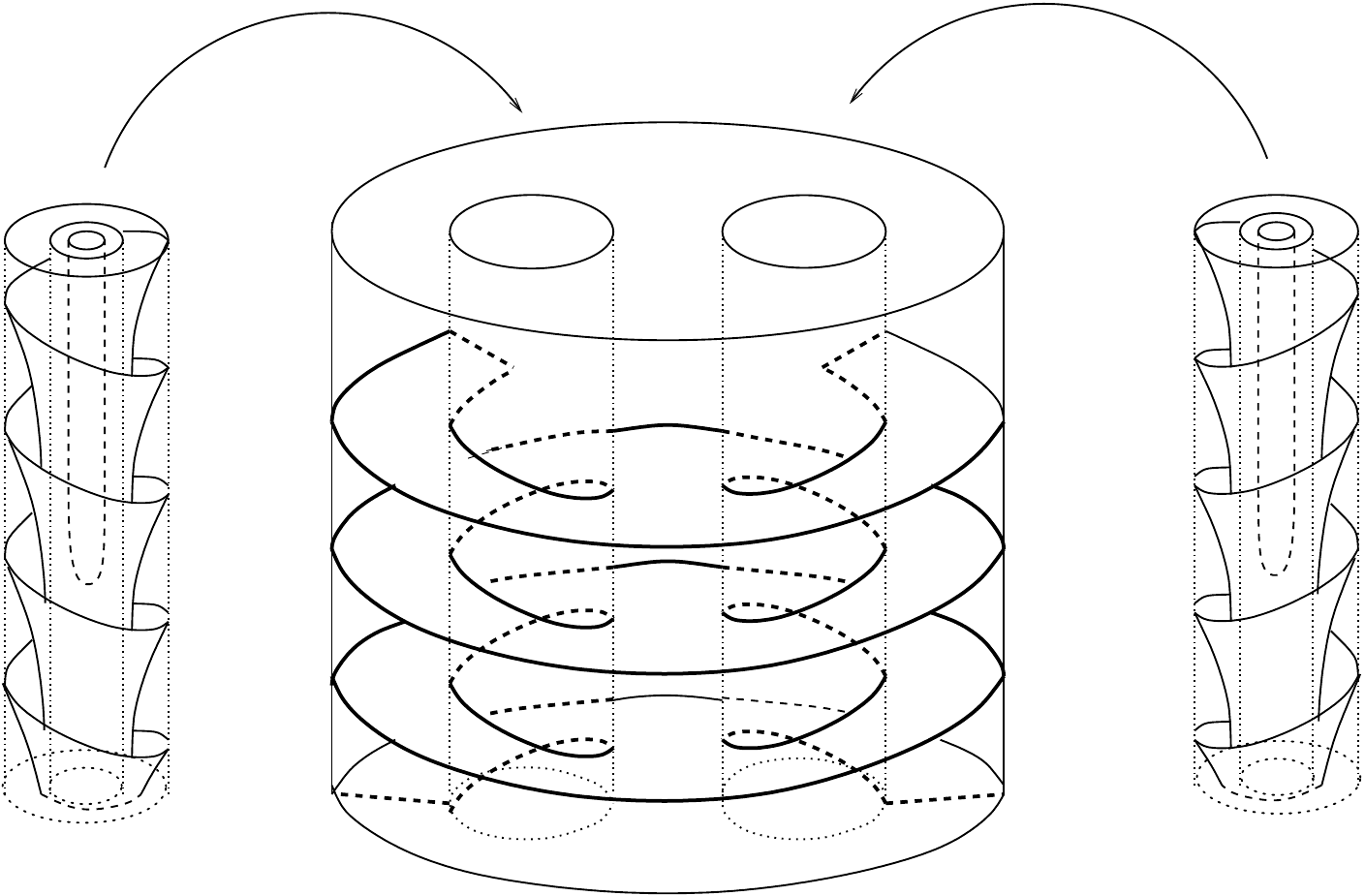} 
\caption{The Schweitzer foliation $\Lid$ \label{f:lid}}
\end{figure}

In the general case, the resulting foliation $\f_f$ is homotopic rel. boundary to a plane field transverse to the $\cercle^1$ factor (because Reeb fillings are), and as Larcanch\'e observed, all of this can be done continuously with respect to $f$ because this is true for the decomposition of $f$ and the ``merging procedure". Let us clarify this last claim using Lemma \ref{l:pantalon} below (which we will not prove), which reflects the flexibility of suspension foliations over a pair of pants, and will be used independently on several occasions in Section \ref{s:malleabilization}. \medskip


To fix notations, let $\P$ denote the (oriented) pair of pants obtained by removing the interiors of the disks $D_\pm$ of radius
$1/4$ centered at $\pm(1/2,0)$ from the unit disk $\D^2\subset\R^2$. Let $\partial_\pm\P=\partial D_\pm$ and
$\partial_0\P=\partial \D^2$ (oriented as the boundary of $D_\pm$ and $\D^2$ respectively). Let $V \subset \P$ be the union of two segments joining $(0,-1) \in \partial \D^2$ to $\pm(1/4,0) \subset \partial_\pm \P$ respectively, and $\FTV(\P \times \cercle^1)$ the subspace of $\FT (\P \times \cercle^1)$ made of foliations inducing the horizontal foliation by $V \times \{\cdot\}$ on $V \times \cercle^1$. 
\begin{figure}[htbp]
\centering
\includegraphics[width=3cm]{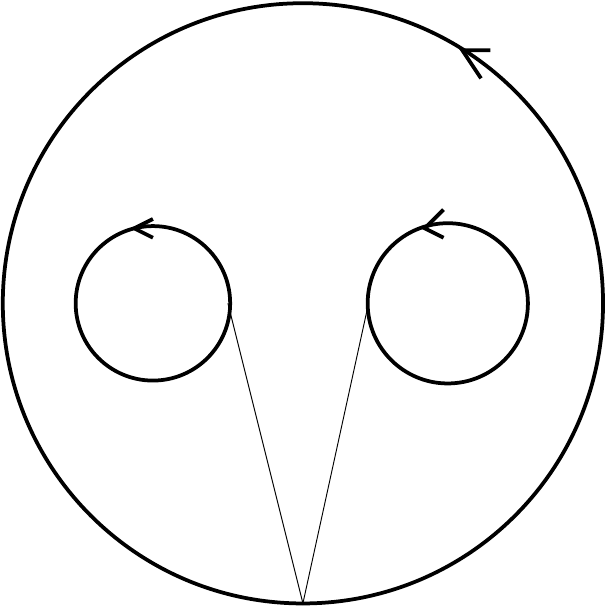}
\put(-3,70){$\scriptstyle \partial_0\P$}
\put(-30,58){$\scriptstyle \partial_+\P$}
\put(-72,58){$\scriptstyle \partial_-\P$}
\put(-35,10){$\scriptstyle V$}
\caption{Boundary components of $\P$ \label{fig:pantalon}}
\end{figure} Denote by $\GG_0$ the group of fibered diffeomorphisms of $\P \times \cercle^1$ covering the identity which restrict to the identity on $(V \cup \partial_{\pm} \P) \times \cercle^1$. The group $\GG_0$ is contractible and acts on $\FTV(\P \times \cercle^1)$. Using $V \cap \partial_i\P$ as a base point on $\partial_{i}\P$, $i \in \{+,-,0\}$, we get holonomy maps $h_i:\FTV (\P \times \cercle^1) \to \DDi$, $h_i(\tau) = hol(\partial_i\tau)$, where $\partial_i\tau$ denotes the foliation induced by $\tau$ on $\partial_i\P$, which satisfy $h_0(\tau) = h_-(\tau) \circ h_+(\tau)$.

\begin{lemma}\label{l:pantalon}
The restriction map
$$  \begin{aligned}
\FTV (\P \times \cercle^1) & \to 
\FT (\partial_-\P \times \cercle^1) \times \FT (\partial_+ \P \times \cercle^1) \\
\tau & \mapsto (\partial_-\tau, \partial_+\tau) 
\end{aligned}  $$ 
is a trivial fibration with contractible fibers (the orbits of $\GG_0$).
\end{lemma}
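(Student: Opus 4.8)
The plan is to prove Lemma \ref{l:pantalon} by the standard strategy for showing that a natural restriction map is a trivial fibration: exhibit an explicit continuous global section and show that the fibers are exactly the orbits of the contractible group $\GG_0$ acting freely and transitively on each fiber. Concretely, first I would construct the section. Given a pair $(\sigma_-,\sigma_+)\in\FT(\partial_-\P\times\cercle^1)\times\FT(\partial_+\P\times\cercle^1)$, I want to produce a foliation $\tau\in\FTV(\P\times\cercle^1)$ transverse to the $\cercle^1$ factor, inducing the horizontal foliation along $V\times\cercle^1$ and restricting to $\sigma_\pm$ on $\partial_\pm\P\times\cercle^1$. The natural device is a suspension: cutting $\P$ along the two segments of $V$ turns it into a disk (a rectangle), so a foliation of $\P\times\cercle^1$ transverse to $\cercle^1$ and horizontal along $V\times\cercle^1$ is the same as a suspension representation of the (free) fundamental groupoid of the cut surface, i.e.\ it is determined by holonomy data, which here amounts to choosing, continuously, a path in $\DDi$ from $\id$ to the holonomy $h_\pm(\sigma_\pm)$ realizing how one travels from the base point $V\cap\partial_\pm\P$ out to $\partial_\pm\P$ and around it. Since $\DDi$ is contractible one can choose such paths continuously (indeed canonically, e.g.\ by the straight-line homotopy after fixing a linear structure on $\DDi$, or simply by using the contraction of $\DDi$), and the reference foliations $\sigma_\pm$ near the boundary are glued in unchanged by construction; this yields a continuous map $s$ from the product to $\FTV(\P\times\cercle^1)$ with $(\partial_-\circ s,\partial_+\circ s)=\id$.

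Next I would show that $\GG_0$ acts freely on $\FTV(\P\times\cercle^1)$ and that the restriction map is exactly the quotient by this action, so that the fibers are precisely the $\GG_0$-orbits. Freeness: if a fibered diffeomorphism $\Phi\in\GG_0$, which is the identity on $(V\cup\partial_\pm\P)\times\cercle^1$, preserves a foliation $\tau$ transverse to $\cercle^1$, then since $\tau$ is transverse to the fibers one can reconstruct $\Phi$ leafwise from its boundary values by following leaves (every point of $\P\times\cercle^1$ is joined to $(V\cup\partial_\pm\P)\times\cercle^1$ by a leaf path, using transversality to $\cercle^1$ and the product structure), forcing $\Phi=\id$. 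Transitivity on fibers: if $\tau,\tau'\in\FTV$ have $\partial_\pm\tau=\partial_\pm\tau'$, then again using that both are transverse to $\cercle^1$ and horizontal along $V$, the "leaf-flow" holonomy identification produces a fibered diffeomorphism $\Phi$ with $\Phi_*\tau=\tau'$ which is the identity on $(V\cup\partial_\pm\P)\times\cercle^1$, i.e.\ $\Phi\in\GG_0$; this is where one uses that cutting along $V$ makes $\P$ a disk, so the leafwise identification is globally well defined and smooth. Contractibility of $\GG_0$ is asserted in the statement and I would take it as given (it follows from the contractibility of $\DDi$ and of groups of fibered diffeomorphisms of the interval/rectangle rel.\ a boundary arc). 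Combining: the map is surjective (by the section $s$), its fibers are the orbits of a free action of the contractible group $\GG_0$, and the section trivializes it, giving the homeomorphism $\FTV(\P\times\cercle^1)\cong \FT(\partial_-\P\times\cercle^1)\times\FT(\partial_+\P\times\cercle^1)\times\GG_0$ over the base. The compatibility $h_0(\tau)=h_-(\tau)\circ h_+(\tau)$ of the holonomy maps is then automatic from the composition of leaf paths around the two inner boundary circles.

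The main obstacle I anticipate is the careful bookkeeping in the suspension/section construction: one must make all choices (the paths in $\DDi$ interpolating holonomies, the smoothing of the foliation where the data on $\partial_\pm\P\times\cercle^1$ is grafted onto the suspension model, the identification of the cut surface $\P\setminus V$ with a standard rectangle) depend continuously — and compatibly — on the input pair $(\sigma_-,\sigma_+)$, and to do so while keeping the resulting foliation genuinely transverse to the $\cercle^1$ factor and exactly horizontal along $V\times\cercle^1$. Transversality is an open condition so it survives small interpolations, but making the section land in $\FTV$ on the nose (horizontal along $V$, with prescribed germ along $\partial_\pm\P$) requires a collar-by-collar construction: use the given $\sigma_\pm$ verbatim on a collar of $\partial_\pm\P\times\cercle^1$, use the product/horizontal model on a neighbourhood of $V\times\cercle^1$, and interpolate in the complementary region via the suspension over the cut rectangle, matching holonomies along the cut arcs by the contraction of $\DDi$. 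All of this is routine but fiddly; the conceptual content — free transitive $\GG_0$-action on fibers, contractible $\GG_0$, continuous section — is exactly the template of Lemmas \ref{l:holonomie1} and \ref{l:holonomie}, and the proof should be presented in parallel with those.
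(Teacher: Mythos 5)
The paper states Lemma \ref{l:pantalon} with a $\qedsymbol$ and no written argument, treating it (like Lemmas \ref{l:holonomie1} and \ref{l:holonomie}) as a standard fact; your sketch correctly fills in the intended argument: the fibers are torsors under the free action of the contractible group $\GG_0$ (transitivity via leafwise parallel transport on the simply connected cut surface $\P\setminus V$, freeness for the same reason), and a continuous section is built from the input foliations on collars of $\partial_\pm\P\times\cercle^1$, the horizontal model near $V\times\cercle^1$, and a suspension interpolation using the contractibility of $\DDi$. This matches what the paper leaves to the reader, so the proposal is correct and in the expected spirit.
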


\noindent\emph{Proof of Proposition \ref{t:larcanche} in terms of Lemma \ref{l:pantalon}}. Let $f\in\DDi$. Define on $\cercle^1\times\cercle^1$ two closed forms $\omega_+=$ $g_f'(z)dz - \mu d\t$ and $\omega_-=dz-\lambda_fd\t$ (cf. Theorem \ref{t:herman}), which define foliations of holonomy $g_f^{-1} \circ T_\mu \circ g_f$ and $T_{\lambda_f}$ respectively. According to Lemma \ref{l:pantalon}, there exists a foliation of $\FTV(\P \times \cercle^1)$ whose restrictions $\partial_{\pm}\tau$ to $\partial_{\pm}\P \times \cercle^1$ are the foliations defined by $\omega_\pm$ and whose restriction $\partial_0\tau$ to $\partial_0\P \times \cercle^1$ has holonomy $f = T_{\lambda_f} \circ (g_f^{-1} \circ T_\mu \circ g_f)$. Then apply the Reeb filling Lemma \ref{l:reeb-fill} to the forms $\omega_{\pm}$ to define $\f_t(f)$ in $D_{\pm}\times \cercle^1$. Everything can be done continuously with respect to $f$.\hfill\qedsymbol

\section{Flexibility of malleable foliations}\label{s:malleable}

In this section, we give a proper definition of almost integrable plane fields and malleable foliations, and prove Theorem \ref{t:malleable}, that is that any path of plane fields connecting two malleable foliations can be deformed with fixed endpoints to a path of malleable foliations. This follows from Proposition \ref{p:p-i}, Lemma \ref{l:desintegration} and Proposition \ref{p:third}, which can be seen as one-parameter versions of Thurston's first, second and third steps in \cite{Th2} (cf. Introduction). We conclude with a sketch of proof of Theorem B.

From now on, given a $3$-manifold $M$, by a \emph{collection of balls} (resp. \emph{solid tori, arcs...}) in $\Int(M)$, we mean a finite union of disjoint such things.


\subsection{Almost-integrable plane fields and malleable foliations} \label{ss:mal-pmal}

\begin{definition} \label{d:p-i}
Let $M$ be a $3$-manifold (possibly with boundary) and $B=\bigcup_i B_i \subset \Int(M)$ a collection of balls. A plane field $\xi$ on $M$ is \emph{almost horizontal on $B$} if it is integrable near $\partial B$ and satisfies the following conditions for some
parametrization (called \emph{adapted}) of each ball $B_i$ by the unit ball $\D^3$:
\begin{itemize}
\item
$\xi$ is tangent to $S_i = \partial B_i = \partial \D^3$ exactly at the poles $z=\pm1$;
\item
for every $\eps > 0$, there exists a nonsingular vector field $\nu$ on $B_i =
\D^3$ everywhere positively transverse to $\xi$ and to the horizontal plane
field $dz=0$ and tangent to $S_i$ outside an $\eps$-neighbourhood of the poles.
\end{itemize}
A plane field $\xi$ on $M$ is \emph{$B$-almost integrable} if it is integrable on $M \setminus \Int B$ and almost horizontal on $B$. A $B$-almost integrable plane field $\xi$ on $M$ is \emph{taut} if the induced foliation on $M \setminus \Int B$ is taut (meaning that every transverse arc in $M \setminus \Int B$ extends to a closed transversal in $M \setminus \Int B$).
Finally, a plane field $\xi$ is \emph{almost integrable} if it is $B$-almost integrable for some $B$.
\end{definition}

\begin{definition}\label{d:malleable-dhf}
A Schweitzer foliation of the solid torus is \emph{simple} if its holonomy on the boundary torus has whole intervals of fixed points, that is if the induced foliation on $\partial \D^2\times\sphere^1$ has a one-parameter family of closed leaves bounding meridian disks.
\end{definition}

\begin{definition}\label{d:malleable}
Let $M$ be a $3$-manifold. A codimension-one foliation $\tau$ on $M$ is \emph{malleable}
if there is a collection of solid tori $W=\bigcup_i W_i\subset \Int(M)$ such that:
\begin{itemize}
\item $\tau$ induces a simple Schweitzer foliation on each $W_i$,
\item $\tau$ induces a taut foliation on $M\setminus \Int W$.
\end{itemize}
\end{definition}

 In particular, taut foliations are malleable. Recall that on a \emph{closed} $3$-manifold, only torus leaves can fail to meet a closed transversal (cf. \cite{No,Go}), and that we referred to such problematic leaves as \emph{Novikov tori} of the foliation (cf. ``Reduction to the malleable case'' in the Introduction). Thus, a foliation on a \emph{closed} $3$-manifold is malleable if all its Novikov tori are torus leaves of (simple) Schweitzer foliations.\medskip

There is a natural correspondence between malleable foliations on the one hand and, on the other hand, taut almost integrable plane fields together with an additional piece of data, namely, for each ball of the associated collection $B$, a transverse arc connecting the poles in $M \setminus \Int B$ (cf. Fig. \ref{f:tunnel}):

\begin{lemma}\label{l:correspondance}
Let $M$ be a $3$-manifold, $B = \bigcup_1^n B_i \subset \Int(M)$ a collection of balls, $\xi$ a taut $B$-almost integrable plane field on $M$ and $\{ A_i \}_{1\le i\le n}$ disjoint arcs transverse to $\xi$, each $A_i$ connecting the poles of $B_i$ in $M \setminus \Int B$. Then there is a malleable foliation $\tau$ on $M$ with the following properties:
\begin{itemize}
\item
the solid tori $W_i$ associated to $\tau$ are neighbourhoods of $B_i \cup A_i$, $1\le i\le n$;
\item
the plane fields $\xi$ and $\tau$ are homotopic relative to $M \setminus
\bigcup_i\Int W_i$.
\end{itemize}
\end{lemma}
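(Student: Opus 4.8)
The plan is to construct the foliation $\tau$ explicitly by the three-part recipe sketched in ``Step 3'' of Thurston's method, and then to check malleability and the relative homotopy. Concretely, start from the taut $B$-almost integrable plane field $\xi$ and the disjoint transverse arcs $A_i$, each joining the two poles of $B_i$ inside $M\setminus\Int B$. First I would fix, for each $i$, a closed tubular neighbourhood $W_i$ of $B_i\cup A_i$ which is a solid torus, chosen thin enough that the $W_i$ are pairwise disjoint and that, away from $B_i$, the part of $W_i$ running along $A_i$ is a ``tunnel'' $D^2\times[0,1]$ on which $\xi$ is (since it is integrable there and $A_i$ is transverse to it) a product foliation by disks $D^2\times\{\cdot\}$, transverse to the $[0,1]$ factor. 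The effect of removing the tunnel from the hole $B_i$ and re-gluing is to turn the ball-shaped hole into a solid-torus-shaped hole $W_i$ on which $\xi$ is, after an isotopy identifying $W_i$ with $D^2\times\sphere^1$, transverse to the $\sphere^1$ factor: indeed on $B_i$ minus a neighbourhood of the poles $\xi$ is transverse to the pole-connecting vector field $\nu$ furnished by almost horizontality (Definition \ref{d:p-i}), and near the poles it is glued to the product foliation on the tunnel, so that the combined transverse direction closes up to an $\sphere^1$. I would record this as a diffeomorphism $W_i\cong D^2\times\sphere^1$ carrying $\xi|_{W_i}$ into $\PPt(D^2\times\sphere^1)$, with prescribed trace on $\partial W_i$.

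Next I would invoke Theorem \ref{t:larcanche} (via Lemmas \ref{l:holonomie1} and \ref{l:holonomie}, or directly Proposition \ref{p:third-param}) to replace $\xi|_{W_i}$ by the Schweitzer foliation $\f_1(f_i)=\LL_{f_i}$, where $f_i$ is the holonomy of $\xi|_{\partial W_i}$; by construction this foliation has the same trace on $\partial W_i$ as $\xi$, so the result glues with $\xi|_{M\setminus\bigcup_i\Int W_i}$ to a globally defined foliation, which we call $\tau$. Moreover the third clause of Proposition \ref{p:third-param} (or of Theorem \ref{t:larcanche}) gives a homotopy of plane fields $\psi_t(\xi|_{W_i})$ from $\xi|_{W_i}$ to $\tau|_{W_i}$ that is constant on $\partial W_i$; extended by $\xi$ outside $\bigcup_i W_i$ this is exactly a homotopy of plane fields from $\xi$ to $\tau$ relative to $M\setminus\bigcup_i\Int W_i$, which is the second asserted property. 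There is one subtlety: the statement demands that $\tau$ be \emph{malleable}, hence that the Schweitzer foliations induced on the $W_i$ be \emph{simple} (Definition \ref{d:malleable-dhf}), i.e. that each $f_i$ have whole intervals of fixed points. This need not hold for the $f_i$ coming directly from $\xi$, so before applying Theorem \ref{t:larcanche} I would first deform $\xi|_{\partial W_i}$ — equivalently, modify the choice of tunnel/parametrization, or pre-compose with a small isotopy supported near $\partial W_i$ — so as to arrange a one-parameter family of meridian circle leaves on each $\partial W_i$; since $\FT(\sphere^1\times\sphere^1)$ is connected (indeed $hol$ is a fibration onto the connected group $\DDi$) and the subset of holonomies with intervals of fixed points is nonempty, such a deformation exists and extends over $W_i$ and a collar, keeping everything relative to $M\setminus\bigcup\Int W_i$ after shrinking the $W_i$ slightly. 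Then the $\LL_{f_i}$ are simple by fiat, $\tau$ is taut on $M\setminus\bigcup\Int W_i$ because $\xi$ was taut there and tautness only concerns the complement of the holes, and the $W_i$ are by construction neighbourhoods of $B_i\cup A_i$; so $\tau$ is malleable with the advertised associated solid tori.

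The main obstacle I expect is the gluing bookkeeping in the first paragraph: checking carefully that excising the tunnel along $A_i$ and re-parametrising $W_i$ as $D^2\times\sphere^1$ really does produce a plane field transverse to the $\sphere^1$ factor, with a controlled and computable trace on the boundary torus, and that the nonsingular vector field $\nu$ from almost horizontality can be spliced to the product direction on the tunnel without introducing singularities — this is where the precise local model of an almost horizontal plane field near its poles (the two-point tangency, the pole-connecting transverse vector field) is genuinely used, and where one must be attentive that the construction depends continuously on $\xi$ and the arcs (needed for the parametric applications, e.g. Theorem \ref{t:larcanche}'s continuity and later Proposition \ref{p:third}). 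The rest — invoking Theorem \ref{t:larcanche}, reading off the relative homotopy from Proposition \ref{p:third-param}, and adjusting the boundary holonomy to make the Schweitzer foliations simple — is comparatively routine once this local picture is nailed down.
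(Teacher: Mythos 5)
Your strategy matches the paper's: tubular neighbourhoods $W_i$ of $B_i\cup A_i$ parametrized as solid tori so that $\xi\res{W_i}\in\PPt(\D^2\times\cercle^1)$, replace by the Schweitzer foliation $\LL_{f_i}$ via Theorem \ref{t:larcanche}, and read off the relative homotopy from Proposition \ref{p:third-param}. But there are two problems. First, the ``subtlety'' you flag about simplicity is illusory and your proposed boundary deformation is unnecessary. With the parametrization $W_i\cong\D^2\times\cercle^1$ in which $A_i=\{0\}\times J_i$ and $\xi$ is tangent to the meridian disks on $\D^2\times J_i$ --- exactly the picture you yourself set up when you observe that the tunnel carries a product foliation by disks --- the trace of $\xi$ on $\partial W_i$ over $J_i$ is the foliation by meridian circles $\partial\D^2\times\{z\}$, $z\in J_i$, so the holonomy $f_i$ fixes $J_i$ pointwise and $\LL_{f_i}$ is \emph{automatically} simple in the sense of Definition~\ref{d:malleable-dhf}. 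There is nothing to fix; the detour you propose (deforming $\xi\res{\partial W_i}$, then shrinking the $W_i$) only complicates the relative-homotopy bookkeeping while making the statement harder to track.

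Second, and more seriously, your tautness claim is a genuine gap. You write that $\tau$ is taut on $M\setminus\bigcup\Int W_i$ ``because $\xi$ was taut there and tautness only concerns the complement of the holes''. But $\xi$ is taut as a foliation of $M\setminus\Int B$, whereas $\tau$ must be taut as a foliation of $M\setminus\Int W$, and $W\setminus B$ is nonempty. The closed transversals witnessing tautness of $\xi\res{M\setminus\Int B}$ may pass through $W\setminus B$, so they are not closed transversals in $M\setminus\Int W$ as stated. One must argue that, given a leaf $L'$ of $\tau\res{M\setminus\Int W}$, a closed transversal through the corresponding leaf of $\xi\res{M\setminus\Int B}$ can be chosen meeting $L'$ and then isotoped off $W\setminus B$. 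This is elementary, but it needs to be said --- and it is exactly the one point the paper's proof is careful to address.
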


Conversely:

\begin{lemma}\label{l:inverse}
Let $M$ be a $3$-manifold, $\tau$ a malleable foliation on $M$ and $W = \bigcup_1^n W_i$ the corresponding collection of solid tori. There exists a plane field $\xi$, together with balls $B_i \subset W_i$ and arcs $A_i \subset \Int W_i \setminus \Int B_i$ such that:
\begin{itemize}
\item
$\xi$ is taut $B$-almost integrable, where $B = \bigcup_i B_i$ ;
\item
each arc $A_i$ is transverse to $\xi$ and connects the poles of $B_i$ ;
\item
the plane fields $\tau$ and $\xi$ are homotopic relative to $M \setminus \Int W$.
\end{itemize}
\end{lemma}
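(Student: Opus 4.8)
The plan is to prove this locally, one solid torus at a time, essentially by running the construction behind Lemma~\ref{l:correspondance} backwards: out of each simple Schweitzer piece we shall dig a ball together with a thin tunnel. First I would localize. Set $\xi:=\tau$ on $M\setminus\Int W$. Since everything below takes place inside the $W_i$'s and leaves the plane field unchanged on each $\partial W_i$, it suffices to treat one $W_i\cong\D^2\times\cercle^1$ at a time and to produce there a plane field that agrees with $\tau$ on $\partial W_i$, is homotopic to $\tau|_{W_i}$ rel~$\partial W_i$, and comes equipped with a ball $B_i\subset\Int W_i$ and an arc $A_i\subset W_i\setminus\Int B_i$ joining two points of $\partial B_i$, in such a way that the plane field is almost horizontal on $B_i$, integrable on $W_i\setminus\Int B_i$ with $A_i$ transverse to it, and $W_i$ is a regular neighbourhood of $B_i\cup A_i$. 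Tautness of the foliation induced on $M\setminus\Int B$ is postponed to the end.

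Next I would replace the Schweitzer piece by a plane field transverse to the $\cercle^1$ factor, using simplicity. Write $\tau|_{W_i}=\f_1(f)$ with $f=hol(\tau|_{\partial W_i})$; then $t\mapsto\f_{1-t}(f)$ (Theorem~\ref{t:larcanche}) is a homotopy rel~$\partial W_i$ from $\tau|_{W_i}$ to a plane field in $\PPt(W_i)$, and since the fibres of the restriction map of Lemma~\ref{l:holonomie1} are connected, $\tau|_{W_i}$ is in fact homotopic rel~$\partial W_i$ to \emph{every} $\eta\in\PPt(W_i)$ inducing the foliation $\tau|_{\partial W_i}$ on $\partial W_i$. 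Now I use that the Schweitzer foliation is \emph{simple}: $\tau|_{\partial W_i}$ has a one-parameter family of meridian-circle leaves, filling an annulus $\partial\D^2\times I$ with $I\subset\cercle^1$ a nondegenerate closed interval. Fix $z_0\in\Int I$ and $\delta>0$ with $[z_0-\delta,z_0+\delta]\subset I$, and set $D_0=\D^2\times\{z_0\}$. I would then choose $\eta$ so that it coincides with the product foliation by meridian disks on the slab $S=\D^2\times[z_0-\delta,z_0+\delta]$: this is possible because on $S$ that foliation induces exactly the meridian-circle foliation, hence $\tau|_{\partial W_i}$, on $\partial\D^2\times[z_0-\delta,z_0+\delta]$, and — writing a plane field transverse to $\partial_z$ as $\ker(dz-\alpha)$ with $\alpha(\partial_z)=0$ — it extends over the complementary ball $W_i\setminus\Int S$ with the prescribed trace on $\partial W_i$, the space of admissible $\alpha$'s being a nonempty affine space.

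Then I would dig the hole. Let $A_i$ be a short arc transverse to the meridian disks, crossing $D_0$ once, and let $T_i\subset\Int S$ be a thin regular neighbourhood of $D_0\cup A_i$, chosen so that the frontier $\partial B_i$ of $B_i:=W_i\setminus\Int T_i$ consists of a cylindrical annulus contained in $\partial W_i$ — to which $\eta$ is automatically transverse, being transverse to $\partial_z$ — together with two caps that dip into $S$, each transverse there to the (horizontal) plane field $\eta$ except at its centre; take those centres $p_\pm\in A_i$ as the poles. Then $B_i$ is a ball, $W_i$ is a regular neighbourhood of $B_i\cup A_i$, and $W_i\setminus\Int B_i=T_i\subset S$, so $\eta$ is the product foliation (in particular integrable) on $W_i\setminus\Int B_i$ with $A_i$ transverse to it. On $B_i$, $\eta$ is tangent to $\partial B_i$ exactly at $p_\pm$ and is transverse to $\partial_z$, with $\partial_z$ itself transverse to $\eta$ and to the horizontal; by the standard local model near a pole (bending $\partial_z$ near the caps) one obtains, for every $\eps>0$, a nonsingular vector field on $B_i$ transverse to $\eta$ and to the horizontal, tangent to $\partial B_i$ off the $\eps$-neighbourhood of $\{p_\pm\}$ and joining the poles, so $\eta$ is almost horizontal on $B_i$. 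Gluing these pieces to $\xi=\tau$ outside $W$ yields the desired $\xi$, which is $B$-almost integrable ($B=\bigcup_iB_i$), homotopic to $\tau$ rel~$M\setminus\Int W$, with each $A_i$ transverse and joining the poles of $B_i$. Finally, the foliation induced by $\xi$ on $M\setminus\Int B$ is $\tau$ on $M\setminus\Int W$ (taut, by malleability) and the product by meridian disks on each tunnel, glued along meridian circles; since every leaf meets $\Int(M\setminus\Int W)$, a closed transversal through it there — furnished by tautness of $\tau|_{M\setminus\Int W}$ — is still a closed transversal of $\xi$, and any transverse arc inside a tunnel can be pushed into $M\setminus\Int W$ and closed up there, so $\xi$ is taut.

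The step I expect to be the main obstacle is the second one: it is precisely \emph{simplicity} — the one-parameter family of meridian-circle leaves on the boundary torus — that makes it possible to re-create a genuine meridian disk $D_0$ of integrability inside the piece (impossible for a general Schweitzer foliation, whose Reeb tori get in the way), and the homotopy bookkeeping (via Theorem~\ref{t:larcanche} and Lemma~\ref{l:holonomie1}) needed to guarantee that the final plane field is honestly homotopic rel~$\partial W_i$ to $\tau|_{W_i}$ is where the care lies. The almost-horizontality on $B_i$ and the tautness verification, by contrast, are routine and parallel the corresponding parts of the proof of Lemma~\ref{l:correspondance}.
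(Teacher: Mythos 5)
Your construction follows the paper's proof in all essentials: both reverse the homotopy of Theorem~\ref{t:larcanche} to replace the Schweitzer piece on $W_i$ by a plane field transverse to the $\cercle^1$ factor, both exploit simplicity (the interval $J_i$ of meridian‑circle leaves on $\partial W_i$) to normalize this plane field to the product foliation by disks near a meridian disk $D_0$, and both then carve out the ball $B_i$ and arc $A_i$ in the obvious way (the paper takes $B_i$ to be a rounding of $\D^2\times(\cercle^1\setminus J_i)$ with $A_i$ a subarc of the core, while you take $B_i$ to be the complement of a thin neighbourhood of $D_0\cup A_i$ — a cosmetic difference). Your explicit check of tautness of $\xi$ on $M\setminus\Int B$ is a welcome addition, as the paper's own proof of this lemma leaves it to the reader.
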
  

\begin{proof}[Proof of Lemma \ref{l:correspondance}]
This follows readily from Theorem \ref{t:larcanche}.  For $1 \le i \le n$, we can parametrize a neighbourhood $W_i$ of $B_i \cup A_i$ by $\D^2 \times \cercle^1$, so that: 
\begin{itemize} 
\item $A_i = \{0\} \times J_i$ for some interval $J_i$ of $\cercle^1$ and $\xi$ is tangent to the disks $\D^2 \times \{\cdot\}$ on $\D^2 \times J_i$ ; 
\item $\xi$ is transverse to the $\cercle^1$ factor, and integrable in a neighbourhood of $\partial W_i$.  
\end{itemize} 
Let $f_i \in \DDi$ be the holonomy of the foliation induced by $\xi$ on $\partial W_i$. According to Theorem \ref{t:larcanche}, the tangent plane field to the (simple) Schweitzer foliation $\LL_{f_i}$ is homotopic to $\xi$ relative to $\partial W_i$. Let $\tau$ be the foliation of $M$ which coincides with $\xi$ on $M'= M\setminus \bigcup \Int W_i$ and with $\LL_{f_i}$ on $W_i$. It remains to prove that $\tau\res {M'}=\xi\res {M'}$ is taut. Let $L'$ be a leaf of $\tau\res {M'}$, and $L$ the leaf of $\xi\res {M\setminus \Int B}$ such that $L'=L\cap M$. By assumption on $\xi$, $L$ is crossed by a closed transversal $\Gamma$ to $\xi\res {M\setminus \Int B}$, and we can assume that $\Gamma$ meets $L$ at some point of $L'$. Now it is not difficult to push $\Gamma$ out of $W\setminus B$. 
\end{proof}

\begin{remark}\label{r:correspondance-param}
According to Proposition \ref{p:third-param}, the above construction can actually be performed continuously on \emph{families} of taut almost integrable plane fields given with transverse arcs provided these transverse arcs vary continuously. Proposition \ref{p:third} below will show how to get rid of the latter condition.
\end{remark}

\begin{proof}[Proof of Lemma \ref{l:inverse}]
For $1 \le i \le n$ and for some suitable parametrization of $W_i$ by $\D^2 \times \sphere^1$, the Schweitzer foliation $\tau \res{W_i}$ is homotopic relative to the boundary to a plane field $\bar\xi$ transverse to the $\sphere^1$ factor and
whose restriction to $\partial W_i = \partial \D^2 \times \sphere^1$ is tangent to $\partial \D^2 \times \{z\}$ for all $z$ in some interval $J_i$ of $\cercle^1$. We can thus deform $\bar\xi$ relative to the boundary among plane fields transverse to $\sphere^1$ into a plane field $\xi$ tangent to the disks $\D^2 \times \{\cdot\}$ on $\D^2 \times J_i$. We then define $B_i$ as the ball obtained after rounding the box $W_i\setminus (\D^2\times J_i)=\D^2 \times (\sphere^1 \setminus J_i)$, making sure
that $\partial B_i$ has exactly two tangency points with $\xi$: the poles, located on the core curve $\{0\} \times \sphere^1$ of $W_i$. If we parametrize $B_i$ by $\D^3$ in such a way that the third coordinate coincides with the coordinate $z$ in $\cercle^1$, for every $\eps>0$, the vector field $\partial_z$ on $\D^2 \times (\cercle^1 \setminus J_i)$ can easilly be extended into a vector field $\nu$ satisfying all the properties of Definition \ref{d:p-i}. The sub-arc $A_i$ of $\{0\} \times
J_i$ connecting the poles of $B_i$ is transverse to $\xi$.
\end{proof}


\subsection{Flexibility of taut almost integrable plane fields}\label{ss:flex-p-m}

Here we prove:

\begin{proposition}
\label{p:p-m}
Let $\xi_0$ and $\xi_1$ be taut $B$-almost integrable plane fields on a closed $3$-manifold $M$, for some collection of balls $B\subset M$. Any path $(\xi_t)_{t\in[0,1]}$ of plane fields on $M$ connecting $\xi_0$ to $\xi_1$ is homotopic with fixed endpoints to a path of taut $\Bh$-almost integrable plane fields, where $\Bh$ is a collection of balls \including$B$.
\end{proposition}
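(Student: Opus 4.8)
The plan is to realize Proposition~\ref{p:p-m} as the one-parameter version of Thurston's first step, exactly as sketched in the introduction, carried out in two stages. First I would fix a triangulation $K$ of $M$ so fine that the direction of $\xi_t$ is almost constant on each $3$-simplex \emph{for every} $t\in[0,1]$ simultaneously; this is possible by uniform continuity of $(x,t)\mapsto\xi_t(x)$ on the compact $M\times[0,1]$. Since $\xi_0$ and $\xi_1$ are already $B$-almost integrable, I would further arrange (subdividing if necessary) that $B$ sits in the union of the open $3$-simplices of $K$, one ball per simplex, and that on a neighbourhood of the $2$-skeleton $\xi_0$ and $\xi_1$ are integrable with the edges and faces transverse to them. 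The target bunch $\Bh$ will be $B$ together with one extra small ball in each $3$-simplex of $K$ not already meeting $B$ (or, more honestly, possibly several extra balls near the ``bad'' faces; see below), its poles varying continuously with $t$.

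The core of the argument is the parametric construction of a neighbourhood $V$ of the $2$-skeleton and a deformation, with fixed endpoints, of $(\xi_t)$ to a family that is integrable on $V$. Following Thurston, one treats the simplices of the $2$-skeleton in increasing order of dimension: vertices, then edges, then faces. For a simplex $\sigma$ and a value $t$ for which $\xi_t$ is transverse to $\sigma$, there is a nonsingular vector field $\nu_t$ near $\sigma$ tangent to $\xi_t$ and transverse to $\sigma$, and the deformation makes $\xi_t$ invariant under $\nu_t$ near $\sigma$; since $\xi_t$ is already integrable near $\partial\sigma$ after the previous stages, it is $\nu_t$-invariant there and the local perturbations glue coherently, and for $t\in\{0,1\}$ nothing happens because $\xi_0,\xi_1$ are already integrable near the $2$-skeleton. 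The subtlety, as noted in the introduction, is that one \emph{cannot} keep every edge and face transverse to every $\xi_t$: for some $t$ a face $\sigma$ may be tangent to $\xi_t$ at a point, and then no such $\nu_t$ exists near $\sigma$. This is precisely where I would invoke Eliashberg's device, adapted to foliations in the appendix: treat such special $2$-simplices as ``big vertices'', i.e. handle them (with the help of extra balls absorbing the tangencies, which become part of $\Bh$) \emph{before} the ordinary edges and faces, reducing the remaining edges and faces to the transverse case. The outcome is a family $(\xi_t)$, with $\xi_0,\xi_1$ unchanged, integrable on a neighbourhood $V$ of the $2$-skeleton whose complement is $\Bh$, and arranged so that on each ball of $\Bh$ it is almost horizontal (the adapted parametrization exists because the direction is almost constant there and one can interpolate the poles continuously in $t$). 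Hence each $\xi_t$ is $\Bh$-almost integrable.

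It remains to see that the resulting family can be taken \emph{taut}, i.e. that the induced foliation on $M\setminus\Int\Bh$ is taut for every $t$. This does not come for free from Step~1, and strictly speaking it is Thurston's second step (Lemma~\ref{l:desintegration}) that is responsible for it; within the present proposition I would either (a) phrase ``taut'' as an extra requirement handled by appending a parametric application of Lemma~\ref{l:desintegration} — ripping all leaves and letting them spiral around further small ball-shaped holes, enlarging $\Bh$ once more — so that every transverse arc in the complement extends to a closed transversal, or (b) observe that, since $\xi_0$ and $\xi_1$ are already taut and tautness of the complement foliation is what we must preserve at the endpoints, the desintegration trick is applied only for intermediate $t$ and is trivial (no new hole needed) at $t\in\{0,1\}$, so the endpoints are genuinely fixed. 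Either way, the conclusion is a homotopy with fixed endpoints from $(\xi_t)$ to a path of taut $\Bh$-almost integrable plane fields with $\Bh\supset B$.

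I expect the main obstacle to be Step~1's failure of transversality: organizing the Eliashberg ``big vertex'' treatment of the tangent faces so that (i) the extra holes introduced to absorb tangencies have poles and adapted parametrizations depending continuously on $t$, (ii) the holes are honestly disjoint from each other and from $B$ across all $t$, and (iii) the construction is genuinely \emph{rel} $t\in\{0,1\}$ given that $\xi_0,\xi_1$ already have their $2$-skeleton integrable and need no big vertices at all. This bookkeeping — which is exactly what the appendix is for — is the heart of the matter; the vertex/edge/face induction and the verification that the local $\nu_t$-invariance perturbations glue are routine once the combinatorial setup is in place, and the tautness clean-up is a direct citation of Lemma~\ref{l:desintegration}.
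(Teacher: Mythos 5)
Your high-level strategy matches the paper's: Proposition~\ref{p:p-m} is indeed the concatenation of a one-parameter version of Thurston's Step~1 (make the path integrable near the $2$-skeleton of a fine triangulation, treating $2$-simplices tangent to some $\xi_t$ as ``big vertices'' \`a la Eliashberg) and Step~2 (the leaf-ripping trick for tautness), and the paper proves it precisely by chaining Lemma~\ref{l:p-h}, Proposition~\ref{p:p-i} and Lemma~\ref{l:desintegration}. Your treatment of the tautness clean-up is essentially correct: Lemma~\ref{l:desintegration} is applied to the restriction of the Step-1 path to $N=M\setminus\Int\Bb$, and your remark~(b) — the disintegration is made trivial at $t\in\{0,1\}$, via the cut-off function $\rho$ vanishing at the endpoints — is exactly how that lemma keeps the endpoints fixed.

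There are, however, two genuine gaps in the Step-1 part. First, your plan to arrange, ``subdividing if necessary,'' that $B$ sits in the union of the open $3$-simplices of $K$ cannot work: the triangulation must be so fine that each $\xi_t$ is nearly constant on every $3$-simplex for \emph{all} $t\in[0,1]$, hence the simplices will be much smaller than the fixed-size balls of $B$, and subdividing only makes this worse. The paper does \emph{not} try to absorb $B$ into the triangulation; the deformation in Proposition~\ref{p:p-i} (and in the Key Lemma~\ref{l:local} behind it) is performed rel $\Op(B)$, keeping the old holes fixed, and the new balls in $\Bb\setminus B$ come from the fine triangulation of (a neighbourhood of) $M\setminus\Int B$. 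Second, and more substantially, Proposition~\ref{p:p-i} takes as input a path of plane fields that are already \emph{$B$-almost horizontal} for every $t$ — a condition the intermediate $\xi_t$'s in an arbitrary path have no reason to satisfy. This is the content of Lemma~\ref{l:p-h}, which is missing from your proposal: one must first deform the path, rel endpoints, to a path of $B$-almost horizontal plane fields, and this uses a non-obvious trick — reduce (via Cerf's theorem on $\pi_0\Diff(\D^3)$ and Claim~\ref{a:p-h}) to the case where $\xi_0$ and $\xi_1$ coincide and are horizontal on $B$, then shrink the balls as $t$ runs from $0$ to $1/3$ to a size where $\xi_t$ is constant, hold them on $[1/3,2/3]$, grow them back, and conjugate by a supporting isotopy. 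Without this intermediate step, the rel-$\Op(B)$ deformation of Step~1 does not produce $\Bb$-almost integrable plane fields, because nothing controls $\xi_t$ on $B$ at intermediate times.
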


Here, we say that a collection of balls $\Bh=\bigcup_1^m \Bh_j$ \emph{ \includes}a collection of balls $B=\bigcup_1^nB_i$ if $\{B_i\}_i$ is a subset of $\{\Bh_j\}_j$. Proposition \ref{p:p-m} follows readily from the next three results, in which the notation $\Op(A)$, for a subspace $A$ of any topological space, refers to a small nonspecified open neighbourhood of $A$. Lemma \ref{l:p-h} and Proposition \ref{p:p-i} provide a relative one-parameter version of Thurston's ``step 1'' (cf. Introduction) while Lemma \ref{l:desintegration} corresponds to ``step 2'' and is applied to the restriction to $N=M\setminus \Int \Bb$ of the path provided by Proposition \ref{p:p-i}. 

\begin{lemma}\label{l:p-h}
Let $\xi_0$ and $\xi_1$ be $B$-almost horizontal plane fields on a closed $3$-manifold $M$, for some collection of balls $B\subset M$. Any path $(\xi_t)_{t\in[0,1]}$ of plane fields on $M$ connecting $\xi_0$ to $\xi_1$ is homotopic with fixed end points to a path of $B$-almost horizontal plane fields.
\end{lemma}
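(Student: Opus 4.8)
The plan is to reduce the statement to a local, parametric problem over each ball $B_i$, and to solve it using the convexity built into the definition of ``almost horizontal.'' First I would record the elementary but useful observation that the set of plane fields on a fixed ball $\D^3$ that are almost horizontal \emph{with respect to a fixed adapted parametrization}, and integrable near a fixed neighbourhood of $\partial\D^3$, is convex (or at least star-shaped, hence contractible) once one fixes the tangency locus to be the two poles: the tangency-at-the-poles condition and the transversality to $dz=0$ of a common $\nu$ are stable under convex combination, because a convex combination of plane fields all positively transverse to the same $\nu$ and to $dz=0$ is again positively transverse to both, and a convex combination of plane fields tangent to $S_i$ exactly at the poles is still tangent there (the ``exactly'' part needs a small argument but holds if one keeps all the $\xi_t$ sufficiently $C^1$-close near the poles, which is automatic if one first localizes). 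The point of this step is that almost-horizontality on $B$ is an \emph{open and convex-fibered} condition, so deforming a path into that class is a soft obstruction-theoretic matter rather than a hard geometric construction.

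Next I would set up the deformation globally. Since $\xi_0$ and $\xi_1$ are $B$-almost horizontal, fix adapted parametrizations of the $B_i$ and a neighbourhood $U=\Op(\partial B)$ on which both $\xi_0$ and $\xi_1$ are integrable. On $M\setminus\Int B$ there is nothing to do: the condition ``almost horizontal on $B$'' imposes no constraint away from $B$, so I would simply leave $\xi_t$ unchanged there, and interpolate only inside $B$. The real content is thus: given a path $(\xi_t)$ in $\PP(\D^3)$ with $\xi_0,\xi_1$ almost horizontal (w.r.t. a fixed parametrization) and integrable near $\partial\D^3$, and a prescribed germ along $\partial\D^3\times[0,1]$ (namely the restriction of $\xi_t$, which I must not change because I am gluing to the untouched exterior), deform $(\xi_t)$ rel endpoints and rel that boundary germ to a path of almost-horizontal plane fields. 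The boundary germ is the one subtlety: a priori $\xi_t\res{\partial B}$ for intermediate $t$ need not be integrable, whereas almost-horizontality wants integrability near $\partial B$. So I would first absorb a collar: shrink the ball slightly, i.e. replace $B_i$ by a marginally smaller concentric ball $B_i'$, perform the deformation on $B_i'$ with the \emph{free} boundary condition (only matching $\xi_t$ at $\partial B_i'$, where $\xi_0,\xi_1$ are integrable, but $\xi_t$ need not be), and reconcile the thin shell $B_i\setminus\Int B_i'$ by a further interpolation — this is exactly the kind of ``widen/narrow the hole'' bookkeeping used repeatedly in the paper, so I would invoke it as routine.

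With the boundary issue parked, the core is a relative homotopy-lifting statement: the inclusion of $\{$almost-horizontal plane fields on $\D^3$, integrable near $\partial$, with poles as tangency locus$\}$ into $\PP(\D^3)$ is a \emph{weak homotopy equivalence onto its path component}, or at least $\pi_0$-injective on the relevant component and $\pi_1$-surjective, which is all that is needed to fill in a path with fixed endpoints. I would prove this by exhibiting an explicit deformation retraction: given any plane field $\xi$ on $\D^3$ integrable near $\partial\D^3$ and suitably controlled near the poles, push it toward the horizontal field $dz=0$ by a canonical homotopy $s\mapsto \xi^{(s)}$ that (i) fixes $\xi$ near the poles and near $\partial\D^3$, (ii) stays transverse to a fixed $\nu$ and to $dz=0$ throughout — here I'd use the convexity observation from the first paragraph, literally the straight-line homotopy in the space of plane fields transverse to the two fixed reference objects — and is continuous in $\xi$. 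Applying this fiberwise to the whole path $(\xi_t)$ and to its endpoints simultaneously produces a homotopy of paths landing, for every $s=1$, in the almost-horizontal class, while at $s=0$ it is the original path and for $t\in\{0,1\}$ it is constant (since $\xi_0,\xi_1$ were already there). The main obstacle, and the step I expect to cost the most care, is verifying that the straightening can be made to genuinely preserve the \emph{``tangent to $S_i$ exactly at the poles''} condition for all intermediate $s$ and $t$: transversality to $\nu$ and $dz=0$ is an honest open convex condition and causes no trouble, but the tangency locus on the boundary sphere can jump under interpolation if two plane fields disagree there, so one must either arrange all the $\xi_t$ to already agree with a fixed model in a neighbourhood of the poles (achievable by a preliminary local normalization, since near a pole an almost-horizontal plane field looks like a fixed model up to a contractible choice) or allow the tangency locus to fatten temporarily and then invoke a genericity/$C^1$-closeness argument to pinch it back to the poles. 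I would isolate this as the one genuinely technical lemma and handle it by the preliminary-normalization route, which keeps everything manifestly convex.
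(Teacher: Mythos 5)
Your plan is genuinely different from the paper's: you try to prove a local weak-homotopy-equivalence statement (the inclusion of almost-horizontal plane fields on $\D^3$ into all plane fields is $\pi_0$-injective, via a deformation retraction), whereas the paper's proof is a concrete \emph{shrink-the-ball} argument: after normalizing $\xi_0$ and $\xi_1$ to $dz=0$ in a common parametrization, one reparametrizes $[0,1]$, makes $\psi^*\xi_t$ constant on a tiny ball centered at the origin, shrinks $B$ to that tiny ball (on which a constant plane field is trivially almost horizontal), and pulls back by an isotopy of $M$. That trick sidesteps entirely the question of what the homotopy type of the almost-horizontal locus is.

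There are three gaps in your proposal. First, the convexity observation is only correct in a restricted sense. Two almost-horizontal plane fields on $\D^3$, even with the same adapted parametrization, need not be transverse to a \emph{common} vector field $\nu$: the definition provides a $\nu$ that is transverse to $\xi$ and to $dz=0$, but says nothing about a second plane field $\xi'$. What is true (and is exactly Claim~\ref{a:p-h} of the paper) is that $\xi$ and $dz=0$ share a $\nu$, so the \emph{particular} segment $(1-s)\alpha + s\,dz$ stays nonsingular and almost horizontal; that is much weaker than convexity of the whole class, and it is the only instance of convexity you can actually appeal to.

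Second, and more seriously, your proposed ``deformation retraction'' is really a contraction toward $dz=0$, not a retraction that fixes the almost-horizontal subspace pointwise. Applied ``fiberwise to the whole path,'' the homotopy $\xi_t^{(s)}$ moves $\xi_0$ and $\xi_1$ toward $dz=0$, so it does not have fixed endpoints unless $\xi_0$ and $\xi_1$ already equal $dz=0$ \emph{in the same parametrization}. And that points to the third gap: the adapted parametrizations of $\xi_0$ and $\xi_1$ need not agree --- in particular the poles, which are the tangency points of $\xi_i$ with $\partial B$, are different points in general. The paper reconciles the two parametrizations using the connectedness of $\Diff(\D^3)$ (Cerf's theorem), which is a genuine input that your proposal never invokes. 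Without it, the ``preliminary local normalization near the poles'' you describe is ambiguous (near \emph{whose} poles?), and your target $dz=0$ is not a single, well-defined endpoint for the straightening of both $\xi_0$ and $\xi_1$. Once Cerf is supplied and one realizes the contraction must be replaced by something endpoint-preserving, one is essentially forced into the paper's shrink-the-ball construction, which is why I'd call the gap an essential one rather than a bookkeeping issue.
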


\begin{proposition}\label{p:p-i}
Let $\xi_0$ and $\xi_1$ be $B$-almost integrable plane fields on a closed $3$-manifold $M$, for some collection of balls $B\subset M$. Any path $(\xi_t)_{t\in[0,1]}$ of $B$-almost horizontal plane fields on $M$ connecting $\xi_0$ to $\xi_1$ is homotopic with fixed end points and rel. $\Op(B)$ to a path of $\Bb$-almost integrable plane fields, where $\Bb$ is a collection of balls \including$B$.
\end{proposition}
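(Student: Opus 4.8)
The plan is to mimic Thurston's first step, carried out simultaneously for the whole family $(\xi_t)_{t\in[0,1]}$ of $B$-almost horizontal plane fields, with the twist (due to Eliashberg's contact-geometric technology) that the triangulation cannot be chosen transverse to every $\xi_t$ at once. First I would choose a triangulation $K$ of $N=M\setminus\Int B$ (extended over a collar so that everything happens rel.\ $\Op(B)$) fine enough that the direction of $\xi_t$ is almost constant on each $3$-simplex, uniformly in $t\in[0,1]$; this is possible by compactness of $[0,1]\times M$. On $\Op(B)$ the path is already integrable (each $\xi_t$ is almost horizontal, hence integrable near $\partial B$), so there is nothing to do there and the homotopy will be rel.\ $\Op(B)$. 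The job is then to make every $\xi_t$ integrable in a neighbourhood $V$ of the $2$-skeleton of $K$, proceeding by increasing dimension of simplices, in such a way that the new plane fields over the complement of $V$ land in balls on which they are almost horizontal; the union of $B$ with these new balls is the bunch $\Bb$.

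The mechanism at each simplex $\sigma$ of the $2$-skeleton is the one recalled in the introduction: when $\xi_t$ is transverse to $\sigma$ for all $t$, there is a nonsingular vector field $\nu_t$ near $\sigma$, depending continuously on $t$, tangent to $\xi_t$ and transverse to $\sigma$; making $\xi_t$ invariant under $\nu_t$ on a neighbourhood of $\sigma$ makes it integrable there, and since $\xi_t$ is already integrable (hence $\nu_t$-invariant) near $\partial\sigma$, the local perturbations glue coherently, and they are constant in $t$ wherever the original family already was. The obstruction is precisely the $2$-simplices $\sigma$ at which some $\xi_{t_0}$ is tangent to $\sigma$ at some point: there one cannot find such a $\nu_t$. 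Following Eliashberg's idea (and the adaptation to foliations carried out in the appendix, which I may invoke), these finitely many ``bad'' $2$-simplices are treated \emph{first}, as ``big vertices'': around each of them one directly carves a ball on which the family $\xi_t$ is put in almost horizontal position, using that the family is almost constant on the ambient $3$-simplices and that Lemma \ref{l:p-h} lets one normalize a family of plane fields to almost horizontal form on a ball rel.\ its boundary. After this initial step one is reduced to a $1$- and $2$-skeleton whose remaining edges and faces \emph{can} be taken transverse to every $\xi_t$ (perturbing them slightly away from the bad simplices and the newly dug balls), and the vertex--edge--face induction above goes through verbatim with continuous dependence on $t$.

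Two bookkeeping points then finish the argument. First, the whole construction is a homotopy rel.\ endpoints: since $\xi_0$ and $\xi_1$ are already $B$-almost integrable, they are already integrable on $N$, hence already invariant under the relevant $\nu$'s near each simplex and already in almost horizontal position on the prospective balls, so the perturbation is the identity at $t=0,1$ — here one must check that the triangulation $K$ can be chosen in ``good position'' with respect to the \emph{fixed} foliations $\xi_0,\xi_1$ as well, which is a standard transversality argument, and that the balls dug for the bad simplices can be chosen disjoint from $B$ and so that $\xi_0,\xi_1$ are already almost horizontal on them. Second, one must verify that the complement of the neighbourhood $V$ of the (repaired) $2$-skeleton is, inside each $3$-simplex, a ball on which the resulting family is almost horizontal: this is where the ``almost constant direction on each $3$-simplex'' hypothesis is used, together with Lemma \ref{l:p-h} applied ball by ball. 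Setting $\Bb$ to be $B$ together with all these balls (those from the bad simplices and those from the $3$-simplices), we obtain a path of $\Bb$-almost integrable plane fields homotopic to $(\xi_t)$ rel.\ endpoints and rel.\ $\Op(B)$, with $\Bb$ including $B$. The main obstacle is exactly the handling of the tangential (bad) $2$-simplices: guaranteeing that the ``big vertex'' surgery can be performed continuously in $t$, trivially at $t=0,1$, and compatibly with the subsequent skeletal induction — this is the content imported from Eliashberg's work and detailed in the appendix.
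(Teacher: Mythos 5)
Your proposal correctly identifies the strategy — mimic Thurston's first step parametrically, adopt Eliashberg's device of treating tangency-carrying $2$-simplices as ``big vertices'', and keep the deformation rel.\ $\Op(B)$ — and this is indeed the route taken in the appendix. But there is a genuine gap: you never subdivide the parameter interval $[0,1]$, and you treat the set of ``bad'' $2$-simplices as if it were fixed and small. Choosing the triangulation so that each $\xi_t$ is almost constant on every $3$-simplex (``compactness of $[0,1]\times M$'') only controls the spatial oscillation; as $t$ ranges over $[0,1]$, the direction of $\xi_t(p)$ at a fixed $p$ can sweep out a large region of $\sphere^2$. Consequently a single $2$-simplex may be tangent to $\xi_{t}$ for some $t$ and transverse for others, and in the worst case almost every $2$-simplex is ``special for some $t$'', so the step ``the remaining edges and faces can be taken transverse to every $\xi_t$'' has no content. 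Worse, the crucial disjointness of special simplices (Claim \ref{a:speciaux}) is proved using the bound on the \emph{temporal} oscillation $\angle(\xi_s(p),\xi_t(p))$ coming from a fine subdivision of the parameter space (condition \eqref{e:vp}); without that subdivision two adjacent $2$-simplices could both be special (for different values of $t$) and the big-vertex surgery cannot be carried out independently. The paper therefore first subdivides $K=[0,1]$ so that $\xi_t$ oscillates very little in $t$ on each parameter simplex, and then proceeds by induction on the skeleton of that subdivision (Lemma \ref{l:recurrence}), treating the $0$-skeleton first and extending over each $1$-simplex rel.\ its endpoints. That inductive structure is what guarantees the compatibility of choices (including which simplices are special) across adjacent parameter intervals, and it is missing from your argument.

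Two smaller discrepancies are worth flagging. In the paper the special $2$-simplices are not turned into holes: they are made integrable on a neighbourhood by the same ``$\nu$-invariance'' mechanism as ordinary simplices, using a vector field tangent to $\xi_t$ whose flowbox covers $\sigma$ (see Lemma \ref{l:defmod} and the deformation near special simplices); only the interiors of the $3$-simplices become the new balls $\Bb$. Carving an extra hole at each special $2$-simplex, as you propose, is conceivable but leaves you with the burden of verifying that these extra holes are pairwise disjoint, disjoint from $B$, and admit adapted parametrizations with the two-pole tangency pattern — none of which is automatic. Finally, to know that the final plane fields really are almost horizontal on the balls inside the $3$-simplices, one needs quantitative control of the $\CC^1$ norm of the perturbed plane fields (Lemma \ref{l:curv2} plus the $\norm{\cdot}_{d,m}$ estimates of Lemma \ref{l:defmod}); ``almost constant direction on each $3$-simplex'' is a hypothesis on the input, not an automatic feature of the output, and citing Lemma \ref{l:p-h} does not supply this control.
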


\begin{lemma}\label{l:desintegration}
Let $\xi_0$ and $\xi_1$ be taut foliations on a compact $3$-manifold $N$ (possibly with boundary), and $(\xi_t)_{t\in[0,1]}$ a path of foliations on $N$ connecting $\xi_0$ to $\xi_1$ and having no component of $\partial N$ as a leaf. Then $(\xi_t)_{t\in[0,1]}$ is homotopic with fixed end points and rel. $\Op(\partial N)$ to a path $(\bar\xi_t)_{t\in[0,1]}$ of \emph{taut} $B'$-almost integrable plane fields, for some collection of balls $B'\subset \Int(N)$.
\end{lemma}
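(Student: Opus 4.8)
The statement is a parametric version of Thurston's "step 2": we are given a path $(\xi_t)$ of \emph{foliations} on a compact $3$-manifold $N$ (no boundary component a leaf), with $\xi_0,\xi_1$ taut, and we must deform it rel.\ $\Op(\partial N)$ and with fixed endpoints to a path of taut almost integrable plane fields. The strategy is to reproduce Thurston's "ripping the leaves" trick in a parametric and relative way, which is exactly what Lemma~\ref{l:desintegration} is quoted as doing when invoked in the main text; so the job here is to set that trick up carefully. The core idea: choose a finite collection of small disjoint balls $B' = \bigcup_j B'_j$ in $\Int(N)$, away from $\Op(\partial N)$, whose union meets every leaf of every $\xi_t$ (by compactness of the path and of $N$, finitely many balls suffice, and one can arrange the same balls to work for all $t$ since the condition "each leaf passes near $B'_j$" is open and the parameter space is compact — more precisely, pick the $B'_j$ so that their union is $\frac{1}{2}$-dense transversally for every $\xi_t$). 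Inside each ball, one performs the Thurston spiralling modification: the foliation is replaced, rel.\ $\partial B'_j$, by one that is almost horizontal on a slightly smaller ball, obtained by "whirling" the leaves so that after the modification every leaf of the ambient foliation that entered $B'_j$ now spirals toward (and hence is crossed by a transversal leading into) the almost-horizontal core. Performing this on all the $B'_j$ makes the resulting plane field $B'$-almost integrable, and tautness outside $B'$ follows because every leaf, having been forced to enter some $B'_j$, is crossed by a closed transversal built from the spiralling together with a return path.

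\textbf{Key steps, in order.} (1) Using compactness of $[0,1]$ and $N$ together with the fact that each $\xi_t$ is a foliation (so leaves are embedded submanifolds varying continuously), choose a fixed finite bunch of disjoint closed balls $B' \subset \Int(N) \setminus \Op(\partial N)$ such that for \emph{every} $t$, every leaf of $\xi_t$ passes within a fixed small transverse distance of $\bigcup_j B'_j$; shrink so that on $\Op(B'_j)$ each $\xi_t$ is arbitrarily $C^0$-close to a fixed plane field, in fact one can take $\xi_t$ almost constant on each $B'_j$ after a preliminary small isotopy (choosing $B'_j$ small enough). (2) On each $B'_j$, build the parametric Thurston "rip": a family of plane fields $\bar\xi_t$ on $B'_j$ agreeing with $\xi_t$ near $\partial B'_j$, homotopic to $\xi_t$ rel.\ $\partial B'_j$ through plane fields, and such that $\bar\xi_t$ is almost horizontal on a sub-ball $B'_j{}'' \subset \Int B'_j$ (the poles chosen to vary continuously with $t$, which is possible since $\xi_t|_{B'_j}$ is nearly constant). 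This is the parametric "Thurston trick" alluded to in the introduction (Figure~\ref{f:whirl}); for $\xi_0$ and $\xi_1$ one keeps the endpoint of the homotopy but the deformation itself need not be constant there — wait: actually we DO need fixed endpoints, so one builds the homotopy of \emph{paths} to be stationary at $t=0,1$ in path-space, meaning the deformation parameter $s$ moves $\xi_0$ to its own ripped version $\bar\xi_0$ and likewise $\xi_1 \to \bar\xi_1$, which is fine because "fixed endpoints" for Proposition~\ref{p:p-m} refers to the path's endpoints being \emph{the taut almost-integrable} $\bar\xi_0,\bar\xi_1$, not to $\xi_0,\xi_1$ themselves. (3) Glue: outside $\bigcup_j B'_j$ leave everything alone, so the resulting path consists of $B'$-almost integrable plane fields, with $\bar\xi_t$ integrable (a foliation) on $N \setminus \Int B'$ since the modification was supported in the balls. (4) Verify tautness of the induced foliation on $N \setminus \Int B'$ for every $t$: given a transverse arc, extend it following a leaf; since every leaf was arranged to approach some $B'_j$ and the spiralling forces it to wind toward the pole, the arc can be closed up using the whirled structure — this is precisely Thurston's observation that after ripping, every leaf is crossed by a closed transversal.

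\textbf{Main obstacle.} The real difficulty is step~(1)–(2) done \emph{parametrically}: the leaves of $\xi_t$ move with $t$, their topology can change, and one cannot choose the balls, poles, and spiralling data continuously in a naive way. The resolution is exactly the one the paper flags for "step 1" — one must choose the balls $B'_j$ small and numerous enough that on each, every $\xi_t$ is $C^1$-close to a \emph{fixed} plane field, which reduces the construction on $B'_j$ to a neighbourhood of the constant plane field where the Thurston rip is a smooth, manifestly parametric and relative construction (and the "every leaf comes near $B'$" condition, being open in $t$, survives a fixed choice of balls by compactness of $[0,1]$). So the plan is: first absorb all the $t$-dependence into an honest continuous family of nearly-constant local models via a fine enough choice of balls, then apply the (now routine, because local and near-constant) parametric Thurston trick, then check tautness globally. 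The tautness verification at the end is the second-most delicate point, but it follows the classical argument of Thurston and Goodman once the spiralling is in place.
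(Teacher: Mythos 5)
Your high-level strategy is the right one and matches the paper's: reduce to local models covering all leaves, perform Thurston's ripping trick in each, and verify tautness from the resulting spiraling. But two of your concrete steps have genuine gaps.

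\textbf{One hole per ball cannot work.} You propose to modify $\xi_t$ inside a ball $B'_j$, \emph{rel.} $\partial B'_j$, so that the result is an integrable foliation on the shell $B'_j \setminus \Int B''_j$ with spiraling near the single sub-ball $B''_j$. This runs into a holonomy obstruction. Before rounding corners, the local model is a flow box $C\cong \D^2\times\D^1$ and the hole is $D_0\times\D^1$ for a sub-disk $D_0\subset\D^2$; the complementary region $(\D^2\setminus D_0)\times\D^1$ is an annulus times interval, whose foliations transverse to the $\D^1$ factor are classified by a single holonomy element $h\in\DDi$ (the image of the generator of $\pi_1$). Since both boundary circles of the annulus carry the same $h$, the requirement that the foliation agree with $\xi_t$ near $\partial C$ (holonomy $\id$) forces $h=\id$ on $\partial D_0\times\D^1$ as well: there is no spiraling, and hence no closed transversal is created. (The same conclusion can be seen infinitesimally: an integrable perturbation $dz - Q(r,\theta,z)\,d\theta$ of the horizontal foliation, in cylindrical coordinates about the polar axis, has $\partial_r Q=0$ by the Frobenius condition, so any spiraling near $\partial B''_j$ would have to persist out to $\partial B'_j$.) This is precisely why the paper digs \emph{two} holes $B_\pm$ in each flow box: it works with the pair of pants $\P=\D^2\setminus\Int(D_+\cup D_-)$ and imposes holonomies $f_{\rho(t)u}$ on $\partial D_+\times\D^1$ and $f_{\rho(t)u}^{-1}$ on $\partial D_-\times\D^1$, which cancel on $\partial\D^2\times\D^1$. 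The two holes are essential; your construction as stated produces no closed transversals and hence no tautness.

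\textbf{Fixed endpoints.} You correctly notice the fixed-endpoint requirement mid-argument but then resolve it incorrectly: you claim it is acceptable for the deformation to carry $\xi_0$ to a ripped version $\bar\xi_0\ne\xi_0$. It is not. ``Homotopic with fixed endpoints'' in the statement means the homotopy $H(s,\cdot)$ of paths satisfies $H(s,0)=\xi_0$ and $H(s,1)=\xi_1$ for all $s$, so that $\bar\xi_0=\xi_0$ and $\bar\xi_1=\xi_1$; this is needed so Lemma~\ref{l:desintegration} concatenates correctly with Proposition~\ref{p:p-i} inside the proof of Proposition~\ref{p:p-m}. The paper achieves this by scaling the spiraling holonomy $f_{\rho(t)u}$ by a function $\rho:[0,1]\to\R_+$ vanishing \emph{only} at $t=0$ and $t=1$: no rip is performed at the endpoints, and the resulting $\bar\xi_0=\xi_0$, $\bar\xi_1=\xi_1$ are taut $B'$-almost integrable because they are taut foliations restricted to small balls. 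This device (making the rip amplitude $t$-dependent and vanishing at the endpoints) is the key ingredient your proof is missing on the parametric side.

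A smaller difference worth noting: the paper does not ask for a single collection of flow boxes valid for every $t$; it covers $[0,1]$ by finitely many subintervals on each of which a fixed bunch of transverse arcs meets every leaf, and treats the subdivision case as analogous. Your idea of making the collection transversally dense enough for all $t$ may be arrangeable, but the subdivision is both standard and simpler to justify.
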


Lemmas \ref{l:p-h} and \ref{l:desintegration} are proved below. Proposition \ref{p:p-i} will be discussed in the appendix. Actually this result is a version for foliations of a theorem established by Eliashberg for contact structures (see \cite[Lemma 3.2.1]{El}). Though the key ideas of the proof are purely geometrical, their implementation requires some tedious technical estimates which will be carried out in full detail. 

The first step of the proof of Lemma \ref{l:p-h} consists in reducing to the case where $\xi_0$ and $\xi_1$ coincide on $B$ and are \emph{horizontal} on each ball of $B$ in some adapted coordinates:

\begin{affirmation}\label{a:p-h} 
Let $M$ be a closed $3$-manifold, $\xi$ a plane field on $M$ almost horizontal on some ball $B\subset M$, and 
$(x,y,z)$ the coordinates induced on $B$ by some adapted parametrization. Then $\xi$ can be deformed relative to $M \setminus \Op (B)$ among $B$-almost horizontal plane fields to a plane field defined by $dz = 0$ on $B$.
\end{affirmation}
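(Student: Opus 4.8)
The plan is to reduce the almost-horizontality condition to something purely local on the ball $B$ and then straighten out the plane field by a leafwise-trivial deformation. Recall that, by Definition \ref{d:p-i}, being almost horizontal on $B$ means (i) $\xi$ is integrable near $\partial B$, (ii) in adapted coordinates $\xi$ is tangent to $S=\partial B=\partial\D^3$ exactly at the poles, and (iii) for every $\eps>0$ there is a nonsingular $\nu$ on $\D^3$ positively transverse to both $\xi$ and the horizontal plane field $dz=0$ and tangent to $S$ outside the $\eps$-neighbourhood of the poles. The key observation is that all three conditions are about the pair $(\xi, dz=0)$ inside $B$, together with the already-fixed behaviour near $\partial B$; nothing about $M\setminus B$ is affected if we only move $\xi$ inside $\Op(B)$ and keep it unchanged near $\partial B$.

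The concrete approach I would take: write $\xi$ near the poles, where it is transverse to no common vector field constraint is an issue, as the graph of a $1$-form $dz - \alpha\,dx - \beta\,dy$ on $\D^3$ (possible since $\xi$ is transverse to $\partial_z$ everywhere — indeed $\nu$ being transverse to $dz=0$ forces $\partial_z$ to be transverse to $\xi$, because $\xi$ and $dz=0$ together with $\nu$ span the tangent bundle consistently; more directly, tangency of $\xi$ to $\partial_z$ would contradict transversality of a common $\nu$ to $\xi$ and to $dz=0$ near that point). Then I would interpolate linearly: $\xi_s$ defined by $dz - s\alpha\,dx - s\beta\,dy = 0$, $s$ going from $1$ to $0$. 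This deformation is supported inside $B$, and I can further cut it off to be stationary in a collar of $\partial B$ by multiplying $\alpha,\beta$ by a bump function equal to $1$ on most of $\D^3$ and $0$ near $\partial\D^3$ — but here one must be careful, because near $\partial B$ the plane field is already integrable and in particular tangent to $S$ at the poles, so one cannot simply flatten it there. The fix is: the deformation need only reach $dz=0$ on the part of $B$ away from $\Op(\partial B)$; on $\Op(\partial B)$, $\xi$ is left unchanged, which is fine because $dz=0$ is never claimed to hold near $\partial B$ — Affirmation \ref{a:p-h} only asserts $\xi$ becomes $dz=0$ on $B$, and the statement should be read with the understanding that the equality can be arranged on all of $B$ by first shrinking the ball slightly, or the statement tolerates it near $\partial B$ after a diffeomorphism. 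I would phrase the conclusion as: after the deformation, $\xi = \{dz=0\}$ on $B$ (possibly after shrinking $B$ within its original adapted chart and re-labelling).

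The point that needs genuine care — and what I expect to be the main obstacle — is verifying that every intermediate $\xi_s$ in the interpolation is still $B$-almost horizontal, i.e. still satisfies conditions (i)–(iii) with the \emph{same} adapted parametrization. Condition (i) is immediate since we do not touch $\Op(\partial B)$. Condition (ii), tangency to $S$ exactly at the poles: the tangency locus of $\xi_s$ with $S$ is where the restriction of $dz - s\alpha\,dx - s\beta\,dy$ to $TS$ vanishes; at $s=1$ this is exactly the two poles, at $s=0$ (horizontal field) it is again exactly the two poles of the round sphere, but for intermediate $s$ one must check no new tangencies appear. This is where I would use, rather than pure interpolation, the vector field $\nu$ itself: the honest construction is to choose $\nu$ transverse to $\xi$ and to $dz=0$ and tangent to $S$ off the $\eps$-neighbourhoods of the poles, and then define the deformation by "rotating" $\xi$ towards $dz=0$ inside the family of planes transverse to $\nu$ — i.e. along the convex path in the space of planes transverse to $\nu$ from $\xi$ to the horizontal one, which is well-defined and stays transverse to $\nu$ throughout. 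Any plane transverse to $\nu$ and tangent to $S$ must, off the poles, be tangent to $S$ along a line; the convexity argument then shows the tangency locus stays inside the $\eps$-neighbourhood of the poles throughout the deformation, and condition (iii) is automatic since $\nu$ works for the whole family by construction. Cutting off this deformation in a collar of $\partial B$ (where $\nu$ is already transverse and everything is integrable) completes the argument.
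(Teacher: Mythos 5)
Your core idea — deform $\xi$ to $dz=0$ by interpolating the defining $1$-forms while staying transverse to the vector field $\nu$ guaranteed by the definition of almost horizontality — is essentially the paper's strategy. But there are two genuine gaps.

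First, your preliminary reduction rests on a false claim: the existence of a common $\nu$ positively transverse to $\xi$ and to $dz=0$ does \emph{not} force $\xi$ to be transverse to $\partial_z$. (Take $\xi=\ker dx$ and $\nu=\partial_x+\partial_z$: this $\nu$ is positively transverse to both $\xi$ and $\ker dz$, yet $\partial_z\in\xi$.) This doesn't sink your argument because your second, $\nu$-based formulation — interpolate $(1-t)\alpha+t\,dz$ after normalizing $\alpha(\nu)>0$ — sidesteps the issue; but the justification as written is wrong and you should not rely on writing $\xi$ as $\ker(dz-\alpha\,dx-\beta\,dy)$.

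Second, and more seriously, you dodge rather than solve the problem near $\partial B$. You argue that the deformation should be kept stationary near $\partial B$ (to preserve integrability there), and then try to weaken the conclusion so that $dz=0$ is only required away from a collar. But the statement asks for $dz=0$ \emph{on all of} $B$, and this is genuinely needed downstream in the proof of Lemma \ref{l:p-h} (where $\psi_i^*\xi_i = \ker dz$ on $\D^3$ is used, e.g.\ in conjunction with Cerf's theorem on $\Diff(\D^3)$). The correct move is the opposite of yours: cut off a bump function $\rho$ that equals $1$ on all of $B$ and is supported in a slightly larger $\Op(B)$, so that $\xi_1=\ker dz$ on all of $B$. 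The price is that the intermediate $\xi_t$ are then no longer integrable near the equatorial part of $\partial B$ (a convex combination of two integrable forms need not be integrable), so they are not $B$-almost horizontal. The paper remedies this by a separate $C^0$-small perturbation: near the poles $\alpha=df$ so $(1-t)df+t\,dz$ is closed and nothing needs fixing, and along the equatorial band one produces a vector field $\eta_t$ tangent to $\xi_t$ and transverse to $\partial B$, and replaces $\xi_t$ by the plane field obtained by pushing $\xi_t\vert_{\partial B}$ along $\eta_t$, i.e.\ the nearby $\eta_t$-invariant (hence integrable) plane field. This step is not cosmetic: without it the intermediate fields violate the ``among $B$-almost horizontal plane fields'' clause, which is precisely what the claim is asserting. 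You should add this restoration-of-integrability step explicitly.
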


\begin{proof}[Proof of Claim \ref{a:p-h}]
We want to straighten out $\xi$ in $B$ while keeping it fixed outside $\Op(B)$. The difficulty is to do this \emph{through $B$-almost horizontal plane fields}.

Let $\alpha$ be an equation of $\xi$ which, near each pole of $B$, coincides with the differential of some function $f$. Define
$\xi_t$, $t \in [0,1]$, as the kernel of the form 
$$\alpha_t = (1-\rho) \alpha +\rho \left((1-t) \alpha + t \, dz\right),$$ 
where $\rho \from M \to [0,1]$ is a smooth function equal to $1$ near $B$ and with compact support in a neighbourhood $U$ of $B$, small enough that all the forms $(1-t) \alpha + t \, dz$ are nonsingular on $U$.

Clearly, $\xi_0 = \xi$, all the plane fields $\xi_t$ coincide with $\xi$ outside $U$, and $\xi_1 \res B$ is defined by $dz = 0$. Moreover, all the plane fields $\xi_t$ are integrable near the poles of $B$ for $\alpha_t$ equals $(1-t) \, df + t \, dz$ there.

Let us now show that $\xi_t$ is tangent to $\partial B$ exactly at the poles $p_\pm$. Let $p \in \partial B \setminus \{p_\pm\}$ and $\eps < \dist(p, \{p_\pm\})$. According to Definition \ref{d:p-i}, there exists a vector field $\nu$ on $B$ positively transverse to $\xi$ and to the $z$-levels and tangent to $\partial B$ outside an $\eps$-neighbourhood of the poles. By construction, $\alpha_t(\nu) > 0$, and since $\nu$ is tangent to $\partial B$ at $p$, no plane field $\xi_t$ is tangent to $\partial B$ at $p$. Incidentally, we see that the vector field $\nu$ is positively transverse to both $\xi_t$ and the $z$-levels.

We finally need to perform a $\CC^0$-small perturbation of the $\xi_t$'s so that they become integrable near $\partial B$, and hence almost horizontal on $B$. Fix $\eps$ small enough that the plane fields $\xi_t$ are integrable in a $2 \eps$-neighbourhood of the poles, denote by $\nu$ the vector field described above and extend it to a vector field transverse to the $\xi_t$'s in a neighbourhood of $B$. Let $S$ be the surface obtained from $\partial B$ by removing an $\eps$-neighbourhood of the poles. We can parametrize a collar neighbourhood $W = S \times \D^1$ of $S$ by $\cercle^1 \times \D^1 \times \D^1$ so that $S = \cercle^1 \times \D^1 \times \{0\}$ and that every curve $\{\cdot\} \times \D^1 \times \{\cdot\}$ is an orbit segment of $\nu$. Since $\nu$ is transverse to $\xi_t$, there exists a unique vector field $\eta_t$ on $W$ tangent to $\xi_t$ and to each rectangle $\{\cdot\} \times \D^1 \times \D^1$ and whose last component is $1$. Now define $\bar\xi_t$ to be a $\CC^0$-small perturbation of $\xi_t$ with the following properties (see the appendix for similar constructions):
\begin{itemize}
\item $\bar\xi_t$ coincides with $\xi_t$ along $S$ and outside $S \times (-\delta, \delta) \subset W$ with $\delta$ arbitrarily small;
\item $\bar\xi_t$ contains $\eta_t$ at every point of $W$ ;
\item $\bar\xi_t$ is invariant under $\eta_t$ near $S$ and thus integrable there.
\end{itemize}
Note that on every region of the type $S' \times (-\delta', \delta') \subset S \times \D^1$ where $\xi_t$ is integrable, $\bar\xi_t$ is equal to $\xi_t$. This shows in particular that $\bar\xi_i = \xi_i$ for $i = 0, 1$ and that $\bar\xi_t = \xi_t$ near $\partial S \times \D^1$ for all $t$. 
\end{proof}

\begin{proof}[Proof of Lemma \ref{l:p-h}] It is enough to consider the case where $B$ consists of a unique ball. Let
$(\xi_t)_{t \in [0,1]}$ be a path of plane fields from $\xi_0$ to $\xi_1$. Using Claim \ref{a:p-h}, we assume that, in some adapted parametrization $\psi_i :\D^3 \to B$, the equation of $\xi_i$, $i = 0, 1$, is $dz = 0$. Since the group of diffeomorphisms of $\D^3$ is connected (according to a theorem of J.~Cerf \cite{Ce}), there exists an isotopy of $B$ between $\id$ and  $\psi_1 \circ \psi_0^{-1}$. Deforming $\xi_0$ by an extension of this isotopy to $M$ (among plane fields which are obviously almost horizontal on $B$), we reduce to the case where $\xi_0$ coincides with $\xi_1$ on $B$ and $\psi_0 = \psi_1 =\psi$. 

One can reparametrize the path $(\xi_t)_t$ so that $\xi_t$ coincides with $\xi_0$ for $t \in [0,1/3]$ and with $\xi_1$ for $t \in
[2/3,1]$,  and deform it slightly near $\psi(0)$, keeping $\xi_0$ and $\xi_1$ unchanged, so that each $\psi^*\xi_t$ is constant on the euclidean ball of radius $\eps$ centered at $0$. 

We then define the following family of balls: 
\begin{itemize}
\item $B_t$ is the image under $\psi$ of the euclidean ball centered at $0$ and of radius $3 (\eps - 1) t + 1$ for $t \in [0,1/3]$ ;
\item $B_t = B_{1/3}$ for all $t \in [1/3,2/3]$ ;
\item $B_t = B_{1-t}$ for $t \in [2/3,1]$.
\end{itemize}
By construction, $\xi_t$ is almost horizontal on $B_t$ for all $t\in[0,1]$. Let $\phi_t$ be an isotopy supported in a neighbourhood of $B$ satisfying $\phi_0 = \phi_1 = \id$ and $\phi_t(B) = B_t$ for all $t$. Then $(\phi_t^*\xi_t)$ is a path of $B$-almost horizontal plane fields homotopic to $(\xi_t)_{t\in[0,1]}$ with fixed end points. 
\end{proof}

\begin{proof}[Proof of Lemma \ref{l:desintegration}]
Let us start with the nonparametric version of Lemma \ref{l:desintegration} (referred to as \emph{Thurston's trick} in the Introduction), \emph{i.e.} how to homotope a single foliation $\xi$ on $N$ (having no component of $\partial N$ as a leaf) rel. $\Op(\partial N)$ to a taut $B'$-almost integrable plane field, for some collection of balls $B'\subset\Int(N)$. One can find finitely many disjoint arcs in $\Int(N)$ transverse to $\xi$ whose union meets every leaf of $\xi$. Let $A$ be one of them. We can assume that $\xi$ is tangent to $\D^2 \times \{\cdot\}$ in a neighbourhood $C = \D^2 \times \D^1$ of $A$, where $A = \{0\} \times [-1/2,1/2]$. Let $D_+$ and $D_-$ be two small disks in $\D^2$ and $\P = \D^2 \setminus \Int(D_+ \cup D_-)$. 
\begin{figure}[htbp]
 \centering
\begin{tabular}{ccc}
\includegraphics[height=3cm]{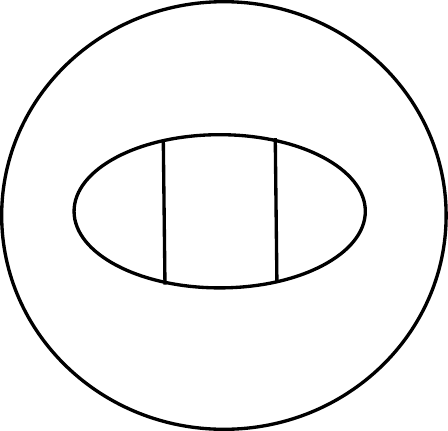} 
\put(-82,77){$\scriptstyle \D^2$}
\put(-70,40){$\scriptstyle D_-$}
\put(-30,40){$\scriptstyle D_+$}
\put(-49,40){$\scriptstyle R$}
&  & \includegraphics[height=3cm]{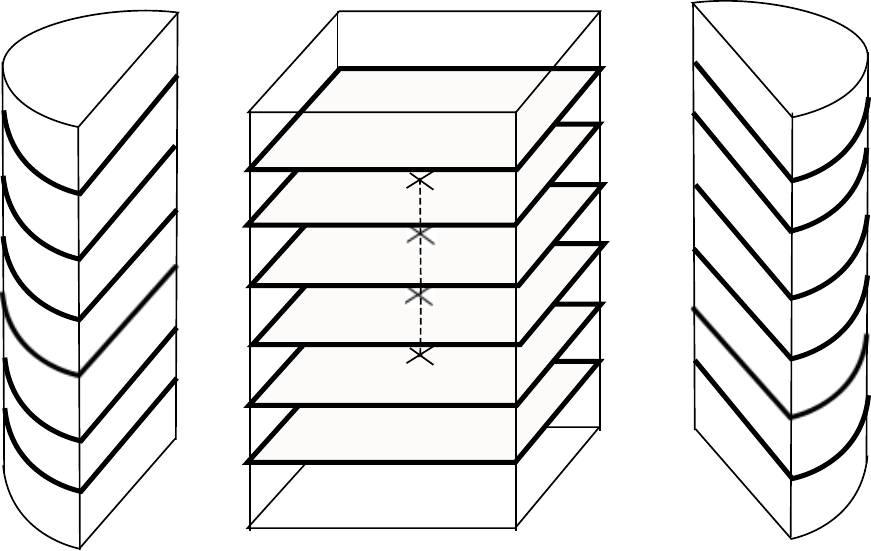} 
\put(-125,90){$\scriptstyle D_-$}
\put(-20,90){$\scriptstyle D_+$}
\put(-70,90){$\scriptstyle R$}
\put(-67,42){$\scriptstyle A$}
\end{tabular}
\caption{Schematic view of the deformation (I)\label{f:trick0}}
\small The initial (trivial) foliation $\xi=\xi^0$ near an arc $A$
\end{figure}
Let us first describe the deformation of $\xi$ on $\P \times \D^1 \subset C$. Let $f_u$, ${u \in [0,1]}$, be a path of diffeomorphisms of $\D^1$ coinciding with the identity near the boundary, such that $f_0 = \id$ and $f_u(x) > x$ for all $x \in
[-1/2,1/2]$ and all $u > 0$.  Using an analogue of Lemma \ref{l:pantalon}, we construct a deformation $u\in[0,1]\mapsto \xi^u \res{\P\times \D^1}$ such that:
\begin{itemize}
\item $\xi^0 \res{\P \times \D^1} = \xi \res{\P \times \D^1}$;
\item for every $u \in [0,1]$, the foliation $\xi^u \res{\P \times \D^1}$ is transverse to the $\D^1$ factor, coincides with $\xi$ near $\partial (\D^2 \times \D^1) \cap \P \times \D^1$, and induces foliations of holonomy $f_{u}$, $f_{u}^{-1}$ and $\id$ on $\partial D_+ \times \D^1$, $\partial D_- \times \D^1$ and $\partial \D^2 \times \D^1$ respectively.
\end{itemize}
\begin{figure}[htbp]
 \centering
\includegraphics[height=3cm]{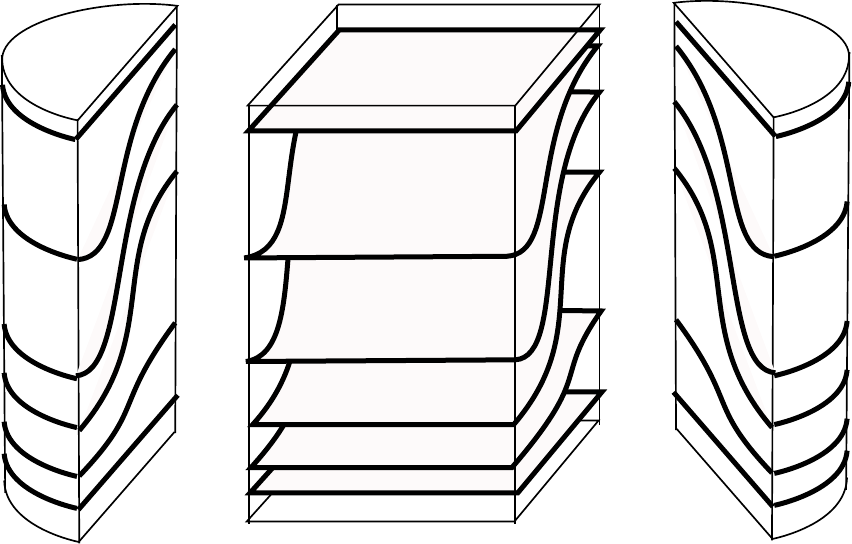} 
\caption{Schematic view of the deformation (II)\label{f:trick1}}
\small
The final plane field $\xi^1$ near $A$
\end{figure}
One easily extends each $\xi^u \res {\P \times \D^1}$ (continuously with respect to $u \in [0,1]$) to a \emph{plane field} on $C = \D^2 \times \D^1$ in such a way that $\xi^0 = \xi$. But for $u\neq0$, $\xi^u$ cannot be integrable on 
 $D_\pm \times \D^1$ for we have made its holonomy nontrivial on the lateral boundary. Denote by $B_\pm$ a ball obtained by rounding the corners of $D_\pm \times \D^1$.

Let $\bar\xi$ denote the plane field on $N$ obtained by carrying out the above perturbation $u\mapsto \xi^u$ in a neighbourhood of every transverse arc $A$, and $B'$ the collection of balls $B_\pm$. Then $\bar\xi$ is integrable on $N
\setminus \Int B'$, almost horizontal on $B'$, and every leaf of the foliation defined by $\bar\xi$ on $N \setminus \Int B'$ meets the boundary of some ball $B_\pm$ in the ``subtropical" region where the induced foliation spirals up or down (from $\mp1/2$ to $\pm1/2 \in \D^1$). In particular, it is noncompact and meets a closed transversal: the ``equator" of $\partial B_\pm$. So $\bar\xi$ is taut $B'$-almost integrable.
\begin{figure}[htbp]
 \centering
\includegraphics[height=2.5cm]{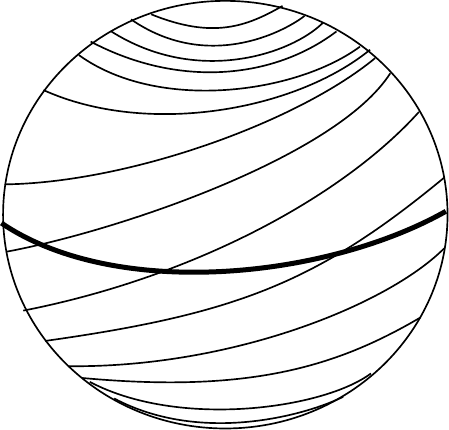} 
\caption{$B_-$ and its equator\label{f:B+}}
\end{figure}

Let us now turn to the proof of Lemma \ref{l:desintegration} itself, \emph{i.e.} let us now give a relative one-parameter version of the above. For every $t \in [0,1]$, we can find finitely many disjoint arcs in $\Int(N)$ transverse to $\xi_t$ whose union meets every leaf of $\xi_t$. Extending them slightly if necessary, we can assume they have the same property with respect to
$\xi_s$ for all $s$ close enough to $t$. So we can cover $[0,1]$ by the interiors (as subsets of $[0,1]$) of finitely many segments $J_k$, $1\le k \le m$, such that, for each $k$, there exists a collection of arcs $\{A^i(k)\}_{i}$ in $\Int N$ transverse to $\xi_t$ whose union meets every leaf of $\xi_t$ for all $t$ in $\Op (J_k)$. Up to a slight perturbation of $(\xi_t)_{t\in[0,1]}$, we can actually assume that every $\xi_t$, $t\in \Op (J_k)$, is tangent to $\D^2\times\{.\}$  in a neighbourhood $C^i(k) = \D^2 \times \D^1$ of $A^i(k)$, where $A^i(k) = \{0\} \times [-1/2,1/2]$. Moreover, these neighbourhoods $C^i(k)$ can be taken disjoint.

Now for every $k$, we apply the nonparametric trick to every $\xi_t$, $t\in J_k$, on each $C^i(k)$ (leaving $\xi_t$ unchanged for $t\notin J_k$), using the path of diffeomorphisms $u\in[0,1]\mapsto f_{\rho_k(t) u}$ instead of $u\mapsto f_u$, where $\rho_k$ denotes a bump function on $J_k$ vanishing only on $\partial J_k$. This can be done continuously with respect to $t$, leaving $\xi_0$ and $\xi_1$ unchanged while making every other $\xi_t$ taut oustide balls (two in each $C^i(k)$ such that $t\in J_k$, which will be denoted by $B^i_+(k)$ and $B^i_-(k)$). 

Note that at that point the collection of balls for a given $\xi_t$ depends on $t$. But the same (big) collection would work for every plane field if $\xi_t$ was almost horizontal on $B^i_\pm(k)$ even when $t\notin J_k$. This can be arranged by pulling back $\xi_t$ by a diffeomorphism $\phi_t$ (depending continuously on $t$ and equal to $\id$ for $t\in J_k$) supported in a neighbourhood of $B^i_\pm(k)$ and sending $B^i_\pm(k)$ to a sufficiently small neighbourhood of its ``center'' for $t$ away from $J_k$ (just like in the previous proof). 
\end{proof}

\begin{remark}\label{r:multi} We stated and proved Lemma \ref{l:desintegration} for a one-parameter family because this is what we needed at the moment, but the proof adapts without problem to any number of parameters. 
\end{remark}


\subsection{Deforming families of taut almost integrable plane fields}\label{ss:CVL}

\begin{proposition}\label{p:third} Let $M$ be a closed $3$-manifold, $B\subset M$ a collection of balls, $(\xi_t)_{t\in[0,1]}$ a path of taut $B$-almost integrable plane fields on $M$ and $\tau_0$ and $\tau_1$ malleable foliations obtained from $\xi_0$ and $\xi_1$ by Lemma \ref{l:correspondance}. Then the corresponding deformations from $\xi_i$ to $\tau_i$, $i=0,1$, extend to a deformation of $(\xi_t)_{t\in[0,1]}$ to a path $(\tau_t)_{t\in[0,1]}$ of malleable foliations connecting $\tau_0$ to $\tau_1$. 
\end{proposition}

This will follow from the study of two particular cases: the one, settled in Remark \ref{r:correspondance-param}, where one is given families of transverse arcs connecting the poles of the balls varying continuously with the parameter, and the cases that $\xi_t$ does not depend on $t$, which can be rephrased as follows:

\begin{lemma}[Siphon Lemma]\label{l:v-c}
Let $M$ be a closed $3$-manifold, $B = \bigcup_1^n B_i \subset M$ a collection of balls, $\xi$ a taut $B$-almost integrable
plane field on $M$ and $A^\pm = \bigcup A^\pm_i \subset M \setminus \Int B$ two collections of arcs transverse to $\xi$, each $A^\pm_i$ connecting the poles of $B_i$. The malleable foliations $\tau^\pm$ built from $\xi$ and $A^\pm$ can be connected by a path of malleable foliations. What's more, the loop of plane fields formed by the homotopies from $\xi$ to $\tau^-$, $\tau^-$ to $\tau^+$, and $\tau^+$ to $\xi$ bounds a disk of plane fields on $M$.
\end{lemma}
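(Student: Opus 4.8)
The plan is to reduce everything to a single ball $B_i$ with its two competing transverse arcs $A^-_i$ and $A^+_i$, since the construction is local near each $B_i\cup A^\pm_i$ and disjoint from the taut piece $M\setminus\Int B$ away from small neighbourhoods. So fix one ball $B$, with poles $p_\pm$, and two disjoint transverse arcs $A^-,A^+$ joining $p_-$ to $p_+$ in the taut foliated region. The key geometric input is tautness of $\xi$ on $M\setminus\Int B$: this lets us connect $A^-$ to $A^+$ by a finite chain of transverse arcs, each consecutive pair cobounding a ``rectangle'' tangent to $\xi$, or more precisely lying in a single foliated chart where both arcs are of the form $\{pt\}\times\D^1$. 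The reason such a chain exists is that any two transverse arcs with the same endpoints become, after following closed transversals through the leaves they meet, homotopic through transverse arcs; tautness guarantees the closed transversals needed to travel between leaves. Thus it suffices to treat the case where $A^-$ and $A^+$ are ``elementary neighbours'': there is a chart $\D^2\times\D^1\subset M\setminus\Int B$ in which $\xi$ is tangent to the $\D^2\times\{\cdot\}$ slices, $A^-=\{a_-\}\times\D^1$ and $A^+=\{a_+\}\times\D^1$ for two points $a_\pm\in\D^2$, and the dug tunnels together with $B$ sit inside one solid torus $W\supset B\cup A^-\cup A^+$ parametrized by $\D^2\times\cercle^1$.

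In that elementary situation, I would build the path of malleable foliations explicitly inside $W$, leaving everything fixed on $M\setminus\Int W$ (where $\xi$, hence all three foliations, agree and are taut). The idea is a ``siphon'': start from $\tau^-$, which is $\LL_{f_-}$ on a sub-solid-torus $W^-\subset W$ around $B\cup A^-$ (with $f_-$ the holonomy of $\xi$ on $\partial W^-$), and slide the tunnel continuously from $A^-$ to $A^+$ within the foliated chart. Concretely, move the core arc from $\{a_-\}\times\D^1$ to $\{a_+\}\times\D^1$ along a path $\gamma(s)$ in $\D^2$; this drags the solid torus $W^{\gamma(s)}$ and, by Lemma \ref{l:correspondance} applied to the varying transverse arc (which varies \emph{continuously} in $s$, so Remark \ref{r:correspondance-param} / Proposition \ref{p:third-param} applies), yields a continuous path of malleable foliations from $\tau^-$ to $\tau^+$. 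Continuity of Schweitzer's filling in the holonomy (Theorem \ref{t:larcanche}) is what makes the endpoints and the whole family cohere. Chaining these elementary moves connects the original $\tau^-$ to $\tau^+$ through malleable foliations, proving the first assertion.

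For the second assertion — that the loop $\xi\to\tau^-\to\tau^+\to\xi$ bounds a disk of plane fields — I would argue at the level of plane fields (forgetting integrability), where everything is soft because $\PP(M)$ is locally contractible. By Lemma \ref{l:correspondance}, the homotopy $\xi\to\tau^\pm$ is relative to $M\setminus\Int W^\pm$, and inside each $W^\pm$ it is the Reeb/Schweitzer homotopy of Lemma \ref{l:reeb-fill} and Theorem \ref{t:larcanche}; the homotopy $\tau^-\to\tau^+$ constructed above is supported in $W\supset W^-\cup W^+$. So the whole loop of plane fields is supported in a single ball (a slight thickening of $W$), and a loop of plane fields supported in a ball, being null-homotopic rel.\ the complement iff its class in $\pi_1$ of the relevant space of plane fields on the ball vanishes, can be filled: the space of plane fields on a ball transverse to a fixed line field, together with the contractibility statements in Lemmas \ref{l:holonomie1}, \ref{l:holonomie} and the contractibility of $\GG_0$ in Lemma \ref{l:pantalon}, shows the relevant $\pi_1$ is trivial. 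More carefully, I would track that each elementary slide is itself the boundary of a square of plane-field homotopies (because the intermediate tunnel positions interpolate and Schweitzer's construction is parametric in \emph{two} parameters by Proposition \ref{p:third-param}), assemble these squares, and cap off using that $\pi_1$ of the space of plane fields on $\D^3$ transverse to a given vector field is trivial.

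\textbf{Main obstacle.} The delicate point is the passage from ``$A^-$ and $A^+$ are elementary neighbours'' to the general case: I need a canonical, finite chain of elementary moves connecting any two transverse arcs with common endpoints, and I need the solid tori dug at intermediate stages to stay embedded and disjoint from the other $B_j$'s and from each other's interiors. Controlling this combinatorics — essentially a transverse isotopy-extension argument powered by tautness (every leaf met by the arcs carries a closed transversal, so one can ``shorten'' the homotopy to finitely many steps each living in one chart) — and verifying that the plane-field loop assembled from the elementary squares is genuinely null-homotopic (not just null-homotopic after stabilization) is where the real work lies.
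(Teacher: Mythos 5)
There is a genuine gap in your reduction to ``elementary neighbours,'' and it is not merely a bookkeeping difficulty as your ``Main obstacle'' paragraph suggests: the step simply fails in general. The two transverse arcs $A^-$ and $A^+$ joining $p_-$ to $p_+$ in $M\setminus\Int B$ can lie in different homotopy classes rel endpoints (take, e.g., $\T^3$ with the horizontal foliation $dz=0$, remove a ball, and let $A^+$ differ from $A^-$ by a loop in the $\partial_x$-direction). No finite chain of arcs, each consecutive pair living in a single foliated chart, can connect them, because each elementary move is a homotopy of arcs. Tautness supplies closed transversals, but appending a closed transversal changes the homotopy class only by elements realized as closed transversals --- in the $\T^3$ example these are essentially the $z$-direction --- which need not account for the discrepancy. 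So ``following closed transversals through the leaves they meet'' does not, in general, make $A^-$ and $A^+$ homotopic through transverse arcs, and the sliding-the-tunnel strategy cannot start.

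The paper's proof avoids this entirely and is genuinely different: it never moves the arc. Instead it moves the \emph{hole}. Parametrize $B$ (minus polar caps) as $\D^2\times[-1/4,1/4]$, push $A^\pm$ so they enter $B$ from the two half-disks $D^\pm=\D^2\cap\{\pm x\ge 0\}$, and let $W^\pm$ be the two solid tori built from $D^\pm\times[-1/4,1/4]$ together with a trivially-foliated neighbourhood of $\bar A^\pm$. Crucially $W^+$ and $W^-$ share the wall $\{x=0\}\times[-1/4,1/4]$. Now apply diffeomorphisms $\psi_t$ supported in $C=\D^2\times[-1/4,1/4]$ that slide a shrunken copy $C'$ of $C$ entirely into $C^+$ (at $t=-1$) or into $C^-$ (at $t=1$). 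When the hole sits inside $W^+$, the restriction of $(\psi_{-1})_*\xi$ to $W^-$ is an honest foliation by disks, and symmetrically at $t=1$. One then deforms the product foliation to $\Lid$ on one side, uses Theorem \ref{t:larcanche} to vary the Schweitzer fillings on both $W^\pm$ as the holonomies on $\partial W^\pm$ vary continuously with $t$, and finally retracts $\Lid$ back to the product on the other side. This produces the required path of malleable foliations (and, being a two-parameter family of plane fields over $[-2,2]\times[0,1]$ built via Proposition \ref{p:third-param}, the disk of plane fields for the second assertion) with no hypothesis whatsoever relating $A^-$ to $A^+$.

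If you want to salvage your outline, the essential missing idea is exactly this: do not try to interpolate between the arcs; interpolate the position of the non-integrable ball inside $B$, and exploit the fact that both $W^+$ and $W^-$ become trivially foliated at one endpoint of that interpolation.
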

\begin{figure}[h!]
\centering
\includegraphics[height=4cm]{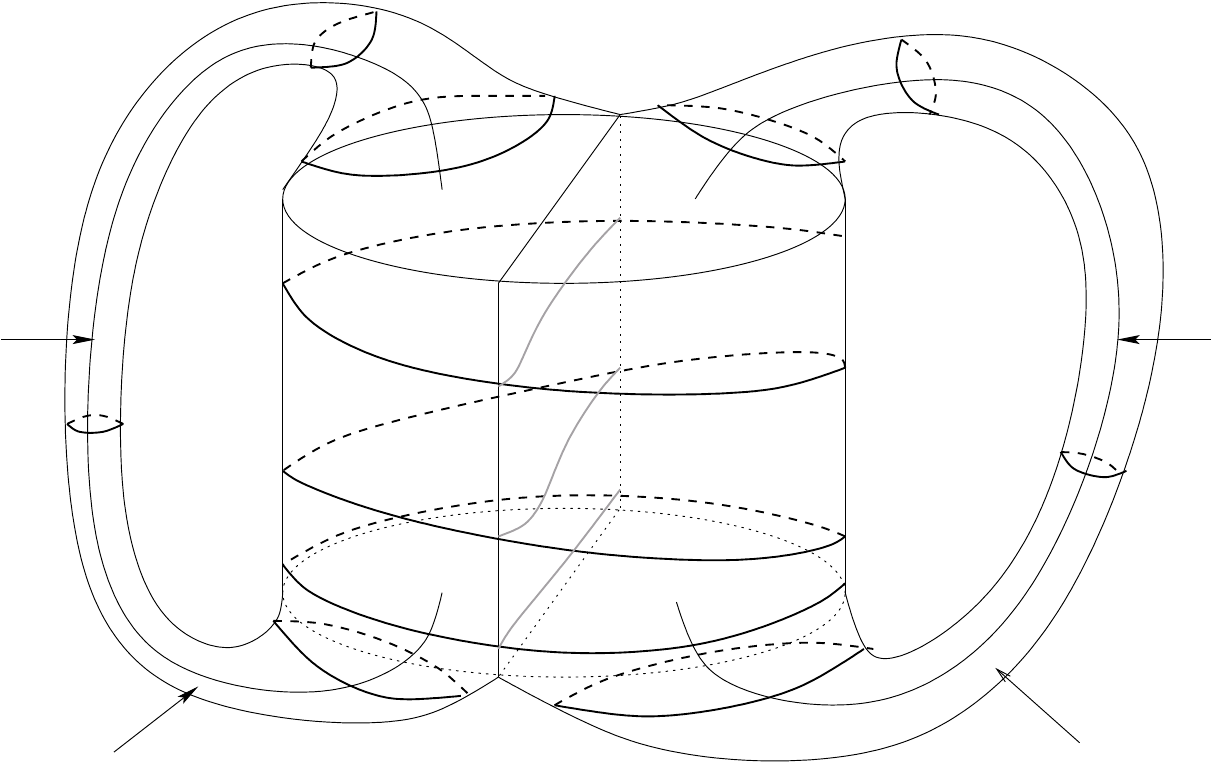}
\put(-140,118){$\scriptstyle W_-$}
\put(-193,60){$\scriptstyle \bar A_-$}
\put(-120,65){$\scriptstyle C_-$}
\put(-40,115){$\scriptstyle W_+$}
\put(0,60){$\scriptstyle \bar A_+$}
\put(-80,65){$\scriptstyle C_+$}
\put(-17,-2){$\scriptstyle \Lf$}
\put(-170,-6){$\scriptstyle \Lid$}
\caption{The foliation $\tau_1$ on $W$}
\label{fig:vase}
\end{figure}
\begin{proof}
For simplicity, let us assume $n=1$, so that $B$ is a single ball, and $A^\pm$ are two arcs transverse to $\xi$ connecting the poles of $B$. The main features of the following set up are depicted in Figure \ref{fig:vase}. Parametrize $B$ minus two small polar caps by $\D^2 \times [-1/4,1/4]$ in such a way that $\xi$ is transverse to the second factor and tangent to $\D^2 \times \{\cdot\}$ in a neighbourhood of $\D^2 \times \{\pm1/4\}$. Deform the arcs $A^\pm$ slightly into disjoint arcs $\bar A^\pm$ connecting $(\pm 1/2,0,1/4)$ to $(\pm1/2,0,-1/4) \in C = \D^2 \times [-1/4,1/4]$ respectively, transversely to $\xi$ in 
$\overline{M\setminus C}$. Let
$$D^\pm = \D^2\cap \{\pm x\ge 0\},$$
where $x$ denotes the first coordinate on $\D^2$, 
$$C^\pm = C \cap \{\pm x\ge 0\} = D^\pm \times [-1/4,1/4] \subset C$$
and let $W^\pm$ be solid tori obtained by smoothing the union of $C^\pm$ with a neighbourhood of $\bar A^\pm$ trivially foliated by $\xi$, such that
$$W^+ \cap W^- = C \cap \{x=0\} = C^+ \cap C^-.$$
Parametrize $W^\pm$ by $\D^2 \times \cercle^1 = \D^2 \times \R / \Z$ so that
$$C^\pm = D^\pm \times [-1/4,1/4] \subset D^\pm \times \R/\Z = W^\pm.$$
Finally, denote by $g \in \Diff^\infty_+([-1/4,1/4])$ the holonomy, for the base point $(0,-1) \in \partial \D^2$, of the foliation induced by $\xi$ on the lateral boundary of $C$, $\bar f$ its extension by the identity to a diffeomorphism of $\cercle^1 = \R/\Z$, and $f \in \DDi$ the lift of $\bar f$ fixing $\pm 1/4$.

Now let $C'$ be a slight shrinking of $C$ so that $\xi$ is integrable on $C\setminus C'$, and let $(\psi_t)_{t\in[-1,1]}$ be a continuous path of diffeomorphisms of $M$ supported in $C=\D^2 \times [-1/4,1/4]$, leaving the last coordinate unchanged and such that $\psi_0=\id$ and $\psi_{\pm 1}(C')\subset C^\pm$. Define $\xi_t=(\psi_t)_*\xi$ for all $t\in[-1,1]$, $\xi_t=\xi_{-1}$ for all $t\in[-2,-1]$ and $\xi_t=\xi_1$ for all $ t\in [1,2]$. In particular, $\xi_{\pm 1}$ induces a  foliation by disks on $W^\mp$ and consequently a foliation of holonomy $f$ on $\partial W^\pm = \partial D^\pm \times \cercle^1$ (the base point being $(0,-1) \in \partial D^\pm \subset \partial \D^2$). Let $\tau_{\pm2}$ be a foliation of $M$ coinciding with $\xi_{\pm 2}$ on $M\setminus W^\pm$ and with $\f_f$ on $W^\pm$. This foliation is clearly isotopic to the foliation $\tau^\pm$ built from $\xi$ and the transverse arc $A^\pm$ using the process of Lemma \ref{l:correspondance}. Hence, to prove Lemma \ref{l:v-c}, it suffices to prove that $(\xi_t\res W)_{t\in[-2,2]}$, can be deformed (rel. boundary) to a continuous path of malleable foliations $(\tau_t\res W)$ connecting $\tau_{-2}\res W$ to $\tau_2\res W$, the deformation from $\xi_{\pm 2}$ to $\tau_{\pm 2}$ being the one of Lemma \ref{l:correspondance} (outside $W$, one simply takes $\tau_t=\xi$). Let us just describe the path $\tau_t$, $t\in[-2,2]$, since the existence of the homotopy between $(\xi_t)$ and $(\tau_t)$ is a direct consequence of Proposition  \ref{p:third-param} and its proof. The deformation $\tau_t$, $t\in[-2,-1]$, consists in deforming the foliation by disks in $W^+$ to $\Lid$ (cf. Remark \ref{r:lid}). Now as $t$ goes from $-1$ to $1$, the holonomies of the foliations induced by $\xi_t$ on $\partial W^-$ and $\partial W^+$ vary from $f$ to $\id$ and from $\id$ to $f$ respectively. Define $\tau_t\res {W^\pm}$, $t\in[-1,1]$, to be the extensions of these foliations given by Proposition \ref{t:larcanche}, so that $\tau_1$ induces $\f_f$ on $W^+$ and $\Lid$ on $W^-$. Then $\tau_t$, $t\in[1,2]$, consists in deforming the foliation $\Lid$ to a foliation by disks in $W^-$, which gives the desired malleable foliation $\tau_2$. 
\end{proof}

\begin{proof}[Proof of Proposition \ref{p:third}]
Assume again, for simplicity, that $B$ is a single ball. To each plane field $\xi_t$ corresponds a particular parametrization of $B$ by the unit euclidean ball $\D^3$. Denote by $p_t$ and $q_t\in B$ the corresponding north and south poles. For every $t$, any arc on $\partial B$ transverse to $\xi_t$ joining the poles $p_t$ and $q_t$ can be extended in $M \setminus \Int B$ to a closed transversal, since the foliation defined by $\xi_t$ on $M\setminus \Int B$ is taut. Hence, we get a family $A_t$ of transverse
arcs connecting the poles of $B$ on the outside. To deal with the relative part of Proposition \ref{p:third}, assume $A_0$ and $A_1$ are prescribed, and denote by $\tau_0$ and $\tau_1$ the foliations obtained by applying Lemma \ref{l:correspondance} to $\xi_0$ and $\xi_1$. Now for all $s$ close enough to some given $t$, $A_t$ remains transverse to $\xi_s$, and we can slightly move its ends in a continuous way so that they coincide with $p_s$ and $q_s$ for all $s$. On every interval $[\frac k m,\frac{k+1}{m}]$, $0 \le k \le m-1$, with $m$ sufficiently large, we thus have a continuous path $t \mapsto A_t(k) $ of
transverse arcs (with $A_0(0)=A_0$ and $A_1(m-1)=A_1$). According to the parametric version of Lemma \ref{l:correspondance} (cf. Remark \ref{r:correspondance-param}), we can deform $t\in[\frac k m, \frac{k+1}{m}] \mapsto
\xi_t$ to a continuous path $t \in [\frac k m, \frac{k+1}{m}] \mapsto \tau_t(k)$ (with $\tau_0(0)=\tau_0$ and $\tau_1(m-1)=\tau_1$). Combining this with the Siphon Lemma \ref{l:v-c} (applied to $\xi=\xi_{k/m}$,  $A^-=A_{k/m}(k-1)$ and $A^+=A_{k/m}(k)$), we get the desired homotopy from $(\xi_t)_{t\in[0,1]}$ to a path of malleable foliations joining $\tau_0$ and $\tau_1$. 
\end{proof}

We can now conclude with the proof of Theorem \ref{t:malleable} and a sketch of proof of Theorem~\ref{t:pik}.

\begin{proof}[Proof of Theorem \ref{t:malleable}] Let $(\zeta_t)_{t\in[0,1]}$ be a path of plane fields on a closed $3$-manifold $M$ connecting two malleable foliations $\tau_0$ and $\tau_1$, whose associated collections of tori are denoted by  $W_0$ and $W_1$. For $i=0,1$, Lemma \ref{l:inverse} associates to $\tau_i$ and $W_i$ a collection of balls $B_i$ and a taut $B_i$-almost integrable plane field $\xi_i$ homotopic to $\tau_i$. We can assume that $B_0$ and $B_1$ have the same number of balls, completing one or the other if necessary with some small balls $\D^3$ on which $\xi_i$ is horizontal. Deforming $\xi_0$ by an isotopy of $M$, we can then reduce to the case $B_0 = B_1 = B$. By the homotopy extension property, we can extend the deformation from $(\zeta_t)_{t\in\{0,1\}}$ to $(\xi_t)_{t\in\{0,1\}}$ to a deformation from $(\zeta_t)_{t\in[0,1]}$ to a path $(\xi_t)_{t \in [0,1]}$ of plane fields connecting $\xi_0$ to $\xi_1$.  According to Proposition \ref{p:p-m}, this path can be deformed with fixed endpoints to a path $(\widehat\xi_t)_{t \in [0,1]}$, of taut $\Bh$-almost integrable plane fields, for some collection of balls $\Bh = \bigcup_{j=1}^n \Bh_j$ containing $B$. Now according to Proposition \ref{p:third}, this path can in turn be deformed to a path $(\tau_t)_{t \in [0,1]}$ of malleable foliations, the deformation from $\widehat\xi_i=\xi_i$ to $\tau_i$, $i=0,1$, being the reverse of the one from $\tau_i$ to $\xi_i$ considered earlier. It is then easy to combine the above homotopies from $(\zeta_t)_{t\in[0,1]}$ to $(\xi_t)_{t\in[0,1]}$, from $(\xi_t)_{t\in[0,1]}$ to $(\widehat\xi_t)_{t\in[0,1]}$ and from $(\widehat\xi_t)_{t\in[0,1]}$ to $(\tau_t)_{t\in[0,1]}$ to obtain a homotopy from $(\zeta_t)_{t\in[0,1]}$ to $(\tau_t)_{t \in [0,1]}$ \emph{with fixed endpoints}. 
\end{proof}

\begin{proof}[Sketch of proof of Theorem \ref{t:pik}] Let $\tau$ be a malleable foliation on a closed  $3$-manifold $M$ and $(\zeta_t)_{t\in \D^{k}}$ a continuous family of plane fields such that $\zeta_{t}=\tau$ for all $t\in \partial \D^{k}$. We want to deform this family, relative to $\partial  \D^{k}$, to a family of malleable foliations. 

First, let $\xi$ be a taut $B$-almost integrable plane field obtained from $\tau$ by Lemma \ref{l:inverse} and consider a new family of plane fields $(\xi_t)_{t\in \D^{k}}$ defined by $\xi_t=\tau_{2t}$ for $\norm{t}\le \frac12$ and such that, for every $t\in \partial \D^k$, the path $r\in[\frac12,1]\mapsto \xi_{rt}$ is the one from $\tau$ to $\xi$ given by Lemma~\ref{l:inverse}. We apply the following analogue of Proposition \ref{p:p-m} to $(\xi_t)_{t\in \D^{k}}$:

\begin{proposition}
Let $\xi$ be a taut $B$-almost integrable plane field on a closed $3$-manifold $M$, for some collection of balls $B\subset M$, and $(\xi_t)_{t\in \D^{k}}$ a continuous family of plane fields such that $\xi_{t}=\xi$ for all $t\in \partial \D^{k}$. Then $(\xi_t)_{t\in \D^{k}}$ is homotopic, relative to $\partial  \D^{k}$, to a family of taut $\Bh$-almost integrable plane fields, where $\Bh$ is a collection of balls \including$B$.
\end{proposition}

Like Proposition \ref{p:p-m}, this is proved in three steps.
First, like in Lemma \ref{l:p-h}, we make all the plane fields almost horizontal on $B$. Then we apply Proposition \ref{param} (the generalization of Proposition \ref{p:p-i} to which the appendix is devoted) to make them all $\Bb$-almost integrable for some collection $\Bb$ containing $B$. Finally we apply a multiparametric version of Lemma \ref{l:desintegration} (cf. Remark \ref{r:multi}) to make them all taut and $\Bh$-almost integrable for some collection $\Bh$ containing $\Bb$. All of this is done relative to $\partial \D^k$. We denote by $(\widehat\xi_t)_{t\in \D^{k}}$ the resulting family of plane fields.

We then apply the following analogue of Proposition \ref{p:third}:

\begin{proposition}
Let $(\widehat\xi_t)_{t\in \D^{k}}$ be a family of taut $\Bh$-almost integrable plane fields on $M$ such that $\xi_t=\xi$ for all $t\in\partial \D^k$,
 and let $\tau$ be a malleable foliation obtained from $\xi$ by Lemma \ref{l:correspondance}. Then the deformation from $\widehat\xi_t=\xi$ to $\tau_t=\tau$, $t\in\partial \D^k$, given by Lemma \ref{l:correspondance}, extends to a deformation from $(\widehat\xi_t)_{t\in \D^{k}}$ to a family $(\tau_t)_{t\in \D^{k}}$ of malleable foliations. 
\end{proposition}

Collapsing $\partial \D^k$ to a point, one can equivalently start with a family $(\widehat\xi_t)_{t\in \sphere^{k}}$ of taut $\Bh$-almost integrable plane fields on $M$ such that $\xi_{t_0}=\xi$ for some $t_0\in \sphere^k$ and extend the deformation from $\xi_{t_0}=\xi$ to $\tau_{t_0}=\tau$ to all of $\sphere^k$. 

The proof generalizes that of Proposition \ref{p:third}. Assume again, for simplicity, that $\Bh$ is made of a single ball, and denote by $A$ the arc used to turn $\xi$ into $\tau$ in Lemma \ref{l:correspondance}. We then triangulate the parameter space $\sphere^k$  finely enough so that for any vertex $v$ of this triangulation, there exists a continuous family of arcs $t\in\mathrm{Star}(v)\mapsto A_t(v)$ such that $A_t(v)$ is transverse to $\xi_t$ and connects the poles of $\Bh$ for this plane field for every $t\in\mathrm{Star}(v)$. We impose in addition that $t_0$ is a vertex and that $A_{t_0}(t_0)= A$. 

We first define the homotopy from $\widehat\xi_v$ to $\tau_v$, for every vertex $v$, as the one given by Lemma \ref{l:correspondance} applied to the arc $A_v(v)$. Then as in the Siphon Lemma, this extends to a homotopy from $(\xi_t)_{t\in\sphere^k}$ to a family of malleable foliations $(\tau_t)_{t\in\sphere^k}$. More precisely, the Siphon Lemma itself provides an extension to the one-skeleton, and the next skeleta are dealt with similarly. As an example, if $t$ is the center of some $j$-simplex $\sigma$, the foliation $\tau_t$ is obtained from $\widehat\xi_t$ by digging out $j+1$ worm holes connecting the top and bottom of $\Bh$ (along the $j+1$ arcs associated to the $j+1$ vertices of $\sigma$) and then by ``equally distributing'' the holonomy of $\widehat\xi_t$ along $\partial \Bh$ in the $j+1$ solid tori obtained as a union of a worm hole with a cylinder in $\Bh$. .\medskip

To conclude the proof of Theorem \ref{t:pik}, we combine the above homotopies from $(\zeta_t)_{t\in \D^{k}}$ to $(\xi_t)_{t\in \D^{k}}$, from $(\xi_t)_{t\in \D^{k}}$ to $(\widehat\xi_t)_{t\in \D^{k}}$ and from $(\widehat\xi_t)_{t\in \D^{k}}$ to $(\tau_t)_{t\in \D^{k}}$ to obtain a homotopy from $(\zeta_t)_{t\in \D^{k}}$ to $(\tau_t)_{t\in \D^{k}}$ \emph{relative to $\partial\D^k$}. 
\end{proof}

\section{Malleabilization of neat foliations}\label{s:malleabilization}

This section is devoted to the proof of Theorem \ref{t:malleabilization}, that is that every neat foliation can be connected to a malleable one by a path of neat foliations. We have not defined neat foliations yet, we have only said that they were described by a simple model near their Novikov tori. Let us now describe this model.

\begin{definition}\label{d:model}
Given $\eps > 0$, we call any foliation on $\T^2 \times [-\eps,\eps]$ defined by an equation of the form:
\begin{align*}
\begin{cases}
dz - u(z)(a^+ dx_1 + b^+ dx_2) ,\quad (x_1,x_2,z) \in \T^2 \times [0,\eps]\\
dz - u(z)(a^- dx_1 + b^- dx_2), \quad (x_1,x_2,z) \in \T^2 \times [-\eps,0]
\end{cases}
\end{align*}
where $(a^{\pm},b^{\pm}) \in \R^2 \setminus \{(0,0)\}$ and $u$ is a smooth
function vanishing only at $0$, a \emph{model foliation}.
\end{definition}
\begin{figure}[htbp]
\centering
\includegraphics[height=4cm]{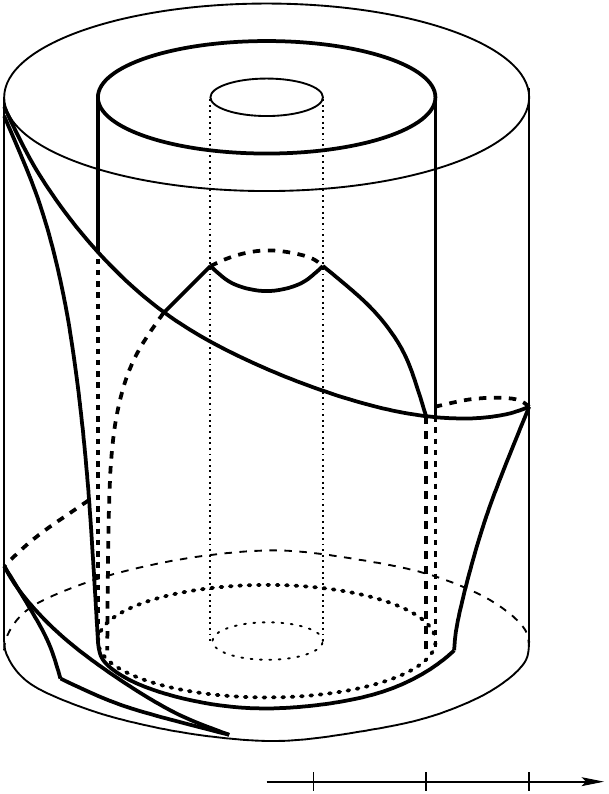}
\put(-28,-10){$0$}
\put(-50,-10){$-\eps$}
\put(-13,-10){$\eps$}
\caption{Model foliation near a torus leaf, with $(a^-,b^-)=(0,1)$ and $(a^+,b^+)=(4,5)$}
\label{net}
\end{figure}

\begin{remark} 
If $u$ is not infinitely flat at $0$ in the statement above, then $(a^+,b^+) = (a^-,b^-)$ since the equation is supposed to be $\Cinf$. If $u$ is infinitely flat at $0$ however, the vectors $(a^+,b^+)$ and $(a^-,b^-)$ may differ.
\end{remark}

\begin{definition}\label{d:net}
Let $\tau$ be a smooth foliation on a closed $3$-manifold. A torus leaf $T$ of $\tau$ is \emph{neat} if there is a parametrized neighbourhood $N \simeq \T^2 \times [-\eps,\eps]$ of $T$ on which $\tau$ induces a model foliation.

The foliation itself is \emph{neat} if all its Novikov tori are neat.
\end{definition}

\begin{remark}
A neat foliation has finitely many Novikov tori.
\end{remark}

The definitions of \emph{malleable} and \emph{neat} extend in a natural way to foliations of a compact manifold transverse to the boundary. Theorem \ref{t:malleabilization} is then a direct consequence of the following local deformation result:

\begin{proposition}\label{p:mall-local}
Every model foliation on $\T^2 \times [-1,1]$ can be deformed  to a malleable foliation through neat foliations relative to the boundary.
\end{proposition}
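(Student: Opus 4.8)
\emph{Proof idea.} Write $\tau$ for the given model foliation on $\T^2\times[-1,1]$, with torus leaf $T=\T^2\times\{0\}$ and defining forms $dz-u(z)(a^\pm dx_1+b^\pm dx_2)$. The first point is that $T$ is the \emph{only} obstruction to tautness: away from $z=0$, reparametrising $z$ turns $\tau$ into a linear foliation, and since $(a^\pm,b^\pm)\neq(0,0)$, for every $z_0\neq0$ one of the coordinate circles $\{x_2=0,\ z=z_0\}$ or $\{x_1=0,\ z=z_0\}$ is a closed transversal meeting every leaf on the corresponding side. Hence $\tau$ is taut on $\T^2\times([-1,1]\setminus\{0\})$ and $T$ is its unique Novikov torus. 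So the plan is to deform $\tau$, through neat foliations and relative to $\partial(\T^2\times[-1,1])$, until the non-taut locus is confined to a bunch of disjoint solid tori each carrying a \emph{simple} Schweitzer foliation; the resulting foliation is then malleable by Definition~\ref{d:malleable}.

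Concretely I would proceed as follows. First, shrink to a tube $\T^2\times[-\eps,\eps]$, leave $\tau$ untouched outside it, and normalise $u$ to a fixed profile (still vanishing only at $0$) through a path of model foliations. Next choose a primitive simple closed curve $\gamma\subset\T^2$ and an annular neighbourhood $N(\gamma)$ with $\T^2\setminus N(\gamma)$ a single annulus $A$, and work inside the solid torus $W=N(\gamma)\times[-\delta,\delta]$ (core $\gamma\times\{0\}\subset T$, with $\gamma$ as the $\Ss^1$-direction) together with a collar of $\partial W$; there $\tau$ is a product of a one-dimensional ``Reeb annulus'' foliation of the meridian square, whose only compact leaf is the segment $T\cap W$, by the $\gamma$-circle. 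Then: (i) in the collar of $\partial W$, deform $\tau$ \emph{through foliations} to one transverse to $\gamma$ with translation holonomy along $\gamma$ on $\partial W$, reshaping the two ``ends'' of $T$ near $\partial W$ by the Reeb Filling Lemma~\ref{l:reeb-fill} and the pair-of-pants flexibility of Lemma~\ref{l:pantalon}; this replaces a neighbourhood of $T\cap\partial W$ by a region spiralling onto a small new torus leaf, so that the remnant $A\times\{0\}$ becomes an annular leaf spiralling onto that torus and is therefore crossed by a closed transversal — its ``equator'', exactly as in the proof of Lemma~\ref{l:desintegration}. (ii) Inside $W$, where $\tau$ is now transverse to the $\Ss^1$-factor with translation holonomy $f$ on the boundary, replace it, through foliations and relative to $\partial W$, by the Schweitzer foliation $\LL_f$, using that a Reeb filling and the Schweitzer foliation of a translation are connected by a path of foliations (Remark~\ref{r:lid}); one arranges $f$ so that $\LL_f$ is simple in the sense of Definition~\ref{d:malleable-dhf}. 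After (i)–(ii), $\tau$ is a simple Schweitzer foliation on the (slightly shrunk) solid torus and taut everywhere else, hence malleable. The only torus leaves occurring along the path are $T$ itself (with its standing model neighbourhood) and the Schweitzer torus leaves (which have linear neighbourhoods, being glued Reeb fillings), so every foliation in the path is neat; and all the modifications are supported in $\Int(\T^2\times[-1,1])$, so the deformation is relative to the boundary.

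The main obstacle is that one has no freedom to normalise the slope data $(a^\pm,b^\pm)$: the form $dz-v(z)(a(z)dx_1+b(z)dx_2)$ is integrable only when the slope vector merely rescales (its angle is frozen), so irrational slopes stay irrational and the two sides of $T$ may carry genuinely different directions. Consequently step (i) — reshaping the two ends of $T$ near $\partial W$ and matching the Schweitzer holonomy — must be carried out with no control at all on the holonomies of $\tau$ along $\gamma$ on the two sides of $T$; the way around this is to keep every construction fibred over the $\gamma$-direction and to absorb those unknown holonomies into the holonomy $f$ of $\LL_f$, which Theorem~\ref{t:larcanche} furnishes for arbitrary $f\in\DDi$. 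Turning the rough moves (i)–(ii) into an explicit path of neat foliations, and checking that the equator transversal of the remnant annular leaf lies in the taut region and not in the Schweitzer solid torus, is the technical heart of the argument; the rest is local model-building of the kind developed in Section~\ref{s:prelim}.
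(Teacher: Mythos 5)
Your overall plan — isolate the unique Novikov torus $T$, dig a solid torus $W$ along a curve $\gamma\subset T$, make the foliation transverse to the $\gamma$-direction on a collar of $\partial W$, and fill $W$ with a Schweitzer foliation — is not what the paper does, and it has a genuine gap at its last step. The paper instead proves Proposition~\ref{p:mall-local} as the composition of two lemmas: Lemma~\ref{l:rolling} ``rolls up'' $T$ by interpolating the slope $(a^+,b^+_t)$ on one side of $T$ while siphoning the excess slope into a Reeb component inserted along a \emph{transverse} circle $\cercle^1\times\{p\}$ (via the pair-of-pants Lemma~\ref{l:pantalon}), ending up taut outside one or two solid tori foliated by \emph{Reeb fillings}; and then Lemma~\ref{l:frag-fol} converts each such Reeb filling into a bunch of parallel \emph{simple} Schweitzer foliations.

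The gap in your argument is the sentence ``one arranges $f$ so that $\LL_f$ is simple in the sense of Definition~\ref{d:malleable-dhf}.'' You cannot. By your own set-up, after step (i) the holonomy $f$ on $\partial W$ along $\gamma$ is a nontrivial translation $T_\lambda$ determined (up to the choice of $\gamma\in H_1(\T^2;\Z)$ primitive) by the slope data $(a^\pm,b^\pm)$ you have acknowledged you cannot normalise; for generic (e.g.\ irrational) slope, $T_\lambda$ has \emph{no} fixed point at all, let alone an interval of them, and no choice of primitive $\gamma$ fixes this. But Definition~\ref{d:malleable} requires \emph{simple} Schweitzer pieces, i.e.\ boundary holonomy with whole intervals of fixed points. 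A single solid torus whose boundary holonomy is $T_\lambda$, $\lambda\neq 0$, cannot carry a simple Schweitzer foliation, so your final foliation is a Schweitzer-filled torus but not a malleable one. The missing ingredient is precisely holonomy fragmentation (Lemma~\ref{l:fragmentation}): decompose $T_\lambda = f_1\circ\cdots\circ f_n$ with each $f_i$ having intervals of fixed points, then use Lemma~\ref{l:pantalon} and Theorem~\ref{t:larcanche} to replace the single Reeb/Schweitzer torus by $n$ parallel solid tori each carrying the simple Schweitzer foliation $\LL_{f_i}$; this is exactly Lemma~\ref{l:frag-fol} and it is not optional.

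A secondary concern: your step (i) performs a Reeb-type modification along the two circles $T\cap\partial W=\partial N(\gamma)\times\{0\}$, which lie \emph{inside} the leaf $T$, whereas the turbulization move needs a transverse circle. The paper avoids this by inserting its siphon Reeb component along a genuinely transverse circle $\cercle^1\times\{p\}\subset\T^2\times(1/2,1)$. Your version is not obviously fatal — one can deform a foliation near a curve in a leaf to split the leaf — but it would need its own local model, and you would still be left with the fragmentation problem above.
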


The proof consists of two steps: first we kill the initial torus leaf $\T^2 \times \{0\}$, creating one or two new ones lying in Reeb foliations (cf. Lemma \ref{l:rolling}). This uses the form of the foliation near a neat leaf in a fundamental way. Then, we replace these Reeb foliations by a collection of ``parallel" simple Schweitzer foliations (cf. Lemma \ref{l:frag-fol}).


\subsection{Rolling up a torus leaf}\label{ss:rolling}

\begin{lemma}\label{l:rolling}
Every model foliation on $\T^2 \times [-1,1]$ can be deformed through neat foliations and relative to the boundary to a foliation taut outside one or two solid tori foliated by Reeb fillings.
\end{lemma}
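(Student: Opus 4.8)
The plan is to destroy the Novikov torus $T := \T^2\times\{0\}$ by "rolling up" each of its two half\nobreakdash-neighbourhoods around the boundary of a small solid torus, so that the infinitely spiralling leaves of the model get replaced by the leaves of one or two Reeb fillings, in the spirit of Remark \ref{r:reeb} and Figure \ref{f:creation-reeb}. The two sides $\T^2\times[0,1]$ and $\T^2\times[-1,0]$ of $T$ are handled independently and symmetrically, so I focus on the positive side.

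\textbf{Normal form and the rolling move.} After a change of coordinates on $\T^2$ in $SL_2(\Z)$ and a reparametrisation of $u$ and of $z$, the model foliation on $\T^2\times[0,1]$ may be taken to be $\ker\bigl(dz - u(z)(a\,dx_1 + b\,dx_2)\bigr)$ with $u>0$ on $(0,1]$ and $u(0)=0$. Since $\int_0 dz/u = +\infty$, every leaf with $z>0$ wraps around $\T^2$ in the direction $(a,b)$ an amount going to $\infty$ as $z\to 0^+$, i.e. it spirals infinitely onto $T$; this is the property the argument exploits. Fix an essential simple closed curve $\gamma$ on $\T^2$ not parallel to $(a,b)$ (when $(a,b)$ is rational; if $(a,b)$ is irrational one keeps an arbitrary essential $\gamma$ and accepts an irrational slope below), let $A\subset\T^2$ be an annular neighbourhood of $\gamma$, and set $V := A\times[0,\eta]\subset\T^2\times[0,1]$ for small $\eta$; this is a solid torus with $\sphere^1$-factor $\gamma$. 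On the collar $\T^2\times[0,\eta]$ of $T$ I will construct a homotopy, through foliations and relative to $\T^2\times\{\eta\}$ (and to the negative side), which (i) deforms the model on $V$ into a Reeb filling of an appropriate slope — essentially the "addition of a Reeb component" of Remark \ref{r:reeb}, whose integrability at every stage is guaranteed by the Reeb Filling Lemma \ref{l:reeb-fill} — and (ii) simultaneously deforms the foliation on the complementary solid torus $\overline{(\T^2\times[0,\eta])\setminus V}$ so that the former spiralling leaves now run off onto $\partial V$; integrability here is preserved because this piece is a suspension over an annulus and the deformation can be performed as in Lemma \ref{l:pantalon} (cf. also the trick in the proof of Lemma \ref{l:desintegration}). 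The net effect is that $T$ is no longer a leaf.

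\textbf{Verifications.} After the move, every leaf of the resulting foliation on the complement of the dug solid torus accumulates onto $\partial V$ in a region where the induced foliation spirals, hence is crossed by a closed transversal (the "equator" of the new Reeb component, exactly as at the end of the proof of Lemma \ref{l:desintegration}), while leaves staying away from $z=0$ were already crossed by closed transversals in the model; so the new foliation is taut outside the dug solid torus, which carries a Reeb filling by construction. The homotopy runs through \emph{neat} foliations: the only torus leaves appearing along the way are those internal to the Reeb fillings, which sit in model neighbourhoods thanks to the linear form near the torus leaf of a Reeb filling (Lemma \ref{l:reeb-fill}); and it is relative to $\T^2\times\{\pm1\}$ since it is supported in the collars $\T^2\times[0,\eta]$ and $\T^2\times[-\eta,0]$. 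Finally, one gets a single solid torus straddling $z=0$ when the data $(a^+,b^+)$ and $(a^-,b^-)$ on the two sides are compatible — in particular when $u$ is not infinitely flat at $0$, which forces them to coincide — and two solid tori, one on each side, otherwise.

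\textbf{Main obstacle.} The heart of the matter is to realise the rolling move \emph{through foliations}: the holonomy rigidity near a torus leaf (Kopell's Lemma, cf. the discussion after Remark \ref{r:reeb}) rules out any naive interpolation of slopes or holonomies, so the deformation must genuinely transport the leaves; one must also ensure that no non\nobreakdash-neat torus leaf is born at any intermediate time, and treat the case of an irrational spiralling direction, where the tidy cylinder-leaf picture is unavailable.
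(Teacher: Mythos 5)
Your plan is genuinely different from the paper's, and unfortunately the gap you flag yourself at the end (``the heart of the matter'') is precisely what the paper's proof is engineered to avoid, and what your proposal does not actually carry out.

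The paper does not try to transport the spiralling leaves onto a Reeb component dug \emph{next to} the Novikov torus $T=\T^2\times\{0\}$. It begins with an ``easy case'' observation: if the slope is the same on both sides of $T$ (i.e.\ $(a^+,b^+)=(a^-,b^-)$), then the equation is globally $dz - u(z)(a\,dx_1+b\,dx_2)$ and one can simply replace $u$ by nearby nonvanishing functions $u_t$; each $\ker\bigl(dz-u_t(z)(a\,dx_1+b\,dx_2)\bigr)$ is automatically a foliation, so the torus leaf is killed \emph{through foliations} with no Kopell-type obstruction and no digging near $T$ at all. The rest of the proof reduces the general case to this one by changing the slope on one side of $T$ while inserting a Reeb ``siphon'' \emph{far from $T$} (inside $\T^2\times(1/2,1)$), where the foliation is transverse to a circle fibration and Lemma~\ref{l:pantalon} applies verbatim to absorb the slope difference through suspension foliations.

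Your approach runs into concrete trouble long before holonomy rigidity. You set $V=A\times[0,\eta]$, so $A\times\{0\}\subset T$ is an annulus \emph{on the boundary} of $V$ to which the model foliation is tangent; before you can even talk about a Reeb filling of $V$ (which needs $\partial V$ transverse to the foliation), you must deform the foliation in a neighbourhood of $A\times\{0\}$ through foliations, and that is exactly the step you do not perform. Worse, the complementary piece $\overline{\T^2\setminus A}\times[0,\eta]$ is \emph{not} a suspension over an annulus: it contains the complementary annulus of $T$ as a tangent boundary piece, so the foliation is not transverse to the $[0,\eta]$-fibres and Lemma~\ref{l:pantalon} does not apply to it. (Your appeal to the ``trick in the proof of Lemma~\ref{l:desintegration}'' also cannot help: that trick explicitly \emph{destroys} integrability on the dug balls and produces almost integrable plane fields, not a path of foliations.) In short, you have correctly identified that the naive ``rolling'' cannot be done by interpolating slopes or holonomies, but you do not supply an alternative; the paper's answer is to not roll near $T$ at all, but instead to make the two sides' slopes coincide (paying for it with a remote Reeb siphon in the suspension region), and then dissolve $T$ by the easy-case perturbation of $u$.
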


\begin{proof} \textbf{Easy case}. There is a case in which getting rid of the toric leaf of a model foliation by a deformation of foliations \emph{without adding any new toric leaf} is easy: this is when the equation of the foliation is of the form:
$$dz - u(z)(a dx_1 + b dx_2) ,\quad (x_1,x_2,z) \in \T^2 \times [-1,1]$$
with $(a,b) \in\R^2 \setminus \{0\}$ and $u$ a smooth function vanishing only at $0$ and having the same sign on both sides of $0$. Then simply take a small deformation $u_t$, $t\in[0,1]$, of $u=u_0$ such that, for all $t>0$, $u_t$ is a smooth non-vanishing function coinciding with $u$ outside a small neighbourhood of $0$. Then the equations
$$dz - u_t(z)(a dx_1 + b dx_2) ,\quad (x_1,x_2,z) \in \T^2 \times [-1,1]$$
define foliations $\tau_t$ which, for $t > 0$, have no torus leaves anymore.

The idea in the general case is to reduce to this easy case by changing the ``slope" of the model foliation on one side of the central torus leaf, having beforehand inserted a Reeb filling to serve as a ``siphon" for the excess (or lack) of slope. \medskip

\noindent\textbf{Set up: choice of nice coordinates}. Let $\tau$ be a model foliation on $\T^2 \times [-1,1]$, defined by the equations:
\begin{align*}
\begin{cases}
dz - u(z)(a^+ dx_1 + b^+ dx_2) ,\quad (x_1,x_2,z) \in \T^2 \times [0,1]\\
dz - u(z)(a^- dx_1 + b^- dx_2), \quad (x_1,x_2,z) \in \T^2 \times [-1,0]
\end{cases}
\end{align*}
where $(a^{\pm},b^{\pm}) \in \R^2 \setminus \{(0,0)\}$ and $u$ is a smooth function vanishing only at $0$. Up to a deformation of $u$ near $0$ (and thus of $\tau$ near $T = \T^2 \times \{0\}$), we can assume that this function is infinitely
flat at $0$. Furthermore, up to a linear change of coordinates on $\T^2$, we can assume that $a^+$ and $a^-$ are different from $0$. The function $v$ equal to $|a^+ u|$ on $[0,1]$ and $|a^- u|$ on $[-1,0]$ is smooth, so $\tau$ is actually described by an equation of the form $dz - v(z)(a^\pm dx_1 + b^\pm dx_2)$ with $v$ smooth, nonnegative, vanishing only at $0$ and $a^\pm \in \{-1,1\}$. We distinguish two cases, depending on whether $(a^+,b^+)$ differs or not from $-(a^-,b^-)$. The second case reduces to the first one by a continuous deformation of $\tau$ which consists in splitting the torus leaf $\T^2 \times \{0\}$ into two, $\T^2 \times \{\pm \eps\}$, inserting a neat foliation of the form $dz-w(z)(a' dx_1 + b' dx_2)$ in the middle, with $w$ smooth and vanishing only at $\pm\eps$, and $(a',b') \neq \pm(a^+,b^+)$.

In the first case, there exists an integer vector of $\Z^2$ which forms a direct basis both with $(a^+,b^+)$ and $(a^-,b^-)$. In other words, up to a linear change of coordinates, we can assume $a^\pm > 0$. So replacing $v$ by $a^+v$ on $[0,1]$ and $a^-v$ on $[-1,0]$, (which leaves $v$ smooth and positive outside $0$), we can assume that $\tau$ has an equation of the form:
\begin{align*}
\begin{cases}
dz - v(z)(dx_1 + b^+ dx_2) ,\quad (x_1,x_2,z) \in \T^2 \times [0,1]\\
dz - v(z)(dx_1 + b^- dx_2), \quad (x_1,x_2,z) \in \T^2 \times [-1,0].
\end{cases}
\end{align*}

\noindent\textbf{Deformation on $\T^2 \times [-1,1/2]$}. Recall we want to reduce to the ``easy case" above, \emph{i.e} to the case $b^+=b^-$. So let us consider a continuous path $b_t^+$, $t\in[0,1]$, between $b_0^+=b^+$ and $b_1^+=b^-$. For later purposes, let us also require $b_t^+$ to be equal to $b^+$ for all $t \in [0,1/2]$. Now on $\T^2 \times [-1,1/2]$, we consider the following path of foliations $\tau_t$, $t\in[0,1]$:
\begin{align*}
\begin{cases}
dz - v(z)(dx_1 + b^- dx_2) ,\quad (x_1,x_2,z) \in \T^2 \times [-1,0]\\
dz - v(z)(dx_1 + b^+_t dx_2), \quad (x_1,x_2,z) \in \T^2 \times [0,1/2].
\end{cases}
\end{align*}
For $t = 1$ we are in the situation of the ``easy case" and the central (unique) torus leaf can be removed by a deformation of foliations relative to the boundary.
\begin{figure}[htb]
\centering
\begin{tabular}{ccc}
\includegraphics[height=3.3cm]{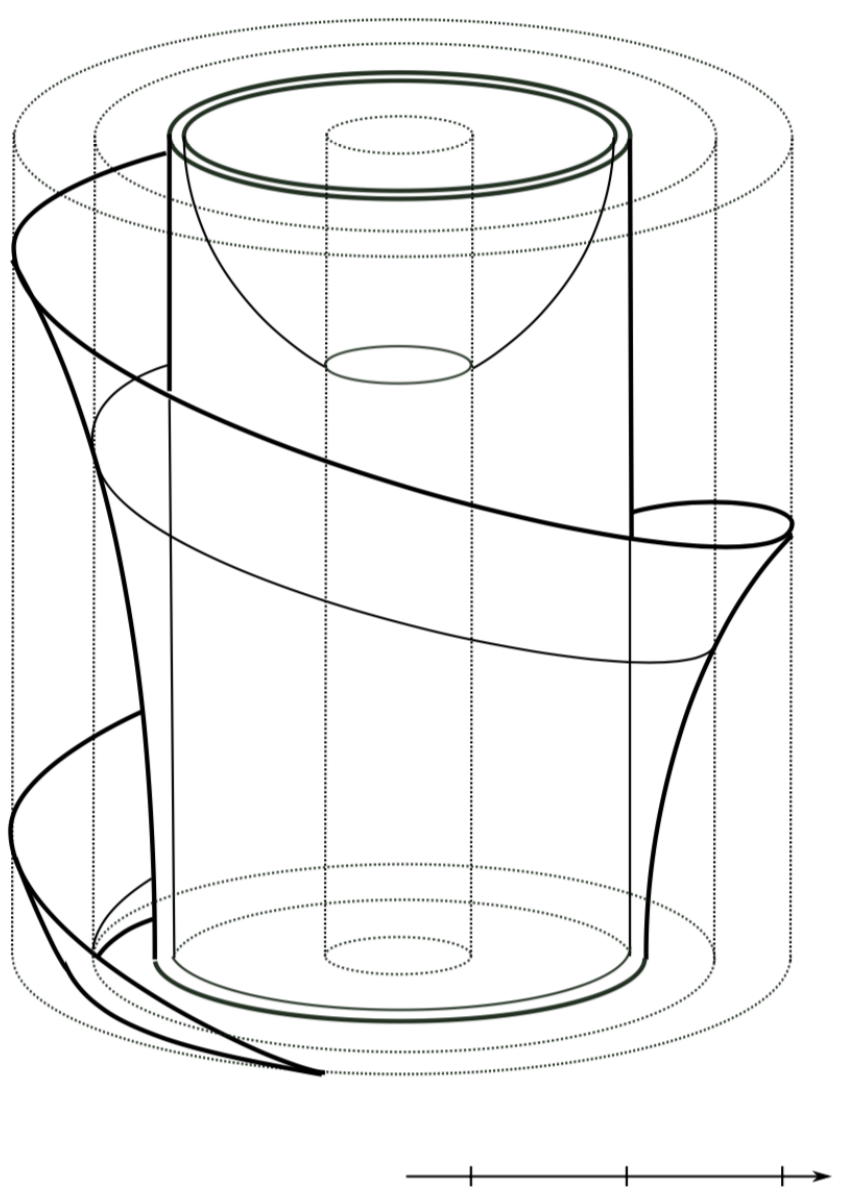} 
\put(-37,-12){$\scriptstyle -1$}
\put(-18,-12){$\scriptstyle 0$}
\put(-6,-12){$\scriptstyle 1$}
\hspace{0.5cm}
& \includegraphics[height=3.3cm]{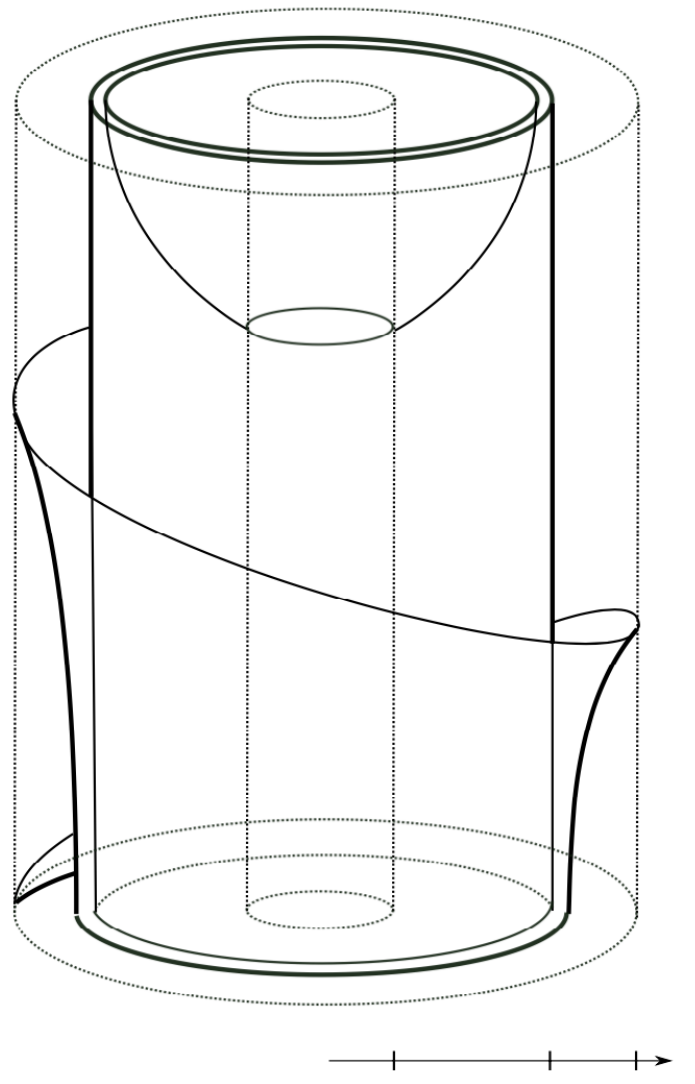}
\put(-32,-12){$\scriptstyle -1$}
\put(-13,-12){$\scriptstyle 0$}
\put(-6,-12){${\scriptscriptstyle \tfrac12}$}
\hspace{0.5cm}
& \includegraphics[height=3.3cm]{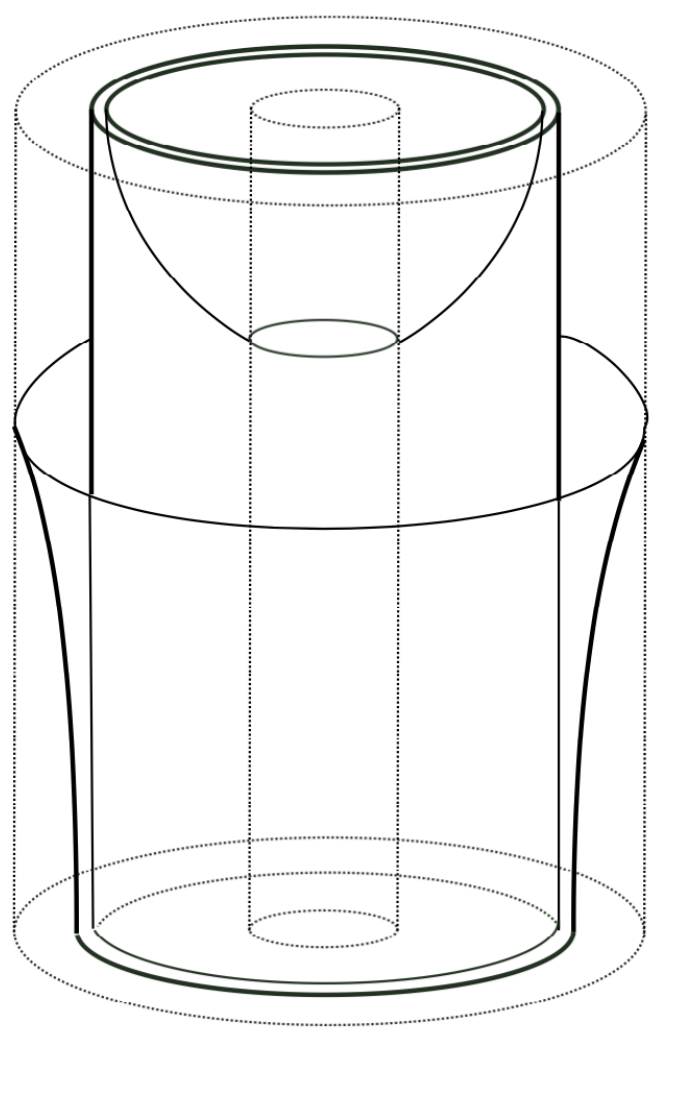}
\end{tabular}
\caption{Deformation on $\T^2\times[-1,1/2]$
\label{pente}}
\end{figure}

\medskip

\noindent\textbf{Deformation on $\T^2 \times [1/2,1]$}. We now need to extend the above deformation to $\T^2 \times [1/2,1]$ (relative to $\T^2 \times \{1\}$).

For $t\in[0,1/2]$, the foliation induced on $\partial(\T^2 \times [1/2,1])$ must remain unchanged. The foliation inside, however, is going to be modified by the insertion of a Reeb component along a transverse circle. More precisely, let $p$ be a point in the open annulus $A=\cercle^1\times(1/2,1)$. The circle $\cercle^1 \times \{p\} \subset \cercle^1 \times A = \T^2 \times (1/2,1)$ is transverse to the foliation, so for a small enough disk $D$ centered at $p$ in $A$, $\cercle^1 \times D$ is foliated by disks. According to Remark \ref{r:reeb}, there exists a deformation $\tau_t$, $t \in [0,1/2]$, of foliations on $\T^2 \times
[1/2,1]$ relative to the complement of $\cercle^1 \times D$ such that $\tau_0 = \tau$ and $\tau_{1/2}$ induces a Reeb filling of slope $0$ in $\cercle^1 \times D$.

Now we want to define a deformation $\tau_t$, $t \in [1/2,1]$, of foliations on $\T^2 \times [1/2,1]$ which induce linear foliations of equation $dx_1 + b^+ dx_2$ on $\T^2 \times \{1\}$ and $dx_1 + b^+_t dx_2$ on $\T^2 \times \{1/2\}$
(with the right coorientation). To that end, consider the pair of pants $P = A \setminus D$. According to Lemma \ref{l:pantalon}, we can define a path of foliations on $\cercle^1 \times P$ inducing the desired foliations on the boundary components of $\cercle^1 \times \partial A$, and inducing on $\cercle^1 \times \partial D $ a continuous path of linearizable foliations (\emph{i.e} whose holonomies are (compositions of) translations), which can be extended inside $\cercle^1 \times D $ by a continuous path of Reeb fillings.

\begin{figure}[htb]
\centering
\begin{tabular}{ccc}
\includegraphics[height=3.3cm]{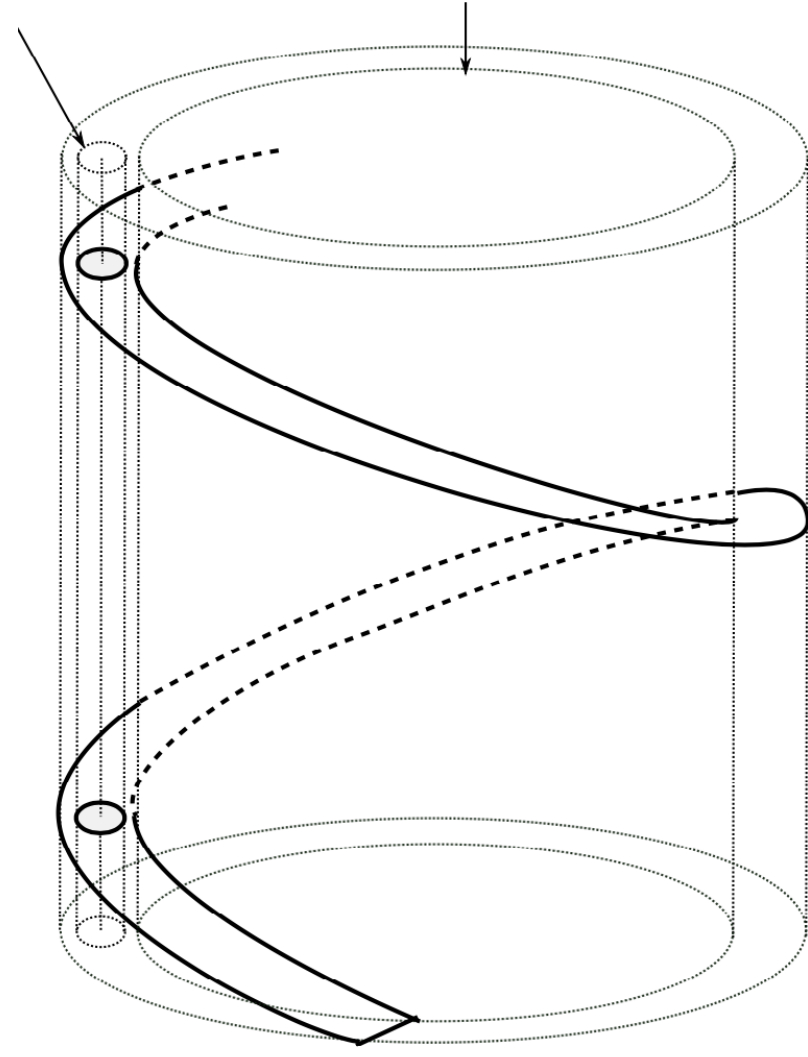}
\put(-85,95){$\scriptstyle \cercle^1\times\partial D$}
\put(-45,95){$\scriptstyle\T^2\times\{1/2\}$}
\put(-55,-10){$\scriptstyle\tau_0\res{\T^2\times[1/2,1]}$}
\hspace{1cm}
 & \includegraphics[height=3.3cm]{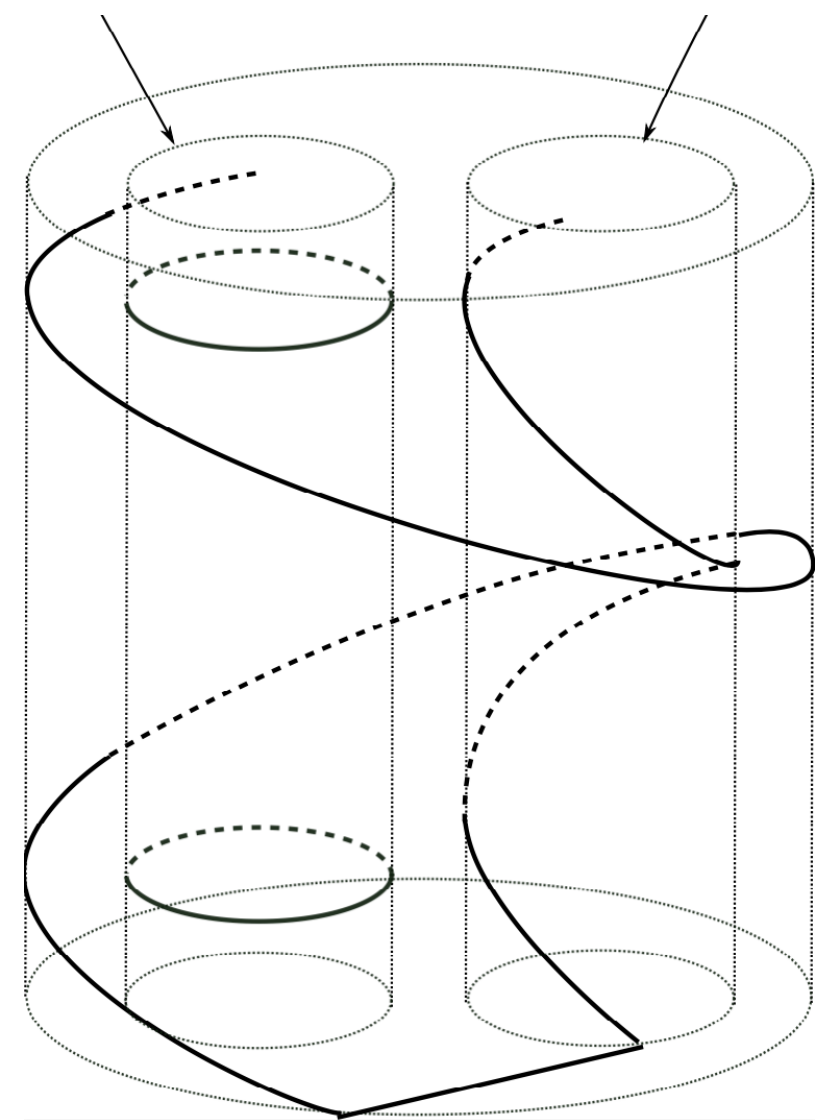}
\put(-85,95){$\scriptstyle \cercle^1\times\partial D$}
\put(-25,95){$\scriptstyle\T^2\times\{1/2\}$}
\put(-50,-10){$\scriptstyle{\tau_{1/2}}\res{\cercle^1\times A}$}
\hspace{1cm}
& \includegraphics[height=3.3cm]{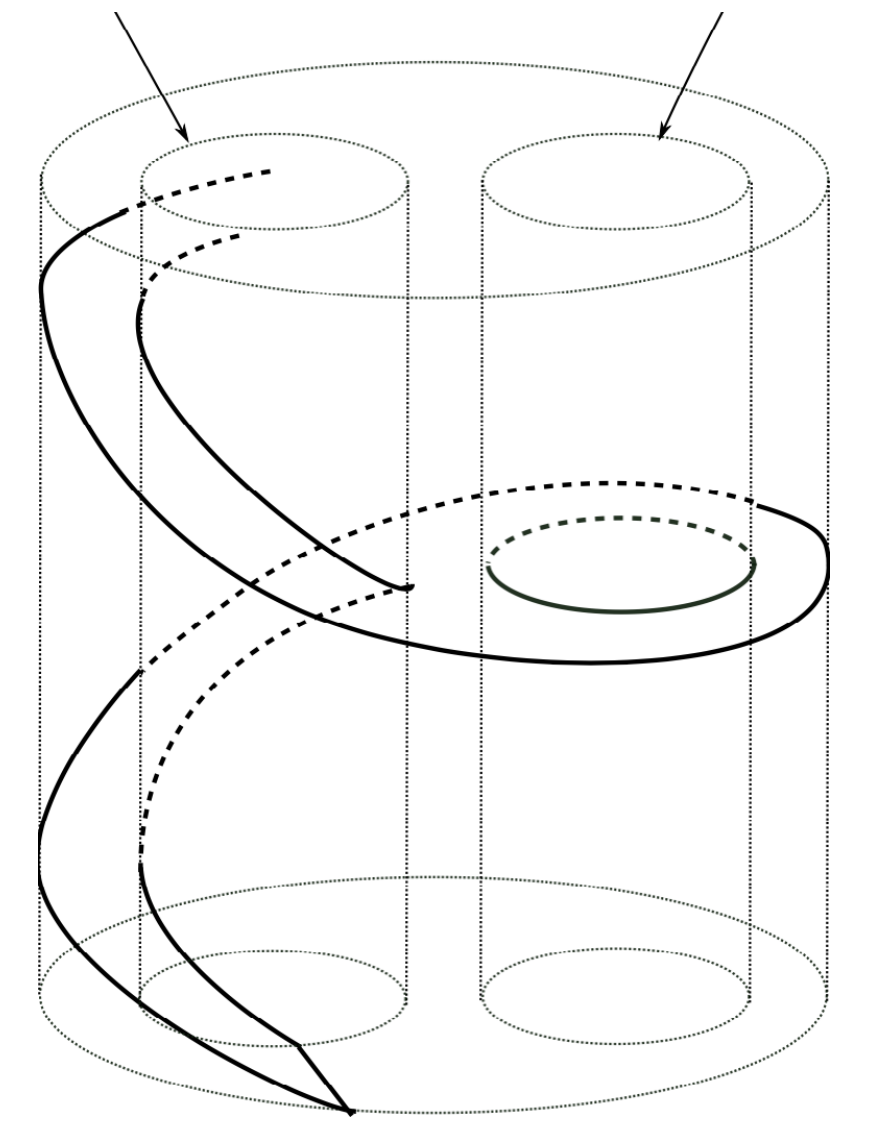}
\put(-85,95){$\scriptstyle \cercle^1\times\partial D$}
\put(-25,95){$\scriptstyle\T^2\times\{1/2\}$}
\put(-47,-10){$\scriptstyle\tau_{1}\res{\cercle^1\times A}$}
\end{tabular}
\caption{Deformation on  $\T^2\times[1/2,1]$ 
\label{pantalon}}
\end{figure}

The global foliation $\tau_1$ is transverse to the first $\cercle^1$
factor outside $\cercle^1 \times D$, where it induces a Reeb filling, which
concludes the proof.
\end{proof}


\subsection{Holonomy fragmentation}\label{ss:frag}

\begin{lemma}\label{l:frag-fol}
A Reeb filling on $\D^2 \times \cercle^1$ can be deformed to a malleable foliation through neat foliations and
relative to the boundary.
\end{lemma}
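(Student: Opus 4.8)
The plan is to reduce the statement to a ``refactorisation'' property of Schweitzer foliations, proved by sliding holonomies over a pair of pants. A Reeb filling on $\D^2\times\cercle^1$ is the foliation $\bar\omega$ attached (Reeb Filling Lemma~\ref{l:reeb-fill}) to a nonsingular \emph{closed} $1$-form $\omega$ on $\partial\D^2\times\cercle^1$; being closed, $\omega$ induces a linear foliation on the boundary torus, of holonomy some translation $T_c$, $c\in\R$. Such a $\bar\omega$ is neat: for a suitable partition of unity it has a single torus leaf, which is neat, and it is taut elsewhere; and by Remark~\ref{r:lid} (last sentence) it can be deformed, through foliations and relative to the boundary, to the Schweitzer foliation $\LL_{T_c}$. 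Inspecting that deformation --- it digs, slides and merges Reeb fillings of translations, whose torus leaves are neat for that partition of unity --- shows it goes through neat foliations. So I may replace $\bar\omega$ by $\LL_{T_c}$, and it suffices to deform $\LL_{T_c}$, through neat foliations and relative to the boundary, to a malleable one.

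Next I would \emph{fragment the holonomy}: write $T_c=f_1\circ\cdots\circ f_k$ in $\DDi$ with each $f_j$ fixing a whole interval of $\cercle^1$. This is elementary (for $c\in(0,\tfrac12)$ already $k=2$ works: prescribe $f_2$ to coincide with $T_c$ on one small interval and with $\id$ on a disjoint one and fill in, then $f_1:=T_c\circ f_2^{-1}$ fixes the first interval while $f_1f_2=T_c$ has no fixed point; the general case follows by composing several such maps), and it gives the subsection its name. Since $f_j$ has an interval of fixed points, $\LL_{f_j}$ is a \emph{simple} Schweitzer foliation. Now pick disjoint closed disks $D_1,\dots,D_k\subset\Int\D^2$, set $W_j=D_j\times\cercle^1$ and $P=\D^2\setminus\bigcup_j\Int D_j$, and, using the $k$-holed analogue of Lemma~\ref{l:pantalon} (with Lemma~\ref{l:holonomie} to pin the outer trace), choose a suspension on $P\times\cercle^1$ transverse to the $\cercle^1$ factor that induces $\bar\omega\res{\partial\D^2\times\cercle^1}$ on the outer boundary and the boundary trace of $\LL_{f_j}$ on each $\partial D_j\times\cercle^1$; its outer holonomy is then $f_1\cdots f_k=T_c$, as required. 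Gluing this to the $\LL_{f_j}$'s in the $W_j$'s yields a foliation $\tau_1$ of $\D^2\times\cercle^1$ that is \emph{neat} (the $P$-part is transverse to $\cercle^1$, hence taut, hence Novikov-free; the $\LL_{f_j}$ are neat), \emph{malleable} (taut off the $W_j$'s, a simple Schweitzer foliation on each), and induces $\bar\omega$ on the boundary. It remains to connect $\LL_{T_c}$ to $\tau_1$ through neat foliations, relative to the boundary.

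The crux is the following \emph{refactorisation} statement: for every $f\in\DDi$ and every factorisation $f=h_1\circ h_2$, the Schweitzer foliation $\LL_f$ can be deformed, through neat foliations and relative to the boundary, to the foliation $\sigma(h_1,h_2)$ obtained by gluing a suspension over the pair of pants $\P$ with inner holonomies $h_1,h_2$ (hence outer holonomy $f$) to $\LL_{h_1},\LL_{h_2}$ in its two holes. Granting it, one concludes by applying it $k-1$ times to $\LL_{T_c}$, along $T_c=f_1\cdot(f_2\cdots f_k)$, then $f_2\cdot(f_3\cdots f_k)$, and so on: a suspension over $\P$ glued in a hole of a suspension over $\P$ is a suspension over a $3$-holed disk, and so on, so after $k-1$ steps $\LL_{T_c}$ has become exactly $\tau_1$, through neat foliations and relative to the boundary. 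To prove the refactorisation statement: the set $\{(a,b)\in\DDi^2:ab=f\}$ is a torsor over the contractible group $\DDi$, hence path-connected, so moving $(h_1,h_2)$ to $(f,\id)$ inside it and carrying the suspension along by Lemma~\ref{l:pantalon} (which varies the inner traces while, with Lemma~\ref{l:holonomie}, keeping the outer one fixed) and the Schweitzer pieces along by Theorem~\ref{t:larcanche} (continuity of $f\mapsto\LL_f$) deforms $\sigma(h_1,h_2)$ to $\sigma(f,\id)$ through neat foliations ($\P$-part stays transverse to $\cercle^1$; the Schweitzer pieces stay neat) and rel.\ boundary (the outer holonomy stays $f$). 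Finally, deform $\LL_{\id}$ in the $D_+$-hole of $\sigma(f,\id)$ to the product foliation by meridian disks (Remark~\ref{r:lid}): this caps off that hole and merges the $\P$-suspension with its collar into a suspension over an annulus; and since $\LL_f$ is itself, by construction (Section~\ref{s:prelim}), transverse to $\cercle^1$ off a smaller pair of pants carrying a Reeb filling in each of its two holes, one further application of Lemma~\ref{l:pantalon} identifies the result with $\LL_f$, relative to the boundary and through neat foliations.

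The real obstacle will be checking that \emph{every} move above stays among neat foliations: one must control the local model near each torus leaf created, slid or destroyed, which is exactly what the explicit $1$-forms of the Reeb Filling Lemma~\ref{l:reeb-fill} and the controlled flexibility of Lemma~\ref{l:pantalon} are for. The fragmentation itself, and keeping track of which holonomy sits on which boundary circle --- so that the deformation ends at $\tau_1$ and not merely at a fibred-isotopic foliation --- are more routine, but must be organised consistently, as in the Siphon Lemma~\ref{l:v-c}.
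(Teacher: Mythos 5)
Your proof is correct and rests on the same ingredients as the paper's: the fragmentation Lemma~\ref{l:fragmentation}, the deformations of Remark~\ref{r:lid} between Reeb, Schweitzer and product foliations, the flexibility of suspensions over a multi-holed disk (Lemma~\ref{l:pantalon} and its generalisation), and the continuity of Schweitzer foliations from Theorem~\ref{t:larcanche}. What differs is the organisation. The paper performs a \emph{single simultaneous} interpolation: after passing from the Reeb filling to the Schweitzer foliation $\LL_{T_\lambda}$ supported on a subdisk $D_1$ and replacing trivially foliated subdisks $D_2,\dots,D_n$ by $\Lid$ (Remark~\ref{r:lid}), it picks paths $f_i^t$ with $f_1^t\circ\cdots\circ f_n^t\equiv T_\lambda$ joining $(T_\lambda,\id,\dots,\id)$ to $(f_1,\dots,f_n)$, and lets the $n$-holed pair-of-pants flexibility together with Theorem~\ref{t:larcanche} carry the whole foliation along in one stroke. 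You instead isolate a \emph{binary} ``refactorisation'' move $\LL_f \to \sigma(h_1,h_2)$ and iterate it $k-1$ times --- this is the $n=2$ case of the paper's interpolation, applied repeatedly. Both are valid; the single-shot version avoids the bookkeeping of nested pair-of-pants gluings, while yours makes the ``torsor over the contractible group $\DDi$'' picture explicit. Two small points: in your fragmentation sketch, $f_1=T_c\circ f_2^{-1}$ fixes $T_c(I)$ rather than the interval $I$ on which $f_2=T_c$ (harmless); and the final identification of ``annular suspension collar glued to $\LL_f$ on a smaller torus'' with $\LL_f$ on the full torus is most naturally a fibred isotopy, using that $\LL_f$ is already a suspension near $\partial\D^2\times\cercle^1$, rather than another Lemma~\ref{l:pantalon} application. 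Your closing caveat about neatness of every intermediate foliation is fair, and the paper leaves it equally implicit, relying on the explicit model $1$-forms of the Reeb Filling Lemma.
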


We already know that a Reeb filling can be deformed to a Schweitzer foliation rel. boundary (cf. Remark \ref{r:lid}). But if the holonomy $f$ on the boundary has no interval of fixed points (\emph{i.e} if it is not trivial, since it is a translation), this Schweitzer foliation is not \emph{simple}. The idea is to replace it by a collection of ``parallel" simple Schweitzer foliations whose holonomies form a decomposition of $f$. This uses the following fragmentation lemma for diffeomorphisms, along with the flexibility of suspension foliations over a punctured disk just as Theorem \ref{t:larcanche} follows from Herman's decomposition Theorem \ref{t:herman} together with the flexibility of suspension foliations over a pair of pants (Lemma \ref{l:pantalon}).

\begin{lemma}\label{l:fragmentation}
Every element of $\DDi$ is the composition of finitely many elements of $\DDi$, each having intervals of fixed points.
\end{lemma}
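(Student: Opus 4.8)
The plan is to deduce the lemma from a purely local ``flexibility'' statement: \emph{any element of $\DDi$ sufficiently $\Cun$-close to the identity splits as a composition of two elements of $\DDi$, each fixing an interval}. The passage from this to the global statement is then soft. Let $H\subset\DDi$ be the set of all finite compositions of elements of $\DDi$ having intervals of fixed points. The inverse of such an element again has intervals of fixed points, so $H$ is a subgroup. Granting the local statement, $H$ contains the set $\{f:\Bars{f-\id}_{\Cun}<\eps_0\}$ for some $\eps_0>0$, which is a neighbourhood of $\id$ in the $\Cinf$ topology; hence $H$ is open (a subgroup containing a neighbourhood of the identity is a union of translates of that neighbourhood), hence also closed. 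Since $\DDi$ is connected --~for instance the segment $s\mapsto(1-s)\,\id+s\,f$ lies in $\DDi$, as its derivative $(1-s)+s\,f'$ is everywhere positive, and it joins $\id$ to $f$~-- it follows that $H=\DDi$, which is exactly Lemma \ref{l:fragmentation}. (Equivalently one may subdivide that segment into short sub-segments and telescope, writing $f$ as a product of finitely many near-identity elements of $\DDi$ --~the number depending on the $\Cun$-distance from $f$ to $\id$~-- and apply the local statement to each.)

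It remains to prove the local statement, which is the only genuine computation. Let $f\in\DDi$ with $\Bars{f-\id}_{\Cun}$ so small that $|f(x)-x|<1/20$ for all $x$, and fix $\delta=1/10$. I would construct $g_2\in\DDi$ agreeing with $f$ on $[-\delta,\delta]+\Z$ and with $\id$ on $[\tfrac12-\delta,\tfrac12+\delta]+\Z$: on the fundamental domain $[-\delta,1-\delta]$, set $g_2=f$ on $[-\delta,\delta]$ and $g_2=\id$ on $[\tfrac12-\delta,\tfrac12+\delta]$, and fill the two remaining gaps $[\delta,\tfrac12-\delta]$ and $[\tfrac12+\delta,1-\delta]$ by orientation-preserving diffeomorphisms onto $[f(\delta),\tfrac12-\delta]$ and $[\tfrac12+\delta,f(-\delta)+1]$ respectively --~nondegenerate intervals precisely because $f$ is close to $\id$~-- matching all derivatives at the endpoints; extending by $g_2(x+1)=g_2(x)+1$ then gives an element of $\DDi$. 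Now $g_1:=f\circ g_2^{-1}\in\DDi$ equals $\id$ on the interval $g_2([-\delta,\delta])=f([-\delta,\delta])$, so $g_1$ fixes an interval; $g_2$ fixes the interval $[\tfrac12-\delta,\tfrac12+\delta]$; and $f=g_1\circ g_2$, as wanted.

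I do not expect a serious obstacle here: for the present purposes this lemma is the exact counterpart of Herman's decomposition Theorem \ref{t:herman} --~it will feed into the flexibility of suspension foliations over a punctured disk in Lemma \ref{l:frag-fol} just as Herman's theorem feeds into Lemma \ref{l:pantalon}. The two points deserving a little care in the writeup are (i) checking, in the telescoping version, that the ``ratios'' $f_{s_j}\circ f_{s_{j-1}}^{-1}$ are indeed $\Cun$-small, which follows from $\Bars{h\circ g^{-1}-\id}_{\Cun}\le\Bars{h-g}_{\Cun}/\min g'$ together with a uniform lower bound for $\min g'$ along the (compact) segment; and (ii) arranging the interpolating diffeomorphisms in the construction of $g_2$ to glue $\Cinf$-smoothly and $\Z$-equivariantly, which is routine since one has complete freedom over the two gap intervals.
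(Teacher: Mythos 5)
Your proof is correct, and it takes a genuinely different route to reduce to the near-identity case. Where the paper reduces by an explicit algebraic decomposition --~writing $v = f - \id = n\lambda + w$ with $|w|<1/2$ and $|\lambda|<1/2$, so that $f = T_{\lambda}^{n}\circ(\id+w)$ with each factor $\CC^0$-close to the identity~-- you instead observe that the set $H$ of all finite products of ``interval-fixing'' elements is a subgroup, and deduce $H=\DDi$ from the fact that $H$ contains a neighbourhood of the identity together with the connectedness (in fact convexity) of $\DDi$. Both reductions are sound; the paper's is more explicit and gives a concrete bound on the number of factors (namely $2(n+1)$ with $n$ controlled by $\max(f-\id)$), while yours is softer and would apply verbatim to any connected topological group in which the near-identity splitting holds. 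The local splitting itself --~build $g_2$ equal to $f$ near $0$ and to $\id$ near $1/2$, interpolate on the gaps, take $g_1 = f\circ g_2^{-1}$~-- is essentially the same construction as in the paper (there written symmetrically as $f = g\circ(g^{-1}\circ f)$), and, as in the paper, it really only needs the $\CC^0$ bound $|f-\id|<1/2-2\delta$ rather than $\Cun$-smallness; your $\Cun$ hypothesis is a harmless over-assumption. The checks you defer (smooth $\Z$-equivariant gluing, nondegeneracy of the gap intervals) are indeed routine.
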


\begin{proof}
Let $f \in \DDi$. If $|f(x)-x| < 1/2$ for all $x \in \R$, there exists a diffeomorphism $g \in \DDi$ which coincides with the identity near $0$ and with $f$ near $1/2$. Hence $f = g \circ (g^{-1} \circ f)$, where $g$ and $g^{-1} \circ f$ each have an interval of fixed points.

In the general case, the function $v = f - \id$ is $1$-periodic and satisfies $\max v - \min v < 1$. Thus $v = n \lambda + w$ for some $n\in\N$, with $|w(x)|< 1/2$ for all $x \in \R$ and $\lambda \in \; (-1/2,1/2)$. Hence, 
$$f = T_{\lambda}^n \circ  (\id + w)$$ 
and each component of the righthandside falls into the first case.
\end{proof}

\begin{proof}[Proof of Lemma \ref{l:frag-fol}] Let $\tau$ be a Reeb filling of a translation $T_\lambda$ and $f_1 \circ... \circ f_n$ a decomposition of $T_\lambda$ into diffeomorphisms each having intervals of fixed points (cf. Lemma \ref{l:fragmentation}). First of all, according to Remark \ref{r:lid}, $\tau$ can be deformed among (neat) foliations and relative to the boundary into a Schweitzer foliation of the same slope. Now let $D_1$ be a disk in $\D^2$ big enough that the foliation on $(\D^2 \setminus D_1) \times \cercle^1$ is conjugate to the product foliation of the linear foliation on the boundary $\partial \D^2 \times \cercle^1$ by a small interval, and let $D_2$,...,$D_n$ be small disks in $\D^2 \setminus D_1$ so that $D_i \times \cercle^1$ is foliated by disks for all $i \in \{2,...,n\}$. Again according to Remark \ref{r:lid}, these trivial foliations can be deformed to $\Lid$ rel. $\partial D_i \times \cercle^1$. Denote by $\bar \tau$ the resulting foliation on $\D^2 \times \cercle^1$. Let $f_2^t$,...,$f_n^t$, $t\in[0,1]$ be continuous paths in $\DDi$ such that $f_i^0=\id$ and $f_i^1=f_i$ for all $i\in\{2,...,n\}$, and let $f_1^t = T_\lambda \circ (f_2^t \circ... \circ f_n^t)^{-1}$ for all $t \in [0,1]$. Finally, let $P = \D^2 \setminus (D_1 \cup ... \cup D_n)$. According to (Lemma \ref{l:holonomie} and a straightforward generalization of) Lemma \ref{l:pantalon}, there is a continuous path of foliations $\bar \tau_t$, $t\in[0,1]$, on $P \times \cercle^1$, constant on 
$\partial \D^2 \times \cercle^1$ such that $\bar\tau_0 = \bar\tau \res{P\times \cercle^1}$ and the holonomy of $\bar \tau_t$ on $\partial D_i \times \cercle^1$ is $f_i^t$. Now according to Theorem \ref{t:larcanche}, this can be extended to $\cup_i 
D_i \times \cercle^1$ by continuous paths of Schweitzer foliations, and the final foliation $\bar\tau_1$ of $\D^2 \times \cercle^1$ is malleable.
\end{proof}

\begin{remark}\label{r:agreable} Let $\pi : M \to S$ be a circle bundle over a compact oriented surface, and consider the space of cooriented foliations on $M$ positively transverse to the fibers except above a finite number of simple closed curves in the interior of $S$ whose preimages by $\pi$ are neat leaves. We show in \cite{Ey}, using the same kind of arguments as above, that this space is path-connected, and that this remains true if we fix the foliation on the boundary of $M$ (if there is any). This extends the following result of Larcanch\'e \cite{La}: given a circle bundle $\pi : M \to S$ over a compact oriented surface $S$, the inclusion map from the space of foliations transverse to the fibres into the space of all foliations on $M$ is homotopic to a
constant map.
\end{remark}

\section{Density of neat foliations} \label{s:cleaning}

The aim of this section is to prove Theorem \ref{t:cleaning}, that is that any smooth foliation of a closed $3$-manifold can be made \emph{neat} (cf. Definition \ref{d:net}) by an arbitrarily small perturbation. The idea is very simple. A neat foliation is one which has only finitely many Novikov tori (\emph{i.e.} torus leaves which meet no closed transversal) near which it is described by a simple explicit model (cf. Definition \ref{d:model}). A random foliation on the other hand can have infinitely many Novikov tori, but those are gathered in a finite number of disjoint saturated sets of the form $\T^2 \times [a,b]$, where the foliation is transverse to the second factor (cf. \cite[Theorem 2]{Th1}, or for example \cite{B-F}). We will refer to such regions as \emph{Novikov stacks} of the foliation. We have to perturb the foliation in a neighbourhood of these thickened tori (leaving it unchanged on the complement) into one with finitely many torus leaves each surrounded by a nice model foliation. To that aim, we first translate this requirement in terms of holonomy (cf. Section \ref{ss:hol-net} below). Then, in Section \ref{ss:approx}, we use a result of C. Bonatti and A. Haefliger \cite{B-H} to reduce our problem of approximation of foliations to an approximation result for holonomy representations proved in \cite{B-E}.

\subsection{Neat foliations in terms of holonomy} \label{ss:hol-net}

Let $S$ be a saturated set of the form $\T^2 \times J$, where $J$ denotes a segment (possibly reduced to a point), of a foliated manifold $(M, \tau)$, on which $\tau$ is transverse to the second factor. Let $\Gamma$ be a small extension of the parametrized transverse arc $t \in J \mapsto (0,0,t) \in \T^2 \times J \simeq S$ and let $\Diff_+(\R,J)$ denote the group of germs of $\Cinf$ orientation preserving diffeomorphisms of $\R$ defined in a neighbourhood of $J$. Then the holonomy of $\tau$ on the transverse arc $\Gamma$ induces a homomorphism $h : \pi_1(\T^2,(0,0)) \simeq \Z^2 \to \Diff_+(\R,J)$. Actually, since such a homomorphism is completely determined by the image of the standard basis of $\Z^2$, what we call \emph{holonomy of $\tau$ on $\Gamma$} is simply the pair of commuting germs $(h(1,0), h(0,1))$. Let us now give a simple characterization of the neatness of a foliation in terms of holonomy.

\begin{definition} \label{d:neat-hol}
A pair $(f,g)$ of commuting elements of $\Diff_+(\R,J)$ is called \emph{neat} if $f$ and $g$ have finitely many common fixed points, and if for each such $z_0 \in \fix(f) \cap \fix(g)$, there is a $\Cinf$ vector field $\nu$ on $\R$ such that the left and right semi-germs of $f$ and $g$ at $z_0$ belong to the flow of the corresponding semi-germ of $\nu$.
\end{definition}

\begin{remark} \label{r:neat-hol}
It follows directly from classical results of G.~Szekeres \cite{Sz}, N.~Kopell \cite{Ko} and F.~Takens \cite{Ta} that if $f$ and $g$ are nowhere simultaneously infinitely tangent to the identity (or, in short, ``\emph{i.t.i}"), then $(f,g)$ is neat.
\end{remark}

\begin{proposition}\label{p:neat-hol}
The torus leaves of $\tau \res S$ are all neat if and only if the holonomy of $\tau$ on $\Gamma$ is neat.
\end{proposition}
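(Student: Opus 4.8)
I would prove the equivalence torus leaf by torus leaf, translating the local model of Definition~\ref{d:net} into the flow condition of Definition~\ref{d:neat-hol}; the bridge is the standard fact that the germ of a codimension-one foliation near a compact leaf is determined, up to a fibre-preserving diffeomorphism, by the germ of its holonomy. First I would pin down the correspondence between torus leaves of $\tau\res S$ and common fixed points of the pair $(f,g)=(h(1,0),h(0,1))$. A compact leaf of $\tau\res S$ is transverse to the fibres $\{\cdot\}\times J$, hence a finite cover of $\T^2$, hence a torus, and its trace on $\Gamma$ is a finite orbit of the group generated by $f$ and $g$; since $f$ and $g$ are orientation-preserving they have no periodic points other than fixed points, so this orbit is a single point $z_0\in\fix(f)\cap\fix(g)$. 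Conversely such a $z_0$ lies on a leaf with trivial germinal holonomy around both generators of $\pi_1(\T^2)$, hence on a torus leaf $T$ admitting a product neighbourhood $\T^2\times(-\eps,\eps)$ on which it is the section $\T^2\times\{0\}$ and $\tau$ is transverse to $\partial_z$. Finally, on such a neighbourhood $\tau$ is conjugate by a fibre-preserving diffeomorphism to the suspension of the germ of $(f,g)$ at $z_0$.

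Granting this, the ``only if'' direction is direct. If $T$ is neat, with model neighbourhood carrying $dz-u(z)(a^\pm dx_1+b^\pm dx_2)$, a one-line integration (set $U'=1/u$, so the leaves are $\{U(z)=a^\pm x_1+b^\pm x_2+\mathrm{cst}\}$) shows that the holonomy of this model along the loops $(1,0)$ and $(0,1)$ is, on each side of $0$, the time-$a^\pm$ and time-$b^\pm$ map of the flow of $\nu=u(z)\,\partial_z$. Since $(a^\pm,b^\pm)\neq(0,0)$ and $u$ vanishes only at $0$, the only common fixed point of these germs is $0$, and $\nu$ is a witness in the sense of Definition~\ref{d:neat-hol}. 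As the torus leaves exhaust $\fix(f)\cap\fix(g)$ and are finite in number (each is isolated in its model neighbourhood, inside the compact stack $S$), the pair $(f,g)$ is neat.

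For the ``if'' direction I would run this in reverse. Given $z_0\in\fix(f)\cap\fix(g)$ with a $C^\infty$ witness $\nu=\mu(z)\,\partial_z$, the hypothesis yields reals $a^\pm,b^\pm$ with $f=\phi_\nu^{a^\pm}$ and $g=\phi_\nu^{b^\pm}$ as right/left semi-germs at $z_0$; since $z_0$ is isolated in $\fix(f)\cap\fix(g)$ and $f,g$ agree with flows of $\nu$ near $z_0$, the function $\mu$ has no zero other than $z_0$ in a neighbourhood of $z_0$ (a common zero of $\mu$ would be a common fixed point of $f$ and $g$) and $(a^\pm,b^\pm)\neq(0,0)$. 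Translating $z_0$ to $0$ and setting $u=\mu$ on a small $[-\eps,\eps]$ (smooth, vanishing only at $0$), I would note that unless $u$ is infinitely flat at $0$ the $C^\infty$-compatibility at $0$ of the two semi-germs of $f$ (resp. of $g$) forces $a^+=a^-$ and $b^+=b^-$; hence $dz-u(z)(a^\pm dx_1+b^\pm dx_2)$ is a genuine model foliation $\tau_0$ on $\T^2\times[-\eps,\eps]$, whose holonomy germ at $0$ is, by the computation of the previous paragraph, exactly the germ of $(f,g)$ at $z_0$. By the germinal-holonomy classification, $\tau$ agrees with $\tau_0$ near $T$ up to a fibre-preserving diffeomorphism, so $T$ is neat; and by the correspondence this accounts for every torus leaf of $\tau\res S$.

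The step I expect to cost the most is the classification invoked at the end of the setup: that a foliation transverse to the fibres $\{\cdot\}\times J$, near a torus leaf which is a section over $\T^2$, is conjugate by a fibre-preserving diffeomorphism to the suspension of its germinal holonomy pair. This is the familiar description of the germ of a codimension-one foliation near a compact leaf, but it must be phrased in the transverse-to-$\partial_z$ setting and with enough control on smoothness that the suspension is honestly $C^\infty$ and the conjugacy respects the product structure. Everything else — the explicit integration of the model foliation and the $C^\infty$-matching argument forcing $a^+=a^-$ when $u$ is not infinitely flat — is elementary.
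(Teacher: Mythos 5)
Your proof is correct and follows essentially the same route as the paper: translate each torus leaf into a common fixed point of $(f,g)$, use the classification of a foliation germ near a compact leaf by its holonomy germ, and identify the holonomy of the explicit model $dz-u(z)(a^\pm dx_1+b^\pm dx_2)$ with the time-$a^\pm$, $b^\pm$ maps of the flow of $u\,\partial_z$. You spell out a few points the paper leaves implicit (the leaf--fixed-point dictionary, why neat leaves are isolated hence finite in the compact stack, and why smoothness of $f,g$ forces $a^+=a^-$, $b^+=b^-$ when $u$ is not infinitely flat so that the model is genuinely $\CC^\infty$), but these are the same arguments being made more explicit rather than a different method.
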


\begin{proof} 
Let $(f,g)$ be the holonomy of $\tau$ on $\Gamma$. The torus leaves of $\tau \res S$ correspond to the common fixed points of $f$ and $g$. Assume that $(f,g)$ is neat. Let $T$ be a torus leaf of $\tau \res S$ and $z_0\in J$ the corresponding common fixed point of $f$ and $g$. By definition of neat holonomy, there exists a $\Cinf$ vector field $\nu = u \partial_z$ on $\R$
vanishing only at $z_0$, and numbers $a^+, a^-, b^+, b^-$ so that the semi-germs of $f$ and $g$ at $z_0^\pm$ coincide with the germs of the time-$a^\pm$ and $b^\pm$ maps of $\nu$ respectively. Now consider the foliation $\tau'$ on $M' = \T^2 \times \R$ defined by the equations:
\begin{align*}
\begin{cases}
dz - u(z)(a^+ dx_1 + b^+ dx_2) ,\quad (x_1,x_2,z) \in \T^2 \times [z_0,+\infty)\\
dz - u(z)(a^- dx_1 + b^- dx_2), \quad (x_1,x_2,z) \in \T^2 \times (-\infty,z_0]
\end{cases}
\end{align*}
The holonomy of this foliation on the transverse arc $\Gamma' = \{(0,0)\} \times \R$ has the same germ at $z_0$ as the holonomy $(f,g)$ of $\tau$ on $\Gamma$. Hence there is a diffeomorphism from a neighbourhood of $\T^2 \times
\{0\}$ in $M'$ to a neighbourhood of $T$ in $M$ carrying $\tau'$ to $\tau$ (see \cite[Theorem 2.3.9]{C-C1}, for example, for a proof of this standard fact), which means precisely that $T$ is a neat leaf of $\tau$.

Now assume that all the torus leaves of $\tau \res S$ are neat. Let $z_0 \in J$ be a common fixed point of $f$ and $g$ and $T$ the corresponding leaf of $\tau$. Since $T$ is neat, there is a parametrized neighbourhood $N \simeq \T^2 \times (-\eps,\eps)$ of $T \simeq  \T^2 \times \{0\}$ on which $\tau$ is defined by equations of the form:
\begin{align*}
\begin{cases}
dz - u(z)(a^+ dx_1 + b^+ dx_2) ,\quad (x_1,x_2,z) \in \T^2 \times [0,\eps)\\
dz - u(z)(a^- dx_1 + b^- dx_2), \quad (x_1,x_2,z) \in \T^2 \times (-\eps,0]
\end{cases}
\end{align*}
where $(a^{\pm},b^{\pm}) \in \R^2 \setminus \{(0,0)\}$ and $u$ is a smooth function vanishing only at $0$. Hence, the germ at $z_0$ of the holonomy of $\tau$ on $\Gamma$ is conjugate to the germ at $0$ of the holonomy $(\fbar,\gbar) \in (\Diff_+(\R,0))^2$ of the above foliation on $\Gamma' = \{(0,0)\} \times (-\eps,\eps)$. But the semi-germs of $\fbar$ and $\gbar$ at $0^\pm$ are just those of the time-$a^\pm$ and $b^\pm$ maps of the smooth vector field $\nu = u \partial_z$.
This shows that $(f,g)$ is neat.
\end{proof}

\subsection{An approximation result for foliations and holonomies} \label{ss:approx}

According to Proposition \ref{p:neat-hol}, what is left to prove is that for every Novikov stack $S$ of a foliated manifold $(M,\tau)$ (coming with a transverse arc $\Gamma$), $\tau$ can be perturbed, relative to the complement of a neighbourhood of $S$, into a foliation also having $S$ as a saturated set but whose holonomy on $\Gamma$ is neat. According to the following result of \cite{B-F} based on the main theorem of \cite{B-H}, this boils down to showing that any commuting pair $(f,g) \in
\Diff_+(\R,J)^2$ can be approximated by neat pairs:

\begin{proposition}[cf. \cite{B-F}, Proposition 1.b.1] \label{p:B-F} 
Let $(f,g)$ be the holonomy of $\tau$ on $\Gamma$ and $\ft,\gt$ two commuting local diffeomorphisms of $\R$ defined near $J$, $\Cinf$-close to $f$ and $g$ respectively, coinciding with them outside a small neighbourhood of $J$. Then there exists a foliation $\taut$ of $M$ $\Cinf$-close to $\tau$ which coincides with $\tau$ outside a small neighbourhood of $S$ and whose holonomy on $\Gamma$ is $(\ft,\gt)$.
\end{proposition}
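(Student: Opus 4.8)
The statement is entirely local around the Novikov stack $S = \T^2 \times J$, and the plan is to translate it, through the foliated-bundle structure near $S$, into a realization problem for a perturbation of the holonomy pseudogroup. First I would enlarge $S$ to a slightly bigger closed set $\widehat S = \T^2 \times \widehat J$, with $J \subset \Int\widehat J$ chosen small enough that $\tau$ is still transverse to the $z$-direction on all of $\widehat S$. Then $\tau\res{\widehat S}$ is transverse to the fibres of the bundle $\widehat S \to \T^2$, hence --- the fibre $\widehat J$ being compact, by Ehresmann's theorem --- a foliated bundle: every leaf covers $\T^2$, and its transverse geometry is exactly the holonomy pseudogroup on the fibre transversal $\Gamma' = \{(0,0)\} \times \widehat J \supset \Gamma$. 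Since $\pi_1(\T^2) \cong \Z^2$ is generated by two commuting loops, this pseudogroup is generated by the commuting pair $(f,g) \in \Diff_+(\R,J)^2$ of Proposition \ref{p:neat-hol}; its common fixed points are the torus leaves of $\tau\res S$, among them the two components of $\partial S$. The hypothesis supplies a $\Cinf$-close commuting perturbation $(\ft,\gt)$ agreeing with $(f,g)$ off a small subinterval of $J$, i.e. a $\Cinf$-small perturbation of this pseudogroup supported in $\T^2 \times J_0$ for some $J_0$ with $\overline{J_0} \subset \Int J$.

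The core of the proof is to lift this perturbation of the holonomy to a perturbation of the foliation, and for that I would invoke the main theorem of \cite{B-H}: for a foliation that is a foliated product near a compact saturated set, any sufficiently $\Cinf$-small perturbation of its holonomy pseudogroup supported away from a prescribed closed part of the transversal is induced by a $\Cinf$-small perturbation of the foliation, supported in an arbitrarily small neighbourhood of the saturation of that support. Applied with support $\T^2 \times J_0$ --- whose $\tau$-saturation stays inside $S$, since $J$ is $\langle f,g\rangle$-invariant --- this produces a foliation $\taut$ on $M$, $\Cinf$-close to $\tau$, coinciding with $\tau$ off a small neighbourhood of $S$, whose holonomy on $\Gamma$ is $(\ft,\gt)$; this is precisely the statement. (When moreover $\ft$ and $\gt$ fix $\partial J$, as in the intended application where $(\ft,\gt)$ is a neat approximant, the two components of $\partial S$ remain leaves of $\taut$, so no leaf crosses $\partial S$ and $S$ stays saturated, as needed in the reduction preceding the statement.)

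The main obstacle is this realization step. A foliation $\Cinf$-close to $\tau$ need not itself be a foliated product --- near $S$ it is only transverse to $\partial_z$ --- so one cannot simply read off and match its holonomy; the Bonatti--Haefliger argument shows that after a $\Cinf$-small isotopy any nearby foliation does acquire the product form (via a transversality/implicit-function argument and the compactness of $\T^2$), after which it is classified by its holonomy pseudogroup, while conversely a small perturbation of the pseudogroup can be spread across the $\T^2$-direction by a partition of unity into an integrable plane field; keeping both directions of this correspondence continuous and supported where needed is the technical heart, and I would quote it as a black box. A more hands-on alternative, parallel to the proof of Proposition \ref{p:neat-hol}, would be to cut $\widehat J$ at the common fixed points of $f$ and $g$ (the torus leaves), flow-linearise $f$ and $g$ on each complementary interval using the results of Szekeres \cite{Sz} and Kopell \cite{Ko} so that there $\tau$ is conjugate to some $dz - u(z)(a\,dx_1 + b\,dx_2) = 0$, perturb $u,a,b$ on each slab while keeping everything fixed near the interval ends, and conjugate back via \cite[Theorem 2.3.9]{C-C1}; but glueing these slab-by-slab perturbations into a single $\Cinf$ foliation on $\widehat S$ with uniform control on $\Cinf$-size and on supports is exactly the bookkeeping that \cite{B-H} performs cleanly, which is why I would prefer the first route.
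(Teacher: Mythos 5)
The paper does not give a proof of this proposition: it is cited verbatim from \cite{B-F} (Proposition 1.b.1), which in turn rests on the main theorem of \cite{B-H}, and your argument invokes precisely that Bonatti--Haefliger realization theorem after translating the local picture near $S$ into a foliated-bundle/holonomy-pseudogroup problem. So your proposal is correct and is essentially the same approach as the one the paper relies on by citation, just written out in more detail.
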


We thus need the following approximation result for commuting germs of diffeomorphisms:

\begin{proposition} \label{p:neat-approx} 
Every commuting pair $(f,g) \in ( \Diff_+(\R,J))^2$ can be $\Cinf$-approximated by a neat pair ($\ft$,$\gt$), coinciding with $(f,g)$ outside a small neighbourhood of $J$.
\end{proposition}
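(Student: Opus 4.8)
The plan is to reduce \emph{neatness} to the sufficient condition recalled in Remark~\ref{r:neat-hol} --- that $f$ and $g$ be nowhere simultaneously infinitely tangent to the identity (i.t.i.) --- and then to produce, by a $C^\infty$-small surgery on $(f,g)$ supported near $J$ and preserving commutation, a pair $(\ft,\gt)$ with this property. Fix a small compact neighbourhood $K$ of $J$ on which $f,g$ are defined, and set $Z=\{z\in K:\ j^\infty_z f=j^\infty_z g=\id\}$, the closed set of simultaneous i.t.i.\ points. If $Z=\emptyset$ there is nothing to do; and if $(\ft,\gt)$ coincides with $(f,g)$ outside a small neighbourhood of $J$ and the analogous set for $(\ft,\gt)$ is empty near $J$, then $(\ft,\gt)$ is automatically neat: at a common fixed point $z_0$, one of $\ft,\gt$, say $\ft$, has $j^\infty_{z_0}\ft\ne\id$, so $\ft-\id$ has a zero of finite order at $z_0$, whence $z_0$ is isolated in $\fix(\ft)$ and \emph{a fortiori} in $\fix(\ft)\cap\fix(\gt)$ --- giving finitely many common fixed points on $K$ --- and neatness at each of them follows from Remark~\ref{r:neat-hol}. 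So the whole content is to kill $Z$.

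Next I would describe $Z$ and organise the surgery using the rigidity of commuting germs. On an open interval on which $j^\infty f\equiv\id$ one has $f\equiv\id$; hence $f=g=\id$ on $\Int(Z)$. On a connected component $I$ of $\R\setminus\fix(f)$ the map $f$ is fixed-point-free, and Kopell's Lemma \cite{Ko} (together with the order-rigidity of commuting homeomorphisms) shows that $g$ is either the identity on $\overline I$ or fixed-point-free on $I$, being in the latter case a locally constant time map of the Szekeres vector field of $f$ on $I$ \cite{Sz,Ta}; symmetrically with $f$ and $g$ exchanged. Consequently $Z\setminus\Int(Z)$ is \emph{thin}: it is accumulated only by the complementary intervals of $\fix(f)$ or of $\fix(g)$, which cluster near it, and over each such interval one of the two maps drives a smooth local flow that the other one follows.

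The surgery then splits into two complementary regimes. Over $\Int(Z)$, where $f=g=\id$: fix a $C^\infty$ vector field $X$ supported in a small neighbourhood of each component of $\Int(Z)$ and having finitely many zeros, and replace $(f,g)$ there by the pair of time-$s$ and time-$t$ maps of $X$ for $s,t$ tiny; the two new maps commute (both are time maps of $X$), their common fixed points are the zeros of $X$ and are flow-embeddable by construction, and the change is $C^\infty$-small and compactly supported. Over a cluster of complementary intervals accumulating on the thin part of $Z$: near the cluster build a \emph{single} $C^\infty$ vector field $Y$ which is a Szekeres field for $f$ over the intervals where $f$ is nontrivial and for $g$ over those where $g$ is nontrivial, perturb $f$ to its Takens normal form relative to $Y$ so that it is no longer i.t.i.\ at the cluster points, and adjust $g$ so that it remains, on each sub-interval, a (locally constant) time map of the \emph{same} perturbed field; since both maps are then time maps of one vector field in a neighbourhood of the cluster, they still commute. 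Finally, patch the at most countably many local moves by a summable family of cut-offs so that the global $(\ft,\gt)$ is $C^\infty$, $C^\infty$-close to $(f,g)$, equal to $(f,g)$ outside a small neighbourhood of $J$, and has empty $Z$ near $J$.

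The main obstacle is the thin-part surgery. There one must simultaneously destroy the infinite tangency of one of the maps, keep the relation $\ft\gt=\gt\ft$ \emph{exactly} --- and Kopell rigidity essentially forces $\gt$ to follow the flow of $\ft$, so the two perturbations are not independent --- and keep $C^\infty$-control over an infinite concatenation of local flattening moves, with flatness estimates ensuring that the limiting pair is still smooth and untouched away from $J$. This is the $\Z^2$-action analogue of the quantitative flattening arguments used elsewhere in the paper, and constitutes the technical heart of the proof, carried out in detail in \cite{B-E}.
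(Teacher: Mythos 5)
Your reduction is sound: by Remark~\ref{r:neat-hol}, it suffices to approximate $(f,g)$ by a commuting pair that is nowhere simultaneously i.t.i.\ near $J$, and your treatment of $\Int(Z)$ is unobjectionable. The gap is in the thin-part surgery and in the role you assign to \cite{B-E}. The single smooth field $Y$ you posit generally does not exist: on each component of $\R\setminus\fix(f)$ accumulating at a point $z_0\in Z\setminus\Int(Z)$, the Szekeres field of $f$ is only $C^1$ in general, the fields on the infinitely many such components need not glue into a $C^\infty$ field across $z_0$ (\cite{Se,Ko}), and since $f$ is i.t.i.\ at $z_0$, Takens' theorem --- which requires a non-degenerate finite jet --- simply does not apply. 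More fundamentally, Theorem~\ref{t:B-E} is not what you cite it for: it produces a \emph{piecewise clean} pair $(\fbar,\gbar)$, a conclusion strictly weaker than ``$Z=\emptyset$''. The set $\ITI(\fbar)\cap\ITI(\gbar)$ is still nonempty; its complement in a neighbourhood $I$ of $J$ merely has finitely many components, on each of whose closures the pair either shares a $C^\infty$ flow or consists of iterates of a single germ, and $\fbar,\gbar$ remain simultaneously i.t.i.\ at the endpoints of these components. Citing \cite{B-E} for a direct ``kill~$Z$'' statement is therefore incorrect.

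The part you are missing is precisely what the paper does \emph{after} \cite{B-E}. On each of the finitely many complementary components $[a_i,b_i]$, the pair is i.t.i.\ at $a_i,b_i$; one applies Lemma~\ref{l:pf-champ} (common-flow case) or Lemma~\ref{l:pf-diff} (common-iterate case) to replace it by a pair with finitely many common fixed points, each flow-embeddable, while preserving the infinite jet at $a_i,b_i$ so the modifications glue. The residual intervals of $Z$ on which $f=g=\id$ are handled by replacing $(\id,\id)$ by $(h,h)$ for a suitable time-one map $h$. The resulting pair is neat but typically still satisfies $Z\neq\emptyset$ at the points $a_i,b_i$: the definition of neatness tolerates a simultaneous i.t.i.\ point as long as both semi-germs there embed in a smooth flow. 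That is why going through the piecewise clean structure is both achievable and sufficient, whereas aiming directly for $Z=\emptyset$ runs into the very obstruction you acknowledge but cannot resolve.
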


We will obtain this as a consequence of Theorem \ref{t:B-E} below. Given an element $f$ of $\Diff_+(\R,J)$, we denote by $\ITI(f)$ the set of points where $f$ is infinitely tangent to the identity.

\begin{definition}[cf. \cite{B-E}]  \label{d:p-clean} 
A pair $(f,g)$ of commuting elements of $\Diff_+(\R,J)$ is called \emph{piecewise clean} if $I \setminus \ITI(f) \cap \ITI(g)$, for some small neighbourhood $I$ of $J$, has finitely many connected components on the closure of which the restrictions
of $f$ and $g$ either belong to a common (germ of a) $\Cinf$ flow or are iterates of the same (germ of a) smooth diffeomorphism.
\end{definition}

\begin{theorem}[cf. \cite{B-E}] \label{t:B-E}
Any pair $(f,g)$ of commuting elements of $\Diff_+(\R,J)$ can be $\Cinf$-approximated by a piecewise clean pair ($\fbar$,$\gbar$), coinciding with $(f,g)$ outside a small neighbourhood of $J$.
\end{theorem}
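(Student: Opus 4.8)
\emph{Proof strategy.} The plan is to combine the fine structure theory of commuting $\Cinf$ germs with a perturbation localised near the ``resonant'' set $Z := \ITI(f)\cap\ITI(g)$. First I would record the local picture, which follows from the theorems of Szekeres, Kopell and Takens (\cite{Sz,Ko,Ta}; cf.\ Remark \ref{r:neat-hol}): around a point $p$ at which at least one of $f,g$ is not infinitely tangent to the identity, the pair $(f,g)$ is simultaneously embedded in a germ of $\Cinf$ flow. Indeed, if (say) $f$ is not i.t.i.\ at $p$ and $f(p)\neq p$, this is the Szekeres flow $(\phi^t_X)$ of $f$ and $g$ lies in it by Kopell's rigidity of centralisers; if $f(p)=p$, then $f$ is either smoothly linearisable ($f'(p)\neq1$) or put in Takens normal form ($f'(p)=1$), and in both cases the centraliser of the germ of $f$ is exactly its model flow, hence contains $g$. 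Since the Szekeres field is canonical, these local flows agree on overlaps, so on each component $C$ of $I\setminus Z$ there is a $\Cinf$ field $X_C$ and constants $t_C,s_C$ with $f|_C=\phi^{t_C}_{X_C}$ and $g|_C=\phi^{s_C}_{X_C}$ (the constants are well defined because $X_C$ cannot vanish identically near any point of $C$, lest $f=g=\id$ there). As $\ITI(h)$ has nonempty interior only where $h$ is the identity, the only way $(f,g)$ can fail to be piecewise clean is that $Z$ may accumulate, so that $I\setminus Z$ has infinitely many components and $X_C$ need not extend smoothly to the endpoints of $C$ lying in $Z$.

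Second I would localise. By flatness of $f-\id$ and $g-\id$ along $Z$ together with compactness of $Z$, there is a neighbourhood $U\supset Z$ on which $f$ and $g$ are $\Cinf$-close to the identity; cover $Z$ by finitely many short closed intervals $K_1,\dots,K_q\subset U$ with pairwise disjoint interiors and endpoints outside $Z$, as short as desired. It then suffices to perturb $(f,g)$, independently on each $K_j$, to a commuting pair agreeing with $(f,g)$ near $\partial K_j$, $\Cinf$-close to it, and whose resonant set meets $K_j$ in a finite set $F_j$ near which the new pair is a pair of time-$t$ maps of a single $\Cinf$ field (or has one generator equal to the identity). Since the $K_j$ cover $Z$ and the perturbation is trivial off $\bigcup K_j$, the new resonant set is the finite set $\bigcup F_j$, so $I$ minus it has finitely many components; on the closure of each, the first step, together with the prescribed model behaviour at the points of $\bigcup F_j$, produces a common $\Cinf$ flow (or, on a component where a generator is trivial, exhibits both generators as iterates of the other, which is the second option of Definition \ref{d:p-clean}). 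The perturbed pair is then piecewise clean.

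Third --- and this is the crux --- I would solve the local problem on a short interval $K=[a,b]$. Near $a$ (resp.\ $b$) the pair already embeds, by the first step, in a canonical $\Cinf$ field $Y_a$ (resp.\ $Y_b$), with $f=\phi^{u_a}_{Y_a}$, $g=\phi^{v_a}_{Y_a}$ near $a$ and $f=\phi^{u_b}_{Y_b}$, $g=\phi^{v_b}_{Y_b}$ near $b$. The idea is to introduce a single interior point $z_\ast\in(a,b)$ and to build $\Cinf$-small fields $Y_\ell$ on $[a,z_\ast]$ and $Y_r$ on $[z_\ast,b]$, extending $Y_a$ near $a$ and $Y_b$ near $b$ and infinitely flat at $z_\ast$, then set $\ft=\phi^{u_a}_{Y_\ell},\gt=\phi^{v_a}_{Y_\ell}$ on $[a,z_\ast]$ and $\ft=\phi^{u_b}_{Y_r},\gt=\phi^{v_b}_{Y_r}$ on $[z_\ast,b]$. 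Flatness at $z_\ast$ makes $\ft,\gt$ glue into $\Cinf$ diffeomorphisms of $K$ which commute (on each half they lie in a common flow, and near $z_\ast$ both are the identity to infinite order), and the only common i.t.i.\ point produced in $K$ is $z_\ast$, where $Y_\ell$ and $Y_r$ glue to a single $\Cinf$ field. When even this cannot be kept $\Cinf$-small --- because the relative speed of $g$ with respect to $f$ varies too much across the components of $K\setminus Z$ that pile up at a point of $Z$ --- one instead perturbs these speeds to a common rational so that the pair becomes, on each half, a pair of iterates of a single small diffeomorphism; this is the interval analogue of Herman's decomposition (Theorem \ref{t:herman}) and of the fragmentation Lemma \ref{l:fragmentation}, and is exactly the kind of argument carried out in \cite{B-E}.

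The main obstacle is precisely this third step: deforming a commuting pair on an interval so as to replace its (possibly Cantor-like) resonant set by a single point, while simultaneously keeping the two generators commuting and keeping the deformation $\Cinf$-small. Everything else --- the local normal forms, the covering argument, and the verification that a finite resonant set with model behaviour yields piecewise cleanness --- is routine once the cited structure theory is available.
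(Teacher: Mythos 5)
The paper does not actually prove Theorem~\ref{t:B-E}: it attributes the result to \cite{B-E} (Proposition~2.22 there, stated for diffeomorphisms of a segment rather than germs near a segment), adding only the one-line remark that the germinal version follows by the same argument. So there is no in-paper proof for your sketch to be checked against; the fair comparison is with the argument of \cite{B-E} itself.

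Your outline does capture the correct architecture of that argument: the Szekeres--Kopell--Takens structure theory giving a canonical common flow on each component of $I \setminus Z$ where $Z = \ITI(f)\cap\ITI(g)$; the covering of the compact set $Z$ by finitely many short intervals with endpoints outside $Z$; and the reduction to a local perturbation problem on each such interval. You also correctly identify the two failure modes of piecewise cleanness (infinitely many components, and failure of the Szekeres field to extend $\Cinf$ to the boundary of a component). Most importantly, you correctly locate the genuine difficulty: the translation number of $g$ relative to $f$ on the components of $K\setminus Z$ that accumulate on a point of $Z$ can vary wildly, so that the na\"ive ``glue two flat fields at a single interior point'' construction cannot in general be kept $\Cinf$-small and commuting.

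That said, your proposal explicitly defers that central step to \cite{B-E}, and this is precisely where the theorem's content lives: the delicate case analysis of the asymptotics of these translation numbers (rational vs.\ irrational, bounded vs.\ unbounded, convergent vs.\ oscillating) and the corresponding perturbation scheme are what make the result non-trivial. As written, your proposal is a faithful roadmap of the proof but not a proof; it ends at exactly the same point where the paper itself stops and hands off to the reference. This is an honest and structurally accurate reduction, but it should not be mistaken for an independent verification of the theorem.
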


\begin{remark} 
Actually, what is proved in \cite{B-E} (cf. Proposition 2.22) is an analogue of the above for diffeomorphisms of a segment, rather than germs of diffeomorphisms near a segment. But the germinal version follows directly from Proposition 2.22 and its key ingredient Proposition 2.15 in \cite{B-E}, which is also the heart of Lemma \ref{l:pf-diff} below (see Proposition \ref{l:approx-be} below for a simplified version of Proposition 2.15 of \cite{B-E}).
\end{remark}

Now the fact that any piecewiese clean pair ($\fbar$,$\gbar$) can be approximated by a neat pair is obtained by applying one of the following lemmas (or its germinal version) to the closure of each connected component of $I \setminus \ITI(\fbar) \cap \ITI(\gbar)$ independently (the diffeomorphisms involved being infinitely tangent to the identity at the boundary). The diffeomorphisms of the resulting pair $(\ft,\gt)$ might still have whole intervals of common fixed points, but then it is easy to perturb $(\id,\id) \in (\Diff_+[a,b])^2$ slightly into a pair of commuting diffeomorphisms (twice the same for example) having only $a$ and $b$ as fixed points.

\begin{lemma} \label{l:pf-champ}
Every $\Cinf$ map $\nu$ from $[0,1]$ to $\R$ that is nowhere infinitely flat on $(0,1)$ can be $\Cinf$-approximated by a map with the same property, the same $\infty$-jet at the boundary and finitely many zeroes.
\end{lemma}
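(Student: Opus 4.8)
The plan is to obtain $\tilde\nu$ as an additive perturbation $\tilde\nu=\nu+\phi$ by a $\Cinf$‑small function $\phi\ge 0$ that is infinitely flat at $0$ and at $1$ (so the $\infty$‑jets at the boundary are unchanged) and that, near the endpoints where the zeros of $\nu$ accumulate, is large enough to swallow the oscillation of $\nu$.

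First I would record the structure of the zero set. Since $\nu$ is nowhere infinitely flat on $(0,1)$, every zero $x_0\in(0,1)$ has finite vanishing order, hence is isolated; thus $\nu^{-1}(0)\cap(0,1)$ is discrete and its only possible accumulation points in $[0,1]$ are $0$ and $1$. By iterated use of Rolle's theorem, if the zeros accumulate at an endpoint then $\nu$ is infinitely flat there. If neither endpoint is an accumulation point, $\nu$ already has finitely many zeros and we are done; so assume $\nu$ is infinitely flat at both $0$ and $1$, and let $E\subseteq\{0,1\}$ be the set of endpoints at which the zeros accumulate.

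The heart of the matter is a domination statement: since $\nu$ is $\Cinf$ and infinitely flat at $p\in E$ one has $|\nu(x)|=o(|x-p|^n)$ for every $n$, and from this one builds, on a small one‑sided neighbourhood of $p$, a $\Cinf$ function $\phi$ that is infinitely flat at $p$, strictly positive off $p$, and satisfies $\phi\ge 2|\nu|$; being flat at $p$, its $C^k$ norm on a $\delta_0$‑neighbourhood of $p$ tends to $0$ as $\delta_0\to0$. Concretely one takes a $\Cinf$ nondecreasing majorant of the envelope $x\mapsto\sup_{|y-p|\le|x-p|}2|\nu(y)|$ obtained by the usual bump/partition‑of‑unity smoothing, keeping it $o(|x-p|^n)$ — hence infinitely flat — because $|\nu|$ is. This is the one step I expect to be the real obstacle; the rest is soft. (For an endpoint in $\{0,1\}\setminus E$, near which $\nu$ has no zeros, one instead takes $\phi$ tiny, e.g. $\phi=\nu^2$ on the side where $\nu<0$ and $\phi\equiv0$ otherwise, which is still flat at the endpoint; jet preservation only needs this flatness.)

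Gluing the local functions to a small positive $\Cinf$ function on the interior, one gets a global $\phi\ge0$ on $[0,1]$, infinitely flat at $0$ and $1$, with $\|\phi\|_{C^k[0,1]}$ as small as desired and $\phi\ge 2|\nu|$ on $\Op(E)$. Put $\tilde\nu=\nu+\phi$. It has the same $\infty$‑jet as $\nu$ at $0$ and $1$, is $\Cinf$‑close to $\nu$, and is still nowhere infinitely flat on $(0,1)$ with finitely many zeros: on a neighbourhood of each $p\in E$, $\tilde\nu=\nu+\phi\ge\nu+2|\nu|\ge|\nu|\ge0$ with equality only where $\phi=0$, i.e. only at $p$, so $\tilde\nu>0$ there; on the complementary compact set $K\subset(0,1)$, $\nu$ has finitely many zeros, and near a zero of order $m$ the $m$‑th derivative of $\tilde\nu$ keeps a constant nonzero sign (for $\|\phi\|_{C^m}$ small), so by iterated Rolle $\tilde\nu$ has at most $m$ zeros nearby, all of finite order, while away from the zeros of $\nu$ one has $|\nu|\ge c>0$ on $K$, hence $\tilde\nu\ne0$ there for $\|\phi\|_{C^0}<c$. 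Thus $\tilde\nu$ has only finitely many, finite‑order zeros in $(0,1)$ (plus at most the two boundary points), which proves the lemma; the germinal version is handled by exactly the same argument.
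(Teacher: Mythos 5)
Your proof is correct, but it takes a genuinely different route from the paper's. The paper's argument is multiplicative plus a reparametrization: it multiplies $\nu$ by the rescaled cut-off $\rho(x/t)$ (which annihilates $\nu$ on $[0,t]$ and leaves it untouched near $1$), verifies via the flatness hypothesis that $t\mapsto\nu_t$ is continuous at $t=0$ in the $\Cinf$ topology, and then pulls the resulting interval of zeros $[0,t]$ back to the single endpoint $\{0\}$ via a family of reparametrizations $h_t$. Your argument is instead \emph{additive}: you dominate $2|\nu|$ near the accumulating endpoints by a $\Cinf$-small, strictly positive, infinitely flat majorant $\phi$ and set $\tilde\nu=\nu+\phi$, which is then strictly positive off the endpoints on a neighbourhood of $E$ and inherits finitely many finite-order zeros from $\nu$ on the complementary compact set. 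The crux of your route is the flat-majorant construction, which you correctly identify as the only nontrivial step; it is a standard dyadic bump-summation argument and does work, with the $C^k$-smallness following from $\sup_{|y|\le 2^{-n}}|\nu(y)|=o(2^{-(k+1)n})$. Your approach is arguably more transparent and handles both endpoints simultaneously, at the price of needing the majorant lemma; the paper's avoids the majorant but must establish the $\Cinf$-continuity estimate and then undo the interval of zeros with $h_t$.

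One small inaccuracy in your parenthetical: for an endpoint $p\in\{0,1\}\setminus E$, the suggestion $\phi=\nu^2$ is wrong in general (if $\nu$ is not infinitely flat at $p$ then $\nu^2$ isn't either, so the $\infty$-jet at $p$ would change, and the stated piecewise definition is not clearly $\Cinf$). The correct and simpler choice there is $\phi\equiv 0$ near $p$: since the zeros of $\nu$ do not accumulate at $p$, they are already finite near $p$ and no perturbation is needed. This is only a side remark and does not affect the substance of the argument.
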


\begin{lemma} \label{l:pf-diff} 
Every $h \in \Diff_+[0,1]$ that is nowhere infinitely tangent to the identity on $(0,1)$ can be $\Cinf$-approximated by some $\hti \in \Diff_+[0,1]$ of the same kind with finitely many fixed points, near each of which $\hti$ belongs to the flow of some $\Cinf$
vector field.
\end{lemma}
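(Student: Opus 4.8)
The plan is to modify $h$ only near the two endpoints $0$ and $1$, leaving it untouched on a compact subinterval $[\delta,1-\delta]\subset(0,1)$; by symmetry it is enough to describe the modification near $0$. First I record the structural facts that drive everything. Since $h-\id$ is nowhere infinitely flat on $(0,1)$, the fixed point set $\fix(h)$ is discrete in $(0,1)$ and can accumulate only at $\{0,1\}$: if $z_n\to z_\ast\in(0,1)$ with $h(z_n)=z_n$, then iterating Rolle's theorem on the intervals $[z_{n+1},z_n]$ produces, for every $j$, zeros of $(h-\id)^{(j)}$ converging to $z_\ast$, so $(h-\id)^{(j)}(z_\ast)=0$ for all $j$, i.e.\ $h$ is infinitely tangent to the identity at $z_\ast$, a contradiction. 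The same computation shows that if $\fix(h)$ accumulates at $0$ then $h-\id$ is infinitely flat at $0$. Consequently $\fix(h)\cap[\delta,1-\delta]$ is finite, and at each such fixed point $z_0$ the germ $h-\id$ vanishes to finite order (it is interior, hence not infinitely tangent to the identity); by Takens' normal form theorem \cite{Ta} when $h'(z_0)=1$ and by the classical smooth linearization of a hyperbolic fixed point when $h'(z_0)\ne1$, $h$ is $\Cinf$-conjugate near $z_0$ to the time-one map of a $\Cinf$ vector field, hence, pushing the normalizing field forward, $h$ itself belongs to the flow of a $\Cinf$ vector field near $z_0$. So on $[\delta,1-\delta]$ there is nothing to do.

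Near $0$ I distinguish two cases. If $h$ is not infinitely tangent to the identity at $0$, then by the Rolle computation $\fix(h)$ does not accumulate at $0$, so $0$ is isolated in $\fix(h)$ and $h-\id$ vanishes to finite order at $0$; exactly as in the interior case, $h$ already belongs to the flow of a $\Cinf$ vector field near $0$ and no change is needed. If $h$ is infinitely tangent to the identity at $0$, i.e.\ $d:=h-\id$ is infinitely flat at $0$, I perform a surgery on $[0,\eta]$ for a well chosen small $\eta>0$. Choosing $\eta$ suitably close to a point of $\fix(h)$ near $0$ (or to $0$ itself) and inside a connected component $I$ of $(0,1)\setminus\fix(h)$, the flatness of $d$ at $0$ ensures $|h(\eta)-\eta|=|d(\eta)|$ is smaller than the distance from $\eta$ to $\partial I$, so the length-one $h$-orbit segment through $\eta$ lies in $I$; taking $\eta$ small also makes $d\res{[0,\eta]}$ as $\Cinf$-small as we please, since an infinitely flat function tends to $0$ in $\Cinf$ along shrinking neighbourhoods of $0$. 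Let $s\in\{\pm1\}$ be the sign of $d$ on $I$ and let $Y=\psi\,\partial_x$ be the $\Cinf$ vector field near $\eta$ with time-one map $h$ (it exists because $h$ has no fixed point near $\eta$); $Y$ is $\Cinf$-small near $\eta$ because $h$ is $\Cinf$-close to $\id$ there. Extend $Y$ to a $\Cinf$ vector field $X=\varphi\,\partial_x$ on a slightly larger interval $[0,\eta']$ with $\varphi$ of constant sign $s$ on $(0,\eta']$, infinitely flat at $0$, equal to $\psi$ near $\eta$, and with all $\Cinf$-seminorms small. Finally set $\hti:=\phi^1_X$ on $[0,\eta]$, where $\phi^t_X$ is the time-$t$ flow of $X$ (it fixes $0$ and, $\varphi$ being small, is defined on $[0,\eta]$), and $\hti:=h$ on $[\eta,1]$.

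Then $\hti\in\Diff_+[0,1]$: the two pieces glue smoothly since $X$ coincides with $Y$ along the orbit segment computing $\phi^1_X$ near $\eta$, so $\phi^1_X=\phi^1_Y=h$ there. By construction $\hti$ has no fixed point in $(0,\eta)$ (the flow of a nowhere-vanishing, sign-definite vector field has no periodic point), so, after the symmetric surgery near $1$, $\fix(\hti)\subseteq\{0,1\}\cup\bigl(\fix(h)\cap[\eta,1-\eta']\bigr)$ is finite; for the same reason $\hti$ is nowhere infinitely tangent to the identity on $(0,1)$; and near $0$ it is by definition the time-one map of the $\Cinf$ vector field $X$. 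It remains to see that $\hti$ is $\Cinf$-close to $h$. The difference is supported in $[0,\eta]$, where $\hti-h=(\phi^1_X-\id-\varphi)+(\varphi-d)$. The term $\varphi-d$ is $\Cinf$ and infinitely flat at $0$, hence $\Cinf$-small on $[0,\eta]$, and it is also $\Cinf$-small near $\eta$ (there both $d=h-\id$ and $\varphi=\psi$ are $\Cinf$-small, $\eta$ being small and $d$ flat at $0$). The term $\phi^1_X-\id-\varphi=\int_0^1(\varphi\circ\phi^t_X-\varphi)\,dt$ is bounded in each $C^k$ by a product of $\Cinf$-seminorms of $\varphi$, hence is small as well. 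So $\hti$ is as $\Cinf$-close to $h$ as desired.

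I expect the crux to be precisely this surgery in the infinitely-tangent case: one must simultaneously make the perturbation $\Cinf$-small and eliminate the (possibly infinitely many) fixed points accumulating at $0$, while producing a germ that genuinely embeds in a smooth flow. Everything hinges on two points that have to be proved rather than quoted: (i) accumulation of fixed points at $0$ forces $h-\id$ to be infinitely flat there (iterated Rolle), and (ii) an infinitely flat function is automatically $\Cinf$-small on small enough neighbourhoods of $0$, which is what turns ``overwriting $h$ by a flow-box model on $[0,\eta]$'' into a small perturbation. The remaining bookkeeping -- choosing $\eta$ inside a fixed-point-free interval and controlling the generator $Y$ of $h$ near $\eta$ -- costs a bounded loss of derivatives but no smallness.
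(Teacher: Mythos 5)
Your structural reorganization is appealing and partly genuine: the iterated Rolle observation correctly shows that $\fix(h)$ can only accumulate at $\{0,1\}$ (so interior fixed points are automatically finitely many on compact subintervals and handled by Takens), which makes Lemma \ref{l:pf-champ} look unnecessary, and the ``erase all fixed points below $\eta$ by overwriting with a flow box'' surgery is morally the right idea. But there is a real gap at the single most delicate step, which you explicitly wave through: the assertion that a $\Cinf$ generating vector field $Y$ of $h$ near $\eta$ can be taken ``$\Cinf$-small near $\eta$ because $h$ is $\Cinf$-close to $\id$ there'', and your summary that controlling the generator ``costs a bounded loss of derivatives but no smallness''. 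Neither is true in general. The generator on a fixed-point-free interval is determined up to the choice on one fundamental domain, and the $h$-equivariance relation $\Phi\circ h=\Phi+1$ forces the jets of $\Phi$ (hence of $Y=1/\Phi'$) at the two ends of the fundamental domain to be related through the jets of $h$; the canonical (Szekeres) choice of $Y$ is only $\CC^1$ up to an i.t.i.\ fixed point, and Sergeraert's examples \cite{Se} show that its higher derivatives need not tend to $0$ along $(0,\eta)$ even though $h-\id$ is infinitely flat at $0$. So there is no fixed ``loss of derivatives'' bound $\|Y\|_{\CC^k}\lesssim\|h-\id\|_{\CC^{k+j}}$, and ``$Y$ is $C^k$-small near every small $\eta$'' is generally false for $k\ge2$.

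This is precisely where the paper does something you cannot skip: it first applies Lemma \ref{l:pf-champ} to reduce to finitely many fixed points (so that there is a clean interval $[0,c)$ with only $0$ fixed), then identifies the canonical $\CC^1$ Szekeres field $\nu$ on $[0,c)$, and invokes Proposition 2.15 of \cite{B-E} (restated here as Proposition \ref{l:approx-be}), which asserts the \emph{existence of a good cutoff point} $x_0\in(0,a]$ such that $\nu$ can be modified on $[0,x_0]$ into a $\Cinf$, infinitely flat, $\eps$-$\CC^k$-small field coinciding with $\nu$ on $[x_0,c)$. The force of that statement is exactly that $x_0$ must be \emph{chosen}, not merely taken small; the diagonal/cutoff argument of \cite{B-E} is what replaces your assertion. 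Without it, the inequality ``$|h(\eta)-\eta|$ is smaller than the distance from $\eta$ to $\partial I$'' controls the $C^0$ scale but gives no handle on $Y''$ and higher derivatives, and your bound $\phi^1_X-\id-\varphi=\int_0^1(\varphi\circ\phi^t_X-\varphi)\,dt$ then rests on an unproven smallness of $\varphi$ in $\CC^k$. To repair the proof you would either have to quote Proposition \ref{l:approx-be} (or an equivalent) at this point, or reproduce its argument; as written, the proposal presents the hard analytic content of the lemma as a routine bookkeeping step.
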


\begin{proof}[Proof of \ref{l:pf-champ}] 
If we forget about the $\infty$-jets at the boundary, this is just a standard transversality result. A little more care is needed if we want to preserve the jets. Actually, if $\nu$ is not infinitely flat at $0$ nor $1$, there is nothing to do. So let us consider the case where $\nu$ is infinitely flat at $0$, say, and let us perturb it near $0$ so that the resulting map has finitely many zeros there. The idea is basically to multiply $\nu$ by some smooth step function equal to $0$ on some small neighbourhood $[0,t]$ of
$0$ (and to $1$ away from there) and then spread the restriction to $[t,1]$ of the resulting function to all of $[0,1]$ (to get rid of the interval of zeros $[0,t]$).

More precisely, let $\rho$ be a smooth map from $[0,+\infty)$ to $[0,1]$ vanishing on $[0,1]$, equal to $1$ on $[2,+\infty)$ and increasing on $[1,2]$, and consider, for all $t \in (0,1]$, the map
$$\nu_t : x \in [0,1] \mapsto \rho(\tfrac x t) \nu(x).$$
Let us check that $t \in [0,1] \mapsto \nu_t$, with $\nu_0=\nu$, is continuous at $0$ (in $\Cinf$ topology). On $[2t,1]$, $\left|\nu_t^{(n)}(x) - \nu^{(n)}(x) \right| = 0$. And on $[0,2t]$,
\begin{equation*}
\sup_{x\in[0,2t]} \left|\nu_t^{(n)}(x) - \nu^{(n)}(x)\right|  
= \sup_{x\in[0,2t]} \left|\sum_{k=0}^n \tbinom{n}{k} \tfrac 1 {t^{n-k}}
\rho^{(n-k)} \left(\tfrac x t\right) \nu^{(k)}(x) - \nu^{(n)}(x)\right|
= o(t)
\end{equation*}
since $\sup_{x\in[0,2t]} \left|\nu^{(k)}(x)\right| = o(t^l)$ for all $l$, $\nu$ being infinitely flat at $0$. So $t \mapsto \nu_t$ is indeed continuous at $0$.Now let $(h_t)_{t\in[0,1/2]}$ be a continuous family of increasing $\Cinf$ maps on $[0,1]$ satisfying $h_0 = \id$ and for all $t \in [0,1/2]$, $h_t(0) = t$ and $h_t = \id$ near $1$. For $t$ small enough, $\nut = \nu_t \circ h_t$ is
$\Cinf$-close to $\nu_0 \circ h_0 = \nu$. It is furthermore infinitely flat at $0$, equal to $\nu$ near $1$, and its zeros in $(0,1]$ are the preimages under $h_t$ of those of $\nu \res{(t,1]}$, and are thus finite in number near $0$. One concludes by repeating the above process near $1$ if necessary. The resulting map has finitely many zeros near $0$ and $1$ and is still nowhere infinitely flat on $(0,1)$, so has finitely many zeros on $[0,1]$. \end{proof}

\begin{proof}[Proof of Lemma \ref{l:pf-diff}]
First apply Lemma \ref{l:pf-champ} to $h_0 = h-\id$, denote by $\hbar_0$ the resulting map and define $\hbar$ as $\id + \hbar_0$, which satisfies all the requirements of Lemma \ref{l:pf-diff} except maybe the last one. Actually, according to a result of Takens \cite[Theorem 4 p. 165]{Ta}, $\hbar$ does belong to the flow of some $\Cinf$ vector field near each \emph{interior} fixed point because it is not \emph{i.t.i} there. This however might not be true at $0$ and $1$, but this problem can be solved by some \emph{local} perturbation as follows. Assume for example that $\hbar$ is \emph{i.t.i} at $0$, and denote by $c$ the smallest fixed point of $\hbar$ different from $0$. According to well-known results by Szekeres \cite{Sz} and Kopell \cite{Ko}, $\hbar \res{[0,c)}$ belongs to the flow of a unique $\CC^1$ vector field $\nu$ called the Szekeres vector field of $\hbar
\res{[0,c)}$, and we may apply Proposition 2.15 in \cite{B-E}, that we restate below in our present simplified setting:

\begin{proposition}\label{l:approx-be}
Let $f$ be a smooth diffeomorphism of $[0,c)$, \emph{i.t.i} at $0$, without fixed points in $(0,c)$, and let $\nu$ be its Szekeres vector field.

Then, for all $\eps > 0$, $a \in (0,c]$ and $k \in \N$, there exists $x_0 \in (0,a]$ and a vector field on $[0,c)$ coinciding with $\nu$ on $[x_0,c)$, $\Cinf$ on $[0,c)$, infinitely flat at $0$, and $\eps$-$\CC^k$-small on $[0,\max(f^2(x_0),f^{-2}(x_0))]$.
\end{proposition}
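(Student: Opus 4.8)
The plan is to leave the field unchanged away from the fixed point and to bend it smoothly down to $0$ near it, the whole difficulty being to choose the scale $x_0$ well. Replacing $f$ by $f^{-1}$ replaces $\nu$ by $-\nu$ and leaves the interval $[0,\max(f^2(x_0),f^{-2}(x_0))]$ unchanged, so I may assume $f(z)<z$ on $(0,c)$; then, by Szekeres's theorem (\cite{Sz,Ko}), $\nu=u\,\partial_z$ with $u<0$ on $(0,c)$, $u$ of class $\Cun$ on $[0,c)$ and $\Cinf$ on $(0,c)$, and $u(0)=u'(0)=0$. I will use the flow identity $\int_{f(z)}^{z}\tfrac{ds}{|u(s)|}=1$ (the flow of $\nu$ goes from $z$ to $f(z)$ in time $1$) and the invariance identity $u\circ f^{n}=(f^{n})'\cdot u$. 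Note that $\max(f^2(x_0),f^{-2}(x_0))=f^{-2}(x_0)>x_0$: so $\tilde\nu$ is to agree with $\nu$ on $[x_0,c)$ and to be $\CC^k$-small on $[0,f^{-2}(x_0)]$.

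Step 1 is the selection of $x_0$. The Szekeres field is flat at $0$: since $\sup_{[f(z),z]}|u'|\to0$, the flow identity gives $|u(z)|=(z-f(z))(1+o(1))$, which is infinitely flat at $0$ because $f$ is infinitely tangent to the identity. This is not quite enough. What is really needed is: \emph{for every $\eps>0$ and $k\in\N$ there is $x_0\in(0,a]$, arbitrarily small, with $\|u\|_{\CC^k([\,x_0,\,f^{-2}(x_0)\,])}\le\eps/2$ and $x_0^{-k}\,\|u\|_{\CC^k([\,x_0/2,\,x_0\,])}\le\eps/2$} (the extra negative power of $x_0$ will be absorbed by the cutoff in Step 2). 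Although $u$ is in general only $\Cun$ at $0$ (it need not be $\CC^2$ there), its higher derivatives — which need not even be bounded near $0$ — are nevertheless $\CC^k$-small on suitable fundamental domains $[f(x),x]$ with $x$ arbitrarily small. This is precisely where one uses that $f$ is $\Cinf$ rather than merely $\Cun$: transporting the smoothness of $f$ to $u$ through $u\circ f^{n}=(f^{n})'\cdot u$, together with bounded-distortion control of the iterates $f^{n}$ in the spirit of Kopell and Takens (\cite{Ko,Ta}), produces such an $x_0$. In germinal form this is the statement of Proposition 2.15 of \cite{B-E}, and it is the heart of the matter.

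Step 2, given $x_0$, is routine. Fix a $\Cinf$ function $\chi\colon\R\to[0,1]$ with $\chi\equiv0$ on $(-\infty,1/2]$ and $\chi\equiv1$ on $[3/4,\infty)$; by Borel's theorem choose a $\Cinf$ function $\hat u$ on $[0,c)$ with the same $\infty$-jet as $u$ at $x_0$ and differing from the degree-$k$ Taylor polynomial of $u$ at $x_0$ only by a bump supported in an arbitrarily small left-neighbourhood of $x_0$, so that $\|\hat u\|_{\CC^k([\,x_0/2,\,x_0\,])}$ is at most a constant (depending only on $k$) times $\|u\|_{\CC^k([\,x_0/2,\,x_0\,])}$; then set $\tilde u=\chi(\cdot/x_0)\,\hat u$ on $[0,x_0]$, $\tilde u=u$ on $[x_0,c)$, and $\tilde\nu=\tilde u\,\partial_z$. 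By construction $\tilde\nu$ coincides with $\nu$ on $[x_0,c)$ (and is $\Cinf$ across $x_0$ since $\hat u$ has the right $\infty$-jet there); it vanishes on $[0,x_0/2]$, hence is $\Cinf$ on $[0,c)$ and infinitely flat at $0$; on $[x_0,f^{-2}(x_0)]$ it equals $u$, whose $\CC^k$ norm there is $\le\eps/2$ by the choice of $x_0$; and on $[x_0/2,x_0]$ its $\CC^k$ norm is bounded by $\bigl(\sum_{i\le k}\binom ki\|\chi^{(i)}\|_\infty\bigr)\,x_0^{-k}\,\|\hat u\|_{\CC^k([\,x_0/2,\,x_0\,])}\lesssim x_0^{-k}\,\|u\|_{\CC^k([\,x_0/2,\,x_0\,])}\le\eps/2$ once the constant has been absorbed into the choice of $x_0$. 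Since $\max(f^2(x_0),f^{-2}(x_0))=f^{-2}(x_0)$, this gives $\|\tilde\nu\|_{\CC^k([0,\max(f^2(x_0),f^{-2}(x_0))])}\le\eps$, which completes the proof. The only genuine obstacle is Step 1: because the Szekeres field is merely $\Cun$ at the fixed point, one cannot control its higher derivatives on a full neighbourhood of $0$, and the whole point is that the $\Cinf$ smoothness of $f$ still forces those derivatives to be small on fundamental domains arbitrarily close to $0$.
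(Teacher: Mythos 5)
The paper does not actually prove this Proposition; it is quoted as a restatement, in a simplified setting, of Proposition 2.15 of \cite{B-E}, and all the work is deferred to that reference. Your proposal ultimately does the same, and that is where it stops short of a proof. Step~2 (the cutoff) is correct and essentially mechanical: given $x_0$ satisfying your two estimates, a rescaled cutoff of $u$ does the job. You can even simplify it: since $u$ is already $\Cinf$ on $(0,c)$, there is no need to invoke Borel's theorem nor to introduce an auxiliary $\hat u$; just set $\tilde u = \chi(\cdot/x_0)\,u$ where $\chi\colon\R\to[0,1]$ is $\Cinf$, $\equiv 0$ on $(-\infty,1/2]$ and $\equiv 1$ on $[1,\infty)$, so that $\tilde u\equiv 0$ on $[0,x_0/2]$ (hence extends smoothly and flatly across $0$), $\tilde u=u$ on $[x_0,c)$, and on $[x_0/2,x_0]$ the Leibniz bound $\|\tilde u\|_{\CC^k}\le C_k\, x_0^{-k}\,\|u\|_{\CC^k([x_0/2,x_0])}$ holds.

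All the substance is in Step~1, and there you state --- but do not prove --- the existence of arbitrarily small $x_0\in(0,a]$ with $\|u\|_{\CC^k([x_0,f^{-2}(x_0)])}\le\eps/2$ and $x_0^{-k}\|u\|_{\CC^k([x_0/2,x_0])}\le\eps/2$, attributing this to \cite{B-E}. Two remarks. First, the attribution as you phrase it (``in germinal form this is the statement of Proposition 2.15'') is not right, and makes the argument read as circular: the conclusion of the Proposition gives, via $\tilde\nu=\nu$ on $[x_0,f^{-2}(x_0)]$, only the first of your two bounds and tells you nothing about $\|u\|_{\CC^k}$ on $[x_0/2,x_0]$, so Step~1 is strictly stronger than the Proposition; your Step~2 proves (Step~1)$\Rightarrow$(Proposition), not the converse. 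What you presumably mean is that these bounds are what is established \emph{inside} the proof of Proposition 2.15 in \cite{B-E}, via the $f$-invariance $u\circ f=f'\cdot u$ of the Szekeres field and bounded-distortion control of the iterates; that is the honest picture and should be said that way. Second, since the paper itself cites \cite{B-E} here and gives no proof, your proposal is consistent with the paper's own treatment, but neither your text nor the paper contains the core analytic argument --- controlling the higher derivatives of a merely $\Cun$ Szekeres field on fundamental domains near the fixed point --- which lives entirely in \cite{B-E}.
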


Let $\nut$ be a vector field obtained by applying the above Proposition to $f = \hbar\res{[0,c)}$. It follows directly from the proof of 2.15 in \cite{B-E} that $\nut$ does not vanish on $(0,c)$, but one can apply Lemma \ref{l:pf-champ} instead to make sure that $\nut$ has finitely many zeros in $(0,c)$, and none infinitely flat. The time-$1$ map $\ft$ of $\nut$ coincides with
$\hbar \res{[0,c)}$ on $[f^{\pm 1}(x_0),c)$ and is $\Cinf$-close to $\id$ on $[0,f^{\pm 1}(x_0)]$, as is $\hbar$ if $x_0$ is small enough. Repeating the above process near $1$ if necessary we get a $\Cinf$ approximation of $\hbar$ with all the required properties.
\end{proof}

\section{Appendix: Flexibility of almost integrable plane fields}\label{s:eliashberg}

This section is devoted to the proof of Proposition \ref{p:p-i}, which states that any two homotopic almost integrable plane fields are actually homotopic through almost integrable plane fields. Actually, in order to prove Theorem \ref{t:pik}, we need a more general statement, replacing the parameter space $[0,1]$ and its boundary $\{0,1\}$ by a compact finite dimensional polyhedron $K$ and a closed subpolyhedron $L$ of $K$ (typically, $K = \D^n$ and $L = \cercle^{n-1}$). 

We will use the following vocabulary. A $K$-\emph{plane field} $\xi$ on a manifold $M$ is a family $\xi_t$, $t\in K$, of plane fields on $M$. Now given a subset $X \subset K \times M$, we say that a $K$-plane field $\xi$ is integrable on $X$ if for every $t\in K$, the plane field $\xi_t$ is integrable on $X_t = X \cap (\{t\} \times M)$. In practice, $X$ is often of the form $(K \times A) \cup (L \times M)$, where $A$ is a subset of $M$. We say that a $K$-plane field $\xi$ is almost horizontal on a collection of balls $B\subset M$ if, for every $t\in K$, the plane field $\xi_t$ is almost horizontal on $B$. Finally, we say that a $K$-plane field $\xi$ is $(K' \times B)$-almost integrable if for every $t\in K'\subset K$, the plane field $\xi_t$ is $B$-almost integrable.

Recall that given a subset $A$ of a topological space, the notation $\Op(A)$ refers to a small nonspecified open neighbourhood of $A$.

\begin{proposition} \label{param} Consider a closed $3$-manifold $M$, a collection of balls $B$ in $M$, a compact finite dimensional polyhedron $K$ and a closed subpolyhedron $L$ of $K$. 
Let $\xi$ be an $(L \times
B)$-almost integrable $K$-plane field on $M$, almost horizontal on $B$. There exists a $K$-plane field $\xibar$ on $M$ with the following properties:
\begin{enumerate}
\item
$\xibar$ is homotopic to $\xi$ relative to $(K \times \Op B) \cup (L \times 
M)$;
\item
$\xibar$ is $(K \times \Bb)$-almost integrable for some collection of balls $\Bb$ containing $B$.
\end{enumerate}
\end{proposition}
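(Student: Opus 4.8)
The plan is to carry out a parametric version of Thurston's Step 1, in the spirit of Eliashberg's treatment of contact structures \cite[Lemma 3.2.1]{El}. First I would fix a triangulation $\mathcal{T}$ of $M$, fine enough that for \emph{every} $t\in K$ the plane field $\xi_t$ is almost constant on each $3$-simplex, and such that the bunch of balls $B$ is contained in the interiors of the $3$-simplices (one ball per simplex at most), with each $\xi_t$ already integrable near $\partial B$. The $K$-plane field is already $(L\times B)$-almost integrable, hence already integrable in a neighbourhood of the whole $2$-skeleton when $t\in L$; the goal is to extend this integrability to all of $K$, at the cost of enlarging $B$.

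The heart of the argument is the following induction on the dimension of the simplices of the $2$-skeleton. Suppose we have made $\xi$ integrable (for all $t\in K$, up to homotopy rel.\ $(K\times\Op B)\cup(L\times M)$) in a neighbourhood of all simplices of dimension $<k$. Fix a $k$-simplex $\sigma$ ($k\in\{0,1,2\}$). For each $t$, Thurston's key observation gives a nonsingular vector field $\nu_t$, defined near $\sigma$, tangent to $\xi_t$ and transverse to $\sigma$ — \emph{provided $\sigma$ is transverse to $\xi_t$}. The deformation that makes $\xi_t$ invariant under $\nu_t$ near $\sigma$ does not disturb $\xi_t$ near $\partial\sigma$ (where it is already integrable, hence already $\nu_t$-invariant), which guarantees global coherence and the relative conditions. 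The parametric version requires $\nu_t$ to depend continuously on $t$, which follows from a partition-of-unity / contractibility-of-choices argument (the space of such $\nu_t$ near $\sigma$ is convex, or at least contractible, once $\sigma$ is transverse to $\xi_t$), and $\nu_t$ must agree with the previously chosen fields on lower-dimensional faces: this is arranged by first extending from $\partial\sigma$ inward, again using contractibility of the relevant fibres.

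The main obstacle — and this is exactly Eliashberg's difficulty — is that as $t$ ranges over $K$, the direction of $\xi_t$ moves, so we \emph{cannot} demand that every edge and every face be transverse to every $\xi_t$; at some parameters $\xi_t$ will be tangent to some face $\sigma$ at some point, and then no suitable $\nu_t$ exists near $\sigma$. The remedy, following Eliashberg, is to treat such ``bad'' $2$-simplices $\sigma$ \emph{before} the rest of the $2$-skeleton, as if they were ``big vertices'': near such a $\sigma$ one performs a direct perturbation making $\xi_t$ integrable in a neighbourhood of $\sigma$ (enlarging $B$ to swallow the locus where $\xi_t$ is tangent to $\sigma$, which contributes the extra balls making up $\Bb$), and then resumes the edge/face induction on the remaining, now-transverse, skeleton. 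Throughout, the deformations are $C^0$-small perturbations of the type used in the proof of Claim \ref{a:p-h} (inserting tangency with a chosen vector field in a thin collar, leaving $\xi$ untouched where it is already integrable), so that the homotopy is relative to $(K\times\Op B)\cup(L\times M)$ and the output is $(K\times\Bb)$-almost integrable. The bookkeeping — uniform estimates ensuring the perturbations stay small, that ``almost constant on each simplex'' is preserved, and that the enlarged balls are genuinely almost horizontal — is the tedious technical content promised in the text, but the geometric skeleton is exactly the one above.
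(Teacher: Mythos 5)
Your overall picture is correct --- Thurston/Eliashberg triangulation, make $\xi$ integrable near the $2$-skeleton via flow-invariance, treat nearly-tangent $2$-simplices as ``big vertices'' first, leave balls inside the $3$-simplices --- but three substantive points are missing or wrong, the first of which would make the plan fail as written.

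\textbf{You omit the subdivision of the parameter space $K$ and the accompanying induction on its skeleta.} Whether a $2$-simplex $\sigma$ is ``bad'' (nearly tangent to $\xi_t$) depends on $t$, so over all of $K$ there is no fixed dichotomy good/bad; the choice of which deformation model to use at $\sigma$ jumps as $t$ varies, and the contractibility-of-choices argument you invoke simply does not apply (the fibre is empty when $\xi_t$ is tangent to $\sigma$, and otherwise the admissible $\nu_t$'s for the ``transverse'' and the ``special'' treatments live in different spaces). The paper (following Eliashberg) subdivides $K$ so finely that, on each closed simplex $K_*$, the oscillation of $\xi_t$ satisfies $\angle(\xi_s,\xi_t)<\theta/16$. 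With $K_*$ fixed, a face $\sigma$ is declared \emph{special} if $\angle(\sigma,\xi_s(q))<\theta/2$ for some $(s,q)\in K_*\times\sigma$, and the jiggled triangulation guarantees special faces are disjoint (Claim~\ref{a:speciaux}). The argument is then a double induction: over the skeleta $K^i$ of $K$, and, for each fixed $K_*$, over the dimensions of the simplices of $A_d^2$ (special faces, then vertices, then edges, then non-special faces). Relativity over $\partial K_*\subset K^{i-1}$ is automatic by Remark~\ref{r:relatif}, since there $\xi$ is already integrable near $A_d^2$.

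\textbf{The extra balls $\Bb$ are not swallowed tangency loci.} After the deformation, $\xi$ is integrable near the \emph{entire} $2$-skeleton, including the special faces (the big-vertex treatment makes $\xi$ integrable on a full flowbox neighbourhood of $\sigma$, it does not excise it). The new balls are one strictly convex ball per $3$-simplex of $A_d$, chosen in the complement of the neighbourhood of $A_d^2$; Lemma~\ref{l:curv2} then shows they are almost horizontal once the $C^1$ norm of $\xibar$ is under control and the scale $d$ is small enough. This brings up the third point: $C^0$-smallness of the perturbation is \emph{not} enough. The almost-horizontality argument needs a uniform $C^1$ bound $\|\xibar_t\|_1\le c$, and the deformation model loses a derivative (Remark~\ref{r:2.3.3}: $\|\xi^u\|_{d,m}$ is controlled by $\|\xi\|_{d,m+1}$), so the chain of estimates through the finitely many applications of Lemma~\ref{l:defmod} is a genuine part of the argument, not mere bookkeeping. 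Finally, the paper first reduces to open subsets of $\R^3$ by a finite chart cover (Lemma~\ref{l:R3}), which is what makes the rescaled norms $\|\cdot\|_{d,m}$ and the uniform constants available; your plan tacitly works on $M$ directly, which is feasible but complicates precisely these estimates.
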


In order to deform plane fields to integrable ones, Thurston initiated the use of triangulations. He demonstrated the effectiveness of his idea in \cite{Th1, Th2, Th3}. Eliashberg then adapted the techniques of \cite{Th2}
in \cite{El} to deform plane fields to contact structures, and extended them to families of plane fields depending on any number of parameters. In return, Proposition \ref{param} and its proof are modeled on part of \cite{El}, namely Lemma 3.2.1 and its proof, which relies on sections 2.3 and 2.4 of the same paper. Our aim here is mainly to detail and complete Eliashberg's arguments (see in particular Remark \ref{r:2.3.3}). We also refer the reader to the book \cite{Ge} by H. Geiges for further details about the complete argument of \cite{El}. 

We will now give an outline of the proof of Proposition \ref{param}, which takes up the entire Appendix. In particular, we will try to emphasize the difference between the nonparametric and multiparametric construction of almost-integrable plane fields (cf. ``Proof of Lemma \ref{l:R3}...''), and to motivate our choice to give a full proof of the multiparametric version, including a tiresome induction argument, rather than restrict to the one-parameter case which would convey most of the ideas.
\medskip

\noindent\textbf{Reduction to $\R^3$}. First, in Subsection \ref{ss:euclide}, we cover $M$ with finitely many charts to reduce to a problem in $\R^3$. The rest of the Appendix is devoted to the analogue of Proposition \ref{param} in $\R^3$, namely Lemma \ref{l:R3}, whose proof we now outline.\medskip

\noindent\textbf{The nonparametric case}. Before dealing with families of plane fields, we first recall Thurston's strategy to make \emph{one} plane field $\xi$ almost integrable. The starting point is to construct a triangulation in ``good position'' (or ``general position'' in Thurston's words) with respect to $\xi$, meaning basically that the direction of $\xi$ is ``almost constant'' on each $3$-simplex (this can be ensured simply by taking the triangulation fine enough) and that the faces and edges are transverse to $\xi$ (this is achieved by ``jiggling'' the previous triangulation).

Good position makes it ``easy'' to make $\xi$ integrable in a neighborhood of the $2$-skeleton \emph{and} to pick this neighborhood so that $\xi$ is almost horizontal on each ball of the complement. More precisely, one first makes $\xi$ integrable in a neighborhood of every vertex, then every edge and finally every face. The deformations near all simplices of a given dimension should be thought of as simultaneous, the tricky part being of course to guarantee the compatibility of deformations performed near adjacent simplices. This is made possible by the existence, near every simplex $\sigma$, of a vector field $\nu$ tangent to $\xi$ and transverse to $\sigma$. The deformation then consists in keeping $\xi$ unchanged \emph{on} $\sigma$ and making it invariant under $\nu$ in a neighborhood of $\Int\sigma$, covered by a flow box of $\nu$ with base $\Int\sigma$ (cf. Lemma \ref{l:defmod} for a generalized quantitative version of this process). Since $\xi$ is already integrable near $\partial \sigma$ by the previous step, it is already invariant under $\nu$ there and thus remains unchanged, which guarantees the global coherence of these local perturbations. 

Note the importance of the transversality condition on the triangulation. If the triangulation was not in good position with respect to $\xi$ (but still sufficiently fine), one could still find, for every face $\sigma$, a vector field $\nu$ tangent to $\xi$ with a flow box covering a neighbourhood of $\sigma$, and make $\xi$ invariant under this flow. \emph{But} any flow line leaving \emph{and} reentering the neighbourhood of $\partial \sigma$ would be a potential obstruction to keeping $\xi$ unchanged in this neighbourhood, which was our guarantee for the global coherence of the perturbations. So one would have to deal with these ``special faces'' first, like ``big vertices'',   \emph{before} carrying on with the other (actual) vertices, faces and edges. This is a problem we have to face in our parametric situation.\medskip

\noindent\textbf{Proof of Lemma \ref{l:R3}: meaning of the Key Lemma \ref{l:local} and of the ``curvature'' Lemma \ref{l:curv2}}. Indeed, what we want, in order to prove Lemma \ref{l:R3}, is to deform \emph{an entire family} $\xi_t$, $t\in K$, of plane fields to make them all integrable \emph{outside the same balls}. To that end, we must use the  \emph{same triangulation} for every value of the parameter. But we cannot expect a single triangulation to be in good position with respect to every plane field (the triangulation can be fine enough that every $\xi_t$ is almost constant near each $3$-simplex, but since the direction of $\xi_t$ varies with $t$, there is no hope to fulfil the transversality condition in general). The common triangulation we use is a rescaling by a small factor $d$ of a specific triangulation $\Delta$ of $\R^3$ defined in Section \ref{ss:triangulation} (its main features will be presented below). The choice, or rather the existence of a proper scaling factor $d$ plays an important and elaborate part in the proof. We try to clarify what underlies this choice by writing in bold the relevant parts of the following outline.

The Key Lemma \ref{l:local} (Section \ref{ss:triangulation}) claims that, \textbf{for any $d$ sufficiently small} (so that each plane field is ``almost constant'' near each simplex), our $K$-plane field can indeed be made integrable in a neighborhood of the $2$-skeleton of the rescaled triangulation $d\Delta$. As mentioned earlier, the fact that the triangulation is not in good position with respect to every plane field makes this already somewhat harder than in the nonparametric case: some $\xi_t$ might be tangent to some face $\sigma$ of some $3$-simplex $\tau$ and these parameters and ``special'' simplices (cf. Definition \ref{d:special}) have to be dealt with in a particular way. 

But perhaps more importantly, parameters (and in particular situations like the one above) make it harder to guarantee the almost horizontality of all plane fields on the complement of a common neighborhood of the $2$-skeleton: without further precautions, a plane field $\xit_t$ obtained after deforming a plane field $\xi_t$ as above might have infinitely many points of tangency with the boundary sphere of some randomly embedded ball in $\tau$. This will not happen, however, if the sphere is convex enough compared to the variations of $\xit_t$, as explained in Subsection \ref{ss:curv} (cf. Lemma \ref{l:curv2}). But remember this sphere must lie in the neighborhood of the $2$-skeleton where the $K$-plane field has been made integrable. This is why the Key Lemma \ref{l:local} has to quantify the size of this neighbourhood, namely a $\mu d$-neighborhood (cf. 2. in the Key Lemma), for some $\mu$ depending only on the initial $K$-plane field, but not on the scaling factor $d$. That way, once we have the Key Lemma, \textbf{choosing $d$ small enough once again}, we can make the spheres as curved as we like. 

At that point, we use a key property of the model triangulation $\Delta$: its $3$-simplices are all copies of a finite number of model simplices. One can embed a strictly convex sphere in the $\mu$-neighborhood of the boundary of each of them. The principal curvatures of these model spheres are bounded below by some positive number $k$. Then the spheres we embed in each $3$-simplex of $d\Delta$ are simply scaled copies of these model spheres and thus have principal curvatures bounded below by $k/d$, which can be made as big as we like by taking $d$ sufficiently small.

But the deformed $K$-plane field given by the Key Lemma depends on the triangulation, and thus on the choice of $d$. So when we shrink $d$ to increase the curvature of the spheres we can embed, we change $\xit_t$, and possibly its $C^1$ norm, which determines the minimal curvature guaranteeing the almost horizontality...  So, in short, we need to make sure that the variations of the plane fields resulting from the Key Lemma remain bounded \emph{regardless of the scaling factor $d$}, hence the need for point 3. in the Key Lemma. 

Now that we have explained the content of the Key Lemma and how it combines with Lemma \ref{l:curv2} to produce plane fields that are indeed almost integrable, and thus prove  Lemma \ref{l:R3}, we can go on with the outline of the proof of the Key Lemma itself, which is carried out in Sections \ref{ss:modele} and  \ref{ss:cle}.\medskip

\noindent\textbf{Proof of the Key Lemma I: the deformation model}. Again, the aim is roughly to show that, for any $d$ small enough, one can make a given $K$-plane field integrable on a neighbourhood of the $2$-skeleton of $d\Delta$ whose diameter does not depend on $d$ up to scaling, keeping control (again independent of $d$) on the $C^1$ norm of the resulting plane fields. To that end, the idea is, like in Thurston's process (cf. \emph{the nonparametric case}), to apply a local deformation model repeatedly (namely near each simplex of the triangulation, ``special'' or not). 

But we want quantitative control on the resulting object, that depends neither on the number of times we applied the model nor on the specific simplices to which we applied it. So we need a \emph{quantitative} deformation model which, for a sufficiently large (but necessarily restricted) class of plane fields, tells us how to make them integrable and gives uniform control on the size of the corresponding perturbation. This is the content of Lemma \ref{l:defmod} to which Section \ref{ss:modele} is devoted. No scaling parameter $d$ is involved there, since Lemma \ref{l:defmod} is precisely intended to provide bounds independent of $d$ in the end. This lemma is really a statement about plane fields defined near a simplex of the ``big'' triangulation $\Delta$ and will be applied to the plane fields of the Key Lemma defined near a simplex of $d\Delta$ only after a rescaling by a factor $1/d$.  

Now the difficulty of the proof of the Key Lemma consists in reducing to situations where the deformation model is indeed applicable (which, in particular, \textbf{requires the triangulation to be fine enough}) and to do things in the right order so that each step is compatible with the previous ones. \medskip

\noindent\textbf{Proof of the Key Lemma II: triangulation of the parameter space and induction}. 
Again, given a scaling factor $d$ (which will be chosen \emph{a posteriori}), the idea is to perturb every plane field $\xi_t$, $t\in K$, in a neighborhood of every simplex of the $2$-skeleton of $d\Delta$ to make it integrable there, and to do this continuously with respect to $t$. The problem is that, given $t$, the order in which one must deal with the simplices if one wants to avoid incompatibilities depends on the position of $\xi_t$ with respect to the triangulation. Indeed, as mentioned above (cf. \emph{Proof of \ref{l:R3}...}), one has to start with \emph{special} faces to which $\xi_t$ is ``almost tangent'' ($d$ being assumed small enough here that the direction of each $\xi_t$ on a given simplex is ``almost constant''). But this, of course, depends on $t$, so the order in which the deformation is conducted also does, which is a bad start if we want a deformation continuous in $t$... 

To deal with this issue, we start by triangulating \emph{the parameter space} itself finely enough so that if one plane field $\xi_{t_0}$ is almost tangent to some face of the ``spatial'' triangulation, then all other $\xi_t$'s are, for $t$ in the same simplex of the ``temporal'' triangulation as $t_0$. Thus, each simplex of the ``temporal'' triangulation has its own set of \emph{special} faces. All of this must of course be explicitly quantified. In particular, the ``almost tangency'' is defined so that a single plane field cannot be ``almost tangent'' to two adjacent faces (which would be a problem because we want to be able to deform it near all special faces simultaneously and independently). This uses a second key property of the triangulation $\Delta$: the existence of a uniform lower bound on the angles between adjacent simplices (not contained in one another), ``almost tangency'', and thus ``special faces'', being defined in terms of that bound. 

One then proceeds by induction on the successive skeleta of $K$ (cf. Lemma \ref{l:recurrence}). Our reasons for carrying out the induction explicitly for any $K$, rather than reducing to the case where $K$ is made of a single simplex (cf. \cite[p. 632]{El}) or where $K$ is one-dimensional, are the following.  At each step, we need the \emph{new} special faces (i.e. the ones associated to the plane fields obtained after the previous step), for any given simplex $K_*$ of $K$, to still be disjoint. Therefore, we must, at each step, keep control on the angle between the ``old'' plane fields and the new ones (cf. last condition in Lemma \ref{l:recurrence}). This (and simply the possibility to apply the deformation Lemma near each simplex) \textbf{puts constraints on the choice of $d$}. Furthermore, recall that we want a bound on the $C^1$ norm of the resulting plane fields independent from the choice of the scaling factor $d$. But as we will see in more detail in Subsection \ref{ss:modele}, the local deformation lemma \ref{l:defmod} only provides a bound on the $C^k$ norm of the resulting plane field in terms of the $C^{k+1}$ norm of the initial one. Thus the induction works basically as follows:
\begin{itemize}
\item We start with a $K$-plane field $\xi$ (automatically $C^{n+1}$-bounded on any compact set, where $n$ denotes the dimension of $K$).
\item \textbf{For any $d$ small enough}, one can perturb $\xi_t$ for $t$ in a neighborhood of the $0$-skeleton of $K$ (leaving $\xi_t$ unchanged for $t$ outside a bigger neighborhood) to make it integrable first near special $2$-simplices, then inductively over $0$-, $1$- and normal $2$-simplices of $d\Delta$. This is done by applying the deformation model to $\xi_t$ in a neighbourhood of each simplex (or rather to a rescaled version of it by a factor $1/d$). We then obtain a $K$-plane field $\xi^1$ and a $C^n$ bound, proportional to $d$, on the rescaling of $\xi^1$ by a factor $1/d$. \textbf{Provided $d$ was chosen small enough}, the deformed plane fields are arbitrarily $C^0$-close to the old ones. In particular, this implies that special simplices for the new ones are still disjoint.
\item \textbf{Shrinking $d$ in the previous step \emph{a posteriori} if necessary} (the above being valid \emph{for any $d$} below some given bound determined by the initial $K$-plane field), we continue with the $1$-skeleton of $K$, carrying out the previous step relative to the boundary of the $1$-skeleton, i.e. to the $0$-skeleton of $K$. We obtain a $K$-plane field $\xi^2$ with a $C^{n-1}$ bound on the rescaling of $\xi^1$ by a factor $1/d$, and again a bound on the angle by which the plane fields have been modified.
\item We continue on $K^2$, $K^3$, \dots, $K^n$ and obtain $\xi^3$, $\xi^4$, \dots, $\xi^n=:\xibar$ with $C^{n-2}$, $C^{n-3}$,\dots, $C^1$ bounds proportional to $d$ on their rescalings. Since the $C^1$ norm of $\xibar$ is $1/d$ times the $C^1$ norm of its rescaling (cf. Subsection \ref{sss:induction}), this gives the desired $C^1$ control on $\xibar$. 
\end{itemize}
For simplicity, we have not mentioned here the matter of the size of the neighbourhood of the $2$-skeleton on which the plane fields are made integrable, but this too must be controlled from one step to the next. 
\medskip

Throughout the appendix, $\norm{.}$ will denote both the euclidean norm on $\R^3$ and the operator norm on the space of $k$-linear maps $L^k(\R^3,\R^3)$ associated to it. When there is no ambiguity on the domain of definition $U\subset \R^3$ of a $C^m$ map $f:U\to\R^3$ (resp. $U\to L^k(\R^3,\R^3)$), we will write
\begin{equation*}
\norm{f}_0=\sup_{p\in U}\norm{f(p)}\quad\in[0,+\infty]
\end{equation*}
and
\begin{equation}\label{e:normm}
\norm{f}_m=\max_{1\le k\le m }\norm{D^kf}_0.
\end{equation}

\subsection{Reduction to open sets of euclidian space}
\label{ss:euclide}
The statement we will need in $\R^3$ is the following. 

\begin{lemma} \label{l:R3}
Let $U$ be an open subset of $\R^3$, $F$ a closed subset of $U$ and $\xi$ a $K$-plane field on $U$ integrable on $(K \times \Op F) \cup (L \times U)$. Given a compact subset $A \subset U$, there exists a $K$-plane field $\xibar$ on $U$
satisfying the following properties:
\begin{enumerate}
\item
there is a compactly supported homotopy from $\xi$ to $\xibar$ relative to $(K \times \Op F) \cup
(L \times U)$;
\item
$\xibar$ is integrable on $K \times (A_* \setminus B)$ and almost horizontal on $K \times B$, where $A_*$ is a compact neighbourhood of $A$ and $B$ a collection of balls in $\Int A_* \setminus F$. 
\end{enumerate}
\end{lemma}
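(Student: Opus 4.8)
The plan is to carry out Eliashberg's parametric triangulation argument (\cite{El}, Lemma 3.2.1) in the foliated setting, along the lines sketched just before the statement. Since $\xi$ is integrable on $K\times\Op F$, I would first fix an open neighbourhood $N$ of $F$ in $U$ on which every $\xi_t$, $t\in K$, is integrable; the whole deformation will be supported away from $N$. Choose a compact neighbourhood $A_*$ of $A$ in $U$ and a compact set $G\subset U\setminus F$ with $A_*\setminus N\subset\Int G$. Take a smooth triangulation $\mathcal T$ of a neighbourhood of $A_*$ so fine that, for every closed simplex $\sigma$ meeting $A_*$ and every $t\in K$, the plane field $\xi_t$ is $\eps$-$\CC^1$-close on $\sigma$ to a constant plane field (uniformly in $t$, by compactness of $K\times\bar A_*$), where $\eps$ is a small constant to be fixed last. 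Arrange moreover, by a family-general-position choice of $\mathcal T$, that the \emph{special} $2$-simplices --- those $\sigma$ to which some $\xi_t$ is tangent at some point --- together with all the vertices, admit pairwise disjoint neighbourhoods. We may assume $A_*$ is a subcomplex and that every simplex meeting $F$ lies in $N$.

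\textbf{Making $\xibar$ integrable near the $2$-skeleton.} Next I would perturb $\xi$ in three rounds over the cells of the $2$-skeleton lying in $A_*\setminus N$: round (0), the vertices not contained in any special $2$-simplex together with the special $2$-simplices as ``big vertices''; round (1), the remaining edges; round (2), the remaining (non-special) $2$-simplices. Near each cell $\sigma$ being treated I pick a nonsingular vector field $\nu^\sigma_t\subset\xi_t$, continuous in $t$: any nonvanishing section of $\xi_t$ at a vertex; the normalized orthogonal projection to $\xi_t$ of a fixed generic transverse vector along an edge; the normalized orthogonal projection to $\xi_t$ of the unit normal of $\sigma$ along an ordinary $2$-simplex (transverse to $\sigma$ precisely because $\xi_t$ is); and, for a special $2$-simplex, a field $\nu^\sigma_t\subset\xi_t$ chosen, using once more the smallness of $\sigma$ and of the oscillation of $\xi_t$, so that $\sigma$ lies in a single flow-box of $\nu^\sigma_t$, though $\nu^\sigma_t$ need not be transverse to $\sigma$. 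In flow-box coordinates with $\nu^\sigma_t=\partial_s$, the plane field $\xi_t$ contains $\partial_s$, hence is spanned by $\partial_s$ and some $X_t(x,y,s)$; replacing $X_t$ by its restriction to a cross-section, interpolated to be unchanged near $\partial\sigma$, yields a $\CC^1$-small, $t$-continuous deformation making $\xi_t$ invariant under $\nu^\sigma_t$ and hence integrable near $\sigma$. The coherence of these local moves rests on the remark that a plane field already integrable near a point and containing $\nu^\sigma_t$ there is automatically $\nu^\sigma_t$-invariant there (a field tangent to a foliation preserves the foliation): so the deformation is the identity wherever $\xi_t$ is already integrable --- in particular for every $t\in L$, on $N$, and near $\partial\sigma$ after the previous rounds --- while the round-(0) supports are pairwise disjoint and rounds (1)--(2) act in the complement of what was already treated. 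Concatenating gives a homotopy, supported in $K\times G$ and relative to $(K\times(U\setminus G))\cup(L\times U)$, from $\xi$ to a $K$-plane field $\xibar$ with every $\xibar_t$ integrable on a neighbourhood $V$ of the $2$-skeleton, $V$ meeting each $3$-simplex in a collar of its boundary whose complement is a single ball.

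\textbf{Carving the balls.} Keeping all the deformations $\CC^1$-small (this is what fixes $\eps$ and the support sizes), every $\xibar_t$ stays $2\eps$-$\CC^1$-close on each $3$-simplex $\sigma$ to a constant plane field $P_{\sigma,t}$. I would then take $B_\sigma\subset\Int\sigma$ a Euclidean ball (in affine coordinates on $\sigma$) with $\partial B_\sigma\subset V$, as round and large as the collar permits. In linear coordinates straightening $P_{\sigma,t}$ to $\{dz=0\}$ --- a change $\CC^1$-close to the identity and $\CC^1$-continuous in $t$ --- $B_\sigma$ becomes a convex body $\CC^1$-close to a round ball and $\xibar_t$ becomes $\CC^1$-close to the horizontal plane field, so $\xibar_t$ is tangent to $\partial B_\sigma$ at exactly two points; the vector field required by Definition \ref{d:p-i} is then obtained by interpolating between the ``upward meridian'' field on $\partial B_\sigma$ (tangent to the sphere, with positive $z$-component, hence positively transverse to both $\xibar_t$ and $\{dz=0\}$) and $\partial_z$ near the poles and near the centre. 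Thus $\xibar_t$ is almost horizontal on each $B_\sigma$. Setting $B=\bigcup_\sigma B_\sigma$ over the $3$-simplices contained in $A_*\setminus N$ --- a bunch of balls in $\Int A_*\setminus F$ --- the $K$-plane field $\xibar$ is integrable on $K\times(A_*\setminus B)$ (by construction outside $N$, and because $\xibar=\xi$ on $N$) and almost horizontal on $K\times B$, homotopic to $\xi$ relative to $(K\times(U\setminus G))\cup(L\times U)$ with support in the compact $G$ disjoint from $F$, as required.

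\textbf{Main obstacle.} The two delicate points are, first, the family general-position statement --- that the special $2$-simplices can be made to have disjoint neighbourhoods, which is Eliashberg's key technical device and relies on a transversality argument for the map $(t,x)\mapsto\xi_t(x)$ into the Grassmannian $\mathrm{Gr}(2,3)$ --- and, second, the ``big-vertex'' step, i.e. producing continuously in $t$ a flow-box of $\nu^\sigma_t\subset\xi_t$ swallowing a $2$-simplex to which some $\xi_t$ is tangent. Propagating the $\CC^1$-estimates through the three rounds so that the final $\xibar_t$ is genuinely almost horizontal is the remaining technical chore; the vertex, edge and ordinary-face perturbations and the relative bookkeeping are routine.
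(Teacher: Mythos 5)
Your proposal follows the same overall plan as the paper's (triangulate finely, treat ``special'' $2$-simplices as big vertices first, then vertices, edges, and remaining faces, then carve balls inside the $3$-simplices), but it has a genuine gap at its central point: the claim that one can arrange, by a ``family-general-position choice of $\mathcal T$'', for the special $2$-simplices \emph{over all of $K$} to admit pairwise disjoint neighbourhoods. That is false as soon as $\dim K \ge 1$. For a given $2$-simplex $\sigma$ with plane direction $P_\sigma$, the set $\{(t,p)\in K\times\sigma : \xi_t(p)=P_\sigma\}$ has generic dimension $\dim K$; already for $K=[0,1]$ a path $\xi_t$ can sweep through enough directions that \emph{every} $2$-simplex of any sufficiently fine triangulation is special for some $t$, and two adjacent faces $\sigma,\sigma'$ can both be special simply because the tangencies occur at different parameter values $t,t'$ far apart in $K$. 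No perturbation of the triangulation fixes this, because it is the variation in $t$, not any non-genericity of $\mathcal T$, that is responsible.

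The device the paper (following Eliashberg) uses to repair exactly this is missing from your proposal: one subdivides the parameter space $K$ so finely that on each parameter simplex $K_*$ the oscillation of the plane fields is controlled, $\angle(\xi_s(p),\xi_t(p))<\theta/16$ for $s,t\in K_*$ (inequality~\eqref{e:vp}), and then proceeds by induction on the skeleta $K^0\subset K^1\subset\cdots\subset K^n$ of $K$ (Lemma~\ref{l:recurrence}). ``Special'' is then defined \emph{relative to a single parameter simplex} $K_*$, and disjointness of special simplices (Claim~\ref{a:speciaux}) follows not from general position but from a quantitative angle argument: two intersecting $2$-simplices of the scaled periodic triangulation $d\Delta$ make an angle $\ge\gamma>2\theta$, so if both were special within the same $K_*$ the oscillation bounds would force them to be nearly parallel, a contradiction. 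This is also why the paper works with a \emph{fixed} $(2\Z)^3$-periodic triangulation scaled by $d$ rather than an ad hoc fine triangulation: the angle bound $\gamma$ and the other geometric constants are scale-invariant. Without the parameter-space induction (and the relative statement at $\partial K_*$ it produces), the rest of your argument — including the remark that the deformations are stationary where $\xi$ is already integrable, and the ball-carving via curvature estimates — cannot be run, because you never reach a situation where the ``big vertex'' regions are disjoint.

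A secondary remark: your aside that propagating the $C^1$-estimates is ``routine'' understates a real subtlety that the paper flags in Remark~\ref{r:2.3.3} — the deformation model loses one derivative (the $C^m$ norm of the output is controlled by the $C^{m+1}$ norm of the input), so one must run the induction with $C^m$ bounds for $m$ large enough to end with a $C^1$ bound, and this is what the whole machinery of $\norm{\cdot}_{d,m}$ and Lemma~\ref{l:defmod} is for. This is repairable by sheer bookkeeping, unlike the disjointness issue above, but it is not negligible.
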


\begin{figure}[htbp]
\centering
\includegraphics[height=4cm]{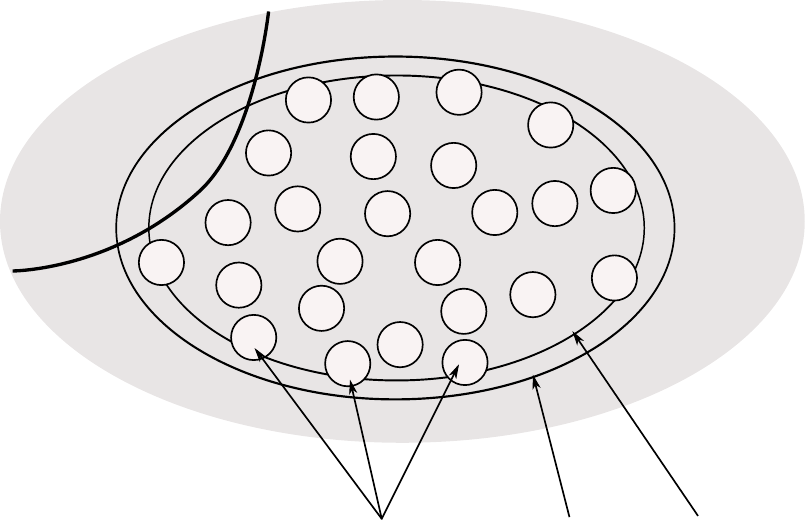}
\put(-54,-8){$\scriptstyle \partial A_*$}
\put(-27,-7){$\scriptstyle \partial A$}
\put(-93,-8){$\scriptstyle B$}
\put(-160,70){$\scriptstyle F$}
\put(-17,63){$\scriptstyle U$}
\caption{Setup in Lemma \ref{l:R3}}
\label{fig:5.2}
\end{figure}
\begin{proof}[Proof of Proposition \ref{param} assuming Lemma \ref{l:R3}]
\smallskip
Let $M$, $B$ and $\xi$ be as in Proposition \ref{param} and $A_{0*}$ be a compact neighbourhood of
$B$ such that $\xi$ is integrable on $K \times (\Op A_{0*} \setminus B)$. Consider open charts $V_i \subset M$, $1 \le i \le p$, and compact subsets $W_i \subset V_i$ such that $M = \bigcup W_i$. Lemma \ref{l:R3} applied to
$$U_1 = V_1 \setminus B,\quad F_1 = U_1 \cap A_{0*}, \quad A_1 = W_1 \setminus \Int
A_{0*}$$
and to the $K$-plane field $\xi$ restricted to $U_1$, provides a compact set $A_{1*}$, a collection of balls $B_1 \subset \Int A_{1*}$ and a new $K$-plane field $\xi_1$ on $U_1$ equal to $\xi$ on $(K\times \Op F_1)\cup (L\times U_1)$, and which extends to $M$ by $\xi_1 = \xi$ on $M \setminus U_1$. We then apply Lemma \ref{l:R3} to 
$$U_2 = V_2 \setminus (B \cup B_1),\quad F_2 = U_2 \cap
(A_{0*} \cup A_{1*}), \quad A_2 = W_2 \setminus \Int (A_{0*} \cup A_{1*})$$
and to the $K$-plane field $\xi_1$ restricted to $U_2$. We then iterate this construction and after finitely many steps we are done.
\end{proof}

\subsection{Almost horizontality and curvature}
\label{ss:curv}

The following lemma will be used to make sure that the plane fields we construct in the next sections have the desired almost horizontality property. It corresponds to Lemmas 2.4.1 and 2.4.2 in \cite{El} (stated without proof in \cite{El} and proved in \cite{Ge}, cf. 4.7.17 and 4.7.18). Figure \ref{fig:pinceau} below gives a schematic picture of its content.

Let $\xi$ be a transversely oriented plane field on an open subset $U$ of $\R^3$. For every $p \in U$, we denote by $\xi^+(p)$ the open half-space of $T_p\R^3$ lying on the positive side of $\xi(p)$ and by $\xi^\perp(p) \in \xi^+(p)$ the positive unit normal vector. In other words, $\xi^\perp \from U \to \sphere^2$ is the Gauss map of $\xi$. For every integer $m \ge 1$, we define the $C^m$ norm of $\xi$ to be the $C^m$ norm (as defined in \eqref{e:normm}) of its Gauss map, and denote it simply by $\norm{ \xi }_m$:
\begin{equation}
\label{e:normm2}
 \norm{\xi}_m:=\norm{\xi^\perp}_m. 
\end{equation}

Now given two points $p,  q \in U$, the affine planes $P_p$ and $P_q$ tangent to $\xi(p)$ and $\xi(q)$ respectively, determine
a pencil, namely the set of planes containing the straight line $P_p \cap P_q$, called the axis of the pencil. Note that this axis can be at infinity, in which case the planes of the pencil are all parallel.

\begin{lemma} \label{l:curv2}
Let $U$ be an open subset of $\R^3$, $\xi$ a $\CC^1$-bounded plane field on $U$ and $S_* \subset \R^3$ a strictly convex sphere. For $d_0 > 0$ sufficiently small, every image $S \subset U$ of $S_*$ by a dilation by a factor $d \le d_0$ has the following properties:
\begin{enumerate}
\item
$\xi$ is tangent to $S$ at exactly two points, a north pole $p_+$ where their coorientations coincide and a south pole $p_-$ where they are opposite; we denote by
$\eta$ the distribution of tangent planes to the pencil defined by $\xi_{p_-}$ and 
$\xi_{p_+}$ (the coorientation of $\xi$ naturally endows $\eta$ with a coorientation);
\item
For every $\eps>0$, there exists a nonsingular vector field $\nu$ on the ball $B$ bounded by $S$, which lies in the dihedral cone $\Omega_p = 
\xi_p^+ \cap \eta_p^+$ at every $p\in B$, and which is tangent to $S$ outside the $\eps$-neighbourhood of the poles.
\end{enumerate}
\end{lemma}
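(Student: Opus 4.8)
The plan is to exploit that dilating $S_*$ by a small factor $d$ makes $\xi$ almost constant over the ball it bounds, and then run the classical ``almost horizontal'' construction around that constant model while keeping quantitative control via $\Bars\xi_1$. Concretely, I would first rescale so as to fix $S=S_*$ and instead assume $m:=\Bars\xi_1$ is small (of order $d$); set $p_0$ the centre of $S$, $\Pi_0:=\xi(p_0)$ and $\nu_0:=\xi^\perp(p_0)$. Since $\xi^\perp$ is $m$-Lipschitz and $B$ has bounded diameter, $\xi$ is $\CC^1$-close to the constant plane field $\Pi_0$ on $B$; all the estimates below are meant to hold once $m$ (equivalently $d$) is small enough, which is what fixes $d_0$.

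For part (1): since $S$ is strictly convex, its Gauss map $N\from S\to\sphere^2$ is a diffeomorphism, so the tangency locus of $\xi$ with $S$ is $\{p:\xi^\perp(p)=\pm N(p)\}$, i.e.\ the solution set of the fixed point equations $H(w)=\pm w$ on $\sphere^2$, where $H:=\xi^\perp\circ N^{-1}$. As $H$ is $\CC^1$-close to the constant $\nu_0$, I would show by a Banach fixed point argument on a small geodesic disc (where $H$, resp.\ $-H$, is a contraction) together with the remark that any fixed point lies in $H(\sphere^2)$, hence near $\pm\nu_0$, that $H(w)=w$ and $H(w)=-w$ each have exactly one solution; these give the poles $p_\pm$, with $\xi^\perp(p_+)=N(p_+)$ (coorientations coincide) and $\xi^\perp(p_-)=-N(p_-)$ (opposite). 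The key structural point is that, $\xi^\perp$ being almost constant, $N(p_+)$ and $N(p_-)$ are almost opposite, so the two supporting planes $P_{p_\pm}=p_\pm+\xi_{p_\pm}$ of $B$ are almost parallel; hence their common axis $L_0$ is either at infinity or at distance $\gtrsim 1/m$ from $B$, and in any case disjoint from $B$ (it lies in a supporting plane). Consequently the pencil of planes through $L_0$ restricts to a genuine nonsingular (integrable) plane field $\eta$ on $B$, which is $\CC^1$-close to $\Pi_0$, is $O(m)$-Lipschitz, inherits a coorientation from $\xi$, satisfies $\eta_{p_\pm}=\xi_{p_\pm}$ with matching coorientations, and — by the same fixed point analysis applied to $\eta^\perp\circ N^{-1}$ — is tangent to $S$ exactly at $p_\pm$.

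For part (2) I would first check that $T_pS\cap\Omega_p\ne\emptyset$ for every $p\in S\setminus\{p_\pm\}$. There, $T_pS\cap\xi_p^+$ and $T_pS\cap\eta_p^+$ are non-empty open half-planes of $T_pS$; their intersection is empty only if they are complementary, i.e.\ only if the bounding lines $T_pS\cap\xi_p$ and $T_pS\cap\eta_p$ coincide and the projections $w':=\mathrm{pr}_{T_pS}\xi^\perp(p)$, $w'':=\mathrm{pr}_{T_pS}\eta^\perp(p)$ point to opposite sides, i.e.\ $w'\cdot w''<0$. I would rule this out via $|w'|=\sin|\ma(\xi^\perp(p),N(p))|\gtrsim\dist(p,\{p_\pm\})$ (the Gauss map has rate bounded below, $\xi^\perp$ is $m$-Lipschitz, and $\xi^\perp=\pm N$ at $p_\pm$) and $|w'-w''|\le|\xi^\perp(p)-\eta^\perp(p)|=O\!\big(m\,\dist(p,\{p_\pm\})\big)$ (both fields are $O(m)$-Lipschitz and agree at $p_\pm$): for $m$ small these give $|w'-w''|<\tfrac12|w'|$, hence $w'\cdot w''\ge|w'|^2-|w'|\,|w'-w''|>0$. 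By compactness $T_pS\cap\Omega_p$ then contains a sub-cone of uniformly positive width on $S$ minus any fixed $\eps$-neighbourhood of $\{p_\pm\}$. To build $\nu$ globally I would use that each $\Omega_p$ is open, convex and contained in $\{v:v\cdot\xi^\perp(p)>0\}$ (so ``pointed''), and that the constant $\nu_0$ lies in every $\Omega_p$: pick, via a local framing of $TS$ and a partition of unity using convexity, a smooth section of the open convex cone field $p\mapsto T_pS\cap\Omega_p$ over $S\setminus\Op(\{p_\pm\})$, propagate it a little way into $B$ along a collar (still inside $\Omega_p$), and glue it to the constant section $\nu_0$ over the rest of $B$ by a partition of unity; convexity keeps the result in $\Omega_p$, pointedness keeps it non-zero, and it is tangent to $S$ off the $\eps$-neighbourhood of the poles by construction (tangency to $S$ at the poles themselves is impossible, since $\xi$ is tangent to $S$ there).

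The main obstacle is the last estimate in part (2): showing that the ``coorientation defect'' $w'\cdot w''$ stays \emph{positive} even at points arbitrarily close to a pole, where $|w'|$ and $|w'-w''|$ both degenerate. This is exactly where the geometry of $\eta$ is used essentially: the axis $L_0$ must be \emph{uniformly} far from $B$ (distance $\gtrsim 1/\Bars\xi_1$, independently of the dilation factor), which forces $\eta^\perp$ to track $\xi^\perp$ at the slow rate $O(\Bars\xi_1)$ while $T_pS$ peels off $\xi_p$ at the fast rate $O(1/d)$; the smallness of the quotient $O(\Bars\xi_1\,d)$ is what makes the defect positive on all of $S\setminus\{p_\pm\}$ and thereby lets the excluded neighbourhood of the poles be taken arbitrarily small. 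The remaining ingredients — the degree/contraction argument for (1) and the convex-cone partition of unity for globalising $\nu$ — are routine.
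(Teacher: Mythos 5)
Your proposal is correct and follows essentially the same route as the paper's proof: locate the poles as the unique fixed points of $\pm\,\xi^\perp\circ\gamma^{-1}$ via a contraction argument on $\sphere^2$, control $\Bars{\eta}_1$ by showing the pencil's axis stays at distance $\gtrsim 1/\Bars\xi_1$ from $B$ (you rescale $S$ to unit size and shrink $\Bars\xi_1$, where the paper shrinks $S$ and keeps $\xi$ fixed — the same estimate under a dilation), deduce non-emptiness of $\Omega_p$ and of $T_pS\cap\Omega_p$ from the resulting comparison of Lipschitz rates, and assemble $\nu$ by a convex-cone partition of unity. The one cosmetic difference is in the non-emptiness check on $S\setminus\{p_\pm\}$: you compare the tangential projections $w',w''$ of $\xi^\perp,\eta^\perp$ and show $w'\cdot w''>0$, whereas the paper shows $\pm\gamma(p)$ avoids a small geodesic disk on $\sphere^2$ containing both $\xi_p^\perp$ and $\eta_p^\perp$; these are two phrasings of the same quantitative fact, both resting on the Lipschitz bound $\Bars{\eta}_1\lesssim\Bars\xi_1$ versus the Gauss-map expansion rate $\gtrsim 1/d$.
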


\begin{figure}[htbp]
\centering
\includegraphics[height=3cm]{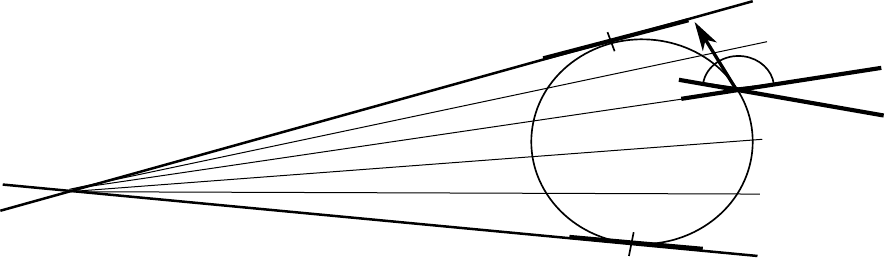}
\put(-273,11){$\scriptstyle A$}
\put(-52,8){$\scriptstyle S$}
\put(-83,-5){$\scriptstyle p_-$}
\put(-52,47){$\scriptstyle p$}
\put(-55,74){$\scriptstyle \nu_p$}
\put(-37,63){$\scriptstyle\Omega_p$}
\put(-93,78){$\scriptstyle p_+$}
\put(3,63){$\scriptstyle \eta_p$}
\put(3,45){$\scriptstyle \xi_p$}
\caption{$2$-dimensional schematic picture}
\label{fig:pinceau}
\end{figure}

\begin{proof}
Let $c = \norm{ \xi }_1$ and let $k>0$ be a (uniform) lower bound on the principal curvatures of
$S_*$ --~so the principal curvatures of $S$ are everywhere at least $k/d$.  

Let $\gamma \from S \to \sphere^2$ be the Gauss map of $S$. The curvature hypothesis means that $\gamma$ is a diffeomorphism and that its inverse satisfies $\norm{ D\gamma^{-1} }_0 \le d/k$. Thus,
$$ \norm{ D (\xi^\perp \circ \gamma^{-1}) }_0\le \norm{D\xi^\perp}_0\norm{D\gamma^{-1} }_0  \le cd/k . $$
For all $d < k/c$, the maps $\pm\xi^\perp \circ \gamma^{-1} \from \sphere^2 \to \sphere^2$ are contractions and each of them has a unique fixed point denoted by $\gamma(p_\pm)$. The points $p_\pm$ are the poles we are looking for. 
As for the vector field $\nu$, it is easily obtained with a partition of unity, provided $\Omega_p$ (resp. $T_pS \cap \Omega_p$) is nonempty for every $p$ in $B$ (resp. in $S \setminus \{p_-,p_+\}$). 

Let $p\in B$. Clearly, the angle between $\xi_p^\perp$ and $ \eta_p^\perp$ satisfies
$$ \angle (\xi_p^\perp, \eta_p^\perp) \le \angle (\xi_p^\perp, \xi_{p_+}^\perp)
 + \angle (\xi_{p_+}^\perp, \xi_{p_-}^\perp) \le 2 \norm{ \xi }_1 d \delta_* $$
where $\delta_*$ denotes the diameter of $S_*$. Thus, for $d < \pi/(2c\delta_*)$, the planes $\xi_p$ and $-\eta_p$ are distinct, and hence $\Omega_p$ is nonempty.

Now let $p \in S \setminus \{p_-,p_+\}$. The plane $T_pS$ is transverse to both $\xi_p$ (by definition of $p_\pm$) and $\eta_p$ (by convexity of $S$), and it is easy to see that $T_pS \cap \Omega_p$ is empty if and only if $\pm\gamma(p)$ belongs to the minimizing geodesic segment of $\sphere^2$ joining $\xi_p^\perp$ to $\eta_p^\perp$. Here we will discuss the case of $\gamma(p)$; for $-\gamma(p)$, replace $p_+$ by $p_-$.

Let $\rho$ be the distance in $B$ between $p$ and $p_+$. On $\sphere^2$, the disk $D$ of radius $c\rho$ centered at $\xi_{p_+}^\perp$ contains $\xi_p^\perp$ but not $\gamma(p)$ if $d < k/c$, for the principal curvatures of $S$ are then greater than $c$. Moreover, since $d < \pi / (2c\delta_*)$, the disk $D$ is geodesically convex: $c\rho \le c d \delta_* < \pi/2$. To conclude, all we need to check is that if $d$ is small enough, $D$ contains $\eta_p^\perp$. This is done below, by showing that
$$ \bigl\lVert \eta \res B \bigr\rVert_1 \le \kappa c $$ 
for some constant $\kappa$ given by the geometry of $S_*$.

First note that the norm of $D\eta^\perp$ at any point $p$ is the inverse of the distance from $p$ to the axis $A$ of the pencil. Actually, in euclidian coordinates in which $A$ is the $z$-axis, the map $\eta^\perp$ is of the form 
$$ (x,y,z) \longmapsto (x^2 + y^2)^{-\frac 1 2} (-y,x,0), $$
 so we can calculate the differential and its norm. 

Now observe that the axis $A$ remains distant from $B$. This is because $B$ contains a euclidian (round) ball $B'$ of radius $d r_*$, where $r_*$ only depends on the geometry of $S_*$. The angle of the sector of the pencil between $P_-$ and
$P_+$ (the affine planes tangent to $S$ at $p_-$ and $p_+$) is bounded above by $c d \delta_*$. The fact that this sector contains $B'$ implies that the distance $l$ from the center of $B'$ to $A$ satisfies $d r_* / l \le \sin (c d \delta_* / 2)$. The desired estimate follows, provided $d$ is sufficiently small.
\end{proof}

\subsection{Triangulation and Key Lemma} \label{ss:triangulation}

The following result, combined with Lemma \ref{l:curv2}, is the key to Lemma \ref
{l:R3} (cf. Introduction of the Appendix) and is an adaptation of Lemma 2.3.4 in \cite{El}. It involves a specific triangulation $\Delta$ of $\R^3$ defined as follows.

The unit cube $[0,1]^3 \subset \R^3$ decomposes into six tetrahedra intersecting along the diagonal from $(0,0,0)$ to $(1,1,1)$. This subdivision of the cube gives rise to an infinite triangulation of $\R^3$ invariant under $\Z^3$, sometimes called \emph{crystalline}, whose vertices are the integer points. We take the first barycentric subdivision of this triangulation (whose simplices have a diameter less than or equal to $\sqrt{3}/2$) and, as in Thurston's ``Jiggling Lemma'' \cite{Th1}, we ``jiggle'' it in a $(2\Z^3)$-periodic way so that any three edges sharing a vertex have linearly independent directions. One can make the jiggling small enough that the diameters of the simplices remain less than $1$.  We denote by $\Delta$ the resulting triangulation. By periodicity, the distance between disjoint simplices of $\Delta$ is bounded below by some positive number $\delta>0$. For any $d>0$, we denote by $d\Delta$ the image of $\Delta$ under a dilation by a factor $d$. This way, we obtain arbitrarily fine triangulations of $\R^3$ whose $3$-simplices are all small (similar) copies of a finite number of model simplices.

We denote by $N_\eps(V)$, $\eps > 0$, the (closed) $\eps$-neighbourhood of a subset $V$ of $\R^3$.

\begin{keylemma} \label{l:local}
Let $U$ be an open subset of $\R^3$, $F$ a closed subset of $U$ and $\xi$ a $K$-plane field on $U$ which is integrable on $(K \times \Op F) \cup (L \times U)$. Given a compact subset $A \subset U$, one can find positive numbers $d_*$, $\mu$ and
$c$ such that, for every $d<d_*$, there exists a $K$-plane field $\xibar$ on $U$ with the following properties: 
\begin{enumerate}
\item
there is a compactly supported homotopy from $\xi$ to $\xibar$ relative to $(K \times \Op N_d(F)) \cup (L \times U)$;
\item
$\xibar$ is integrable on $K \times N_{\mu d}(A_d^2)$ where $A_d$ is a compact polyhedral neighbourhood of $A$ in $d \Delta$ and $A_d^2$ is the $2$-skeleton of $A_d$;
\item
$\norm{ \xibar_t \res{ N_{\mu d} (A_d)}}_1 \le c$ for all $t \in K$. 
\end{enumerate}
\end{keylemma}

\begin{proof}[Proof of Lemma \ref{l:R3} assuming Lemma \ref{l:local}]
Let $U$, $F$, $\xi$ and $A$ be as in Lemma \ref{l:R3}, and let $d_*$, $\mu$ and $c$ be the positive numbers given by Lemma \ref{l:local}. Denote by $\sigma_i$, $1 \le i \le p$, the model $3$-simplices of the triangulation $\Delta$. Each of them contains a strictly convex sphere $S_i$ in the $\mu$-neighbourhood of its boundary. For every $d < d_*$, Lemma \ref{l:local} provides a
$K$-plane field $\xibar$ and a polyhedral neighbourhood $A_d$ of $A$. Every $3$-simplex $\sigma$ of $A_d$ contains a ball $B_{\sigma}$ whose boundary is the image under a dilation of factor $d$ of one of the model spheres $S_i$. Now for every $d$, the plane field $\xibar$ given by Lemma \ref{l:local} satisfies $\Bars{\xibar_t \res{ N_{\mu d} (A_d)}}_1 \le c$ for all $t \in K$. So according to Lemma \ref{l:curv2}, if $d$ is chosen small enough (with respect to the geometry of the model spheres $S_i$) $\xibar$ is almost horizontal on every ball $B_{\sigma}$. If $G\subset U\setminus N_d(F)$ denotes a compact subset such that the support of the deformation from $\xi$ to $\xibar$ is contained in $K\times G$, the $K$-plane field $\xibar$, the neighbourhood $A_* = A_d$ of $A$ and the collection of balls $B$ made of the $B_\sigma$ meeting $A_d\cap G$ (so that $B \subset
(\Int A_*) \setminus F$) satisfy all the properties of Lemma \ref{l:R3}.
\end{proof}

Lemma \ref{l:local} is by far the most technical result in this appendix. Its proof takes up the next two subsections.

\subsection{Deformation model} \label{ss:modele}

Lemma \ref{l:defmod} and its proof describe the properties of the deformation model we will use in the next subsection to make plane fields integrable in a neighbourhood of each simplex of the $2$-skeleton of some subcomplex of the triangulation $d\Delta$, for some small enough $d$. More precisely, the model will be applied after rescaling the plane fields by a factor $1/d$, so our model here deals with plane fields defined near a simplex $\sigma$ of the ``big'' triangulation $\Delta$; no scaling factor $d$ is involved in this subsection. Our construction is directly inspired by that of Eliashberg in Lemma 2.3.2 of \cite{El}, and simply consists in flowing the restriction of the given plane field to some transverse surface under a flow tangent to it, whose orbits cover a neighbourhood of the simplex $\sigma$ (cf. first paragraph of the proof below). (In other words, here, we \emph{untwist} the plane field around a line field tangent to it, while Eliashberg \emph{twists} it to make it contact). But, as we explained in the introduction to the appendix, in the next subsection (proof of the Key Lemma), we will also need a bound on the $C^1$ norm of the resulting plane field in terms of the geometric setting and of the norm of the initial plane field (but not of the plane field itself), and this is actually the main issue of this subsection:\medskip

\begin{remark} \label{r:2.3.3}
Despite Eliashberg's claim in \cite[Note 2.3.3]{El}, the $\CC^1$ norm of the plane field $\xi^1$ given by our deformation model is not controlled by the $\CC^1$ norm of the initial plane field $\xi$ but only by its $\CC^2$ norm. More generally, the $\CC^m$ norm of $\xi^1$ is controlled by the
$\CC^{m+1}$ norm of $\xi$. This ``consumption'' of one derivative, which comes from the ``pull-back'' construction of $\xi^1$, complicates the statement and proof of Lemma \ref{l:defmod} and its application in the next subsection but does not affect the result:  though the number of simplices of $d\Delta$, and thus the number of times one applies the deformation model, grows with $d$, the model is actually applied simultaneously to many simplices, in a finite number of steps (independent of $d$), so knowing that the initial plane field is $\CC^m$-bounded with $m$ sufficiently large (independent of $d$) will give a $\CC^1$ bound on the final plane field regardless of the chosen scaling factor. 
\end{remark}

The statement of Lemma \ref{l:defmod} is already quite elaborate so let us introduce part of the setting beforehand. 
In this subsection, we work in $\R^3$ endowed with the triangulation $\Delta$ and with an affine orthonormal frame $(O,x,y,z)$ which is not necessarily the canonical one. We denote by $V$ the $\delta/2$-neighbourhood of a simplex $\sigma$ of $\Delta$, where $\delta$ is the minimal distance between two disjoint simplices of $\Delta$. We endow $V$ with the horizontal foliation $\eta$ defined by $dz=0$, and the plane fields $\xi$ we deform below satisfy the following condition:
\begin{description}
\item[($*$)]
the angle between the vectors $\xi^\perp$ and $\partial_x$ is everywhere less than some fixed number $\tilde\theta \in (0, \pi/2)$.
\end{description}
In particular, $\xi$ is transverse to $\partial_x$, and \emph{a fortiori} to $\eta$, and the angle between the line field $\xi \cap 
\eta$ and $\partial_y$ is everywhere less than $\tilde\theta$. 

Given a plane field $\xi$, all the deformations of $\xi$ we will define consist in ``straightening'' $\xi$ by (un)rotating it around $\xi \cap \eta$ and have compact support in $\Int V$.  We will thus refer to a plane field as \emph{admissible} if it contains $\xi
\cap \eta$ and coincides with $\xi$ near the boundary $\partial V$.

\begin{lemma} \label{l:defmod}
Let $\xi$ be a plane field on $V$ satisfying Condition $({*})$ and $\norm{\xi}_1<1$, and let $S$ be a properly embedded surface in $V$. Given positive numbers $\mu$ and $\kappa$, assume $S\setminus\partial S$ contains a disk $D$ in $V$ transverse to $\xi \cap \eta$ whose orbit segments under $\xi \cap \eta$ cover the $2 \mu$-neighbourhood of $\sigma$ and whose intersection $D \cap P$ with any leaf $P$ of $\eta$ is a connected curve whose angle with $\xi \cap \eta$ is greater than $\kappa > 0$. Then one can deform $\xi = \xi^0$ by a homotopy $\xi^u$, $u \in [0,1]$, of admissible plane fields satisfying the following properties:
\begin{itemize}
\item
$\xi^1$ coincides with $\xi$ along $D$ and is integrable on the $\mu$-neighbourhood of $\sigma$ ;
\item
$\Bars{ \xi^u }_{m} \le \chi_m \bigl( \Bars{ \xi }_{m+1} \bigr)$ for all $u \in [0,1]$ and all $m \ge 1$, where $\chi_m$ is a polynomial without constant term and with positive coefficients depending only on $\tilde\t$, $\kappa$, $\mu$ and $S$ (but not on $D$).
\end{itemize}
Moreover, the homotopy $\xi^u$ varies continuously with $\xi$.
\end{lemma}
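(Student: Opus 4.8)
The plan is to produce $\xi^1$ by making $\xi$ invariant under the flow of a vector field directing $\xi\cap\eta$, using the disk $D$ as the surface along which $\xi$ is not touched, and then to interpolate back to $\xi$ with a cutoff. Condition $({*})$ says exactly that $\xi$ is everywhere transverse to $\partial_x$, so on $V$ one may write $\xi=\ker\omega$ with $\omega=dx-a\,dy-b\,dz$, the functions $a,b$ bounded by $\tan\tilde\theta$; moreover $\xi^\perp$ is a fixed smooth function of $(a,b)$ and conversely $(a,b)$ of $\xi^\perp$, so the derivatives of either are controlled by those of the other up to constants depending only on $\tilde\theta$. The line field $\xi\cap\eta=\xi\cap\ker dz$ is directed by the nonsingular vector field $\nu=a\,\partial_x+\partial_y$, tangent to the leaves $\{z=\mathrm{const}\}$ of $\eta$ and flowing at unit speed in the $y$-direction. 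Admissibility (containing $\xi\cap\eta$, agreeing with $\xi$ near $\partial V$) means precisely being of the form $\ker(dx-a\,dy-b'\,dz)$ with $b'=b$ near $\partial V$, so the whole deformation is encoded by a homotopy $b^u$ of functions with $b^0=b$.

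Concretely, a direct computation gives, for $\omega'=dx-a\,dy-b'\,dz$,
\[ \omega'\wedge d\omega' = \bigl(a_z+a_x b'-\nu(b')\bigr)\,dx\wedge dy\wedge dz , \]
so $\ker\omega'$ is integrable exactly where $b'$ solves the linear first-order equation $\nu(b')=a_z+a_x b'$ along the $\nu$-orbits. I would define $b^1$ on $N_{2\mu d}(\sigma)$ by solving this ODE along the orbit segments issued from $D$, with initial value $b^1=b$ on $D$: this is well posed because the hypotheses guarantee that $N_{2\mu d}(\sigma)$ is swept out by a flow-box of $\nu$ based on $D$ (each point reached from a unique point of $D$ along $\nu$, the connected transversal curves $D\cap\{z=c\}$ of angle $>\kappa$ playing the role of sections), with return time to $D$ of order $d$. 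Then $\ker(dx-a\,dy-b^1\,dz)$ is integrable on $N_{2\mu d}(\sigma)$ and coincides with $\xi$ on $D$. Fixing a cutoff $\beta\colon V\to[0,1]$ equal to $1$ on $N_{\mu d}(\sigma)$ and supported in $N_{2\mu d}(\sigma)$ (which lies well inside $V$ once $\mu$ is chosen small), set
\[ \xi^u := \ker\bigl(dx-a\,dy-(b+u\,\beta\,(b^1-b))\,dz\bigr),\qquad u\in[0,1]. \]
Every $\xi^u$ contains $\nu$, hence $\xi\cap\eta$; equals $\xi$ near $\partial V$ (where $\beta=0$) and along $D$ (where $b^1-b=0$); $\xi^0=\xi$; and $\xi^1$ is integrable on $N_{\mu d}(\sigma)$ (where $\beta\equiv1$, so $b^u=b^1$). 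Thus $(\xi^u)$ is an admissible homotopy with the first stated property.

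The only genuinely delicate point — and the one I expect to be the main obstacle — is the estimate $\Bars{\xi^u}_{d,m}\le\chi_m\!\bigl(\Bars{\xi}_{d,m+1}\bigr)$ with $\chi_m$ invariant under rescaling. Since $\xi^{u\perp}$ is a smooth function of $(a,b^u)$, it suffices to bound the weighted derivatives $d^{k-1}\sup|D^k b^u|$, $k\le m$, of $b^u=b+u\,\beta\,(b^1-b)$. The plan is: (i) rescale by $x\mapsto x/d$, which turns the ODE into the $d$-free equation $\tilde\nu(b^1)=\tilde a_{\tilde z}+\tilde a_{\tilde x}\,b^1$ with $\tilde\nu=a\,\partial_{\tilde x}+\partial_{\tilde y}$, turns $N_{2\mu d}(\sigma)$ into $N_{2\mu}(\sigma_*)$ and keeps the return time bounded in terms of $\kappa,\mu$ and the model data $(\sigma_*,D_*,\eta)$ — which is precisely why $\chi_m$ will depend on $(\sigma,D,\eta)$ only up to scaling; (ii) apply Gronwall-type estimates to this ODE and to its successive variational equations to bound $|\tilde D^k b^1|$ by a function of the derivatives of $a$ up to order $k+1$ — the extra derivative being the loss flagged in Remark \ref{r:2.3.3}, since the ODE coefficients $\tilde a_{\tilde x},\tilde a_{\tilde z}$ already cost one derivative; (iii) since $b^1-b$ vanishes on $D$ and solves a linear ODE over an orbit of bounded length, $\tilde D^k(b^1-b)$ carries an extra factor comparable to that length, which back in the original variables is exactly the power of $d$ needed to absorb the $\sim(\mu d)^{-j}$ derivatives of $\beta$ in the Leibniz expansion of $\beta\,(b^1-b)$; (iv) recombine and return to the $d$-weighted norms. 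The bookkeeping in (ii)--(iii), checking that every monomial appearing in $D^k\xi^u$ carries weight at least $d^{k-1}$, is the real work; everything else is soft. Finally, continuity of $\xi\mapsto(\xi^u)_u$ in the $\Cinf$ topology is immediate, since extracting $(a,b)$ from $\xi^\perp$, solving a linear ODE (continuous dependence on coefficients and initial data), the fixed cutoff $\beta$, and the algebraic formula for $b^u$ are all continuous operations.
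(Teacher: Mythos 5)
Your construction of $\bar\xi$ (solving the scalar Frobenius ODE $\nu(b')=a_z+a_x b'$ along $\nu$-orbits with data on $D$) and the cutoff interpolation are the same as the paper's, merely phrased differently: the paper obtains $\bar\xi$ as the pullback $\bar\omega = \bar h^*\phi^*\omega$ of $\omega$ by the return-time map to $D$, and the two agree because, as the paper's proof observes, there is a unique integrable plane field on the flow cylinder of $D$ that contains $\xi\cap\eta$ and equals $\xi$ on $D$. Your remark that each point is reached from a unique point of $D$ is exactly the Poincar\'e--Bendixson step the paper makes, using the connectedness of $D\cap P$.

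The genuine gap is that the weighted estimate $\Bars{\xi^u}_{d,m}\le\chi_m\bigl(\Bars{\xi}_{d,m+1}\bigr)$ is only sketched, and this estimate is the lemma's real content: the paper's proof devotes five Claims, \ref{a:nuxi} through \ref{a:Dmbar}, to the Gronwall and Fa\`a di Bruno bookkeeping for the flow $\phi^t$, the return time $\bar h$ and the composed map $\bar\Phi=\Phi\circ\bar h$. Your plan --- rescale so the domain has unit size and the ODE coefficients pick up a factor $d$, then Gronwall for the ODE and its variational equations, then use the vanishing of $b^1-b$ on $D$ to compensate the $(\mu d)^{-j}$ growth of the cutoff's derivatives --- is a sound reorganization of the same bookkeeping (the weighted norms $\Bars{\cdot}_{d,m}$ are precisely the device that encodes your rescaling), and the one-derivative loss you flag is the paper's Remark~\ref{r:2.3.3}. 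But as written, your steps (ii)--(iv) are asserted rather than carried out, so the lemma is not yet established.
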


\begin{figure}[htbp]
\centering
\begin{tabular}{ccc}
\includegraphics[height=4cm]{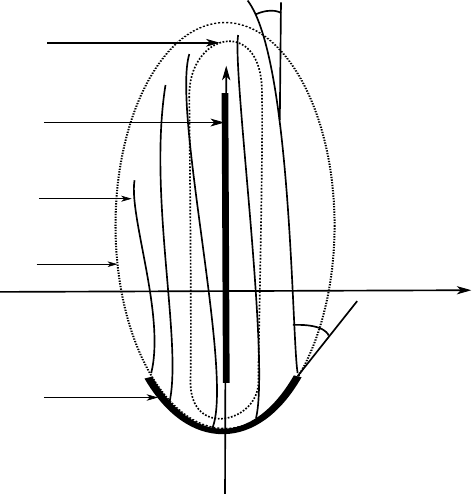} 
\put(-52,117){$\scriptstyle\le\tilde\theta$}
\put(-40,41){$\scriptstyle\ge\kappa$}
\put(-125,102){$\scriptstyle N_{2\mu}(\sigma)$}
\put(-106,84){$\scriptstyle \sigma$}
\put(-116,67){$\scriptstyle \xi\cap \eta$}
\put(-107,50){$\scriptstyle S$}
\put(-107,20){$\scriptstyle D$}
\put(-5,38){$\scriptstyle \partial_x$}
&\hspace{3cm}& \includegraphics[height=4cm]{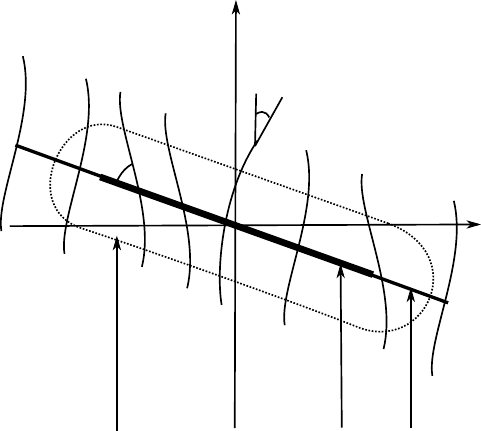} 
\put(-60,88){$\scriptstyle\le\tilde\theta$}
\put(-105,-10){$\scriptstyle N_{2\mu}(\sigma)$}
\put(-105,72){$\scriptstyle \ge \kappa$}
\put(-40,-9){$\scriptstyle \sigma\subset D=S$}
\put(-5,45){$\scriptstyle \partial_x$}
\put(-70,115){$\scriptstyle \partial_y$}
\end{tabular}
\caption{Two situations to which Lemma \ref{l:defmod} will be applied \label{f:5-6}}
\small (near special faces and non-special faces respectively)

Each picture represents the intersections of the objects of 

Lemma  \ref{l:defmod} with a leaf $P$ of the horizontal foliation $\eta$
\end{figure}

\begin{remark}
\label{r:roleofS}
 The role of $S$ is unclear at that stage. The point is that we are going to apply Lemma \ref{l:defmod} to a family of plane fields $\xi_t$, using a different $D_t$ for each $t$, but all these disks will be part of the same surface $S$, so the bound in the second item will be uniform in $t$, since $\chi_m$ depends only on $S$ and not on $D$. 
\end{remark}

\begin{remark}\label{r:poly}
What really matters to us concerning $\chi_m$ is that it is nondecreasing and that $\chi_m(x)/x$ is bounded on any bounded set of $\R_+^*$.  
\end{remark}

\begin{proof} Let us first describe $\xi^u$ geometrically. By condition $(*)$, there exists a nonvanishing vector field 
$\nu$ tangent to $\xi \cap \eta$ (and a unique one if we impose the additional condition $\nu \cdot \partial_y = 1$).
Let $C$ denote the flow ``cylinder'' of $D$ under $\nu$. Note that an integral curve of $\nu$ starting from $D$ cannot return to $D$. Indeed, otherwise, some subarc of this curve intersects $D$ exactly at its endpoints, which necessarily belong to the same leaf $P$ of $\eta$ and are thus connected by an arc of $P\cap D$ by assumption. Then the union of these two arcs forms a simple closed curve in $P\cap V$ (which is convex) and thus bounds a disk in $P\cap V$, which is either positively or negatively stable under $\nu$. Then by a corollary of Poincar\'e-Bendixson's Theorem, this disk must contain a singularity of $\nu$, which is impossible.

Hence, $C$ is an interval bundle over $D$. Now the key observation is that there is a unique integrable plane field $\xibar$ on $C$ containing $\xi \cap \eta$ and coinciding with $\xi$ at every point of $D$: the unique plane field invariant under the holonomy of $\xi \cap \eta$ and equal to $\xi$ along $D$. For $\xi^1$, we will take a plane field coinciding with $\xibar$ on the $\mu$-neighborhood of $\sigma$, with $\xi$ outside a $2\mu$-neighbourdhood of $\sigma$ (which is contained in $C$) and with both on $D$, and $\xi^u$ will be a linear homotopy connecting $\xi$ to $\xi^1$. \medskip

Let us now briefly explain why the control on derivatives of the second part of the statement is natural. One can easily believe that the $C^m$ norm of $\xi^u$, as ``interpolation'' between  $\xi$ and $\xibar$ in a fixed region ($N_{2\mu}(\sigma)\setminus N_\mu(\sigma)$), is bounded in terms of the $C^m$ norms of these two plane fields, so the main object to control is in fact $\xibar$. Now $\xibar$ is obtained from $\xi$ by some kind of pull-back/push-forward construction as follows. Denote by 
$\nu$ the vector field spanning $\xi \cap \eta$ and satisfying $\nu \cdot \partial_y = 1$, by $\phi \from \Omega \subset
\R \times V \to V$ its flow, and by $\tau\from C \to \R$ the map uniquely determined by:
$$\forall p\in C, \;\phi^{-\tau(p)}(p)\in D$$  
(observe that since $\sigma$ is of diameter less than $1$, and $\delta<1$, the condition $dy(\nu) = 1$ implies that $\Omega$ is contained in $[-2,2]\times V$ and that the function $|\tau|$ is bounded by $2$). Then $\xibar$ is defined by:
\begin{equation}
\label{e:xibar}
\xibar(p) = \left(\left(\phi^{\tau(p)}\right)_*\xi\right)(p) = D\phi^{\tau(p)}\left(\phi^{-\tau(p)}(p)\right)\cdot \xi\left(\phi^{-\tau(p)}(p)\right).
\end{equation}
Thus, in short, by composition, the derivatives of $\xibar$ are polynomials in the derivatives of the involved objects, i.e. $\xi$, but also $\tau$ and the flow of $\nu$, whose $C^m$ norms, as we will see, are controlled by that of $\nu$, which are in turn controlled by that of $\xi$.  As a result, since the expression of $\xibar$ involves \emph{the differential} of the flow, the $C^m$ norm of $\xibar$ will be controlled by the $C^{m+1}$ norm of $\xi$.

The uniform polynomial bound on the variations of the flow $\phi$ in terms of its generating vector field $\nu$ (cf. Claims \ref{a:Dmt} and \ref{a:Dm}) is rather natural; the proofs, which we only include for the sake of completeness, rely on Gronwall's Lemma and on general formulas (cf. \eqref{e:faapar} below, for example) for the derivatives of composed maps of several variables (generalizing the so-called \emph{Fa\`a di Bruno Formula} for maps of one variable). The \emph{uniformity} of the control of $\nu$ and $\tau$ on the other hand  (cf. Claim \ref{a:nuxi} and Claim \ref{a:Dmhbar}) depends in a crucial way on the angle bounds $\tilde\theta$ and $\kappa$ and on the geometry of $S$. The calculations carried out in the respective proofs, also based on Fa\`a di Bruno-like formulas, are only intended to clarify this dependency.

Let us now make the previous paragraph more precise. \medskip

First of all, we defined the $C^m$ norm of a plane field as the $C^m$ norm of its Gauss map, so here, rigorously speaking, the derivatives we want to control are those of $\xibar^\perp$, which, from \eqref{e:xibar}, has the following expression:
$$\xibar^\perp(p)=\frac{{}^t\!\!\left(D\phi_p^{-\tau(p)}\right)\cdot\xi^\perp\left(\phi^{-\tau(p)}(p)\right)}{\norm{{}^t\!\!\left(D\phi_p^{-\tau(p)}\right)\cdot\xi^\perp\left(\phi^{-\tau(p)}(p)\right)}}.$$
Let us denote the numerator by $X(p)$ and write $Y(t,p)={}^t(D\phi_p^{t})\cdot\xi^\perp(\phi^t(p))$, so that $X(p)=Y(-\tau(p),p)$. We are going to control the variations of $\nu$, then its flow $\phi^t$, then $Y$, $\tau$, $X$, $\xibar^\perp=\frac{X}{\norm{X}}$ and finally $(\xi^u)^\perp$.

Throughout the calculations, given $m\in\N$, the symbol $\chi_m$ denotes some universal polynomial with positive coefficients depending, as in the lemma, only on $m$, $\tilde\t$, $\mu$, $\kappa$ and $S$, and \emph{which will change in the course of the argument}. When we write $\chi_m^0$ rather than $\chi_m$, we mean that, in addition, $\chi_m$ has no constant term.

\begin{affirmation}\label{a:nuxi}
For all $m \ge 1$, 
$$ \Bars{ \nu }_{m} \le  \chi_m^0 \bigl(\Bars{ \xi }_{m}\bigr). $$
\end{affirmation}
\begin{remark}\label{r:nu1}
For $m=1$, with the assumption $\norm{\xi}_1<1$, this implies: $\norm{\nu}_1\le c\norm{\xi}_1$, for some constant $c$ independent of $\xi$.
\end{remark}
\begin{proof}
If $\xi^\perp=u\partial_x+v\partial_y+w\partial_z$, the maps $u,v,w$ satisfy $u^2+v^2+w^2=1$ and $\frac{v^2+w^2}{u^2}<\tan^2\tilde\theta$ by condition $(*)$, so $\frac1{u^2}=1+\frac{v^2+w^2}{u^2}<1+\tan^2\tilde\theta$. Now $\nu=-\frac v u \partial_x+\partial_y$, so its $C^m$ norm is that of $\frac v u$. Now the $m$-th derivative of $\frac v u$ is a fraction with numerator a (universal) polynomial in the derivatives of $v$ and $u$ of order $k\in[\![0,m]\!]$, each monomial containing a derivative of order at least $1$, $u$ and $v$ are bounded above by $1$, and the denominator is a power of $u$, for which we have a lower bound independent of $\xi$. Hence the required polynomial bound on $\norm{\nu}_m$.
\end{proof}

This leads to a similar bound on the (space) differential of the flow of $\nu$:
\begin{affirmation} \label{a:Dmt}
$ \Bars{ \phi^t }_1 = \Bars{D \phi^t }_0 \le
\chi_1 (\Bars{ \xi }_{1}) $, and for all $m \ge 2$, and all $|t| \le 2$,
$$ \Bars{D^m \phi^t }_0 \le
\chi_m^0 \bigl(\Bars{ \xi }_{m} \bigr) $$
(with $\chi_m$ independent of $t$).
\end{affirmation}
To prove this, we will use the following version of Gronwall's Lemma:

\begin{gronwall} Let $(E,\norm{\cdot})$ be a finite dimensional normed vector space and $x:I\to E$ a differentiable curve satisfying for some positive $a$ and $b$
$$\forall t\in I, \norm{x'(t)}< a\norm{x(t)}+b.$$
Then for all $t_0$ and $t$ in $I$,
$$\norm{x(t)}\le \norm{x(t_0)}e^{a|t-t_0|}+\frac b a \left(e^{a|t-t_0|}-1\right).$$
\end{gronwall}

\begin{proof}[Proof of Claim \ref{a:Dmt}]
We proceed by induction on $m$. For $m=1$, the differential $D\phi^t(p)$ at any point $p$ satisfies the differential equation
\begin{equation}
\label{e:variation1}
\frac d{dt} D\phi^t(p) = D\nu \bigl(\phi^t(p)\bigr) \, D\phi^t(p) 
\end{equation}
with initial condition $D\phi^0(p) = \id$. In particular,
$$\norm{D\phi^0(p)}= 1\quad\text{and}\quad \norm{ \frac d{dt} D\phi^t(p)} \le \norm{\nu}_1 \, \norm{D\phi^t(p)} $$
so if $\norm{\nu}_1>0$, by Gronwall's Lemma with $t_0=0$ and $a=b=\norm{\nu}_1$ we have for all $|t|\le 2$ that
$$\norm{D\phi^t(p)}\le 2e^{2\|\nu\|_1}-1.$$
Let $c$ be the constant given by Remark \ref{r:nu1} and $C$ such that $e^x-1\le Cx$ for all $x\le 2c$. Since $2\norm{\nu}_1\le 2c\norm{\xi}_1\le 2c$, 
$$2e^{2\|\nu\|_1}-1 = 1+2(e^{2\|\nu\|_1}-1)\le 1+4C\|\nu\|_1\le 1+4Cc\|\xi\|_1,$$
which gives the desired bound on $\norm{D\phi^t}_0$ (if $\norm{\nu}_1=0$, $\norm{D\phi^t(p)}\equiv1$).

Now let $m\ge 2$ and assume Claim \ref{a:Dmt} has been proved for every $k\le m-1$. For all $(t,p)$ where it makes sense,
\begin{equation}
\label{e:variationm}
 \frac d {dt} D^{m}\phi^t(p) = D^m(\nu\circ \phi^t)(p)
\end{equation}
with initial condition $D^{m}\phi^0(p) = 0$. Given a smooth function $f$ on an open subset of $\R^3$ and a multi-index $I=(i_1,\dots,i_k)\in \{1,2,3\}^k$, $1\le k \le m$, we denote by $\partial_If$ the partial derivative $\frac{\partial^kf}{\partial x_{i_1}\dots\partial x_{i_k}}$. Given a subvector $J=(i_{n_1},...,i_{n_l})$ of $I$, i.e. given a subset $B=\{n_1,...,n_l\}$ of $\{1,...,k\}$ with $n_1<...<n_l$, we will abusively write $\partial_Bf$ instead of $\partial_Jf$. By induction, one gets the following formula for partial derivatives of the composed map $\nu\circ\phi^t$:
\begin{equation}
\label{e:faapar}
\partial_I(\nu\circ\phi^t)(p) = \sum_{\pi\in\Pi_k}D^{|\pi|}\nu(\phi^t(p))\cdot \left(\prod_{B\in\pi}\partial_B\phi^t(p)\right)
\end{equation}
where $\Pi_k$ denotes the set of partitions $\pi$ of $\{1,...,k\}$ (recall $k$ is the length of $I$ here), and $|\pi|$ the number of ``blocks'' of such a partition. If the blocks of $\pi$ are $B_1$, ..., $B_k$, the parenthesis $\left(\prod_{B\in\pi}\partial_B\phi^t(p)\right)$ must be understood as the $k$-tuple of vectors $\partial_{B_1}\phi^t(p)$,..., $\partial_{B_k}\phi^t(p)$ to which $D^{k}\nu(\phi^t(p))$ is applied. If $I$ is of size $m$, isolating  the partition $\pi$ with one block of size $m$, we get:
$$\partial_I(\nu\circ\phi^t)(p) = D\nu(\phi(p))\cdot \partial_I\phi^t(p)+\sum_{{\pi\in\Pi_m}\atop{|\pi|\ge 2}}D^{|\pi|}\nu(\phi^t(p))\cdot \left(\prod_{B\in\pi}\partial_B\phi^t(p)\right)$$
and thus
\begin{align*}
\norm{\partial_I(\nu\circ\phi^t)(p)}\le \|\nu\|_{1} \norm{\partial_I\phi^t(p)} + \sum_{{\pi\in\Pi_m}\atop{|\pi|\ge 2}}\|\nu\|_{|\pi|}\prod_{B\in\pi}\norm{D^{|B|}\phi^t}
\end{align*}
where the last term is a $\chi_m^0(\|\xi\|_{m})$ by induction and Claim \ref{a:nuxi}. So according to \eqref{e:variationm},
$$\norm{ \frac d {dt} \partial_I\phi^t(p) }\le  \|\nu\|_{1}  \norm{\partial_I\phi^t(p)}+ \chi_m^0(\|\xi\|_{m})$$
and once again we conclude using Gronwall's Lemma (and the fact that $\partial_I\phi^0(p)=0$ for all $p$).
\end{proof}

Recall  that $Y$ is defined by $Y(t,p)={}^tD\phi^t(p)\cdot \xi^\perp (\phi^t(p))$. 

\begin{affirmation}\label{a:Dm}
For all $m\ge 1$, $0\le k\le m$, and every multi-index $I=(i_1,\dots,i_{m-k})\in \{1,2,3\}^{m-k}$,
$$  \norm{\partial_I(\partial_t)^k Y }_0 \le
\chi_m^0 \bigl(\Bars{ \xi }_{m+1} \bigr) . $$
\end{affirmation}

\begin{proof} This follows easily from Claims \ref{a:nuxi} and \ref{a:Dmt}, by product and composition. Let us explain how, starting with the time derivatives:
\begin{align*}
\partial_t Y(t,.) &= \partial_t({}^tD\phi^t)\cdot ( \xi^\perp \circ\phi^t) + {}^tD\phi^t\cdot \partial_t( \xi^\perp \circ\phi^t)\\
&={}^t(D\phi^t) ({}^tD\nu\circ\phi^t)\cdot ( \xi^\perp \circ\phi^t)+ {}^t(D\phi^t)\cdot (D \xi^\perp \circ\phi^t)\cdot (\nu\circ\phi^t)\\
&={}^t(D\phi^t)({}^tD\nu\cdot \xi^\perp+ D \xi^\perp \cdot \nu)\circ\phi^t\\
&={}^t(D\phi^t)(\nu *  \xi^\perp)\circ\phi^t
\end{align*}
where $\nu *$ denotes the Lie derivative-like differential operator $X\mapsto {}^tD\nu\cdot X+ D X \cdot \nu$. By induction,
\begin{align*}
(\partial_t)^k Y(t,.) ={}^t(D\phi^t)\cdot((\nu *)^k  \xi^\perp)\circ\phi^t
\end{align*}
There is a general polynomial formula for $(\nu *)^k  \xi^\perp$ in terms of the derivatives of $\nu$ and $\xi^\perp$ of order $l\in[\![0,k]\!]$, each monomial containing a \emph{real} derivative (\emph{i.e} of order at least one). Now Formula \eqref{e:faapar} applied to $(\nu *)^k  \xi^\perp$ instead of $\nu$ gives a polynomial expression for any partial derivative of order $l$ of $((\nu *)^k  \xi^\perp)\circ\phi^t$ in terms of that of $(\nu *)^k  \xi^\perp$ and $\phi^t$ of order $\le l$, so in terms of  the derivatives of $\nu$ and $\xi^\perp$ of order $\le k+l$ and that of $\phi^t$ of order $\le l$. So in the end, any partial derivative $\partial_I(\partial_t)^k Y$ of order $l$ of the product $(\partial_t)^k Y={}^t(D\phi^t)\cdot((\nu *)^k  \xi^\perp)\circ\phi^t$ is given by a general polynomial formula in terms of the derivatives of $\nu$ and $\xi^\perp$ of order $\le k+l$ and that of $\phi^t$ of order $\le l+1$ (again, each monomial containing a \emph{real} derivative of $\nu$ or $\xi^\perp$), and we conclude using Claims \ref{a:nuxi} and \ref{a:Dmt}.
\end{proof}

\begin{affirmation}\label{a:Dmhbar}
For every $m\ge 1$ and every multi-index $I=(i_1,\dots,i_{m})\in \{1,2,3\}^{m}$,
$$  \norm{\partial_I\tau }_0 \le
\chi_m \bigl(\Bars{ \xi }_{m+1} \bigr) . $$
\end{affirmation}
Here $\norm{\partial_I\tau }_0$ simply means $\sup_{p\in V}|\partial_I\tau (p)|$. 
\begin{proof}
Since $\tau \circ \phi^t = \tau+ t$, given Claim \ref{a:Dmt}, we only need to estimate the derivatives of $\tau$ along $D$. To that end, we need to understand the relation between $\tau$ and the geometry of $D$. 

Therefore, let us introduce the function $\tau_0$, defined in a neighbourhood of $D$, whose restriction to every plane
$P$ of $\eta$ is the euclidean distance to $S\cap P$ multiplied by the sign of $\tau$. In other words $\tau_0$ is the algebraic distance to $S\cap P$ --~where $S$ is cooriented so that $\tau$ and $\tau_0$ have the same sign~-- and is thus smooth. Moreover, it depends only on the geometric setting (not on $\xi$). The idea is to compare $\tau$ to $\tau_0$ and to deduce a bound on the first from one on the latter. To that end, we consider the unique multiple $\nu_0=f\nu$ of $\nu$ near $D$ whose flow $\phi_0^t$ satisfies:
$$\forall  p, \phi_0^{-\tau_0(p)}(p)\in D.$$
Such a vector field must satisfy $\tau_0\circ \phi_0^t=\tau_0+t$ which, differentiating with respect to $t$, gives $f= \frac{1}{\nu \cdot \tau_0}$. The flows $\phi_0^t$ and $\phi^t$ satisfy the relation 
$$\phi_0^t(p) = \phi^{s(t,p)}(p)$$
where the function $s$ satisfies $s(0,p) = 0$ for all $p$ and the differential equation
\begin{equation}\label{e:edo-s}
\frac d {dt} s(t,p) = f \left(\phi^{s(t,p)}(p)\right).
\end{equation}
Since
$$\phi_0^{-\tau_0(p)}(p) = \phi^{-\tau(p)}(p) = \phi^{s(-\tau_0(p),p)}(p),$$
we have $-\tau(p) = s(-\tau_0(p),p)$. Since $s(0,p) = 0$ for all $p$ close to $D$ in $C$, the spatial derivatives $\partial_Is(0,p)$ are all zero. As a consequence, for $p \in D$, the general formula (which we will omit here) expressing the derivatives of $\tau$ in terms of that of $s$ and $\tau_0$ becomes
\begin{equation}
\label{e:dDmh}-\partial_I\tau(p) = \sum_{\pi \in \Pi_k} (\partial_t)^{|\pi|} s(0,p)
\left(\prod_{B \in \pi}  \partial_{B} \tau_0(p) \right),
\end{equation}
where $k$ is the length of the multi-index $I$. The quantities $\Bars{\partial_{B} \tau_0}_0$ depend only on the geometry of $S$. We are now going to control $(\partial_t)^{k+1} s(0,.)$ by induction.  For $k=0$, according to Equation \eqref{e:edo-s}, we need a uniform bound on $f= \frac 1{\partial_\nu \tau_0}$. But $\partial_\nu \tau_0(p)$, for all $p \in D$, is the scalar product of $\nu(p)$ with the unit normal vector to $S \cap P$ in $P$, where $P$ is the horizontal plane containing $p$. The function $\partial_\nu \tau_0$ is thus bounded below along $D$ by some constant depending only on $\kappa$ and $\tilde\t$. 

Now for $k\ge 1$, differentiating Equation \eqref{e:edo-s}, one gets, for all $p \in D$,
\begin{equation}\label{e:dks}
(\partial _t )^{k+1}s(0,p) = \sum_{\pi \in \Pi_k} \left(
\partial_{\nu}^{|\pi|} f(p)\right) \prod_{B \in \pi} \left(
\partial_t^{|B|} s(0,p)\right),
\end{equation}
where $\partial_{\nu}$ denotes the derivative in the direction of $\nu$ (in other words, $\partial_\nu f = \nu \cdot f$). We saw that $\frac 1{\partial_\nu \tau_0}$ is bounded above independently of $\xi$. Moreover, every quantity $\Bars{\partial_\nu^{l+1} \tau_0}_0$ is bounded above by a function of $\Bars{\nu}_{l}$ (which depends only on $S$). Thus, every quantity $\Bars{\partial_\nu^{|\pi|} f \res D}_0$ is itself controlled by $\Bars{\nu}_{|\pi|}$, and Relation \eqref{e:dks} shows by induction that the quantities $|\partial^{k}_t s(0,p)|$ are controlled by $\Bars{\nu}_{k}$. Formula \eqref{e:dDmh} and Claim \ref{a:nuxi} thus imply Claim \ref{a:Dmhbar}.
\end{proof}

\begin{affirmation}\label{a:Dmbar}
For all $m\ge 1$,
$$ \norm{ X }_m \le
   \chi_m^0 \bigl(\norm{ \xi }_{m+1} \bigr). $$
\end{affirmation}

\begin{proof} Since $X=Y\circ(-\tau,\id)$, this follows by composition from Claims \ref{a:Dm} and \ref{a:Dmhbar}.   
\end{proof}

Since $\xibar^\perp=X/\|X\|$, in order to deduce that $\norm{\xibar}_{m} \le \chi_m^0(\Bars{ \xi}_{m+1} )$, we should still check that $\|X\|$ is bounded below (independently of $\xi$). We will simply say here that this follows from the fact that $\xi^\perp$ is unitary and that $D\phi_p^{-\tau(p)}$ is close enough to the identity in our setting.
\medskip

Now let $\rho : V \to [0,1]$ be a function equal to $1$ on $N_{\mu}(\sigma)$, with support in $N_{2 \mu}(\sigma)$. For all $u \in [0,1]$, define $\xi^u$ by
$$(\xi^u)^\perp =(1 - u \rho) \xi^\perp + u \rho \xibar^\perp.$$
Observe that these vector fields are all nonsingular, for the $\partial_x$ components of $ \xi^\perp$ and  $\xibar^\perp$ are both positive. The plane fields $\xi^u$ have all the desired properties. Let us simply stress that the bound on their derivatives is independent of the particular $\sigma$ under scrutiny. This is due to the $(2\Z)^3$-invariance of $\Delta$: all simplices are copies of a finite number of model ones, and we only need one cut-off function per isometry class of simplices (note that $\sigma$ plays no role in the rest of the proof). 
\end{proof}

\begin{remark}\label{r:relatif}
Note that, if the plane field $\xi$ is already integrable on a region of the form
$C' = \{ \phi^t(p), \; p \in D', \; t \in [a(p),b(p)] \}$ for some domain
$D' \subset D$ and some functions $a, b \from D' \to \R$ satisfying $a \le 0 \le b$,
then the homotopy $\xi^u$ of Lemma \ref{l:defmod} is stationary on $C'$.
\end{remark}

\subsection{Proof of the Key Lemma \ref{l:local}}
\label{ss:cle}

We start with the data $U$, $F$, $A$ and $\xi$ of Lemma \ref{l:local} and use the notations of Section \ref{ss:triangulation}. As explained in the introduction to the appendix, the Key Lemma is proved by induction on the successive skeleta of some triangulation of the parameter space $K$. This induction is formalized in Lemma \ref{l:recurrence} below (cf. subsection \ref{sss:induction}), whose proof takes up the last subsection of the article. Before stating this Lemma, we need to prepare its setting, \emph{i.e.}:
\begin{itemize}
\item to define, for any scaling factor $d$ (less than some $d_0$ defined below), polyhedral neighbourhoods $A_d$ and $F_d$ of $A$ and $F$, the support of the future deformation(s) being contained in $A_d$ and disjoint from $F_d$ (cf. subsection \ref{sss:polyhedral});
\item to fix the triangulation of the parameter space (cf \ref{sss:parameter});
\item to define special simplices precisely (cf. \ref{sss:special}).
\end{itemize}
Recall that $\Delta$ is the $(2\Z)^3$-invariant triangulation of $\R^3$ obtained by ``jiggling'' the barycentric subdivision of the crystalline triangulation with integer vertices. By periodicity of the construction, $\Delta$ has a finite number of \emph{model} simplices, meaning that every simplex of $\Delta$ is the image of one of those by a translation. The diameter of the simplices of $\Delta$ is less than $1$. Moreover, the distances between two disjoint simplices and the angles between two intersecting
$1$~or~$2$-simplices (not contained into one another) are uniformly bounded below by numbers denoted by $\delta > 0$ and $\gamma \in (0, \pi/2]$ respectively (the angle between a straight line and a plane is the angle between the straight line and its orthogonal projection on the plane).

Fix an angle $\theta < \gamma/2$. 

\subsubsection{Polyhedral neighbourhoods}
\label{sss:polyhedral}

Given $d>0$, we will still (improperly) call every subcomplex coming from a cube which has been subdivided, ``jiggled'' and scaled, a ``cube'' of $d\Delta$. Since $A$ is a compact subset of $U$, for $d_0 > 0$ sufficiently small, $N_{2d_0}(A)$ is contained in $U$ and $\xi$ is integrable on $K \times N_{2d_0} (F\cap A)$. Fix such a $d_0$. 

Given $d < d_0/4$, we denote by $A_d$ and $F_d$ the subcomplexes of $d\Delta$ made up of all the ``cubes'' meeting $N_{d_0} (A)$ and $N_{d_0} (F \cap A)$ respectively. Thus, since $d_0 + 3d < 2d_0$,
\begin{align*}
   N_{d_0} (A) \subset A_d & \subset N_d(A_d) \subset N_{2d_0} (A) \\
   \llap{\text{and}\quad} N_{d_0} (F \cap A) \subset F_d & \subset N_{d}(F_d)
   \subset N_{2d_0} (F \cap A)
\end{align*}
so, in particular, $\xi$ is integrable on $K \times N_{d}(F_d)$.

\begin{remark} \label{r:arete} 
The combinatorial structure of $\Delta$ will be important when it comes to perturb the plane fields near (special) simplices whose boundary meets $F_d$, where the homotopy must be stationary. More precisely, we will need to know that every $2$-simplex of $A_d$ not contained in $F_d$ has at most one edge in $F_d$. Indeed, let $\sigma$ be such a $2$-simplex and $Q$ the cube of $A_d$ containing it. By assumption, this cube is not contained in $F_d$, so $\sigma \cap F_d \subset \sigma \cap \partial Q$. Since the triangulation $\Delta$ is obtained by barycentric subdivision, there are two cases:
\begin{itemize}
\item 
if $\sigma$ has a vertex in the interior of $Q$, it has at most one edge in $\partial Q$ ;
\item 
otherwise, $\sigma \subset \partial Q$ has a vertex $q$ in the interior of some ``square face'' of $Q$; either $q \in F_d$ and then $\sigma \subset F_d$ (for $F_d \cap Q$ is a union of ``square faces''), or $q \notin F_d$ and then $\sigma$ has at most one edge in $F_d$.  
\end{itemize} 
\end{remark}

\subsubsection{Subdivision of the parameter space}
\label{sss:parameter}

We fix a subdivision of the parameter space $K$ compatible with $L$ and so fine that the following inequality holds on every simplex $K_*$:
\begin{equation} \label{e:vp} \tag{$\ma_0$}
   \angle( \xi_s(p), \xi_t(p) ) < \frac \theta {16} \quad 
   \text{for all $s, \, t \in K_*$ and $p \in N_{2d_0} (A)$.}
\end{equation}
For $0 \le i \le n$, where $n = \dim K$, denote by $K^i$ the union of the $i$-skeleton of the triangulation with the subcomplex $L$. We also write $K^{-1} = L$.

\subsubsection{Special simplices}
\label{sss:special}

Here, $d$ is any positive number less than $d_0$.

\begin{definition} \label{d:special}
Given a simplex $K_*$ of $K$ and a $K_*$-plane field $\xi^*$ on $U$, we will call a $2$-simplex $\sigma$ of $A_d$ \emph{special} (for $\xi^*$) if it is not contained in $F_d$ and if there exists $(s, q) \in K_* \times \sigma$ such that
$\angle( \sigma, \xi^*_s(q) ) < \theta/2$.
\end{definition}

\begin{affirmation} \label{a:speciaux}
If $\xi^*$ satisfies: 
\begin{equation*}
\forall (s,t,p)\in(K_*)^2\times U, \quad\angle(\xi^*_s(p),\xi^*_t(p))<\frac\theta8
\end{equation*}
and
\begin{equation*}
\forall (t,p,q)\in K_*\times (A_d)^2 \text{ s.t. } |p-q|<2d, \quad \angle(\xi^*_t(p),\xi^*_t(q))<\frac\theta8,
\end{equation*}
then for any special simplex $\sigma$, 
\begin{equation*} 
\angle( \sigma, \xi^*_t(p) ) < \theta \quad 
\text{for all $(t,p) \in K_* \times N_d (\sigma)$.}
\end{equation*}
In particular, special simplices are disjoint.
\end{affirmation}

\begin{proof}[Proof]
If $(s,q) \in K_* \times \sigma$ is such that $\angle( \sigma, \xi^*_s(q) ) < 
\theta/2$, then for all $(t,p) \in K_* \times N_d (\sigma)$,
\begin{align*} 
   \angle( \sigma, \xi^*_t(p) )
 & \le \angle( \sigma, \xi^*_s(q) ) + \angle( \xi^*_s(q), \xi^*_s(p) ) 
   + \angle( \xi^*_s(p), \xi^*_t(p) ) \\
 & < \frac \theta 2 + \frac \theta 8  + \frac \theta 8 <\theta.
\end{align*}
Now assume that $\sigma$ and $\sigma'$ are non disjoint special simplices. Let $s, s' \in K_*$ and $q \in \sigma$, $q' \in \sigma'$ be such that
$$ \angle( \sigma, \xi^*_s(q) ) < \frac \theta 2 \quad \text{and} \quad
   \angle( \sigma', \xi^*_{s'} (q') ) < \frac \theta 2 . $$
Then
\begin{align*}
\angle( \sigma, \sigma') 
& \le \angle( \sigma, \xi^*_s(q) ) + \angle( \xi^*_s(q), \xi^*_s(q') ) 
 + \angle( \xi^*_s(q'), \xi^*_{s'}(q') ) + \angle( \xi^*_{s'}(q'), \sigma' ) \\
& < \frac \theta2 +  \frac \theta 8 + \frac \theta 8 + \frac
\theta 2 < 2\theta < \gamma .
\end{align*}
By definition of $\gamma$, the simplices $\sigma$ and $\sigma'$ coincide.
\end{proof}

\subsubsection{Induction Lemma}\label{sss:induction}

Given a plane field $\xi$, we denote by $\|\xi\|_m$ the norm of its restriction to $N_{2d_0}(A)$ and for all $d>0$, we define
\begin{equation*}
\norm{\xi}_{d,m} =\norm{(h_d)^*\xi_{|N_{2d_0}(A)}}_m
\end{equation*} 
where $h_d$ denotes any homothety of factor $d$. Note that 
$$((h_d)^*\xi)^\perp = d\cdot (h_d)^*(\xi^\perp)$$
so
\begin{equation*}
\norm{\xi}_{d,m} = d\norm{(h_d)^*(\xi^\perp)}_m 
\end{equation*} 
and in particular 
\begin{equation*}
\norm{\xi}_{d,1} =d \norm{\xi}_1 \quad\text{and}\quad \norm{\xi}_{d,m} \le d \norm{\xi}_m\quad\forall m\ge 1.
\end{equation*} 

The Key Lemma \ref{l:local} is a consequence of the following result. 

\begin{lemma} \label{l:recurrence}
For every $0 \le i \le n+1$, there are positive numbers $d_i$, $\mu_i$ and $(c_{i,m})_{m \ge 1}$ such that, for every $d < d_i$, there exists a compactly supported homotopy $\xi^u$, $u \in [0,i]$, of $K^{i-1}$-plane fields on $U$ with the following properties
\begin{itemize}
\item
$\xi^0$ coincides with $\xi$ (or more accurately with its restriction to $K^{i-1}
\times U$) and the homotopy is relative to $(K^{i-1} \times \Op N_{\mu_id} (F_d)) 
\cup (L \times U)$;
\item
$\xi^i$ is integrable on $K^{i-1} \times N_{\mu_id} (A_d^2)$;
\item
for every $(t, u) \in K^{i-1} \times [0,i]$, 
\begin{equation} \label{e:vsi} \tag{$\dagger_i$}
\lrBars{ \xi_t^u }_{d,m} \le c_{i,m} d\; ;
\end{equation}
\item for every $(t, u) \in K^{i-1} \times [0,i]$ and every $p\in U$,
\begin{equation*}
\angle( \xi_t^u(p), \xi_t(p) ) < \frac \theta {32}.
\end{equation*}
\end{itemize}
\end{lemma}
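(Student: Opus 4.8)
The plan is to prove Lemma \ref{l:recurrence} by induction on $i$, since this is precisely the inductive skeleton underlying the Key Lemma: process first the subcomplex $L$ (the base case, where $\xi$ is already integrable by hypothesis), then ``big vertices'', then ordinary vertices, then edges, then $2$-faces, adding one dimension of the parameter triangulation at each stage. First I would set up the base case $i=0$: here $K^{-1}=L$, on which $\xi$ is already integrable everywhere, so one takes $\xi^u=\xi$ and $d_0$, $\mu_0$, $c_{0,m}$ as dictated by the $\CC^{m}$-norms of $\xi$ on $N_{2d_0}(A)$. Then, assuming the statement for $i$, I would construct the homotopy for $i+1$ in two substeps. \emph{Substep A (extension over new parameter simplices, rel.\ the old skeleton).} Over each new $i$-simplex $K_*$ of the parameter triangulation, extend the $K^{i-1}$-plane field $\xi^i$ already built to a $K_*$-plane field agreeing with $\xi$ near $\partial K_*\setminus K^{i-1}$, keeping integrability over $N_{\mu_id}(A_d^2)$; using the angular smallness ($\ma_0$) this costs only a controlled change in $\CC^m$-norms. \emph{Substep B (make $\xi$ integrable near one more layer of geometric simplices).} This is the heart: work simplex by simplex in the $2$-skeleton $A_d^2$, treating in turn the special ``big vertex'' simplices (those $2$-faces to which some $\xi_t$ is tangent — by the jiggling construction these are disjoint and can be absorbed into vertices), then vertices, then edges, then faces. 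For each geometric simplex $\sigma$ I would apply the deformation model Lemma \ref{l:defmod}: in suitable euclidean coordinates on the $d\delta/2$-neighbourhood $V$ of $\sigma$, condition $({*})$ holds (after a linear change bringing $\xi$ close to horizontal, using $\theta<\gamma/2$ and the angle bounds $\delta,\gamma$), one chooses a surface $S$ with a disk $D$ transverse to the line field $\xi\cap\eta$ whose flow-cylinder covers $N_{2\mu d}(\sigma)$, and Lemma \ref{l:defmod} produces an admissible homotopy $\xi^u$ whose time-$1$ term is integrable on $N_{\mu d}(\sigma)$, with $\|\xi^u\|_{d,m}\le\chi_m(\|\xi\|_{d,m+1})$.

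The coherence of these local perturbations is guaranteed exactly as in Thurston's argument and Remark \ref{r:relatif}: when $\xi$ is already integrable near $\partial\sigma$ (which holds after lower-dimensional simplices have been processed, and near $F_d$ by hypothesis), the homotopy of Lemma \ref{l:defmod} is stationary there, so the perturbation near $\sigma$ does not disturb what was done near adjacent simplices, and in particular stays relative to $(K^{i}\times(U\setminus G^{i+1}_d))\cup(L\times U)$ for a compact $G^{i+1}_d$ disjoint from $N_{\mu_{i+1}d}(F_d)$ — here Remark \ref{r:arete} is what ensures that $2$-simplices of $A_d$ meet $F_d$ in at most one edge, so there is always a free edge along which to run the flow. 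The numbers $\mu_{i+1}\le\mu_i$ and $c_{i+1,m}$ are obtained by composing the (finitely many) applications of Lemma \ref{l:defmod} and Substep A; the crucial point, flagged in Remark \ref{r:2.3.3}, is that each application consumes one derivative, so starting from a bound on $\|\xi\|_{m_0}$ with $m_0$ large enough (depending only on $n=\dim K$ and the combinatorics of $\Delta$, hence fixed in advance) one still controls $\|\xi^{i+1}\|_{d,1}\le c$. The final angular estimate $\angle(\xi^u_t(p),\xi_t(p))<\theta/32$ is maintained by taking $d$ small: the deformation is supported in $\mu_{i+1}d$-neighbourhoods, on which, by $(\ma_0)$ and the uniform $\CC^1$-control, the rotation performed by Lemma \ref{l:defmod} is as small as desired (this is where one shrinks $d_{i+1}$).

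Finally I would deduce the Key Lemma \ref{l:local} from the case $i=n+1$ of Lemma \ref{l:recurrence}: then $K^{n}=K$, the homotopy $\xi^u$, $u\in[0,n+1]$ (reparametrised to $[0,1]$), is a $K$-plane field homotopy relative to $(K\times(U\setminus G_d))\cup(L\times U)$ with $\xi^{n+1}$ integrable on $K\times N_{\mu d}(A_d^2)$ where $\mu=\mu_{n+1}$, and $\|\xi^{n+1}_t\!\res{N_{\mu d}(A_d)}\|_1\le c:=c_{n+1,1}$ for all $t$; setting $A_d$ as the polyhedral neighbourhood and $G_d=G^{n+1}_d$ gives exactly the three conclusions of the Key Lemma.

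I expect the main obstacle to be Substep B together with the bookkeeping of constants: one must verify that the surface $S$ and disk $D$ required by Lemma \ref{l:defmod} genuinely exist near every simplex of $A_d^2$ with uniform transversality angle $\kappa>0$ (this uses the jiggling — linear independence of edge directions — and the choice $\theta<\gamma/2$), that the ``big vertex'' simplices really are disjoint so they can be swallowed into a preliminary vertex-like stage, and that the finitely-many-fold composition of the norm estimates $\|\cdot\|_{d,m}\le\chi_m(\|\cdot\|_{d,m+1})$ closes up — i.e.\ that the number of derivatives consumed is bounded a priori by a quantity depending only on $n$ and $\Delta$, not on $d$. These are exactly the technical estimates the excerpt warns will be ``tedious'' but geometrically routine; the conceptual content is entirely in Thurston's trick of running the holonomy of $\xi\cap\eta$ along a free face, packaged in Lemma \ref{l:defmod}.
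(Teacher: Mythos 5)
Your proposal follows the same route as the paper: induction on the skeleton dimension $i$ of the parameter polyhedron, a trivial base case on $L$, a trivial extension to $K^i$-plane fields stationary away from $K^{i-1}$, and then an application of the deformation model (Lemma \ref{l:defmod}) on each $i$-simplex $K_*$ to the special simplices, then vertices, then edges, then non-special faces, with Remark \ref{r:relatif} guaranteeing that earlier perturbations are left undisturbed and Remark \ref{r:arete} controlling the overlap with $F_d$. The few imprecisions (e.g.\ $\partial K_*\setminus K^{i-1}$ is empty, Substep A cannot by itself ``keep integrability'' for interior parameter values, and the explicit $\beta$-budget tracking across the four substeps in $(\diamond_{i+k/4})$--$(\ddagger_{i+k/4})$ is glossed over) are matters of bookkeeping rather than genuine gaps.
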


\begin{proof}[Proof of the Key Lemma \ref{l:local}]
For $i = n+1$, the above lemma implies the Key Lemma with $d_* = d_{n+1}$, $\mu = \mu_{n+1}$, and $c = c_{n+1,1}$.
\end{proof}

\subsubsection{Proof of Lemma \ref{l:recurrence}}

We proceed by induction. Step $i = 0$ is trivial (with $\mu_0 = 1$) since the plane fields $\xi_t$, $t \in K^{-1} = L$, are integrable on all of $U$ and uniformly $C^m$-bounded on $N_{2d_0}(A)$ for every $m$. 

Assume now that step $i \ge 0$ has been completed and let us describe the structure of step $i+1$ (each of the following sentences will be detailed afterwards). First, given \emph{any} $d<d_i$, we take the homotopy of $K^{i-1}$-plane fields given by step $i$ and extend it to a homotopy of $K^i$-plane fields stationary outside a neighbourhood of $K^{i-1}$. Taking $d_i$ smaller if necessary, we arrange that for any $d<d_i$, the restriction of the resulting $\xi^i$ to any simplex $K_*$ of $K^i$ satisfies the hypothesis of Claim \ref{a:speciaux}, so that, for any $K_*$, special simplices are disjoint. 

Then we build a homotopy $\xi^{i+u}$, $u \in [0,1]$, of $K^i$-plane fields relative to $K^{i-1} \times U$. This is done $i$-simplex by $i$-simplex of $K^i$ independently, applying Lemma \ref{l:defmod} to a neighbourhood of each simplex of the $2$-skeleton of $A_d$ not contained in $F_d$, after a rescaling by a factor $1/d$, and taking these simplices in a suitable order: first special simplices, then vertices contained in no such simplex, then edges, and finally non-special faces. For each $i$-simplex of $K^i$, each of these four substeps yields ``one quarter'' of the desired homotopy from $\xi^i$ to $\xi^{i+1}$, namely $\xi^{i+u}$, $u \in [0,1/4]$, $[1/4,1/2]$, $[1/2,3/4]$, and $[3/4,1]$. 

More precisely, these four substeps follow the same pattern: we first define coordinates, constants ($\mu$, $\kappa$) and surfaces ($S$, $D$) to which Lemma \ref{l:defmod} and Remark \ref{r:relatif} (to make the deformation relative to the already foliated region)   are applicable (provided $d$ is small enough). Then (taking $d$ even smaller if necessary), we use the quantitative part of Lemma \ref{l:defmod} to bound the variations of the resulting plane fields (in space and time), to ensure both the applicability of the deformation model in the next substep and the disjointness of special simplices in the next inductive step ($i+2$). In particular, the set of $d$'s for which the construction can actually be carried on decreases at each substep.

This similarity of pattern makes this last subsection somewhat repetitive, and generates references to a large number of very similar inequalities. The cases of special simplices and vertices are probably enough for the reader to get the whole picture. There are no new ideas in the rest; we mainly include it for the sake of completeness and to show how Remark \ref{r:relatif} is used in each particular case to guarantee the global coherence of the construction.

Before detailing these four substeps, let us be a little bit more specific about the $K^i$-plane field $\xi^i$ defined after step $i$. Taking $d_i$ smaller if necessary, we assume that
\begin{equation}\label{e:di}\tag{$\diamond_i$}
d_i c_{i,1} < \frac{\t}{16}.
\end{equation}
Given any $d<d_i$, we take a homotopy $\xi^u$, $u \in [0,i]$, of $K^{i-1}$-plane fields given by step $i$ and more specifically satisfying for all $(t, u) \in K^{i-1} \times [0,i]$ and all $p\in U$
\begin{equation}\label{e:vhi}\tag{$\ddagger_i$}
\angle( \xi_t^u(p), \xi_t(p) ) < \frac \theta {32} - \beta \quad \text{with $\beta>0$. }
\end{equation}
By the homotopy extension property, we can extend this homotopy to $K^i$. According to \eqref{e:vp} and \eqref{e:vhi}, for all $s, t$ in the same simplex $K_*$ of $K^i$ and all $p \in U$,
\begin{equation} \label{e:vpi}\tag{$\ma_i$}
\angle (\xi^i_t(p), \xi^i_s(p)) < \frac \theta {8}.
\end{equation}
Furthermore, according to \eqref{e:vsi} and \eqref{e:di}, for all $t\in K^i$ and all $(p,q)\in(A_d)^2$ such that $|p-q|<2d$,
\begin{align*}
 \angle(\xi^i_t(p),\xi^i_t(q))\le  \lrBars{ \xi^i_t }_1 |q-p| \le \frac \theta {16 d_i} \times 2d  < \frac{ \theta }{ 8 }.
\end{align*}
So $\xi^i$ satisfies the hypothesis of Claim \ref{a:speciaux}. Thus, for any $K_*$, special simplices (for $\xi^i$) are disjoint. \medskip

From now on we fix an $i$-simplex $K_*$ (not contained in $L$).

\subsubsection*{Deformation near special simplices}
We first define the coordinates in which we want to apply the deformation model (after rescaling), then Claim \ref{a:Dt} specifies the setting to which we can actually apply it, and Claim \ref{a:D't} makes sure the deformation can be made relative to $F_d$. These two assertions are fairly straightforward for constant plane fields, and the point is that the more we shrink the scaling factor $d$, the more the rescaled plane fields are close to being constant. Finally we keep track of the amplitude of the deformation to make sure that, at the end of step $i+1$, the new plane fields still satisfy the angle conditions necessary for their special simplices to be disjoint. \medskip

Let $\sigma_d$ be a special simplex (for $\xi^i$), $V_d$ its $d\delta/2$-neighbourhood and $\alpha_d$ any edge of $\sigma_d$ if $\sigma_d \cap F_d=\varnothing$ and the (single) edge $\sigma_d \cap F_d$ otherwise (cf. Remark \ref{r:arete}). We now choose adapted coordinates in the following way: the origin is the midpoint $q$ of $\alpha_d$, the vector $\partial_y(q)$ is tangent to $\sigma_d$ and points to the vertex opposite $\alpha_d$, and the vector $\partial_x(q)$ is orthogonal to $\sigma_d$. 
\begin{figure}[htbp]
\centering
\includegraphics[width=5cm,height=3cm]{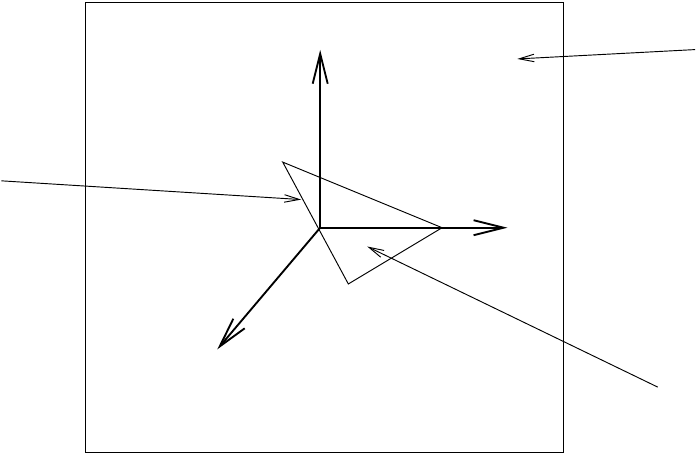}
\put(-5,8){$\scriptstyle \sigma_d$}
\put(-80,32){$\scriptstyle q$}
\put(-105,15){$\scriptstyle \partial_x$}
\put(-38,38){$\scriptstyle \partial_y$}
\put(-80,78){$\scriptstyle \partial_z$}
\put(-147,50){$\scriptstyle \alpha_d$}
\put(3,75){$\scriptstyle \text{plane spanned by } \sigma_d$}
\label{fig:speciaux}
\caption{Choice of coordinates near a special simplex $\sigma_d$}
\end{figure}

Now denote by $h_d$ the homothety of ratio $d$ and center $q$, by $V$, $\sigma$, $\alpha$ and $\bar F$ the inverse images of  $V_d$, $\sigma_d$, $\alpha_d$ and $F_d$ under $h_d$, and by $\zeta_t^i$, $t \in K_*$, the pull-back $(h_d)^*\xi^i_t$ defined on $V$. With the above choices, $\zeta^i_t$, $t \in K_*$, satisfies condition ($*$) of Section \ref{ss:modele} with $\tilde\theta = \theta$ since, for all $(t,p) \in K_* \times N_1 (\sigma) \supset K_* \times V$, 
\begin{equation*} \label{e:transverse2}
\angle({\zeta_t^i}^\perp(p), \partial_x(p))=\angle({\xi_t^i}^\perp(h_d(p)), \partial_x(h_d(p))) = \angle ( \xi^i_t(h_d(p)), \sigma) < \theta
\end{equation*}
according to Claim \ref{a:speciaux}. In addition, according to \eqref{e:vsi} and \eqref{e:di},
$$\norm{\zeta_i^t}_1 = \norm{\xi^i_t}_{d,1}\le d_i c_{i,1} <\frac\theta{16}<1.$$
As in Section \ref{ss:modele}, $\eta$ denotes the plane field defined by $dz=0$. By the induction hypothesis, the plane field $\xi^i_t$ is integrable on $N_{\mu_i d}(\sigma_d)$ for $t \in \partial K_*$ and on $N_{\mu_i d}(F_d)$ for $t \in K_*$, so  $\zeta^i_t$ is integrable on $N_{\mu_i}(\sigma)$ for $t \in \partial K_*$ and on $N_{\mu_i}(\bar F)$ for $t \in K_*$. We denote by $S$ the smooth boundary of some stricly convex domain containing $N_{9 \mu_i /10}(\sigma)$ and contained in $N_{\mu_i }(\sigma)$.

\begin{affirmation}\label{a:Dt}
There are positive numbers $d_{i+1/4}$, $\mu$ and $\kappa$ such that if $d < d_{i+1/4}$, for all $t\in K_*$, the surface $S$ contains a disk $D_t$ varying continuously with $t$ and satisfying the following properties: 
\begin{itemize}
\item $D_t$ is transverse to $\zeta^i_t \cap \eta$ and its orbit segments under $\zeta^i_t \cap \eta$ cover the $2\mu$-neighbourhood of $\sigma$ ; 
\item $D_t \cap P$ is connected for every leaf $P$ of $\eta$ and its angle with $\zeta^i_t \cap \eta$ is at least $\kappa$.
\end{itemize}
\end{affirmation}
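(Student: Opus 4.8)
The plan is to exhibit, for each $t\in K_*$, a single explicit model disk inside the strictly convex sphere $S$ that works for the whole family, and to check the two transversality-and-connectedness conditions using only the a priori angle bound $\angle(\xi^i_t, \sigma) < \theta$ from Claim \ref{a:speciaux} together with the $\CC^1$ estimate \eqref{e:vsi}. In the adapted coordinates chosen near $\sigma$ (origin $q$ the midpoint of $\alpha$, $\partial_x$ orthogonal to $\sigma$, $\partial_y$ tangent to $\sigma$, $\eta = \ker dz$), the line field $\xi^i_t\cap\eta$ is everywhere within angle $\theta$ of $\partial_y$, so it is uniformly transverse to any plane of the form $\{x = \text{const}\}$ and, more to the point, to the ``vertical strip'' $\{(x_0,y,z)\}$ for any fixed $x_0$. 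First I would take for $D_t$ the intersection of $S$ with a plane $P_0 = \{x = x_0\}$, where $x_0 < 0$ is a small negative constant (independent of $d$ up to scaling) chosen so that $P_0\cap S$ is a disk lying just on the $-\partial_x$ side of $\sigma$, hence a subset of $N_{\mu d}(\sigma)\setminus N_{9\mu_i d/10}(\sigma)$ for a suitable intermediate $\mu$. Since $\xi^i_t\cap\eta$ makes an angle $<\theta < \gamma/2 < \pi/4$ with $\partial_y$ and $P_0$ is orthogonal to $\partial_x$, the disk $D_t = P_0\cap S$ is transverse to $\xi^i_t\cap\eta$ with angle bounded below by $\kappa := \pi/2 - \theta > 0$, uniformly in $t$; and its intersection with each horizontal leaf $P = \{z = \text{const}\}$ is an arc (a chord of the ellipse $P_0\cap S$), hence connected, whose angle with $\xi^i_t\cap\eta$ is again $\geq\kappa$ for the same reason. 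This handles the second bullet.

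For the first bullet — that the orbit segments of $D_t$ under $\xi^i_t\cap\eta$ sweep out $N_{2\mu d}(\sigma)$ — I would argue as follows. Near $\sigma$ the flow of $\xi^i_t\cap\eta$ stays within angle $\theta$ of the $\partial_y$-direction, and $\sigma$ is a triangle whose extent in the $\partial_y$-direction is comparable to $d$ (its diameter is $<d$, and $\partial_y(q)$ points toward the vertex opposite $\alpha$). Since $x_0$ is chosen of order $d$ on the far side of $\sigma$ from the flow direction, every point of a thin tube $N_{2\mu d}(\sigma)$ is connected, by a short orbit segment of the nearly-$\partial_y$ field of length $O(d)$, back to the plane $P_0$; the estimate $\|\xi^i_t\|_{d,1}\le c_{i,1}$ together with the smallness of $d$ guarantees the orbits do not bend away before reaching $P_0$ (no Poincaré return, since orbit segments of length $\le 2d$ are nearly straight). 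Making $\mu$ small and, if necessary, shrinking $d_{i+1/4}$, the flow cylinder of $D_t$ thus covers $N_{2\mu d}(\sigma)$. Continuity of $t\mapsto D_t$ is immediate since $D_t = P_0\cap S$ does not depend on $t$ at all; only the verification of the conditions uses $t$ through $\xi^i_t$, and those hold uniformly.

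The main obstacle, and the point requiring genuine care rather than soft arguments, is the compatibility of the three length scales: the disk $D_t$ must sit inside $S$ (hence inside $N_{\mu_i d}(\sigma)$, the region where $\xi^i_t$ is \emph{already integrable} for $t\in\partial K_*$ by the induction hypothesis, so that Remark \ref{r:relatif} will later apply and the deformation stays relative to $K^{i-1}\times U$), it must lie \emph{outside} $N_{9\mu_i d/10}(\sigma)$ so that the deformation of Lemma \ref{l:defmod} leaves $\xi^i_t$ untouched along $D_t$, and yet its flow cylinder must still be fat enough to cover $N_{2\mu d}(\sigma)$ with $2\mu < \mu_i$. Pinning down the constant $x_0$ and the new radius $\mu$ — all independent of $d$ up to scaling, so that the $\chi_m$'s of Lemma \ref{l:defmod} remain $d$-independent — is the only delicate bookkeeping; everything else is an elementary angle computation in the fixed model simplex $\sigma$, following Eliashberg's construction in \cite{El} verbatim.
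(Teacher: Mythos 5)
Your proposal contains a basic geometric error that makes the whole construction fail, and it occurs in the very first step. In the adapted coordinates you correctly identify, the condition $(*)$ gives $\nu_t := \xi^i_t\cap\eta$ the form $\partial_y - v_t\,\partial_x$ with $|v_t| < \tan\theta$, so $\nu_t$ is within angle $\theta$ of $\partial_y$. But $\partial_y$ lies \emph{inside} every plane $\{x=\text{const}\}$, so a vector close to $\partial_y$ is nearly \emph{tangent} to $\{x=x_0\}$, not transverse to it; in fact the transversal component is exactly $-v_t\,\partial_x$, and $v_t$ may vanish. Your sentence ``it is uniformly transverse to any plane of the form $\{x=\text{const}\}$'' is therefore the opposite of the truth, and the disk you propose, the cap of $S$ on the $-\partial_x$ side, is \emph{not} everywhere transverse to $\nu_t$ (near the west pole of $S$ the tangent plane is $\mathrm{span}(\partial_y,\partial_z)$, which essentially contains $\nu_t$). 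The flow-covering argument collapses with it: a nearly-$\partial_y$ flow starting from $\{x=x_0\}$ moves the $y$-coordinate by $O(d)$ while changing $x$ only by $O(d\tan\theta)$, so the orbit segments emanating from your $D_t$ sweep a thin slab near $\{x=x_0\}$, not a full neighbourhood of $\sigma$.

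The disk must instead be a cap of $S$ roughly perpendicular to $\partial_y$, on the side of $\sigma$ from which the flow of $\nu_t$ enters, and this cannot be taken to be $t$-independent in the naive way. The paper proceeds as follows: on each horizontal leaf $P$, the Gauss map $\gamma$ of the convex curve $S\cap P$ is used, and the fixed points of the contractions $\pm\nu_t\circ\gamma^{-1}$ on $\sphere^1$ are the two ``poles'' of $S\cap P$ where $\nu_t$ is tangent; these lie near the $\pm\partial_y$ ends. They split $S$ into an entrance face $S_t^-$ and an exit face $S_t^+$ for the flow, and $D_t$ is $S_t^-$ with a neighbourhood of the pole curve removed. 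The transversality angle $\kappa$ one obtains along $D_t$ is then of order $\eps$ (it is the angular distance to the poles multiplied by the curvature bound $k/d$ of $S\cap P$, giving $(k-cd)\eps$), not $\pi/2-\theta$. Finally, the $t$-continuity of $D_t$ is genuine content here; it follows from continuous dependence of the fixed points of a continuous family of contractions, a step your proposal sidesteps by positing a constant $D$ that does not in fact satisfy the two bullets.
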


\begin{remark}\label{r:const} The constants $d_{i+1/4}$, $\mu$ and $\kappa$ depend only on $\theta$ and the geometry of $S$, \emph{i.e.} on $\theta$, $\mu_i$ and the geometry of the model simplices of $\Delta$. They do not depend on the specific simplex $\sigma$ itself.
\end{remark}

\begin{figure}[htbp]
\centering
\includegraphics[height=3.3cm]{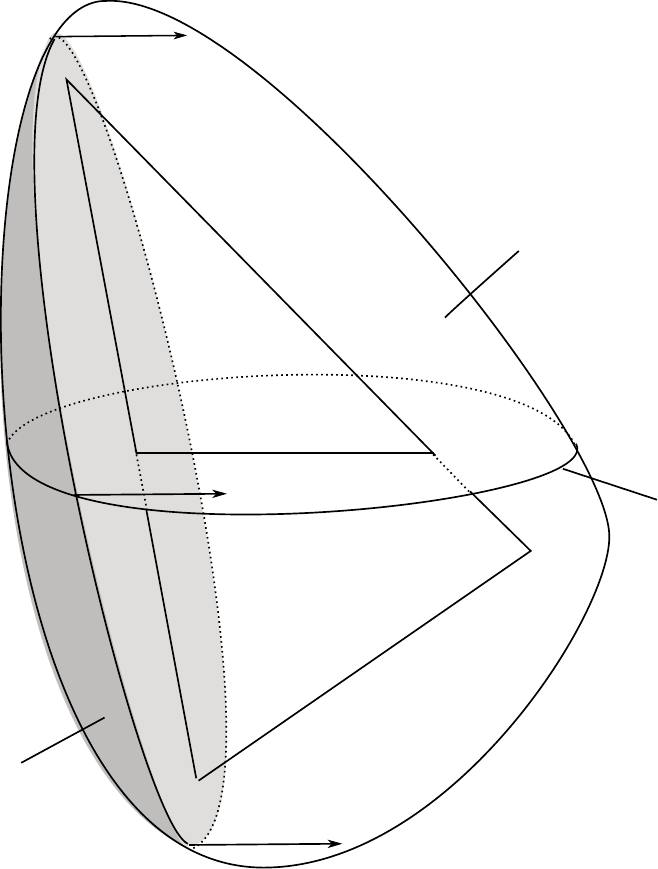}
\put(-80,8){$\scriptstyle D_t$}
\put(-59,84){$\scriptstyle \nu_t$}
\put(2,36){$\scriptstyle S\cap P$}
\put(-13,67){$\scriptstyle S$}
\put(-35,30){$\scriptstyle \sigma$}
\label{fig:Dt3d}
\end{figure}

\begin{proof}
For every $t\in K_*$, denote by $\bar\zeta^i_t$ the constant plane field equal to $\zeta^i_t(q)$ on $V$. Then $\bar\zeta^i_t\cap \eta$ is tangent to $S$ along a simple closed curve which divides $S$ into an ``entrance face''  $S_t^-$ and an ``exit face'' $S_t^+$, which depend continuously on $t$. The flow lines of $\bar\zeta^i_t\cap \eta$ through $S_t^-$  cover the whole domain $B$ bounded by $S$ and hence $N_{9 \mu_i /10}(\sigma)$. So if one defines $D_t$ as $S_t^-$ with an $\eps$-neighborhood of its boundary removed, for some small fixed $\eps$ (independent of $t$, $\sigma$...), the flow lines through $D_t$ still cover a $2\bar\mu$-neighbourhood of $\sigma$ (for some $\bar\mu$ depending only on $\mu_i$ and $\eps$) and for every leaf $P$ of $\eta$, $P\cap D_t$ is connected and its angle with $\bar\zeta^i_t\cap \eta$ is bounded below by some constant $\bar\kappa$ depending only on $\eps$ and some lower bound on the curvatures of $S$ (this lower bound depending only on $\mu_i$ and the global geometry of $\Delta$, not on the simplex $\sigma$ under scrutiny). 

Now taking $\mu$ and $\kappa$ slightly smaller than $\bar\mu$ and $\bar\kappa$, the two properties of Claim \ref{a:Dt} are satisfied provided $\zeta_t^i$ is sufficiently $C^0$-close to $\bar\zeta^i_t$ (in terms of the geometric constants), which can be guaranteed by assuming $d$ to be small enough, since $\norm{\zeta^i_t}_1\le d c_{i,1}$. \end{proof}

In the following claim, we consider the (hard) case where $\sigma_d\cap F_d$ is nonempty, and thus $\alpha_d=\sigma_d\cap F_d$.

\begin{affirmation}\label{a:D't}
Taking $\mu$ and $d_{i+1/4}$ smaller if necessary, the orbit segments of $\zeta^i_t \cap \eta$ starting from $D_t$ and entirely contained in $N_{\mu_i}(\alpha)$ cover $N_{2 \mu}(\sigma) \cap N_{\mu}(\bar F)$, for every $t \in K_*$. 
\end{affirmation}

\begin{proof}
First observe that given $\tilde\mu \in [0,\mu_i]$, there exists $\mu$ (depending only on $\tilde\mu$ and the geometry of $\Delta$) such that 
$$N_{2 \mu}(\sigma) \cap N_{\mu}(\bar F) \subset N_{\tilde\mu}(\alpha).$$
So what we actually need to find is $\tilde\mu\in [0,\mu_i]$ such that $\zeta^i_t$ satisfies the following property (provided $d$ has been chosen small enough):\smallskip

\emph{The union of orbit segments of $\zeta^i_t \cap \eta$ starting from $D_t$ and entirely contained in $N_{\mu_i
}(\alpha)$ contains $N_{\tilde\mu }(\alpha)$.}\smallskip

As in the proof of Claim \ref{a:Dt}, denote by $\bar\zeta^i_t$ the constant plane field equal to $\zeta^i_t(q)$ on $V$. The union $U_t$ of the orbit segments of $\bar\zeta^i_t \cap \eta$ starting from $D_t$ and entirely contained in $N_{\mu_i}(\alpha)$ is simply the intersection of the orbits of $\bar\zeta^i_t \cap \eta$ starting from $D_t$ with $N_{\mu_i}(\alpha)$, since these orbits are straight lines and $N_{\mu_i}(\alpha)$ is convex. In particular, according to Claim \ref{a:Dt}, $U_t$ contains $N_{2\mu}(\sigma)\cap N_{\mu_i}(\alpha)$, which contains $N_{\bar\mu}(\alpha)$ provided $\bar\mu<\min(2\mu,\mu_i)$. 

Now taking $\tilde\mu$ slightly smaller than $\bar\mu$, $\zeta^i_t$ satisfies the above property in italics provided $\zeta_t^i$ is sufficiently $C^0$-close (in terms of the geometric constants) to $\bar\zeta^i_t$, which again can be guaranteed by assuming that $d$ is small enough.
\end{proof}

From now on we assume $d \le d_{i+1/4}$. Claim \ref{a:Dt} shows that the hypotheses of Lemma \ref{l:defmod} are satisfied by every plane field $\zeta^i_t$, $t \in K_*$, for the constants $\mu$ and $\kappa$. Setting $\mu_{i+1/4} = \mu$, we thus obtain a homotopy $\zeta^u$, $u \in [i,i+1/4]$, of $K_*$-plane fields, so that:
\begin{itemize}
\item $\zeta_t^u$ coincides with $\zeta_t^i$ outside of $N_{2 \mu_{i+1/4} }(\sigma)$; 
\item $\zeta_t^{i+1/4}$ is integrable on the $\mu_{i+1/4}$-neighbourhood of
$\sigma$ ;
\item for every $m\ge 1$, $\norm{\zeta^u_t}_m\le\chi_m(\norm{\zeta^i_t}_{m+1})$ for all $(t,u) \in K_* \times [i,i+1/4]$, for some universal polynomial $\chi_m$ with positive coefficients and no constant term. Now 
$$\norm{\zeta^i_t}_{m+1} =  \norm{\xi^i_t}_{d,m+1}\le c_{i,m+1}d$$ 
and there exists a constant $c_{i+1/4,m}\ge c_{i,m}$ such that $\chi_m(c_{i,m+1}x)\le c_{i+1/4,m}x$ for all $x\le d_{i+1/4}$ (cf. Remark \ref{r:poly}), so in the end $\norm{\zeta^u_t}_m\le c_{i+1/4,m} d$. 
\end{itemize}
\begin{remark}\label{r:constantes}
The constants $c_{i+1/4,m}$ depend on $\theta$, $\kappa$, $\mu$ and the geometry of $\Delta$, but not on $\sigma$ itself.
\end{remark}
Hence the rescaling $\xi^u$, $u \in [i,i+1/4]$, of $\zeta^u$ by a factor $d$ has the following properties: 
\begin{itemize}
\item $\xi_t^u$ coincides with $\xi_t^i$ outside of $N_{2 \mu_{i+1/4} d}(\sigma_d)$; 
\item $\xi_t^{i+1/4}$ is integrable on the $\mu_{i+1/4} d$-neighbourhood of
$\sigma_d$ ;
\item for every $m\ge 1$, there exists a constant $c_{i+1/4,m}$ such that $\Bars{\xi_t^u}_{d,m}\le c_{i+1/4,m}d$ for all $(t,u) \in K_* \times [0,i+1/4]$. 
\end{itemize}
Reducing $d_{i+1/4}$ if necessary so that 
\begin{equation}\label{e:di1}\tag{$\diamond_{i+1/4}$}
2 d_{i+1/4} c_{i+1/4,1} < \frac\beta{4},
\end{equation}
we can assume the angle variation of each $\xi^u_t$ on $V_d =
N_{d \delta/2}(\sigma)$ is less than $\beta/8$. Then for all $(t,u) \in
K_* \times [i,i+1/4]$ and all $p \in V_d$,
 \begin{equation}\label{e:ddi1}\tag{$\diamond\diamond_{i+1/4}$}
\angle(\xi^u_t(p), \xi^i_t(p)) < \frac\beta{4}.
\end{equation}
Indeed, if $q \in V_d \setminus  N_{2 \mu_{i+1/4} d}(\sigma)$, 
$$\angle(\xi^u_t(p), \xi^i_t(p)) \le \angle(\xi^u_t(p), \xi^u_t(q)) +
\angle(\xi^u_t(q), \xi^i_t(q)) +
\angle(\xi^i_t(q), \xi^i_t(p)) < \frac\beta{8} + 0 + \frac\beta{8}.$$
Inequalities \eqref{e:vhi} and \eqref{e:ddi1} imply, for all $(t,u) \in
K_* \times [0,i+1/4]$ and all $p \in U$, 
\begin{equation}\label{e:vhi1}\tag{$\ddagger_{i+1/4}$}
\angle( \xi_t^u(p), \xi_t(p) ) < \frac \theta {32} - \frac{3\beta}{4}.
\end{equation}

For $t \in \partial K_*$, Remark \ref{r:relatif} shows that the homotopy $\zeta^u_t$ (and thus $\xi^u_t$), $u \in[i,i+1/4]$, is completely stationary. Indeed, the intersection of the flow cylinder of $D_t$ with the domain bounded by $S$ (which contains the support of the homotopy) is an interval fiber bundle over $D_t$ on which $\zeta^i_t$ is assumed to be already integrable for every $t \in \partial K_* \subset K^{i-1}$.

Moreover, the same remark together with Claim \ref{a:D't} shows that for every $t \in K_*$, the homotopy $\xi^u_t$ is stationary on $N_{\mu_{i+1/4}d}(F_d)$. 

Since the neighbourhoods $V_d=N_{d\delta/2}(\sigma)$ of the different special simplices $\sigma$ are disjoint (by definition of $\delta$ and according to Claim \ref{a:speciaux}), we can apply Lemma \ref{l:defmod} to all of them simultaneously and we obtain constants $d_{i+1/4}$, $\mu_{i+1/4}$ and $c_{i+1/4,m}$ independent of $\sigma$ (cf. Remarks  \ref{r:const} and  \ref{r:constantes}).

\subsubsection*{Deformation near the other simplices}

\noindent\textbf{$0$-simplices.} Let $q$ be a vertex of $A_d$ belonging neither to $F_d$ nor to any special simplex, and $V_d$ its $d \delta/2$-neighbourhood. Note that $V_d$ is disjoint from the $d \delta/2$-neighbourhoods of the special simplices, so that $\xi^{i+1/4}_t$ coincides with $\xi^i_t$ on $V_d$ for every $t \in K_*$. Again, denote by $h_d$ the homothety of factor $d$ and center $q$, by $V$ the inverse image of  $V_d$, and by $\zeta_t^{i+1/4}$, $t \in K_*$, the pull-back $(h_d)^*\xi^{i+1/4}_t$ defined on $V$. For $S$, take the intersection of $V$ with a plane perpendicular to $\zeta^{i+1/4}_s(q)$ for some $s \in K_*$. Define the coordinate axes as follows:
\begin{itemize}
\item $\partial_y(q) \in \zeta^{i+1/4}_s(q)$ is orthogonal to $S$;
\item $\partial_x(q) \in T_qS$ is orthogonal to $\zeta^{i+1/4}_s(q)$.
\end{itemize}

\begin{figure}[htbp]
\centering
\includegraphics[width=6cm,height=5cm]{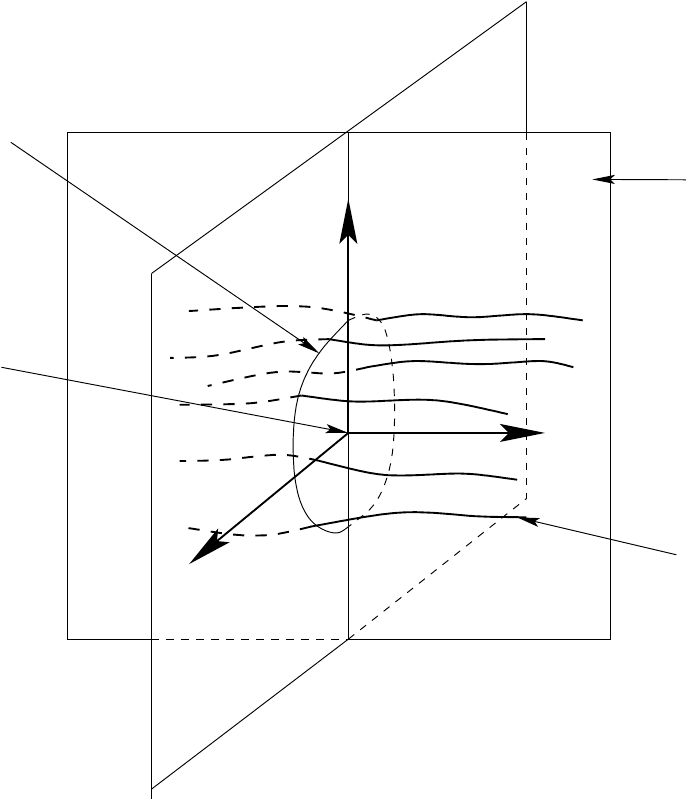}
\put(-180,77){$\scriptstyle q$}
\put(-178,117){$\scriptstyle S$}
\put(-127,33){$\scriptstyle \partial_x$}
\put(-33,60){$\scriptstyle \partial_y$}
\put(-82,105){$\scriptstyle \partial_z$}
\put(0,40){$\scriptstyle \zeta^{i+1/4}_t\cap\eta$}
\put(3,110){$\scriptstyle \zeta^{i+1/4}_s(q)$}
\caption{Choice of coordinates near a vertex $q$}
\label{fig:sommet}
\end{figure}

Combining \eqref{e:vp}, \eqref{e:di1} and \eqref{e:vhi1}, one can check that condition ($*$) (cf. Section \ref{ss:modele}) is satisfied by every $\zeta^{i+1/4}_t$, $t \in K_*$, for $\tilde\theta = \t/8$. With these notations, one easily proves an analogue of Claim \ref{a:Dt}, which provides numbers $d_{i+1/2}$, $\mu = \mu_{i+1/2}$, $\kappa = \pi/2 - \t/8$ and disks $D_t$ which can be taken to be independent of $t$ and contained in the $\mu_{i+1/4}$-neighbourhood of $q$. From now on we assume $d\le d_{i+1/2}$. Lemma \ref{l:defmod} then gives a homotopy $\zeta^u$, $u \in [i+1/4,i+1/2]$, of $K_*$-plane fields whose rescaling $\xi^u$ by a factor $d$ has the following properties: 
\begin{itemize}
\item $\xi_t^u$ coincides with $\xi_t^{i+1/4}$ outside of $N_{2 \mu_{i+1/2} d}(q)$;
\item $\xi_t^{i+1/2}$ is integrable on the $\mu_{i+1/2} d$-neighbourhood of $q$;
\item for every $m\ge 1$, there is a constant $c_{i+1/2,m}$ such that $\Bars{\xi_t^u}_{d,m}\le c_{i+1/2,m}d$ for all $(t,u) \in K_* \times [0,i+1/2]$. 
\end{itemize}
Reducing $d_{i+1/2}$ if necessary so that
\begin{equation}\label{e:di2}\tag{$\diamond_{i+1/2}$}
2 d_{i+1/2} c_{i+1/2,1} < \frac \beta{4},
\end{equation}
one can make sure (as for special simplices) that for all $(t,u) \in K_* \times[i+1/4,i+1/2]$ and all $p \in V$,
\begin{equation}\label{e:ddi2}\tag{$\diamond\diamond_{i+1/2}$}
\angle(\xi^u_t(p), \xi^{i+1/4}_t(p)) < \frac\beta{4}.
\end{equation}
Inequalities \eqref{e:vhi1} and \eqref{e:ddi2} imply that for all $(t,u)
\in K_*\times [0,i+1/2]$, 
\begin{equation}\label{e:vhi2}\tag{$\ddagger_{i+1/2}$}
	\angle( \xi_t^u(p), \xi_t(p) ) < \frac \theta {32}-\frac{\beta}{2}.
\end{equation}

For $t \in \partial K_*$, Remark \ref{r:relatif} shows once again that the homotopy $\zeta^u_t$ (and thus $\xi^u_t$), $u \in [i+1/4,i+1/2]$, is completely stationary, since $D_t$ is contained in $N_{\mu_{i+1/4}}(q)$. As for special simplices, we apply Lemma \ref{l:defmod} simultaneously near all vertices.\medskip

\begin{figure}[htbp]
\centering
\includegraphics[width=5cm,height=4cm]{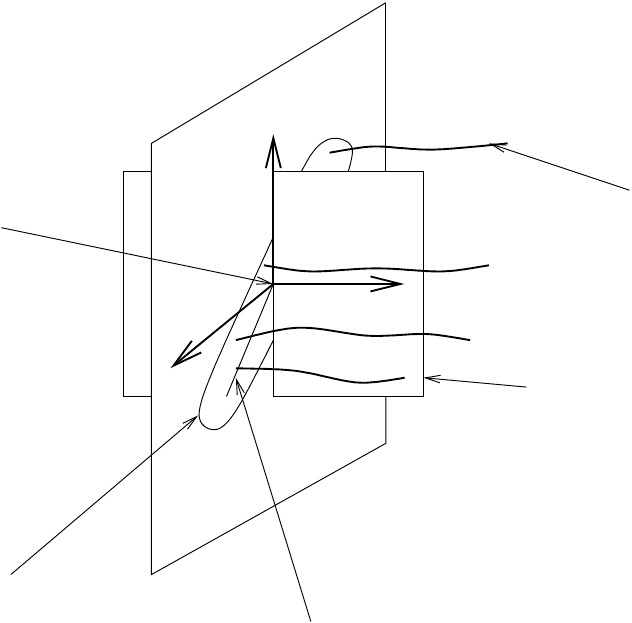}
\put(-150,5){$\scriptstyle S$}
\put(-70,-5){$\scriptstyle \alpha$}
\put(-150,72){$\scriptstyle q$}
\put(-105,53){$\scriptstyle \partial_x$}
\put(-55,55){$\scriptstyle \partial_y$}
\put(-84,92){$\scriptstyle \partial_z$}
\put(-17,40){$\scriptstyle \zeta^{i+1/2}_t(q)$}
\put(3,77){$\scriptstyle \zeta^{i+1/2}_t\cap\eta$}
\caption{Choice of coordinates near an edge $\alpha$}
\label{fig:arete}
\end{figure}

\noindent\textbf{$1$-simplices.} We now consider an edge $\alpha_d$ of $A_d$ contained neither in $F_d$ nor in any special simplex and we denote by $q$ its midpoint and by $V_d$ its $d \delta/2$-neighbourhood. Again, denote by $h_d$ the homothety of ratio $d$ and center $q$, by $V$ and $\alpha$ the inverse images of  $V_d$ and $\alpha_d$ under $h_d$, and by $\zeta_t^{i+1/2}$, $t \in K_*$, the pull-back $(h_d)^*\xi^{i+1/2}_t$ defined on $V$. For the surface $S$, we take the intersection of $V$ with a plane that contains $\alpha$ and is perpendicular to $\zeta^{i+1/2}_s(q)$ for some $s\in K_*$. The coordinate axes are defined as follows: the vector $\partial_y(q) \in \zeta^{i+1/2}_s(q)$ is orthogonal to $S$ and the vector
$\partial_x(q) \in T_qS$ is orthogonal to $\zeta^{i+1/2}_s(q)$.

According to \eqref{e:vp}, \eqref{e:di2} and \eqref{e:vhi2}, condition ($*$) is satisfied by every $\zeta^{i+1/2}_t$, $t \in K_*$, with $\tilde\theta = \t/8$. Here again, one can prove an analogue of Claim \ref{a:Dt}, which provides numbers $d_{i+3/4}$, $\mu = \mu_{i+3/4}$, $\kappa = \pi/2 - \t/8$ and disks $D_t$ which can be taken independent of
$t$ and contained in the $\mu_{i+1/2}$-neighbourhood of $\alpha$. From now on we assume $d\le d_{i+3/4}$.

Lemma \ref{l:defmod} then gives a homotopy $\zeta^u$, $u \in [i+1/2,i+3/4]$, of $K_*$-plane fields whose rescaling $\xi^u$ by a factor $d$ satisfies:
\begin{itemize}
\item $\xi_t^u$ coincides with $\xi_t^{i+1/2}$ outside of $N_{2 \mu_{i+3/4} d}(\alpha_d)$;
\item $\xi_t^{i+3/4}$ is integrable on the $\mu_{i+3/4} d$-neighbourhood of $\alpha_d$;
\item for every $m\ge 1$, there is a constant $c_{i+3/4,m}$ such that $\Bars{\xi_t^u}_{d,m}\le c_{i+3/4,m}d$ for all $(t,u) \in K_* \times [0,i+3/4]$, 
\end{itemize}
and analogues $\diamond_{i+3/4}$, $\diamond\diamond_{i+3/4}$ and $\ddagger_{i+3/4}$ of $\diamond_{i+1/2}$, $\diamond\diamond_{i+1/2}$ and $\ddagger_{i+1/2}$ respectively. In particular, for every $t \in \partial K_*$, Remark \ref{r:relatif} shows once again that the homotopy $\xi^u_t$, $u \in [i+1/2,i+3/4]$, is completely stationary. Moreover, for every $t \in K_*$, every integral curve of $\zeta^{i+1/2}_t \cap \eta$ intersecting $D_t$ meets $N_{\mu_{i+1/2}}(\partial \alpha)$ along an interval, for $N_{\mu_{i+1/2}}(\partial \alpha)$ is made of two strictly convex balls. It then follows from Remark \ref{r:relatif} that $\xi^u_t = \xi^{i+1/2}_t$ on $N_{\mu_{i+1/2} d}(\partial \alpha_d)$ for all $u \in [i+1/2,i+3/4]$. In other words, the deformation changes nothing in the $\mu_{i+1/2} d$-neighbourhood of the $0$-skeleton. One can thus perform the deformations simultaneously near all edges.\medskip

\noindent\textbf{(non-special) $2$-simplices.} Finally, let $\sigma_d$ be a non-special face of $A_d$ not contained in $F_d$, $V_d$ its $d \delta/2$-neighbourhood and $q$ its center. Again, denote by $h_d$ the homothety of factor $d$ and center $q$, by $V$ and $\sigma$ the inverse images of  $V_d$ and $\sigma_d$ under $h_d$, and by $\zeta_t^{i+3/4}$, $t \in K_*$, the pull-back $(h_d)^*\xi^{i+3/4}_t$ defined on $V$. For the surface $S$, take the intersection of
$V$ with the plane containing $\sigma$.  We fix the coordinate axes as follows:
\begin{itemize}
\item $\partial_y(q)$ belongs to $\zeta^{i+3/4}_s(q)$ for some $s \in
K_*$ and has maximal angle with $\sigma$ ;
\item $\partial_x(q)$ is orthogonal to $\zeta^{i+3/4}_s(q)$.
\end{itemize}
Here again, every $\zeta^{i+3/4}_t$, $t \in K_*$, satisfies condition ($*$)
for $\tilde\t = \t/8$. 

\begin{figure}[htbp]
\centering
\includegraphics[width=5cm,height=5cm]{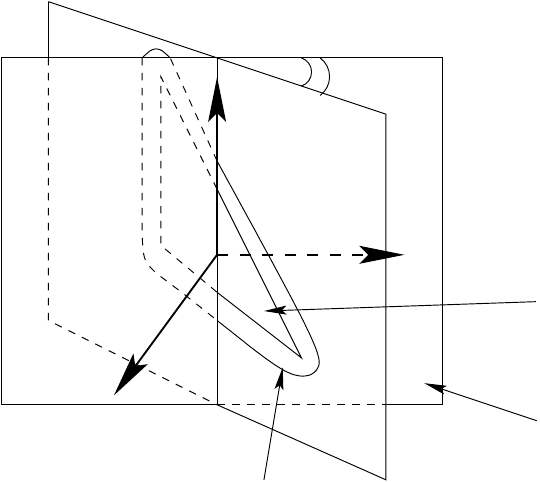}
\put(-75,-8){$\scriptstyle S$}
\put(-92,67){$\scriptstyle q$}
\put(2,51){$\scriptstyle \sigma$}
\put(-122,24){$\scriptstyle \partial_x$}
\put(-35,64){$\scriptstyle \partial_y$}
\put(-80,110){$\scriptstyle \partial_z$}
\put(2,15){$\scriptstyle \zeta_t^{i+3/4}(q)$}
\put(-51,116){$\scriptstyle \ge \t/4$}
\caption{Choice of coordinates near a face $\sigma$}
\label{fig:face}
\end{figure}

Moreover, \emph{since $\sigma$ is non-special}, $\angle(\zeta^i_t(p), \sigma) \ge \t/2$ for all $(t,p) \in K_* \times \sigma$. This, together with inequalities ($\diamond\diamond_{i+k/4}$), $1 \le k \le 3$, and ($\diamond_{i+3/4}$) (left as an exercise to the reader) implies that $\angle(\zeta_t^{i+3/4}(p), \sigma) \ge \t/2 - \beta \ge \t/4$ for all $(t,p) \in K_* \times V$. This lower bound allows us to prove an analogue of Claim \ref{a:Dt}, which provides constants $d_{i+1}$, $\mu = \mu_{i+1}$, $\kappa = \t/4$ and disks $D_t$ which can be taken independent of $t$ and contained in the $\mu_{i+3/4}$-neighbourhood of $\sigma$. We may assume $d\le d_{i+1}$. Lemma \ref{l:defmod} then gives a homotopy $\zeta^u$, $u \in  [i+3/4,i+1]$, of $K_*$-plane fields whose rescaling $\xi^u$ by a factor $d$ has properties analogous to the ones we listed for $u\in[0,3/4]$. Since every integral curve of $\zeta^{i+3/4}_t \cap \eta$ which intersects $D_t$ meets the $\mu_{i+3/4}$-neighbourhood of each edge of $\sigma$ along an interval, the deformation does not affect $N_{\mu_{i+3/4}}(\partial \sigma)$. One can thus once again (and for the last time) make the modifications simultaneously on all faces. \medskip

Carrying out this construction for every $i$-simplex $K_*$ of $K^i$, we finally obtain a homotopy $\xi^u$, $u \in [0,i+1]$, of $K^i$-plane fields on $U$ with all the properties needed to conclude step $i+1$ of the induction.

\end{document}